\pdfoutput=1
\documentclass[11pt]{article}
\title{\vspace{-1cm} \linespread{1.15} \bfseries \large THOMASON FILTRATION VIA $T(1)$-LOCAL $\TC$}
\author{\MakeUppercase Hyungseop Kim}
\date{}

\let\originallhook=\lhook

\usepackage[colorlinks]{hyperref} 
\usepackage[backend=biber, block=space, style=alphabetic, sorting=nyt, maxbibnames=99, maxalphanames=4, minalphanames=3, doi=false, isbn=false, url=false, eprint=false]{biblatex} 
\addbibresource{T1TCfiltrevreference.bib}
\usepackage{xurl}
\hypersetup{breaklinks=true}

\DeclareFieldFormat*{title}{\mkbibemph{#1}}
\DeclareFieldFormat*{journaltitle}{#1}
\DeclareFieldFormat*{booktitle}{#1}

\renewbibmacro{in:}{%
  \ifentrytype{article}
    {}
    {\printtext{\bibstring{in}\intitlepunct}}}

\DeclareFieldFormat[article,periodical]{volume}{\mkbibbold{#1}}
\DeclareFieldFormat[article,periodical]{number}{\bibstring{number}~#1}
\renewbibmacro*{journal+issuetitle}{%
  \usebibmacro{journal}%
  \setunit*{\addspace}%
  \iffieldundef{series}
    {}
    {\newunit
     \printfield{series}%
     \setunit{\addspace}}%
  \printfield{volume}%
  \setunit{\addspace}%
  \usebibmacro{issue+date}%
  \setunit{\addcolon\space}%
  \usebibmacro{issue}%
  \setunit{\addcomma\space}%
  \printfield{number}%
  \setunit{\addcomma\space}%
  \printfield{eid}%
  \newunit}

\DeclareFieldFormat[article,periodical]{pages}{#1}

\DeclareFieldFormat{mrnumber}{%
  MR\addcolon\space
  \ifhyperref
    {\href{http://www.ams.org/mathscinet-getitem?mr=#1}{\nolinkurl{#1}}}
    {\nolinkurl{#1}}}

\renewbibmacro*{doi+eprint+url}{%
  \iftoggle{bbx:doi}
    {\printfield{doi}}
    {}%
  \newunit\newblock
  \printfield{mrnumber}%
  \newunit\newblock
  \iftoggle{bbx:eprint}
    {\usebibmacro{eprint}}
    {}%
  \newunit\newblock
  \iftoggle{bbx:url}
    {\usebibmacro{url+urldate}}
    {}}

\DeclareFieldFormat{postnote}{\mknormrange{#1}}
\DeclareFieldFormat{volcitepages}{\mknormrange{#1}}
\DeclareFieldFormat{multipostnote}{\mknormrange{#1}}

\usepackage{amsthm,amssymb,amsmath,mathrsfs,stmaryrd}  
\usepackage[T1]{fontenc}
\usepackage{lmodern}
\usepackage{xcolor}
\usepackage[all,matrix,color,arrow,pdf]{xy}  
\usepackage{graphicx}
\usepackage{mathtools}
\usepackage{geometry}
\usepackage{tikz}  
\usetikzlibrary{calc, intersections}
\usepackage{enumerate}
\usepackage{pdfpages}
\usepackage[utf8]{inputenc}
\usepackage{longtable}
\usepackage{setspace}
\usepackage{hyperref}
\usepackage{titlesec}
\usepackage{eucal} 
\usepackage{tikz-cd}
\usepackage{slashed}
\usepackage{relsize}
\usepackage[bbgreekl]{mathbbol}
\usepackage{amsfonts}
\usepackage{changepage}
\hypersetup{linkcolor=[RGB]{144,0,32}, citecolor=[RGB]{33,66,171}}

\DeclareSymbolFontAlphabet{\mathbb}{AMSb}
\DeclareSymbolFontAlphabet{\mathbbl}{bbold}
\newcommand{\prism}{\mathbbl{\Delta}}
\def\prismc{\widehat{\prism}}

\let\lhook=\originallhook

\newcommand\sbullet[1][.5]{\mathbin{\vcenter{\hbox{\scalebox{#1}{$\bullet$}}}}} 

\DeclareMathAlphabet{\mathpzc}{OT1}{pzc}{m}{it}

\def\ker{\mathrm{ker}}

\def\Map{\mathrm{Map}}

\def\Aut{\mathrm{Aut}}

\def\Sp{\operatorname{Sp}}
\def\Spec{\operatorname{Spec}}

\def\Spf{\operatorname{Spf}}

\def\tr{\operatorname{tr}}

\def\K{\mathcal{K}}

\def\Dh{\widehat{\mathcal{D}}}

\def\DFh{\widehat{\mathcal{DF}}}

\def\DFhc{\widehat{\mathcal{DF}}^{\mathrm{c}}}

\def\op{\mathrm{op}}

\def\Gal{\operatorname{Gal}}
\def\N{\operatorname{N}}

\def\colim{\operatorname{colim}}

\def\can{\mathrm{can}}

\def\Ring1{\text{Ring}1}
\def\CRing1{\text{CRing}1}
\def\Mod{\text{Mod}}

\def\Alg{\text{Alg}}
\def\CAlg{\text{CAlg}}

\def\an{\mathrm{an}}

\def\bbA{\mathbb{A}}
\def\bbC{\mathbb{C}}

\def\bbE{\mathbb{E}}
\def\bbF{\mathbb{F}}

\def\bbN{\mathbb{N}}
\def\bbZ{\mathbb{Z}}

\def\bbQ{\mathbb{Q}}

\def\bbS{\mathbb{S}}

\def\inf{\operatorname{inf}}

\def\Frac{\mathrm{Frac}}

\def\+1{\xrightarrow{+1}}

\def\dt{\otimes^{\mathrm{L}}}
\def\dth{\widehat{\otimes}^{\mathrm{L}}}
\def\otimesh{\widehat{\otimes}}

\def\RG{\text{R}\Gamma}

\def\et{\mathrm{\'et}}

\def\syn{\mathrm{syn}}

\def\arc{\mathrm{arc}}

\def\arcp{\arc_{p}}
\def\perfd{\mathrm{perfd}}

\def\gr{\operatorname{gr}}

\def\Cat{\text{Cat}}

\def\perf{\text{perf}}
\def\ex{\text{ex}}

\def\Pr{\text{Pr}}

\def\Sp{\text{Sp}}

\def\N{\text{N}}
\def\Fun{\text{Fun}}
\def\fib{\text{fib}}

\def\Cpl{\operatorname{Cpl}}
\def\cpl{\mathrm{cpl}}

\def\Shv{\operatorname{Shv}}

\def\hyp{\mathrm{hyp}}

\def\sm{\mathrm{sm}}
\def\poly{\mathrm{poly}}

\def\THH{\mathrm{THH}}
\def\TC{\mathrm{TC}}

\def\TR{\mathrm{TR}}
\def\tr{\mathrm{tr}}

\def\TP{\mathrm{TP}}

\def\K{\mathrm{K}}

\def\Fil{\mathrm{Fil}}

\def\QSyn{\mathrm{QSyn}}

\def\qrsp{\mathrm{qrsp}}

\def\cyc{\mathrm{cyc}}

\newgeometry{vmargin={23mm}, hmargin={19mm} }

\theoremstyle{definition}
\newtheorem{example}{Example}[section]  

\newtheorem{notation}[example]{Notation}

\newtheorem{proposition}[example]{Proposition}
\newtheorem{lemma}[example]{Lemma}

\newtheorem{theorem}[example]{Theorem}
\newtheorem{corollary}[example]{Corollary}

\newtheorem{remarkn}[example]{Remark} 
\newtheorem{construction}[example]{Construction}

\newtheorem*{ack}{Acknowledgements}
\newtheorem*{conv}{Conventions}

\theoremstyle{remark}

\renewenvironment{abstract}{\noindent\begin{center}\begin{minipage}{0.85\linewidth}\small{\scshape Abstract.}}{\end{minipage}\end{center}}

\makeatletter
\def\blfootnote{\xdef\@thefnmark{}\@footnotetext}
\makeatother

\begin{document}
\maketitle
\begin{abstract}
We construct a natural filtration on $T(1)$-local $\TC$ for any animated commutative rings using prismatic cohomology and descent theory. In the course of the construction, we also study some general properties of prismatic cohomology complexes over perfect prisms after inverting distinguished generators. The construction is intrinsic to $\TC$ and recovers Thomason's spectral sequence for $T(1)$-local algebraic K-theory via the cyclotomic trace map; as a consequence, we also recover the \'etale comparison for prismatic cohomology.
\end{abstract}

{
  \let\clearpage\relax
  \small
  \tableofcontents
}

\section{Introduction}
In this paper,\blfootnote{Mathematics Subject Classification 2020: 19D55 (primary), 14F30 19E20 (secondary)} we construct a filtration on $T(1)$-local topological cyclic homology of derived commutative rings through prismatic and descent methods, and study its relationship with Thomason's filtration on $T(1)$-local algebraic K-theory.\blfootnote{Keywords: algebraic K-theory, topological cyclic homology, motivic filtration, prismatic cohomology, \'etale comparison}  

\subsection{Motivation}
One of the major technical difficulties in studying algebraic K-theory of schemes is in its failure of satisfying the \'etale (or stronger) descent property. Being a localizing invariant, nonconnective algebraic K-theory $\K$ satisfies Nisnevich (hence in particular Zariski) descent, cf. \cite[Th. 8.1]{tt} and \cite[Prop. 5.15]{etalek}, yet fails to satisfy finite Galois descent. \\
\indent One way of alleviating this problem is to study localizing invariants which satisfy \'etale descent and approximate algebraic K-theory sufficiently well. Fix a prime number $p$, and let $T(1)\simeq(\bbS/p)[v_{1}^{-1}]$ be a height 1 telescope. In \cite{tho,tt}, Thomason showed that for a $\bbZ[1/p]$-scheme $X$ satisfying some finiteness conditions, $T(1)$-local\footnote{Note that $T(1)$-localization coincides with $K(1)$-localization, where $K(1)$ is the Morava K-theory of height $1$ at $p$.} algebraic K-theory $L_{T(1)}\K$ on $X$ satisfies \'etale hyperdescent, and there is a spectral sequence starting from the $E_{2}$-page $E_{2}^{i,j} = H^{i}_{\et}(X,\bbZ_{p}(j))$ and converging to $\pi_{-i+2j}L_{T(1)}\K(X)$ coming from the \'etale descent spectral sequence for $L_{T(1)}\K$. This was further generalized and clarified through Clausen and Mathew, cf. \cite[Th. 7.14]{etalek}. Moreover, by (now-proven) Quillen--Lichtenbaum conjecture, cf. \cite{ro} and \cite{rost,voe}, $L_{T(1)}\K(X)$ agrees with $\K(X)$ for sufficiently high degrees for such $X$. On the other hand, by chromatic redshift at height 1 \cite{mit}, telescopic localizations at height higher than 1 of algebraic K-theory vanishes for derived schemes (which are Zariski locally $T(1)$-acyclic), and hence they do not provide meaningful approximations of algebraic K-theory of ordinary or derived schemes. \\
\indent At the residue characteristic $p$, topological cyclic homology $\TC$ provides a useful approximation of algebraic K-theory through the cyclotomic trace map $\tr:\K\to\TC$. Upon $T(1)$-localization, the difference between algebraic K-theory and topological cyclic homology vanishes for $p$-adic derived rings. More precisely, for a $p$-Henselian derived commutative ring $R$, i.e., a derived commutative ring $R$ whose underlying ordinary commutative ring $\pi_{0}(R)$ is $p$-Henselian, the map $L_{T(1)}\K(R)\to L_{T(1)}\TC(R)$ induced by the cyclotomic trace map is an equivalence. By Dundas--Goodwillie--McCarthy theorem \cite{dgm}, one reduces the statement to the case of ordinary rings. Then, Clausen--Mathew--Morrow rigidity \cite{cmm} says that the fiber of the cyclotomic trace map $\tr:\K_{\geq0}\to\TC$ of $R$ is equivalent to that of $R/p$, and from the consequence $L_{T(1)}\K(\bbF_{p})\simeq 0$ of Quillen's computation \cite{quil}, one knows that the $T(1)$-localization of the fiber vanishes, hence verifies the claim. Combining this with \cite[Th. 2.16]{bcm} or \cite[Cor. 4.22]{lmmt}, Bhatt, Clausen and Mathew proved that there is a natural equivalence
\begin{equation}\label{eq:bcm}
L_{T(1)}\K(R^{\wedge_{p}}[1/p])\simeq L_{T(1)}\TC(R)
\end{equation}
for any derived commutative ring $R$ \cite[Th. 2.17]{bcm}. Although the statement \emph{loc. cit.} is for ordinary commutative rings, note that the equivalence (\ref{eq:bcm}) holds for any derived commutative rings as noted above.\\
\indent The left hand side of the equivalence (\ref{eq:bcm}) admits a further structure through the aforementioned result of Thomason. Under certain finiteness conditions imposed on the input ring, the (pro-)\'etale Postnikov filtration for $L_{T(1)}\K$ on the input ring endows the left hand side with a structure of a filtered spectrum, whose associated spectral sequence, cf. \cite[Constr. 1.2.2.6]{ha}, gives Thomason's descent spectral sequence. In particular, the associated graded pieces of the filtration are given by $p$-adic \'etale cohomology complexes at the generic fiber of the input ring. Through the equivalence (\ref{eq:bcm}), the right hand side is automatically equipped with the corresponding filtration. \\
\indent Our goal in this paper is to explain a different, prismatic construction of a natural exhaustive filtration on $L_{T(1)}\TC(R)$ for each derived commutative ring $R$. The construction is intrinsic to $\TC$, yet through the equivalence (\ref{eq:bcm}) recovers the Thomason filtration on $L_{T(1)}\K$, enhancing the equivalence to that of filtered spectra. In particular, we recover the \'etale comparison theorem for prismatic cohomology \cite[Th. 9.1]{bs2} as a consequence, confirming the expectation of \cite[Rem. 2.19]{bcm}. In the course of constructing the filtration, we also study some properties of prismatic cohomology complexes over perfect prisms away from the Hodge-Tate locus.\\
\indent Zariski, or more generally Nisnevich descent property of localizing invariants indicates that localizing invariants behave, in a sense, as cohomology theories of noncommutative motives. For commutative inputs, e.g., commutative rings or schemes, study of filtrations on localizing invariants provides a way of approaching localizing invariants through their associated graded pieces, which would behave as cohomology complexes. For $\TC$ and other Hochschild homology type localizing invariants, \cite{bms2} constructed the motivic filtrations on them and studied relevant cohomology complexes in the $p$-adic setting. In recent years, constructions \emph{loc. cit.} have been further generalized to certain global and derived (or $\bbE_{\infty}$-) settings \cite{morin,bl,hrw}; our construction is in accordance with the approaches of \cite{bms2,hrw} in spirit. On the other hand, the consequent recovery of the étale comparison theorem for prismatic cohomology proceeds in the reverse direction, approaching cohomology theories through localizing invariants and their filtrations. In particular, this aligns with the theme of studying comparison theorems for $p$-adic cohomology theories via algebraic K-theory, as pioneered in \cite{nizcrys,nizss}. 

\subsection{Main results}\label{subsec:mainresults}

We provide a construction of a natural filtration of the following form on $T(1)$-local $\TC$ for animated commutative rings, i.e., (connective) derived commutative rings:

\begin{theorem}[Theorem \ref{th:statementT(1)TCfilt}]  \label{th:introT(1)TCfilt}
For each animated commutative ring $R$, there exists a natural exhaustive multiplicative $\bbZ$-indexed descending filtration $\Fil^{\sbullet}L_{T(1)}\TC(R)$ on the $p$-complete spectrum $L_{T(1)}\TC(R)$, such that for each $n\in\bbZ$, there is a natural equivalence  
\begin{equation*}
\gr^{n}L_{T(1)}\TC(R)\simeq\RG_{\et}(\Spec R^{\wedge_{p}}[1/p],\bbZ_{p}(n))[2n].
\end{equation*} 
The filtration $\Fil^{\sbullet}L_{T(1)}\TC(R)$ is complete if $\pi_{0}(R^{\wedge_{p}})[1/p]$ has finite Krull dimension and admits a uniform bound on the mod $p$ virtual cohomological dimensions of the residue fields. \\
\indent Moreover, the functor $R\mapsto \Fil^{\sbullet}L_{T(1)}\TC(R)$ commutes with sifted colimits, induces equivalences on taking derived $p$-completion and taking $\pi_{0}$ of animated commutative rings, and satisfies $p$-complete fppf descent.
\end{theorem}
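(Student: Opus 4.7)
The plan is to follow the template of motivic filtration constructions of \cite{bms2, hrw}: first build the filtration on the especially well-behaved class of quasiregular semiperfectoid rings, then right Kan extend to all animated commutative rings via $p$-complete fppf descent, and finally identify the graded pieces with the prescribed \'etale cohomology on the generic fiber.

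On a quasiregular semiperfectoid ring $S$, the prismatic cohomology $\prism_{S}$ is concentrated in degree $0$ and carries the Nygaard filtration $\Nyg^{\sbullet}\prism_{S}$. I would define $\Fil^{n}L_{T(1)}\TC(S)$ using the results on prismatic cohomology over perfect prisms after inverting distinguished generators developed earlier in the paper: concretely, take a Frobenius-fixed-points type construction on $\prism_{S}\{n\}[1/I]$ equipped with the appropriate Nygaard-type filtration, where $I$ denotes a distinguished generator of the Hodge--Tate ideal. This parallels the BMS2 motivic filtration on $\TC(S;\bbZ_{p})$, but with $I$ inverted so as to reflect the passage from the special fiber to the generic fiber in the identity $L_{T(1)}\TC(R)\simeq L_{T(1)}\K(R^{\wedge_{p}}[1/p])$.

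To extend the construction from quasiregular semiperfectoid inputs to general animated commutative rings $R$, I would use $p$-complete fppf descent: every animated commutative ring admits a $p$-complete fppf cover by quasiregular semiperfectoid rings, and both $L_{T(1)}\TC$ (as a cyclotomic invariant) and the candidate graded pieces $\RG_{\et}(\Spec(-)^{\wedge_{p}}[1/p],\bbZ_{p}(n))$ satisfy $p$-complete fppf descent. The right Kan extension then inherits naturality, commutation with sifted colimits, and the $p$-complete and $\pi_{0}$-invariance from the test case. Multiplicativity is built in through the $\bbE_{\infty}$-algebra structure on prismatic cohomology and its Nygaard filtration. Under the finite Krull dimension and bounded virtual cohomological dimension hypotheses, the $p$-adic \'etale cohomology of $\Spec R^{\wedge_{p}}[1/p]$ has bounded cohomological dimension, which yields completeness of the filtration by a standard Postnikov-type argument.

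The main technical obstacle is the identification $\gr^{n}L_{T(1)}\TC(R)\simeq\RG_{\et}(\Spec R^{\wedge_{p}}[1/p],\bbZ_{p}(n))[2n]$, which is essentially equivalent to the \'etale comparison theorem for prismatic cohomology and so cannot be assumed as an input. Via the descent reduction, the identification should reduce to the perfectoid case, where $\prism_{R}[1/I]$ is the localization of a perfect prism and the Frobenius-fixed-points construction above becomes computable through an explicit Artin--Schreier-type sequence. Matching the resulting complex with $p$-adic \'etale cohomology of $R^{\wedge_{p}}[1/p]$ then reduces to a direct tilting computation for perfectoid Tate rings; this is precisely what the paper's preliminary study of prismatic cohomology over perfect prisms after inverting distinguished generators is designed to supply.
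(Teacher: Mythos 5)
Your construction at the quasiregular semiperfectoid level is essentially the paper's: for $S \in \CAlg^{\qrsp}_{\bbZ_p,\mathcal{O}_C}$ the filtration is the Postnikov filtration on $L_{T(1)}\TC(S) \simeq \fib(F-1:L_{T(1)}\TP(S)\to L_{T(1)}\TP(S))$, and the graded pieces are identified via the decompletion result (Proposition \ref{prop:nygdc}) and the \'etale comparison with $(\prism_{S/A}\{n\}[1/d])^{F=1}[2n]$. But the extension step is where your proposal has a genuine gap.

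Right Kan extending a sheaf of \emph{complete} filtered objects along a topology produces a sheaf of complete filtered objects, and the underlying object of that extension is the (hyper)sheafification of the underlying presheaf. In the paper's notation that right Kan extension is exactly $\Fil^{\sbullet}(L_{T(1)}\TC)^{\sharp}$ (Construction \ref{constr:filtonarcpsheaf}), a complete filtration on $(L_{T(1)}\TC)^{\sharp}$ — \emph{not} on $L_{T(1)}\TC$. Since $L_{T(1)}\TC$ does not satisfy $\arcp$-hyperdescent in general, these differ, and the theorem asserts the filtration on $L_{T(1)}\TC$ is exhaustive but only complete under the Krull-dimension/vcd hypothesis. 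A descent/right-Kan-extension alone cannot produce a generically non-complete filtration on the underlying presheaf. Moreover, right Kan extension does not preserve sifted colimits, so the claim that your extended filtration ``inherits commutation with sifted colimits from the test case'' would fail. (You also need to be careful with the topology: qrsp rings do not form a basis for $p$-complete fppf on all of $\CAlg^{\an}$; the paper uses qrsp $\mathcal{O}_C$-algebras as a basis for the $\arcp$-topology on $\CAlg^{p\text{-}\cpl}$, with $\pi_0((-)^{\wedge_p})$ absorbing the passage from animated rings.)

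The missing ideas are: (i) construct $\Fil^{\sbullet}L_{T(1)}\TC$ as a \emph{pullback} of $\Fil^{\sbullet}(L_{T(1)}\TC)^{\sharp}$ along the hypersheafification map $h:L_{T(1)}\TC \to (L_{T(1)}\TC)^{\sharp}$ (Construction \ref{constr:filtviapb}) so that exhaustiveness on $L_{T(1)}\TC$ is automatic and completeness is equivalent to $h$ being an equivalence; and (ii) prove, via pro-Galois descent at the generic fiber (Proposition \ref{prop:progal}, using Clausen--Mathew \'etale hyperdescent), that $h$ \emph{is} an equivalence on $p$-completed finitely generated polynomial rings (Propositions \ref{prop:T(1)TChyppresperfd} and \ref{prop:T(1)TChyppres}). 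This is precisely what lets the paper also describe the filtration as a left Kan extension from $\CAlg^{\poly}$ (Construction \ref{constr:filtvialke}), which is what delivers sifted-colimit preservation. The two constructions are then matched in Proposition \ref{prop:filtcompletecomparison}, and completeness under the finiteness hypothesis comes from Thomason's convergence result applied through Theorem \ref{th:compwiththomason}. Without this bridge between the presheaf and its hypersheafification, the theorem's exhaustiveness, sifted-colimit, and conditional-completeness clauses are all out of reach.
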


\begin{remarkn}
As $L_{T(1)}\TC$ is truncating on $T(1)$-acyclic spectra \cite[Cor. 4.22]{lmmt}, Theorem \ref{th:introT(1)TCfilt} also implies that for each $T(1)$-acyclic, connective, and homotopy commutative $\bbE_{1}$-ring $R$, there exists a natural exhaustive multiplicative $\bbZ$-indexed descending filtration $\Fil^{\sbullet}L_{T(1)}\TC(R) := \Fil^{\sbullet}L_{T(1)}\TC(\pi_{0}(R))$ on $L_{T(1)}\TC(R)\simeq L_{T(1)}\TC(\pi_{0}(R))$ such that for each $n\in\bbZ$, there is a natural equivalence  
\begin{equation*}
\gr^{n}L_{T(1)}\TC(R)\simeq\RG_{\et}(\Spec \pi_{0}R^{\wedge_{p}}[1/p],\bbZ_{p}(n))[2n],
\end{equation*} 
and that the filtration $\Fil^{\sbullet}L_{T(1)}\TC(R)$ is complete if $\pi_{0}(R^{\wedge_{p}})[1/p]$ has finite Krull dimension and admits a uniform bound on the mod $p$ virtual cohomological dimensions of the residue fields.
\end{remarkn}

\begin{remarkn}
The filtration $\Fil^{\sbullet}L_{T(1)}\TC$ provides an analogue of the $p$-adic \'etale motivic filtration on $\TC(-,\bbZ_{p})$ of \cite{bms2} for $T(1)$-local $\TC$. Note that directly $T(1)$-localizing the motivic filtration on $\TC(R,\bbZ_{p})$ only results in the trivial filtration; each of the associated graded pieces $\gr^{n}\TC(R,\bbZ_{p})\simeq \RG_{\syn}(\Spf R^{\wedge_{p}},\bbZ_{p}(n))[2n]$ is an $H\bbZ_{p}$-module, and hence vanishes after $T(1)$-localization. Thus, one needs an alternative way of reflecting $T(1)$-localization on cohomology complexes which appear as associated graded pieces of the filtration.  
\end{remarkn}

Our construction of the filtration $\Fil^{\sbullet}L_{T(1)}\TC$ does not use the Thomason filtration on $L_{T(1)}\K$ and the cyclotomic trace map relating $L_{T(1)}\TC$ with $L_{T(1)}\K$. In fact, the point of our approach is that the filtration can be constructed and described through computations of $T(1)$-local $\TC$ via prismatic cohomology and $\arcp$-hyperdescent. \\
\indent Recall that a map $R\to S$ of (derived) $p$-complete commutative rings is an $\arcp$-cover (resp. a $p$-complete $\arc$-cover) if for any map $R\to V$ into a $p$-complete valuation ring $V$ of rank $1$ with $0\neq p\in V$ (resp. a $p$-complete valuation ring $V$ of rank $\leq1$), there is an extension $V\hookrightarrow W$ of $p$-complete valuation rings of rank $\leq 1$ and a map $S\to W$ which fit into the commutative diagram 
\begin{equation*}
\begin{tikzcd}
R \arrow[r] \arrow[d] & V \arrow[d, hook] \\
S \arrow[r] & W,
\end{tikzcd}
\end{equation*}  
 cf. \cite{arc,cs}. This notion of covers endow the opposite category of $p$-complete commutative rings a finitary Grothendieck topology, called the $\arcp$-topology (resp. the $p$-complete $\arc$-topology). Write $(L_{T(1)}\TC)^{\sharp}$ for an $\arcp$-hypersheafification of $L_{T(1)}\TC$ on $p$-complete commutative rings. We first construct the following filtration-completed version $\Fil^{\sbullet}(L_{T(1)}\TC)^{\sharp}$ of $\Fil^{\sbullet}L_{T(1)}\TC$ as a filtration on $(L_{T(1)}\TC)^{\sharp}$: 

\begin{theorem}[Theorem \ref{th:statementfiltonarcpsheaf}] \label{th:introfiltonarcpsheaf}
For each animated commutative ring $R$, there is a natural exhaustive and complete multiplicative $\bbZ$-indexed descending filtration $\Fil^{\sbullet}(L_{T(1)}\TC)^{\sharp}(R)$ on the $p$-complete spectrum $(L_{T(1)}\TC)^{\sharp}(\pi_{0}(R^{\wedge_{p}}))$, such that for each $n\in\bbZ$ and a perfect prism $(A,(d))$ corresponding to a perfectoid ring $\overline{A}$, there is a natural equivalence 
\begin{equation*}
\gr^{n}(L_{T(1)}\TC)^{\sharp}(R)\simeq \left(\prism_{R/A}\{n\}[1/d]\right)^{F=1}[2n]
\end{equation*} 
for an animated commutative $\overline{A}$-algebra $R$. Moreover, the functor $R\mapsto \Fil^{\sbullet}(L_{T(1)}\TC)^{\sharp}(R)$ induces an equivalence on taking $\pi_{0}$ of the derived $p$-completion of animated commutative rings, and its restriction to $p$-complete commutative rings satisfies $\arcp$-hyperdescent.
\end{theorem}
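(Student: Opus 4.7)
The plan is to construct the filtration from a prismatic description of $L_{T(1)}\TC$ over perfect prisms, and then globalize via $\arcp$-hyperdescent. For a perfectoid ring $\overline{A}$ corresponding to a perfect prism $(A,(d))$ and an animated commutative $\overline{A}$-algebra $R$, I would begin with the BMS-style motivic filtration on $\TC(R;\bbZ_{p})$, whose $n$-th graded piece $\bbZ_{p}(n)(R)[2n]$ fits in the fiber sequence
\[
\bbZ_{p}(n)(R) \to \Nyg^{\geq n}\prism_{R/A}\{n\} \xrightarrow{F-1} \prism_{R/A}\{n\}.
\]
$T(1)$-localization corresponds to inverting the Bott element, which on the prismatic side corresponds to inverting the distinguished generator $d$. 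Since $\Nyg^{\geq n}\prism_{R/A}\{n\}[1/d] \simeq \prism_{R/A}\{n\}[1/d]$, the $n$-th graded piece of the resulting filtration on the $T(1)$-localization is $(\prism_{R/A}\{n\}[1/d])^{F=1}[2n]$, as desired; negative weights are accommodated by the $v_{1}$-periodicity of $T(1)$-local spectra, which in the prismatic picture corresponds to multiplication by $d$.

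To extend from perfectoid bases to arbitrary animated commutative rings, I would exploit that the $\arcp$-topology admits a basis of absolutely integrally closed perfectoid rings (and their products). After $\arcp$-hypersheafification, $(L_{T(1)}\TC)^{\sharp}(\pi_{0}(R^{\wedge_{p}}))$ is computed as a limit along a perfectoid $\arcp$-hypercover, and the locally constructed filtrations glue to a filtration on the sheafification whose graded pieces are computed term-by-term. For this identification to be valid, the prismatic complexes $(\prism_{-/A}\{n\}[1/d])^{F=1}$ must themselves satisfy $\arcp$-hyperdescent as functors of the top variable, which I expect to be one of the main results from the earlier parts of the paper on prismatic cohomology over perfect prisms after inverting distinguished generators. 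Exhaustiveness, multiplicativity, and invariance under passage to $\pi_{0}$ of derived $p$-completion are then inherited from the analogous properties of the BMS motivic filtration and its prismatic description.

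The main obstacle will be establishing completeness, which, in contrast with Theorem \ref{th:introT(1)TCfilt}, is asserted here unconditionally after $\arcp$-sheafification. Before inverting $d$, the BMS motivic filtration is complete because its gradeds $\bbZ_{p}(n)[2n]$ enjoy uniform lower connectivity bounds in $n$ on qrsp inputs; after inverting $d$ and taking $F$-fixed points, this uniform connectivity is lost, so the raw filtration need not be complete. The claim is that completeness is nevertheless restored after $\arcp$-hypersheafification, and proving this will require bounding the $F$-fixed points of $\prism_{-/A}\{n\}[1/d]$ from below in $n$ on a suitable $\arcp$-cover — a point that hinges on the independent analysis of prismatic cohomology with $d$ inverted developed earlier in the paper, rather than on any formal BMS-style connectivity estimate.
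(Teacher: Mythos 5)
Your proposal identifies the right target for the graded pieces and correctly predicts that $(\prism_{-/A}\{n\}[1/d])^{F=1}$ should be an $\arcp$-hypersheaf (this is the paper's Lemma~\ref{lem:prismarcphypersheaf}) and that inverting $d$ on the Nygaard filtration should make it constant (Proposition~\ref{prop:candisom}). But there is a genuine gap at the heart of the argument: you never actually construct the filtration. Saying one ``begins with the BMS-style motivic filtration on $\TC(R;\bbZ_p)$'' and then ``inverts $d$'' is not a construction, because the BMS filtration has graded pieces that are $H\bbZ_p$-modules and hence die under $T(1)$-localization; as the paper's introduction explicitly warns, naively $T(1)$-localizing the motivic filtration gives the trivial filtration, and ``inverting $d$ on the graded pieces'' of an unspecified filtered object does not determine a filtration. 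What the paper actually does (Construction~\ref{constr:qrsp}) is completely different: for quasiregular semiperfectoid $\mathcal{O}_C$-algebras it forms the double-speed Postnikov filtration $\tau_{\geq 2\sbullet}$ on $L_{T(1)}\TC^{-}$ and $L_{T(1)}\TP$ — both of which are \emph{even} spectra for such inputs — and then takes the fiber of $L_{T(1)}\varphi^{hS^1}-\can$. The construction is made possible by Construction~\ref{constr:T(1)TP}, which shows $\can$ becomes an equivalence after $T(1)$-localization, so $L_{T(1)}\TC \simeq (L_{T(1)}\TP)^{F=1}$; nothing in your proposal captures this mechanism. A further point you gloss over is the need for the Nygaard decompletion result (Proposition~\ref{prop:nygdc}): the filtration built from $\TP$ naturally produces $(\prismc_R\{n\}[1/d])^{F=1}$, not the decompleted version, and passing between the two is a nontrivial step.

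Your diagnosis of the completeness issue is also off the mark. You suggest completeness might fail at the local level and need to be recovered after $\arcp$-hypersheafification via some connectivity bound on the Frobenius fixed points. In fact, completeness is automatic on $\CAlg^{\qrsp}_{\bbZ_p,\mathcal{O}_C}$ precisely because the filtration there \emph{is} a Postnikov filtration of an even spectrum, so $\gr^n$ sits in degrees $[2n-1,2n]$ and the filtration is tautologically complete; the right Kan extension to the $\arcp$-hypersheaf then preserves completeness formally (the paper cites \cite[A.10]{matTR}). There is no analytic connectivity estimate to be carried out on the $d$-inverted prismatic side. Relatedly, note that the Bott-element identification $L_{T(1)}\TP\simeq\TP[1/d]$ that underlies the whole picture is only available after fixing a cyclotomic trace from $\K(\mathcal{O}_C)$, which is precisely why the paper constructs everything over $\mathcal{O}_C$ first and only then globalizes over arbitrary perfect prisms by hyperdescent; working directly over a general perfectoid base $\overline{A}$ as you propose would leave this identification unjustified.
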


By pulling back the filtration $\Fil^{\sbullet}(L_{T(1)}\TC)^{\sharp}(R)$ on $(L_{T(1)}\TC)^{\sharp}(\pi_{0}(R^{\wedge_{p}}))$ along the natural map $h:L_{T(1)}\TC(R)\to (L_{T(1)}\TC)^{\sharp}(\pi_{0}(R^{\wedge_{p}}))$ induced from the $\arcp$-hypersheafification, we obtain a filtration $\Fil^{\sbullet}L_{T(1)}\TC(R)$ on $L_{T(1)}\TC(R)$ which realizes $\Fil^{\sbullet}(L_{T(1)}\TC)^{\sharp}(R)$ as its completion; this gives a characterization of our construction in Theorem \ref{th:introT(1)TCfilt}. We will also provide another characterization of the filtration as a construction left Kan extended from finitely generated polynomial rings through an independent analysis. \\
\indent Using the characterization of $\Fil^{\sbullet}L_{T(1)}\TC$ as a pullback of the filtration from Theorem \ref{th:introfiltonarcpsheaf}, we can verify the following comparison of the filtration with the Thomason filtration on the $T(1)$-local algebraic K-theory side:

\begin{theorem}[Theorem \ref{th:statementcompwiththomason}] \label{th:introcompwiththomason}
There is a natural equivalence 
\begin{equation*}
\Fil^{\sbullet}L_{T(1)}\K(R^{\wedge_{p}}[1/p]) \simeq \Fil^{\sbullet}L_{T(1)}\TC(R)
\end{equation*}
of filtered $p$-complete spectra for an animated commutative ring $R$, which after taking the underlying $p$-complete spectra recovers the equivalence 
\begin{equation*}
L_{T(1)}\K(R^{\wedge_{p}}[1/p])\simeq L_{T(1)}\K(R^{\wedge_{p}}) \underset{\sim}{\xrightarrow{\tr}} L_{T(1)}\TC(R)
\end{equation*} 
induced by the cyclotomic trace map. Here, $\Fil^{\sbullet}L_{T(1)}\K(R^{\wedge_{p}}[1/p])$ is the Thomason filtration on the $p$-complete spectrum $L_{T(1)}\K(R^{\wedge_{p}}[1/p])$. In fact, there is a natural diagram 
\begin{equation*}
\begin{tikzcd}
\Fil^{\sbullet}L_{T(1)}\K(R^{\wedge_{p}}[1/p]) \arrow[r, "\sim"'] \arrow[d] & \Fil^{\sbullet}L_{T(1)}\TC(R) \arrow[d]  \\
L_{T(1)}\K(R^{\wedge_{p}}[1/p])\simeq L_{T(1)}\K(R^{\wedge_{p}}) \arrow[r, "\tr"', "\sim"] & L_{T(1)}\TC(R) 
\end{tikzcd}
\end{equation*}
of $\bbE_{\infty}$-algebra objects of filtered $p$-complete spectra for an animated commutative ring $R$. 
\end{theorem}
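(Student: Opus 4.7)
My plan is to exploit the pullback characterization of $\Fil^{\sbullet}L_{T(1)}\TC$ from Theorems \ref{th:introT(1)TCfilt} and \ref{th:introfiltonarcpsheaf} and realize the Thomason filtration as an analogous pullback, so that both filtrations become comparable at the level of $\arcp$-hypersheaves, where the cyclotomic trace gives a canonical identification.

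First, I would set up a parallel construction on the K-theory side. Let $\scK$ denote the functor $R \mapsto L_{T(1)}\K(R[1/p])$ on $p$-complete animated commutative rings, and write $\scK^{\sharp}$ for its $\arcp$-hypersheafification. Using the BCM equivalence (\ref{eq:bcm}) and the fact that $L_{T(1)}\TC$ satisfies $p$-complete $\arc$-hyperdescent (via \cite[Cor.~4.22]{lmmt}), the cyclotomic trace induces a natural $\bbE_{\infty}$-equivalence of $\arcp$-hypersheaves $\scK^{\sharp} \simeq (L_{T(1)}\TC)^{\sharp}$, compatible with the units $\scK(R^{\wedge_{p}}) \to \scK^{\sharp}(\pi_{0}(R^{\wedge_{p}}))$ and $h: L_{T(1)}\TC(R) \to (L_{T(1)}\TC)^{\sharp}(\pi_{0}(R^{\wedge_{p}}))$.

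Next, I would identify the Thomason filtration on $L_{T(1)}\K(R^{\wedge_{p}}[1/p])$ with the pullback along this unit of the transported filtration $\Fil^{\sbullet}(L_{T(1)}\TC)^{\sharp}$ from Theorem \ref{th:introfiltonarcpsheaf}. This is the crux of the proof. The Thomason filtration is the pro-\'etale Postnikov filtration on $L_{T(1)}\K$ over the adic generic fiber $\Spec R^{\wedge_{p}}[1/p]$; on the other hand, the filtration $\Fil^{\sbullet}(L_{T(1)}\TC)^{\sharp}$ is, on perfectoid inputs $\overline{A}$, the one whose associated graded reads $(\prism_{\overline{A}}\{n\}[1/d])^{F=1}[2n]$. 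I would produce a cofinal family of $\arcp$-covers $R^{\wedge_{p}} \to \overline{A}$ by $p$-complete perfectoid rings whose restrictions $R^{\wedge_{p}}[1/p] \to \overline{A}[1/p]$ form a pro-\'etale hypercover, so that both filtrations restrict along the same \v{C}ech/Postnikov tower; on each perfectoid stratum, both filtrations are exhaustive, complete and multiplicative, and the associated graded agreement (which is precisely the \'etale comparison extracted as a consequence of the main theorem) forces the two filtrations to coincide.

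Once the pullback descriptions match, one obtains a filtered $\bbE_{\infty}$-equivalence $\Fil^{\sbullet}L_{T(1)}\K(R^{\wedge_{p}}[1/p]) \simeq \Fil^{\sbullet}L_{T(1)}\TC(R)$, and the commuting square of filtered $\bbE_{\infty}$-algebras is automatic from the naturality of the pullback and the BCM equivalence. The expected main obstacle lies in the second step: genuinely comparing the pro-\'etale topology on the adic generic fiber with the $\arcp$-topology on the $p$-complete ring, and checking that the Thomason filtration (built from pro-\'etale hyperdescent) already factors through $\arcp$-hypersheafification in a multiplicative and filtered manner. A secondary subtlety is that the comparison of associated graded pieces $\RG_{\et}(\Spec R^{\wedge_{p}}[1/p], \bbZ_{p}(n))[2n] \simeq (\prism_{R/A}\{n\}[1/d])^{F=1}[2n]$ must arise from the filtered comparison itself (rather than be invoked), so one needs to argue at the level of the Postnikov towers on perfectoid covers without circularly assuming the \'etale comparison for prismatic cohomology.
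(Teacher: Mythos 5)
Your high-level strategy—exploit the pullback characterization of $\Fil^{\sbullet}L_{T(1)}\TC$ and compare filtrations at the level of $\arcp$-hypersheaves via the trace map—is indeed the one the paper follows, and you correctly isolate the two danger points. But the step you call ``the crux'' is where the proposal has a real gap. You propose to produce a cofinal family of $\arcp$-covers $R^{\wedge_p}\to\overline{A}$ by perfectoid rings such that $R^{\wedge_p}[1/p]\to\overline{A}[1/p]$ is a pro-\'etale hypercover, and then argue tower-by-tower. The $\arcp$-topology is strictly finer than the pro-\'etale topology on the generic fiber (perfectoidizations and $v$-type covers are $\arcp$-covers but not pro-\'etale covers after inverting $p$), so such a cofinal family need not exist, and the Thomason filtration is in any case a Postnikov truncation in the \'etale topos rather than the totalization along a single chosen hypercover. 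The paper avoids this issue entirely: instead of matching covers, it compares the \emph{presheaf} Postnikov filtration $F=\tau_{\geq2\sbullet}L_{T(1)}\K$ with its \'etale sheafification $F'$ (the Thomason filtration) and shows in Proposition~\ref{prop:compwiththomason} that the map $F\to F'$ becomes an equivalence after $\arcp$-hypersheafification, by reducing via filtered-colimit-preservation to points of the $\arcp$-topos, i.e., absolutely integrally closed $p$-Henselian valuation rings $V$, for which $V[1/p]$ is strictly Henselian local and hence \'etale sheafification is trivial at the stalk. This stalk-level argument is the missing ingredient in your sketch; without it you cannot show the Thomason filtration pulls back from $\Fil^{\sbullet}(L_{T(1)}\TC)^{\sharp}$.

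On the circularity concern you raised: you flag it but do not resolve it, whereas the paper's handling is the whole point of the section. The comparison of associated gradeds is \emph{not} invoked as an input; rather, once Proposition~\ref{prop:compwiththomason} identifies $(F')^{\sharp}\simeq(F)^{\sharp}$ and the trace identifies $(F)^{\sharp}$ with the $\arcp$-Postnikov filtration $\Fil^{\sbullet}(L_{T(1)}\TC)^{\sharp}$, the graded comparison follows for free (because each $\gr^{n}F'$ is already an $\arcp$-hypersheaf) and Lemma~\ref{lem:filtexhaustivegr} then upgrades the graded equivalence between the two exhaustive filtrations of the same underlying object to an equivalence of filtrations. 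So the \'etale comparison for prismatic cohomology comes \emph{out} of this argument rather than going into it, as Corollary~\ref{cor:etcomp} records. One further small caution: your appeal to ``the fact that $L_{T(1)}\TC$ satisfies $p$-complete $\arc$-hyperdescent'' is not quite right in general; the functor $L_{T(1)}\TC$ is not an $\arcp$-hypersheaf on all $p$-complete rings (that failure is precisely why $(L_{T(1)}\TC)^{\sharp}$ and the pullback construction are needed), and what holds is $\arcp$-hyperdescent on $\CAlg^{\qrsp}_{\bbZ_p,\mathcal{O}_C}$ (Proposition~\ref{prop:T(1)TCarcphypqrsp}) together with the various agreement results of subsection~\ref{subsec:arcphyp}.
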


In particular, by taking the $n$-th associated graded objects of the equivalence, we have a natural equivalence $\gr^{n}L_{T(1)}\K(R^{\wedge_{p}}[1/p]) \simeq \gr^{n}L_{T(1)}\TC(R)$ for an animated commutative ring $R$. Since we constructed $\Fil^{\sbullet}L_{T(1)}\TC(R)$ as a pullback of $\Fil^{\sbullet}(L_{T(1)}\TC)^{\sharp}$, the right hand side of the equivalence is by definition given by $\gr^{n}(L_{T(1)}\TC)^{\sharp}(R)$ of Theorem \ref{th:introfiltonarcpsheaf}, while the left hand side of the equivalence is naturally given by $\RG_{\et}(\Spec R^{\wedge_{p}}[1/p], \bbZ_{p}(n))[2n]$ due to Thomason's computation \cite{tt}. Thus, we have the following consequence, which can be regarded as a K-theoretic proof of the \'etale comparison theorem for prismatic cohomology:

\begin{corollary}[Corollary \ref{cor:etcomp}] \label{cor:introetcomp}
For each $n\in\bbZ$ and a perfect prism $(A,(d))$ corresponding to a perfectoid ring $\overline{A}$, there is a natural equivalence 
\begin{equation*}
\gr^{n}(\tr)[-2n]:\RG_{\et}(\Spec R^{\wedge_{p}}[1/p],\bbZ_{p}(n)) \simeq \left(\prism_{R/A}\{n\}[1/d]\right)^{F=1}
\end{equation*}
for an animated commutative $\overline{A}$-algebra $R$.
\end{corollary}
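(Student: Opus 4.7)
The plan is to deduce the corollary formally from Theorems \ref{th:introT(1)TCfilt}--\ref{th:introcompwiththomason}. First, I would take the $n$-th associated graded piece of the filtered equivalence produced in Theorem \ref{th:introcompwiththomason}, which yields a natural equivalence
\begin{equation*}
\gr^{n}L_{T(1)}\K(R^{\wedge_{p}}[1/p]) \simeq \gr^{n}L_{T(1)}\TC(R)
\end{equation*}
whose underlying map is induced by $\gr^{n}$ of the cyclotomic trace sitting in the bottom row of the commutative square \emph{loc.~cit}.

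Next, I would identify the two sides separately. On the K-theory side, Thomason's descent spectral sequence for $L_{T(1)}\K$ identifies $\gr^{n}L_{T(1)}\K(R^{\wedge_{p}}[1/p])$ with $\RG_{\et}(\Spec R^{\wedge_{p}}[1/p],\bbZ_{p}(n))[2n]$; this is precisely the definition of the Thomason filtration used as input to Theorem \ref{th:introcompwiththomason}. On the $\TC$ side, recall from the discussion immediately after Theorem \ref{th:introfiltonarcpsheaf} that $\Fil^{\sbullet}L_{T(1)}\TC(R)$ is constructed as the pullback of $\Fil^{\sbullet}(L_{T(1)}\TC)^{\sharp}(R)$ along the $\arcp$-hypersheafification map $h$, with $\Fil^{\sbullet}(L_{T(1)}\TC)^{\sharp}(R)$ realized as its completion. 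Since completion preserves associated graded pieces, we obtain $\gr^{n}L_{T(1)}\TC(R)\simeq\gr^{n}(L_{T(1)}\TC)^{\sharp}(R)$, which by Theorem \ref{th:introfiltonarcpsheaf} is identified with $(\prism_{R/A}\{n\}[1/d])^{F=1}[2n]$ for an animated commutative $\overline{A}$-algebra $R$.

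Chaining these identifications and desuspending by $2n$ produces the stated natural equivalence $\gr^{n}(\tr)[-2n]$. The bulk of the work has already been carried out in the preceding theorems, and no serious obstacle remains at this stage; the corollary itself is essentially a bookkeeping exercise. The one point requiring attention is the compatibility of the induced map with $\gr^{n}$ of the cyclotomic trace, which is automatic from the fact that Theorem \ref{th:introcompwiththomason} promotes the comparison to an equivalence of $\bbE_{\infty}$-algebra objects in filtered $p$-complete spectra sitting above the cyclotomic trace map, so that taking $\gr^{n}$ preserves this compatibility.
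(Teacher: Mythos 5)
Your proposal is correct and reproduces the paper's argument essentially step for step: take $\gr^{n}$ of the filtered equivalence from Theorem \ref{th:introcompwiththomason}, identify the K-theory side via Thomason's computation of the associated graded pieces, and identify the $\TC$ side via the pullback/completion description of $\Fil^{\sbullet}L_{T(1)}\TC$ and the prismatic identification of $\gr^{n}(L_{T(1)}\TC)^{\sharp}$ from Theorem \ref{th:introfiltonarcpsheaf}. The one point the paper makes explicit that you leave implicit is the non-circularity check: since this corollary is advertised as a K-theoretic re-proof of the \'etale comparison, one must verify that the filtered equivalence of Theorem \ref{th:introcompwiththomason} --- and in particular the identification $\gr^{\sbullet}L_{T(1)}\K\simeq\gr^{\sbullet}L_{T(1)}\TC$ used here --- is established without appealing to the algebraic Proposition \ref{prop:etcomp}. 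Your argument is in fact free of this circularity, because you correctly identify $\gr^{n}(L_{T(1)}\TC)^{\sharp}$ with its prismatic description (which does not rely on Proposition \ref{prop:etcomp}) rather than with its \'etale-cohomological description (which does), and because the proof of Theorem \ref{th:compwiththomason} only uses $\Fil^{\sbullet}_{\mathrm{pb}}L_{T(1)}\TC$, not the left Kan extended filtration whose analysis invokes Proposition \ref{prop:etcomp}; but it is worth flagging this when presenting the corollary as an independent derivation of the \'etale comparison, as the paper does.
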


We will also explain an algebraic proof of the above form of \'etale comparison in \ref{subsec:etcomp} using the prismatic logarithm map. It turns out that the comparison map $\gr^{n}(\tr)[-2n]$ of Corollary \ref{cor:introetcomp} agrees with the comparison map $\log_{\prism}^{\otimes n}$ of Proposition \ref{prop:etcomp} in subsection \ref{subsec:etcomp} constructed through prismatic logarithm, cf. Remark \ref{rem:etcompmaps}. Thus, we know that our construction of $\Fil^{\sbullet}L_{T(1)}\TC$ and its comparison with the Thomason filtration in Theorem \ref{th:introcompwiththomason} recovers the \'etale comparison theorem (as well as the involved comparison maps) for prismatic cohomology from the filtered equivalence $L_{T(1)}\K(R^{\wedge_{p}}[1/p])\simeq L_{T(1)}\TC(R)$, as expected in \cite[Rem. 2.19]{bcm}. \\
\indent Let us summarize our approach to the main results stated above. We start by constructing and describing the filtration $\Fil^{\sbullet}(L_{T(1)}\TC)^{\sharp}$ in \ref{subsec:filtconstr} by considering the case of quasiregular semiperfectoid algebras over $\mathcal{O}_{C}$, where one can take $C = \widehat{\overline{\bbQ_{p}}}$ for instance. For such rings, $T(1)$-localization of $\TP$ is reflected in inverting a distinguished generator $d$ on the prismatic cohomology complexes. Thus, we can use the results of \ref{subsec:dinvprism} concerning generalities of prismatic cohomology complexes to describe the Postnikov filtration on $T(1)$-local $\TC$ for such rings, using that after $T(1)$-localization, $\TP$ on commutative rings admits a Frobenius endomorphism whose fixed point recovers $L_{T(1)}\TC$. Then, we construct the filtration on $(L_{T(1)}\TC)^{\sharp}$ through right Kan extension from the case of quasiregular semiperfectoid $\mathcal{O}_{C}$-algebras; in fact, we verify that $L_{T(1)}\TC$ is already an $\arcp$-hypersheaf on the latter class of rings. In \ref{subsec:compwithhyp}, we construct the filtration $\Fil^{\sbullet}_{\mathrm{pb}}L_{T(1)}\TC$ on $L_{T(1)}\TC$ as a pullback of $\Fil^{\sbullet}(L_{T(1)}\TC)^{\sharp}$ along the $\arcp$-hypersheafification map. Using $\Fil^{\sbullet}(L_{T(1)}\TC)^{\sharp}$, we verify in \ref{subsec:compwiththomason} that the Thomason filtration on $T(1)$-local algebraic K-theory is equivalent to the filtration given by $\Fil^{\sbullet}_{\mathrm{pb}}L_{T(1)}\TC$ in a way compatible with the cyclotomic trace map, and recover the \'etale comparison theorem as a consequence. \\
\indent On the other hand, in \ref{subsec:filtconstr2}, we provide another construction of a filtration on $L_{T(1)}\TC$, which we write as $\Fil^{\sbullet}_{\mathrm{lke}}L_{T(1)}\TC$, through left Kan extension from finitely generated polynomial rings. The construction is independent from the construction of $\Fil^{\sbullet}_{\mathrm{pb}}L_{T(1)}\TC$ and relies on the two facts, namely (i) the \'etale comparison theorem from \ref{subsec:etcomp} or \ref{subsec:compwiththomason} and (ii) the fact that $\arcp$-hypersheafification does not change the values of $L_{T(1)}\TC$ for finitely generated polynomial rings, which we verify through \ref{subsec:progal} and \ref{subsec:arcphyp} by using descent methods. For the sake of expositional convenience, we prove that the two filtrations $\Fil^{\sbullet}_{\mathrm{lke}}L_{T(1)}\TC$ and $\Fil^{\sbullet}_{\mathrm{pb}}L_{T(1)}\TC$ are equivalent to each other immediately after constructing the latter filtration in \ref{subsec:compwithhyp}, although we do not use the properties of $\Fil^{\sbullet}_{\mathrm{lke}}L_{T(1)}\TC$ in the later subsection \ref{subsec:compwiththomason}. We write $\Fil^{\sbullet}L_{T(1)}\TC$ for the common filtration we obtain through the equivalence between these two filtrations. This finishes the outline of the verification of the main results.   

\begin{conv}
We fix a prime $p$. We write $\Sp$ for the $\infty$-category of spectra and write $\Sp^{\Cpl(p)}$ for its stable subcategory of $p$-complete spectra. Similarly, for an ordinary commutative ring $A$, we write $\mathcal{D}(A)^{\Cpl(p)}$ for the $\infty$-category of $p$-complete $A$-modules. For an ordinary commutative ring, its $p$-completeness refers to its $p$-completeness as an $\bbE_{\infty}$-ring, or equivalently derived $p$-completeness. We follow \cite{bs2,bl} for the theory of prismatic cohomology complexes and mostly follow \cite{bl} for pertinent notations. For each bounded prism $(A,I)$, we denote $\Dh(A)$ for the stable subcategory of the derived $\infty$-category $\mathcal{D}(A)$ spanned by $(p)+I$-complete objects, i.e., $\Dh(A)\simeq\Mod_{A}^{\Cpl((p)+I)}$. In particular, for the crystalline prism $(\bbZ_{p},(p))$, notations $\Dh(\bbZ_{p})$ and $\mathcal{D}(\bbZ_{p})^{\Cpl(p)}$ will be used interchangeably. 
\end{conv}

\begin{ack}
The author is grateful to Ben Antieau, Elden Elmanto, Akhil Mathew, and Matthew Morrow for many helpful communications and discussions at various stages of the project, and to the anonymous referee for their careful reading and many comments that significantly improved the clarity of the article. The author was supported by the CMK Foundation and the European Research Council (ERC) under the European Union's Horizon 2020 research and innovation programme (grant agreement No. 101001474). 
\end{ack}

\section{Prismatic cohomology complexes over perfect prisms} 
In this section, we study some general properties of prismatic cohomology complexes over perfect prisms. In \ref{subsec:dinvprism}, we observe that, away from the Hodge-Tate locus, Nygaard filtration becomes constant (Proposition \ref{prop:candisom}) and Nygaard completion can be decompleted after taking Frobenius fixed point (Proposition \ref{prop:nygdc}). Then, we verify a twisted variant of the \'etale comparison theorem of \cite{bs2} in \ref{subsec:etcomp} (Proposition \ref{prop:etcomp}). 

\subsection{$d$-inverted prismatic cohomology complexes}\label{subsec:dinvprism}
\indent First, let us fix some notations. For each bounded prism $(A,I)$, we write $\DFh(A) = \Fun(\N\bbZ^{\op},\Dh(A))$ for the $\infty$-category of $\bbZ$-indexed descending filtered objects of $\Dh(A)$, and write $\DFhc(A)$ for its stable subcategory spanned by filtration-complete objects. We will only consider perfect prisms, i.e., prisms whose $\delta$-ring is perfect, and write $\overline{A} = A/d$ for the perfectoid ring corresponding to the perfect prism $(A,(d))$. \\
\indent For an (ordinary) commutative ring $R$, let $\CAlg^{\an}_{R}\simeq \mathcal{P}_{\Sigma}(\CAlg^{\poly}_{R})$ be the $\infty$-category of derived commutative $R$-algebras, also known as animated commutative $R$-algebras, obtained as a sifted closure of the category $\CAlg^{\poly}_{R}$ of finitely generated polynomial $R$-algebras. Note that $\CAlg^{\an}_{R}$ is equivalent to the underlying $\infty$-category of the simplicial model category of simplicial commutative $R$-algebras \cite[Cor. 5.5.9.3]{htt}. For $R=\bbZ$, we write $\CAlg^{\an}$ for $\CAlg^{\an}_{\bbZ}$. 
\begin{remarkn}
By construction, there is an equivalence $\Fun_{\Sigma}(\CAlg^{\an}_{R},\mathcal{D})\simeq\Fun(\CAlg^{\poly}_{R},\mathcal{D})$ for any $\infty$-category $\mathcal{D}$ admitting sifted colimits, given by restrictions along the Yoneda embedding $\CAlg^{\poly}_{R}\hookrightarrow\CAlg^{\an}_{R}$ in one direction and by left Kan extensions in the reverse direction. Here, the left hand side of the equivalence is the full subcategory of $\Fun(\CAlg^{\an}_{R},\mathcal{D})$ spanned by sifted colimit preserving functors. 
\end{remarkn}
Now, let $(A,(d))$ be a perfect prism. Consider the Nygaard filtration construction $\Fil^{\sbullet}_{\N}F^{\ast}(\prism_{-/A}\{n\}):\CAlg^{\an}_{\overline{A}}\to\DFh(A)$, cf. \cite[Sec. 15]{bs2} and \cite[5.1]{bl}, and its filtration completion; the latter gives a lifting $\Fil^{\sbullet}_{\N}\prismc_{-}\{n\}:\CAlg^{\an}_{\overline{A}}\to\DFhc(A)$ of the filtration-completed Nygaard filtration in $\Dh(\bbZ_{p})$ to $\DFhc(A)$, cf. \cite[Th. 5.6.2 and Def. 5.8.5]{bl}. 

\begin{remarkn}
For each $n\in\bbZ$, $\Fil^{\sbullet}_{\N}\prismc_{-}\{n\}:\CAlg^{\an}_{\overline{A}}\to\DFhc(A)$ commutes with sifted colimits and satisfies $p$-completely faithfully flat descent. This follows from the analogous properties of $\gr^{m}_{\N}\prismc_{-}\{n\}\simeq\gr^{m}_{\N}F^{\ast}(\prism_{-/A}\{n\}):\CAlg^{\an}_{\overline{A}}\to\Dh(A)$ for each $m\in\bbZ$. 
\end{remarkn}

Through composition with the inclusion $\Dh(A)\to\mathcal{D}(A)^{\Cpl(p)}$, we view each $\Fil^{m}_{\N}F^{\ast}(\prism_{-/A}\{n\})$ and $\Fil^{m}_{\N}\prismc_{-}\{n\}$ as functors $\CAlg^{\an}_{\overline{A}}\to\mathcal{D}(A)^{\Cpl(p)}$.

\begin{lemma}\label{lem:fibid}
Let $(A,(d))$ be a perfect prism, and let $m,n\in\bbZ$.\\
(1) For each $R\in\CAlg^{\an}_{\overline{A}}$, the natural map $\fib(\Fil^{m}_{\N}F^{\ast}(\prism_{R/A}\{n\})\to F^{\ast}(\prism_{R/A}\{n\}))\to\fib(\Fil^{m}_{\N}\prismc_{R}\{n\} \to \prismc_{R}\{n\})$ 
between the fibers of canonical maps given by filtrations is an equivalence in $\mathcal{D}(A)^{\Cpl(p)}$. \\
(2) Let $F:\CAlg^{\an}_{\overline{A}}\to\mathcal{D}(A)^{\Cpl(p)}$ be a common fiber of (1). Then, $F$ commutes with sifted colimits and satisfies $p$-completely faithfully flat descent. \\
(3) Let $F[\frac{1}{d}]:\CAlg^{\an}_{\overline{A}}\to\Dh(\bbZ_{p})$ be the composition of $F$ in (2) with the functor $M\mapsto M[\frac{1}{d}]:\mathcal{D}(A)^{\Cpl(p)}\to\Dh(\bbZ_{p})$. Then, $F[\frac{1}{d}]$ commutes with sifted colimits. 
\begin{proof}
As the object $\Fil^{\sbullet}_{\N}\prismc_{R}\{n\}$ is given as a completion of the filtered object $\Fil^{\sbullet}F^{\ast}(\prism_{R/A}\{n\})$, the natural diagram
\begin{equation*}
\begin{tikzcd}
\Fil^{m}_{\N}F^{\ast}(\prism_{R/A}\{n\}) \arrow[r] \arrow[d] & F^{\ast}(\prism_{R/A}\{n\}) \arrow[d] \\
\Fil^{m}_{\N}\prismc_{R}\{n\} \arrow[r] & \prismc_{R}\{n\}
\end{tikzcd}
\end{equation*}
is a pullback square in $\mathcal{D}(A)^{\Cpl(p)}$, and hence (1) follows. \\
\indent (2) follows from the corresponding properties of the functor $R\mapsto \gr^{m}_{\N}F^{\ast}(\prism_{R/A}\{n\}):\CAlg^{\an}_{\overline{A}}\to\mathcal{D}(A)^{\Cpl(p)}$; since its composition with $\mathcal{D}(A)^{\Cpl(p)}\to\Dh(\bbZ_{p})$ satisfies such properties, it suffices to note that the functor $\mathcal{D}(A) = \Mod_{A}(\mathcal{D}(\bbZ_{p}))\to\mathcal{D}(\bbZ_{p})$ is conservative and that $\mathcal{D}(A)^{\Cpl(p)}\to\mathcal{D}(\bbZ_{p})^{\Cpl(p)}=\Dh(\bbZ_{p})$ commutes with small limits and colimits. The latter statement follows from the corresponding properties of $\mathcal{D}(A)\to\mathcal{D}(\bbZ_{p})$, the fact that $p$-complete objects are closed under small limits of modules (hence the case of limit follows), and that $p$-completion is independent of the underlying modules, i.e., is preserved by $\mathcal{D}(A)\to\mathcal{D}(\bbZ_{p})$ \cite[7.3.3.6]{sag} (hence the case of colimit follows). \\
\indent Finally, we check that the functor $M\mapsto M[\frac{1}{d}]$ commutes with small colimits, which together with (2) verifies (3). Note that the functor is expressed as a composition $\mathcal{D}(A)^{\Cpl(p)}\xrightarrow{M\mapsto M[d^{-1}]}\mathcal{D}(A)^{\Cpl(p)}\to\mathcal{D}(\bbZ_{p})^{\Cpl(p)} = \Dh(\bbZ_{p})$. The first functor $M\mapsto \colim(M\xrightarrow{d}M\xrightarrow{d}\cdots)$ (where the colimit is computed in $\mathcal{D}(A)^{\Cpl(p)}$) by definition commutes with colimits in $\mathcal{D}(A)^{\Cpl(p)}$, and we already recalled during the proof of (2) that the second functor in the composition commutes with colimits. 
\end{proof}
\end{lemma}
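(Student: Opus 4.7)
The plan is to establish (1) through a direct structural analysis of filtration-completion, then use (1) to identify the common fiber $F$ with a construction built from graded pieces, from which (2) and (3) follow by routine closure arguments.

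For (1), I would invoke the general fact that for a $\bbZ$-indexed descending filtered object $X^\bullet$ in a stable $\infty$-category, the filtration-completion admits the levelwise formula $\widehat{X}^m \simeq \cofib(X^{+\infty} \to X^m)$, with $X^{+\infty} := \lim_k X^k$. This formula is confirmed by verifying that the associated graded pieces match (via the octahedral axiom applied to the identity on $X^{+\infty}$) and by observing completeness $\lim_m \widehat{X}^m \simeq \cofib(X^{+\infty} \xrightarrow{\mathrm{id}} X^{+\infty}) \simeq 0$. Consequently $|\widehat{X}| \simeq \cofib(X^{+\infty} \to |X|)$, and the two vertical cofibers of the natural square $(X^m \to |X|) \to (\widehat{X}^m \to |\widehat{X}|)$ are both $X^{+\infty}[1]$, joined by the identity map by naturality. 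The square is therefore a pullback, and its horizontal fibers agree. Applied to $X^\bullet = \Fil^\bullet_\N F^\ast(\prism_{R/A}\{n\})$ with completion $\Fil^\bullet_\N \prismc_R\{n\}$, this yields (1).

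For (2), the identification from (1) realizes the common fiber as $F(R) \simeq \Omega \cofib(\Fil^m_\N F^\ast(\prism_{R/A}\{n\}) \to F^\ast(\prism_{R/A}\{n\}))$. Since the underlying $F^\ast(\prism_{R/A}\{n\})$ is computed as the filtered colimit $\colim_{k \leq m} \Fil^k_\N F^\ast(\prism_{R/A}\{n\})$, the cofiber becomes $\colim_{k \leq m} \cofib(\Fil^m_\N \to \Fil^k_\N)$, and each of these is a finite iterated extension of the graded pieces $\gr^j_\N F^\ast(\prism_{R/A}\{n\})$ for $k \leq j < m$. By the remark preceding the lemma, each such graded piece commutes with sifted colimits and satisfies $p$-complete fppf descent as a functor to $\Dh(A)$, and both properties are preserved under looping, finite extensions, and filtered colimits. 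It remains to transport these properties across the forgetful $\mathcal{D}(A)^{\Cpl(p)} \to \Dh(\bbZ_p)$; conservativity and limit-preservation are formal, while colimit-preservation uses that $p$-completion depends only on the underlying $\bbZ_p$-module structure, cf. \cite[7.3.3.6]{sag}.

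For (3), I factor $M \mapsto M[\tfrac{1}{d}]$ as localization at $d$ inside $\mathcal{D}(A)^{\Cpl(p)}$, computed as the filtered colimit $\colim(M \xrightarrow{d} M \xrightarrow{d} \cdots)$, followed by the forgetful to $\Dh(\bbZ_p)$. The first factor preserves all small colimits by this formula, and the second preserves small colimits by the analysis in (2); composed with $F$, which preserves sifted colimits by (2), this yields the same for $F[\tfrac{1}{d}]$. The main obstacle I anticipate is (1): justifying the completion formula $\widehat{X}^m \simeq \cofib(X^{+\infty} \to X^m)$ and carefully translating it into the required pullback square in the prismatic $p$-complete setting. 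Once this structural input is in place, (2) and (3) amount to bookkeeping along the categorical ladder $\Dh(A) \hookrightarrow \mathcal{D}(A)^{\Cpl(p)} \to \Dh(\bbZ_p)$.
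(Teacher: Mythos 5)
Your proof is correct and follows essentially the same route as the paper's: (1) via the completion-as-cofiber formula yielding the pullback square (the paper asserts this square is a pullback; you supply the brief justification), (2) by reduction to the graded pieces of the Nygaard filtration together with the conservativity/colimit-preservation of the forgetful to $\Dh(\bbZ_p)$, and (3) by factoring $d$-inversion through the localization colimit. The only substantive difference is that you spell out the pullback-square verification that the paper leaves implicit.
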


\begin{remarkn}\label{rem:invertd}
In (3) of Lemma \ref{lem:fibid}, the image of the functor $M\mapsto M[\frac{1}{d}]$ in $\Dh(\bbZ_{p})$ is by definition given as $\colim(M\xrightarrow{d}M\xrightarrow{d}\cdots)^{\wedge_{p}}$, where $M\xrightarrow{d}M$ is the image of the multiplication by $d$ map in $\mathcal{D}(\bbZ_{p})$ and the colimit is taken in $\mathcal{D}(\bbZ_{p})$. 
\end{remarkn}

Following \cite{bl}, we write $\CAlg^{\QSyn}$ for the full subcategory of the category $\CAlg$ of (ordinary) commutative rings spanned by $p$-quasisyntomic rings. The full subcategory of the category $\CAlg_{\overline{A}}$ of commutative $\overline{A}$-algebras spanned by $p$-quasisyntomic rings which are $\overline{A}$-algebras is denoted by $\CAlg^{\QSyn}_{\overline{A}/}$. We also write $\CAlg^{\qrsp}_{\overline{A}}$ (resp. $\CAlg^{\perfd}_{\overline{A}}$) for the full subcategory of $\CAlg^{\QSyn}_{\overline{A}/}$ spanned by quasiregular semiperfectoid rings (resp. perfectoid rings) which are $\overline{A}$-algebras. 

\begin{proposition}\label{prop:candisom}
Let $(A,(d))$ be a perfect prism, and let $R\in\CAlg^{\an}_{\overline{A}}$. Then, for each $n\geq 0$, there are equivalences $\Fil^{n}_{\N}(\prismc_{R}\{n\})[\frac{1}{d}]\xrightarrow{\sim}\prismc_{R}\{n\}[\frac{1}{d}]$ and $\Fil^{n}_{\N}F^{\ast}(\prism_{R/A}\{n\})[\frac{1}{d}]\xrightarrow{\sim}F^{\ast}\prism_{R/A}\{n\}[\frac{1}{d}]$ in $\Dh(\bbZ_{p})$ induced by the canonical maps. 
\begin{proof}
By Lemma \ref{lem:fibid} (1), it suffices to check that $F[\frac{1}{d}]\simeq0$. Since $F[\frac{1}{d}]:\CAlg^{\an}_{\overline{A}}\to\Dh(\bbZ_{p})$ is left Kan extended from $\CAlg^{\QSyn}_{\overline{A}/}$ by Lemma \ref{lem:fibid} (3), it suffices to check the vanishing of $F(R)[\frac{1}{d}]$ for $R\in\CAlg^{\QSyn}_{\overline{A}/}$. To verify this, first suppose that $R$ is a quasiregular semiperfectoid ring admitting an algebra structure over $\overline{A}$. In this case, $d^{n}$ acts as zero on the cofiber, concentrated in degree 0, of the canonical map $\Fil^{n}_{\N}F^{\ast}(\prism_{R/A}\{n\})\hookrightarrow F^{\ast}\prism_{R/A}\{n\}$, cf. \cite[Cor. 5.6.3]{bl}. To deduce the case of a general $R\in\CAlg^{\QSyn}_{\overline{A}/}$, take a $p$-quasisyntomic cover $R\to S$ into a quasiregular semiperfectoid $S\in\CAlg^{\qrsp}_{\overline{A}}$. Note that each term $S'$ of the associated $p$-completed Cech nerve is quasiregular semiperfectoid \cite[Lem. 4.30]{bms2}, and hence each $F(S')$ is concentrated in cohomological degree 1. Thus, combining that $F$ on $\CAlg^{\QSyn}_{\overline{A}/}$ satisfies descent with respect to the $p$-completed Cech nerve associated with $R\to S$ by Lemma \ref{lem:fibid} (2), that totalizations commute with filtered colimits in the coconnective part of derived $\infty$-categories, and the previous observation that $d^{n}$ acts as zero on each $F(S')$, we conclude that $F(R)[\frac{1}{d}]\simeq0$ as desired. 
\end{proof}
\end{proposition}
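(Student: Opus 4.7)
My plan is to reduce the statement to a concrete vanishing result on quasiregular semiperfectoid $\overline{A}$-algebras, where the classical fact that $d^{n}$ annihilates the cofiber of $\Fil^{n}_{\N}\hookrightarrow$ (full complex) instantly delivers the claim after inverting $d$. First, Lemma \ref{lem:fibid}(1) identifies the two fibers appearing in the statement (completed versus uncompleted) with a single functor $F\colon\CAlg^{\an}_{\overline{A}}\to\mathcal{D}(A)^{\Cpl(p)}$, so both equivalences in the proposition reduce to showing that $F[\tfrac{1}{d}]\simeq 0$ in $\Dh(\bbZ_{p})$.

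Next I would invoke Lemma \ref{lem:fibid}(3), which tells us that $F[\tfrac{1}{d}]$ commutes with sifted colimits, in order to test the vanishing on any subcategory of $\CAlg^{\an}_{\overline{A}}$ whose sifted closure is the whole category. The natural choice is the subcategory $\CAlg^{\QSyn}_{\overline{A}/}$ of $p$-quasisyntomic $\overline{A}$-algebras. For such an $R$, I would pick a $p$-quasisyntomic cover $R\to S$ with $S$ quasiregular semiperfectoid (available since $\overline{A}$ is perfectoid), and form the $p$-completed \v{C}ech nerve $S^{\bullet}$. Each term $S'$ remains quasiregular semiperfectoid by \cite[Lem. 4.30]{bms2}, so \cite[Cor. 5.6.3]{bl} provides two crucial pieces of information: the cofiber of $\Fil^{n}_{\N}F^{\ast}(\prism_{S'/A}\{n\})\hookrightarrow F^{\ast}\prism_{S'/A}\{n\}$ is concentrated in cohomological degree $0$ and annihilated by $d^{n}$. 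Consequently each $F(S')$ is uniformly concentrated in cohomological degree $1$ with $d^{n}\cdot F(S')=0$, hence $F(S')[\tfrac{1}{d}]\simeq 0$ termwise.

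The main obstacle, and the step that requires genuine care, is swapping $[\tfrac{1}{d}]$ past the totalization that recovers $F(R)$ from $F(S^{\bullet})$ via the $p$-completely faithfully flat descent of $F$ (Lemma \ref{lem:fibid}(2)). The key input is the \emph{uniform} coconnective concentration of $F(S^{\bullet})$: since every term of the cosimplicial diagram lands in a single coconnective degree, the sequential filtered colimit $\colim(M\xrightarrow{d}M\xrightarrow{d}\cdots)^{\wedge_{p}}$ defining $(-)[\tfrac{1}{d}]$ commutes with the totalization in the relevant range of $\Dh(\bbZ_{p})$, and the termwise vanishing of $F(S')[\tfrac{1}{d}]$ then forces $F(R)[\tfrac{1}{d}]\simeq 0$. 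This coconnective concentration on the quasiregular semiperfectoid level is really where all the content of the proposition is concentrated; the rest of the argument is a formal chain of reductions (fiber identification, sifted-colimit extension, faithfully flat descent).
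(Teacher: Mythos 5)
Your proposal is correct and follows the paper's own proof essentially step for step: reduce via Lemma \ref{lem:fibid}(1) to showing $F[\tfrac{1}{d}]\simeq 0$, reduce via Lemma \ref{lem:fibid}(3) to $p$-quasisyntomic $\overline{A}$-algebras, observe that \cite[Cor.~5.6.3]{bl} makes $F(S')$ coconnective in a single degree and $d^n$-torsion for each qrsp term of a quasisyntomic \v{C}ech nerve, and then use descent together with the commutation of totalizations and filtered colimits in the coconnective range to conclude. The only cosmetic difference is that the paper treats the quasiregular semiperfectoid case separately before running the descent argument, whereas you fold it directly into the \v{C}ech nerve reduction; the content is identical.
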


For $R\in\CAlg^{\an}_{\overline{A}}$, let $\Fil^{\sbullet}\varphi\{n\}:\Fil^{\sbullet}_{\N}F^{\ast}(\prism_{R/A}\{n\})\to d^{\sbullet-n}\prism_{R/A}\{n\}$ be the relative Frobenius morphism. Also, write $\varphi\{n\} = \Fil^{n}\varphi\{n\}:\Fil^{n}_{\N}F^{\ast}(\prism_{R/A}\{n\})\to\prism_{R/A}\{n\}$. Due to completeness of the target filtered object, $\Fil^{\sbullet}\varphi\{n\}$ induces a morphism $\Fil^{\sbullet}\widetilde{\varphi}\{n\}:\Fil^{\sbullet}_{\N}\prismc_{R}\{n\}\to d^{\sbullet-n}\prism_{R/A}\{n\}$, and in particular gives $\widetilde{\varphi}\{n\} = \Fil^{n}\widetilde{\varphi}\{n\}:\Fil^{n}_{\N}\prismc_{R}\{n\}\to\prism_{R/A}\{n\}$. Composition of the latter map with the natural map to the completion gives $\widehat{\varphi}\{n\}:\Fil^{n}_{\N}\prismc_{R}\{n\}\to\prismc_{R}\{n\}$. Understood as maps in $\mathcal{D}(A)^{\Cpl(p)}$, these maps induce endomorphisms $\varphi\{n\}[\frac{1}{d}]$ and $\widehat{\varphi}\{n\}[\frac{1}{d}]$ (which we abbreviate as $F$ and $\widehat{F}$ respectively in Construction \ref{constr:frobfix} below) after inverting $d$ through equivalences provided by Proposition \ref{prop:candisom}.  

\begin{construction}[Frobenius fixed point modules]\label{constr:frobfix}
Fix $R\in\CAlg^{\an}_{\overline{A}}$ and $n\in\bbZ$. \\
(1) We write $\left(\prismc_{R}\{n\}[\frac{1}{d}]\right)^{F=1}$ for an equalizer of the identity map of $\prismc_{R}\{n\}[\frac{1}{d}]$ and the endomorphism 
\begin{equation*}
\widehat{F}:\prismc_{R}\{n\}[1/d]\simeq \Fil^{n}_{\N}(\prismc_{R}\{n\})[1/d]\xrightarrow{\widehat{\varphi}\{n\}\left[\frac{1}{d}\right]}\prismc_{R}\{n\}[1/d]
\end{equation*}
in $\Dh(\bbZ_{p})$.  \\
(2) We similarly define $\left(\prism_{R/A}\{n\}[\frac{1}{d}]\right)^{F=1}$ as follows. First, consider the map 
\begin{equation*}
F:F^{\ast}(\prism_{R/A}\{n\})[1/d]\simeq \Fil^{n}_{\N}F^{\ast}(\prism_{R/A}\{n\})[1/d]\xrightarrow{\varphi\{n\}\left[\frac{1}{d}\right]}\prism_{R/A}\{n\}[1/d]
\end{equation*}
in $\Dh(\bbZ_{p})$, where the first equivalence is provided by Proposition \ref{prop:candisom}. \\
\indent Then, let $\varphi$ be the Frobenius automorphism of $A$, and consider the natural $\varphi$-semilinear equivalence $c:M\to F^{\ast}M$ for each $M\in\Dh(A)$ given by tensoring with $\varphi$, where $F^{\ast}M\simeq A\dth_{\varphi, A}M$ in $\Dh(A)$, corresponding to the identity map of $F^{\ast}M$. Since $\varphi(d)\equiv d^{p}$ mod $(p)$, the $\varphi$-semilinear equivalence $c$ induces an equivalence $c:M[\frac{1}{d}]\simeq (F^{\ast}M)[\frac{1}{d}]$, still denoted by $c$, in $\Dh(\bbZ_{p})$ (see Remark \ref{rem:invertd} for our convention). \\
\indent Under the identification $c:\prism_{R/A}\{n\}[\frac{1}{d}]\simeq F^{\ast}(\prism_{R/A}\{n\})[\frac{1}{d}]$ from the previous paragraph, we define $\left(\prism_{R/A}\{n\}[\frac{1}{d}]\right)^{F=1}$ as an equalizer of the identity map of $\prism_{R/A}\{n\}[\frac{1}{d}]$ and the endomorphism $F$ of $\prism_{R/A}\{n\}[\frac{1}{d}]$ in $\Dh(\bbZ_{p})$. More precisely, we have a pullback square
\begin{equation*}
\begin{tikzcd}
\left(\prism_{R/A}\{n\}[\frac{1}{d}]\right)^{F=1} \arrow[r] \arrow[d] &[4em] F^{\ast}(\prism_{R/A}\{n\})[\frac{1}{d}]  \arrow[d,"(c^{-1}{,}~1)"] \\ [2ex]
F^{\ast}(\prism_{R/A}\{n\})[\frac{1}{d}] \arrow[r,"(F{,}~1)"'] & \prism_{R/A}\{n\}[\frac{1}{d}]\times F^{\ast}(\prism_{R/A}\{n\})[\frac{1}{d}]
\end{tikzcd}
\end{equation*}
in $\Dh(\bbZ_{p})$ describing the object $\left(\prism_{R/A}\{n\}[\frac{1}{d}]\right)^{F=1}$. 
\end{construction}

\begin{proposition}\label{prop:nygdc}
Let $(A,(d))$ be a perfect prism, and let $R\in\CAlg^{\an}_{\overline{A}}$. Then, for each $n\in\bbZ$, the natural map $\left(\prism_{R/A}\{n\}[\frac{1}{d}]\right)^{F=1}\to \left(\prismc_{R}\{n\}[\frac{1}{d}]\right)^{F=1}$ is an equivalence in $\Dh(\bbZ_{p})$. 
\begin{proof}
We provide an argument based on the idea of \cite[Proof of Prop. 7.4.6]{bl}. Analogous to the maps $F$ and $\widehat{F}$ of Construction \ref{constr:frobfix}, write $\widetilde{F}$ for the map
\begin{equation*}
\widetilde{F}:\prismc_{R}\{n\}[1/d]\simeq \Fil^{n}_{\N}(\prismc_{R}\{n\})[1/d]\xrightarrow{\widetilde{\varphi}\{n\}\left[\frac{1}{d}\right]}\prism_{R/A}\{n\}[1/d]. 
\end{equation*}
Also, let $\imath:F^{\ast}(\prism_{R/A}\{n\})[\frac{1}{d}]\to\prismc_{R}\{n\}[\frac{1}{d}]$ be the canonical map induced from the map into the Nygaard completion. Consider the diagram 
\begin{equation*}
\begin{tikzcd}
F^{\ast}(\prism_{R/A}\{n\})[\frac{1}{d}] \arrow[r,"(F{,}~1)"] \arrow[d,"\imath"'] &[3em] \prism_{R/A}\{n\}[\frac{1}{d}]\times F^{\ast}(\prism_{R/A}\{n\})[\frac{1}{d}] \arrow[d,"1\times\imath"] &[2em] F^{\ast}(\prism_{R/A}\{n\})[\frac{1}{d}] \arrow[l,"(c^{-1}{,}~1)"'] \arrow[d,"="]\\ [2ex]
\prismc_{R}\{n\}[\frac{1}{d}] \arrow[r, "(\widetilde{F}{,}~1)"] \arrow[d,"="'] & \prism_{R/A}\{n\}[\frac{1}{d}]\times\prismc_{R}\{n\}[\frac{1}{d}] \arrow[d,"(\imath\circ c)\times1"] & F^{\ast}(\prism_{R/A}\{n\})[\frac{1}{d}] \arrow[l,"(c^{-1}{,}~\imath)"] \arrow[d,"\imath"]\\ [2ex]
\prismc_{R}\{n\}[\frac{1}{d}] \arrow[r,"(\widehat{F}{,}~1)"'] & \prismc_{R}\{n\}[\frac{1}{d}]\times\prismc_{R}\{n\}[\frac{1}{d}] & \prismc_{R}\{n\}[\frac{1}{d}] \arrow[l,"(1{,}~1)"]
\end{tikzcd}
\end{equation*}
in $\Dh(\bbZ_{p})$. Composition of the two maps between limits of the rows of the diagram gives the comparison map $\left(\prism_{R/A}\{n\}[1/d]\right)^{F=1}\to\left(\prismc_{R}\{n\}[1/d]\right)^{F=1}$ by Construction \ref{constr:frobfix}. The map from the limit of the first row to the limit of the second row is an equivalence, since the upper left square is a pullback square. In fact, the upper left square fits into the diagram
\begin{equation*}
\begin{tikzcd}
F^{\ast}(\prism_{R/A}\{n\})[\frac{1}{d}] \arrow[r,"(F{,}~1)"] \arrow[d,"\imath"'] &[3em] \prism_{R/A}\{n\}[\frac{1}{d}]\times F^{\ast}(\prism_{R/A}\{n\})[\frac{1}{d}] \arrow[r,"\mathrm{pr}_{2}"] \arrow[d,"1\times\imath"] &[2em] F^{\ast}(\prism_{R/A}\{n\})[\frac{1}{d}] \arrow[d,"\imath"] \\ [2ex]
\prismc_{R}\{n\}[\frac{1}{d}] \arrow[r,"(\widetilde{F}{,}~1)"'] & \prism_{R/A}\{n\}[\frac{1}{d}]\times\prismc_{R}\{n\}[\frac{1}{d}] \arrow[r,"\mathrm{pr}_{2}"'] & \prismc_{R}\{n\}[\frac{1}{d}]
\end{tikzcd}
\end{equation*}
as the left side square, and since the right side square of the diagram is a pullback square by construction and the outer rectangle of the diagram is trivially a pullback square, the claim follows. The map from the limit of the second row to the limit of the third row is an equivalence, since the lower right square is a pullback square by construction. Combining these verifies the claimed result.   
\end{proof}
\end{proposition}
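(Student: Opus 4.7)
The plan is to compare the two pullback-square descriptions of $(\prism_{R/A}\{n\}[\frac{1}{d}])^{F=1}$ and $(\prismc_{R}\{n\}[\frac{1}{d}])^{F=1}$ given in Construction \ref{constr:frobfix}, interpolating via a third, hybrid pullback square. Concretely, I would introduce an intermediate ``partially completed'' Frobenius map $\widetilde{F}: \prismc_{R}\{n\}[\frac{1}{d}] \simeq \Fil^{n}_{\N}\prismc_{R}\{n\}[\frac{1}{d}] \xrightarrow{\widetilde{\varphi}\{n\}[\frac{1}{d}]} \prism_{R/A}\{n\}[\frac{1}{d}]$, where the first equivalence comes from Proposition \ref{prop:candisom}. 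The three Frobenius variants satisfy the compatibilities $\widetilde{F} \circ \imath = F$ and $\widehat{F} = \imath \circ \widetilde{F}$ with respect to the canonical Nygaard completion map $\imath: F^{\ast}(\prism_{R/A}\{n\})[\frac{1}{d}] \to \prismc_{R}\{n\}[\frac{1}{d}]$, which is what will make the comparison diagrammatic.

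Next, I would arrange a three-row commutative diagram in $\Dh(\bbZ_{p})$ whose top row is the pullback diagram computing $(\prism_{R/A}\{n\}[\frac{1}{d}])^{F=1}$, whose bottom row is the pullback diagram computing $(\prismc_{R}\{n\}[\frac{1}{d}])^{F=1}$, and whose middle row is an intermediate pullback square obtained by replacing one of the two $F^{\ast}(\prism_{R/A}\{n\})[\frac{1}{d}]$-factors in the top diagram with $\prismc_{R}\{n\}[\frac{1}{d}]$ via $\imath$, and correspondingly replacing $F$ by $\widetilde{F}$. The vertical maps between consecutive rows would be either $\imath$ or identity in each slot; by inspection the composition of the two vertical maps between limits will be exactly the natural comparison map in the statement. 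The idea is then to prove that each of the two vertical transitions induces an equivalence on limits by showing that the relevant connecting square is a pullback.

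For each connecting square, I would enlarge it into a rectangle by pasting on one extra trivially-pullback square obtained from projecting to an appropriate factor, and then deduce by the pasting lemma that the connecting square itself is a pullback; this uses essentially that the modification happens in only one factor at a time. The main obstacle is the bookkeeping: one must keep track of three Frobenius variants $F$, $\widetilde{F}$, $\widehat{F}$ and the $\varphi$-semilinear identification $c: \prism_{R/A}\{n\}[\frac{1}{d}] \simeq F^{\ast}(\prism_{R/A}\{n\})[\frac{1}{d}]$ (which is well-defined after inverting $d$ because $\varphi(d) \equiv d^{p} \pmod{p}$), and assemble them in the correct pattern so that the connecting squares factor cleanly as described. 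Once the diagram is set up correctly, the verification is a routine application of the pullback pasting lemma.
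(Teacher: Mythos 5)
Your proposal matches the paper's own proof: you introduce the same intermediate Frobenius $\widetilde{F}$, arrange the same three-row diagram of pullback squares with vertical maps given by $\imath$ and identities, and deduce that each transition between rows induces an equivalence on limits by pasting with a trivially pullback square. This is exactly the argument in the text (following the idea of Bhatt--Lurie, Proof of Prop.\ 7.4.6).
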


\subsection{\'Etale comparison}\label{subsec:etcomp}

We write $\CAlg^{\an,p-\cpl}$ for the full subcategory of $\CAlg^{\an}$ spanned by $p$-complete animated commutative rings, and write $\CAlg^{p-\cpl}$ for its further subcategory spanned by $p$-complete animated commutative rings which are ordinary commutative rings. Note that the inclusions $\CAlg^{p-\cpl}\hookrightarrow\CAlg^{\an,p-\cpl}$ and $\CAlg^{\an,p-\cpl}\hookrightarrow\CAlg^{\an}$ are right adjoint functors admitting left adjoints $\pi_{0}$ and $(-)^{\wedge_{p}}$ respectively. We will also consider the slice categories $\CAlg^{\an,p-\cpl}_{\overline{A}}$ and $\CAlg^{p-\cpl}_{\overline{A}}$ under a perfectoid ring $\overline{A}$. Recall that perfectoid rings form a basis for the $\arcp$-topology on $\CAlg^{p-\cpl}$ \cite[Lem. 8.8]{bs2}; \emph{loc. cit.} uses that semiperfectoids form a basis for the $p$-complete fpqc topolgy on $\CAlg^{p-\cpl}$, and that the natural map from a semiperfectoid to its perfectoidization is an $\arcp$-equivalence.  \\
\indent We record the following twisted variant of the comparison result \cite[Th. 9.1]{bs2} for \'etale and prismatic cohomology, taking Tate twists $\bbZ_{p}(n)$ on the \'etale cohomology side and Breuil-Kisin twists $A\{n\}$ on the prismatic cohomology side into account: 

\begin{proposition}(\'Etale comparison)\label{prop:etcomp}
Let $(A,(d))$ be a perfect prism, and let $n\in\bbZ$. Then, for $R\in\CAlg^{\an,p-\cpl}_{\overline{A}}$, there is a natural equivalence 
\begin{equation}\label{eq:etcomp}
 \RG_{\et}(\Spec R[1/p],\bbZ_{p}(n))\simeq\left(\prism_{R/A}\{n\}[1/d]\right)^{F=1}
\end{equation} 
in $\Dh(\bbZ_{p})$.
\end{proposition}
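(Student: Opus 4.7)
My plan is to construct the comparison map via tensor powers of the prismatic logarithm and verify it is an equivalence by $p$-completely faithfully flat descent, reducing to an explicit computation for perfectoid $R$.

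First, I would verify that both sides of \eqref{eq:etcomp}, viewed as functors $\CAlg^{\an,p\text{-}\cpl}_{\overline{A}}\to\Dh(\bbZ_p)$, satisfy $p$-completely faithfully flat descent. For the right-hand side this follows by writing $(-)^{F=1}$ as the finite limit in Construction \ref{constr:frobfix} and applying Lemma \ref{lem:fibid}(2) term by term, using Proposition \ref{prop:nygdc} to move freely between the Nygaard-completed and uncompleted presentations. For the left-hand side, which factors through $\pi_0$, descent for $p$-adic \'etale cohomology of the generic fiber is classical, and in fact $\arcp$-hyperdescent holds by Bhatt--Mathew--Scholze.

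Next, I would construct the natural comparison map using the prismatic logarithm: $\log_{\prism}$ yields a canonical morphism from $\bbZ_p(1)$ on the generic fiber of $R$ to $(\prism_{R/A}\{1\}[1/d])^{F=1}$, and the $n$-fold tensor power $\log_{\prism}^{\otimes n}$ produces the map for general $n\in\bbZ$, compatibly with multiplicative structure. Since every object of $\CAlg^{p\text{-}\cpl}_{\overline{A}}$ admits a $p$-completely faithfully flat cover by a perfectoid $\overline{A}$-algebra, descent then reduces the verification to the case of perfectoid $R$. For such $R$, $\prism_{R/A}\simeq A_{\inf}(R)$ is concentrated in degree zero with its Witt-vector Frobenius, and the assertion becomes a Tate-twisted Artin--Schreier--Witt statement on the pro-\'etale site of $\Spec R[1/p]$: the case $n=0$ is \cite[Th.~9.1]{bs2}, and general $n$ follows by twisting the Frobenius action on the Breuil--Kisin line $A_{\inf}(R)\{n\}[1/d]$.

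The main obstacle is arranging the prismatic-logarithm comparison map canonically and multiplicatively on animated commutative rings and verifying that it lands in the claimed Frobenius-fixed piece with the correct Breuil--Kisin twist. Granted this construction, the descent reduction is routine from Lemma \ref{lem:fibid}, and the perfectoid case reduces to a classical twisted Artin--Schreier--Witt computation.
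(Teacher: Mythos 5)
Your outline captures the right shape of the argument — building the comparison map from $\log_{\prism}^{\otimes n}$, reducing to perfectoid $R$, and quoting \cite[Th.~9.1]{bs2} for $n=0$ with a twisting argument for general $n$ — but the descent reduction contains a genuine gap.

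Perfectoid $\overline{A}$-algebras do \emph{not} form a basis for the $p$-complete fpqc topology on $\CAlg^{p\text{-}\cpl}_{\overline{A}}$, so $p$-completely faithfully flat descent cannot reduce the statement to the perfectoid case. Even granting that any $R\in\CAlg^{p\text{-}\cpl}_{\overline{A}}$ admits a $p$-completely faithfully flat map $R\to R'$ with $R'$ perfectoid, the terms $R'\otimesh_R R'$, $R'\otimesh_R R'\otimesh_R R'$, \ldots of the associated \v{C}ech conerve are only \emph{semiperfectoid}, not perfectoid; in general the perfectoid property is not stable under such (derived) tensor products over a non-perfectoid base. So fpqc descent only reduces you to quasiregular semiperfectoid rings, which is not where the Artin--Schreier(--Witt) computation lives. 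To actually land on perfectoid rings as a basis, you must pass to the coarser $\arcp$-topology (cf.\ \cite[Lem.~8.8]{bs2}): the point is that a semiperfectoid becomes equivalent to its perfectoidization in the $\arcp$-topology but not in the fpqc topology. This is exactly why the paper right Kan extends from $\CAlg^{\perfd}_{\overline{A},\bbZ_p^{\cyc}}$ and proves $\arcp$-hyperdescent (Lemma \ref{lem:prismarcphypersheaf}) rather than fpqc descent.

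There is a second, smaller issue: your descent argument for the right-hand side invokes Lemma \ref{lem:fibid}(2), but that lemma is about the fiber of the canonical map $\Fil^{\sbullet}_{\N}\to\prism$, not about the $d$-inverted pieces themselves, and it says nothing about how $(-)[1/d]$ interacts with the totalizations appearing in descent. Inverting $d$ is a filtered colimit, and filtered colimits commute with totalizations only under a coconnectivity bound. The paper obtains this bound by first replacing $\prism_{-/A}\{n\}$ with its Frobenius perfection $\prism_{-/A,\perf}\{n\}$ (using $(-[1/d])^{F=1}$ invariance under perfection, \cite[Lem.~9.2]{bs2}), whose values are coconnective \cite[Lem.~8.4]{bs2}. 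Without that step your descent claim for $\left(\prism_{-/A}\{n\}[1/d]\right)^{F=1}$ is not justified. So the correct route is: pass to the perfected complex, get coconnectivity, deduce $\arcp$-hyperdescent, and then reduce to perfectoid rings over $\bbZ_p^{\cyc}$ via right Kan extension along the $\arcp$-basis.
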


This has also appeared in \cite[Th. 4.11]{mw} and \cite[Th. 4.4]{bouis}; we provide an alternative algebraic proof below which in particular identifies the comparison map with the prismatic logarithm map. Moreover, we will later give a K-theoretic proof of this comparison in Corollary \ref{cor:etcomp} which does not rely on these algebraic approaches. 

\begin{remarkn}\label{rem:etcomp}
We will not use Proposition \ref{prop:etcomp} for the most part of this article; the exceptions are all for $\Fil^{\sbullet}_{\mathrm{lke}}L_{T(1)}\TC$ which we do not use for the K-theoretic proof of the \'etale comparison. More precisely, the following two are the only cases we use Proposition \ref{prop:etcomp}, for the sake of expositional convenience:  \\
(1) Description of $\gr^{n}(L_{T(1)}\TC)^{\sharp}$ in Construction \ref{constr:filtonarcpsheaf} in terms of the \'etale cohomology at the generic fiber. This is for Proposition \ref{prop:filtvialke} (2) concerning $\gr^{n}\Fil_{\mathrm{lke}}L_{T(1)}\TC$, and we otherwise use the direct description via prismatic cohomology.\\
(2) Corollary \ref{cor:T(1)TCgrlke} and its consequences, i.e., the description of $\gr^{n}\Fil_{\mathrm{lke}}L_{T(1)}\TC$ in Proposition \ref{prop:filtvialke} (2), the comparison of $\Fil^{\sbullet}_{\mathrm{lke}}L_{T(1)}\TC$ with $\Fil^{\sbullet}(L_{T(1)}\TC)^{\sharp}$ in Proposition \ref{prop:filtcompletecomparison}, and Theorem \ref{th:T(1)TCfilt} (2). In Corollary \ref{cor:T(1)TCgrlke}, we check that the functor $R\mapsto \RG_{\et}(\Spec R^{\wedge_{p}}[1/p],\bbZ_{p}(n))$ on $\CAlg^{\an}$ (not necessarily over a perfectoid base ring) commutes with sifted colimits using Proposition \ref{prop:etcomp}.
\end{remarkn}

Before giving an algebraic proof of Proposition \ref{prop:etcomp}, let us record the following $\arcp$-hyperdescent property of the right hand side functor of (\ref{eq:etcomp}) in the étale comparison theorem, which will also be used later in this paper. 

\begin{lemma}\label{lem:prismarcphypersheaf}
Let $(A,(d))$ be a perfect prism, and let $n\in\bbZ$. Then, the functor
\begin{equation*}
R\mapsto \left(\prism_{R/A}\{n\}[1/d]\right)^{F=1}:\CAlg^{\an, p-\cpl}_{\overline{A}}\to\Dh(\bbZ_{p})
\end{equation*} 
factors through $\pi_{0}:\CAlg^{\an,p-\cpl}_{\overline{A}}\to\CAlg^{p-\cpl}_{\overline{A}}$ and is an $\arcp$-hypersheaf on $\CAlg^{p-\cpl}_{\overline{A}}$.
\begin{proof}
First, by \cite[Lem. 9.2]{bs2}, there is a canonical equivalence $\left(\prism_{R/A}\{n\}[\frac{1}{d}]\right)^{F=1}\simeq \left(\prism_{R/A,\perf}\{n\}[\frac{1}{d}]\right)^{F=1}$ for $R\in\CAlg^{\an,p-\cpl}_{\overline{A}}$. The functor $\prism_{-/A,\perf}\{n\}:\CAlg^{\an,p-\cpl}_{\overline{A}}\to\Dh(A)$ is truncating, i.e., it factors through the functor $\pi_{0}$ \cite[Prop. 8.5]{bs2}, from which the first part of the statement holds. The functor $\prism_{-/A,\perf}\{n\}$ satisfies $p$-complete $\arc$-descent property on $\CAlg^{p-\mathrm{cpl}}_{\overline{A}}$ \cite[Cor. 8.11]{bs2} and its values are coconnective \cite[Lem. 8.4]{bs2}. Thus, as totalizations commute with filtered colimits in the coconnective part of derived $\infty$-categories, the functor $\left(\prism_{-/A,\perf}\{n\}[\frac{1}{d}]\right)^{F=1}:\CAlg^{p-\cpl}_{\overline{A}}\to\Dh(\bbZ_{p})$ satisfies $p$-complete $\arc$-descent, and as the functor vanishes on $\bbF_{p}$-algebras by \cite[Prop. 8.5]{bs2}, it satisfies $\arcp$-descent. Finally, as the functor $\left(\prism_{-/A,\perf}\{n\}[\frac{1}{d}]\right)^{F=1}$ takes values in the coconnective part, it is an $\arcp$-hypersheaf. 
\end{proof}
\end{lemma}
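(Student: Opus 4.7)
The plan is to reduce the entire statement to the perfection $\prism_{R/A,\perf}\{n\}$ of the prismatic cohomology, for which the desired truncating and descent properties are essentially already recorded in \cite{bs2}. The key preliminary observation is that the natural map
\[
\left(\prism_{R/A}\{n\}[1/d]\right)^{F=1} \longrightarrow \left(\prism_{R/A,\perf}\{n\}[1/d]\right)^{F=1}
\]
should be an equivalence in $\Dh(\bbZ_p)$: the perfection is by construction a completed filtered colimit of iterated Frobenius pullbacks, and this colimit collapses to the initial term after restricting to $F$-fixed points. This is exactly the content of \cite[Lem. 9.2]{bs2}, so I would cite it directly rather than reproving it.

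With this identification in hand, factorization through $\pi_0$ follows immediately from the truncating property of $\prism_{-/A,\perf}\{n\}$ recorded in \cite[Prop. 8.5]{bs2}, since inverting $d$ and passing to the equalizer defining $F=1$ are functorial operations on $\Dh(\bbZ_p)$. For $\arcp$-hyperdescent, restrict to $\CAlg^{p-\cpl}_{\overline{A}}$. The perfection satisfies $p$-complete $\arc$-descent by \cite[Cor. 8.11]{bs2} and vanishes on $\bbF_p$-algebras by \cite[Prop. 8.5]{bs2}, which upgrades $p$-complete $\arc$-descent to $\arcp$-descent. Its values are coconnective by \cite[Lem. 8.4]{bs2}; in the coconnective range, totalizations commute with filtered colimits, which lets me commute the descent totalization past the sequential colimit inverting $d$, and the equalizer for $F=1$ is a finite limit and so preserves descent unconditionally. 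Finally, a descent sheaf valued in a uniformly coconnective part of a derived $\infty$-category is automatically a hypersheaf, since Postnikov convergence reduces hyperdescent to ordinary descent term by term.

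The main obstacle is really Step 1: the unperfected functor $R \mapsto \prism_{R/A}\{n\}$ is emphatically not truncating (it carries genuine derived information beyond $\pi_0$) and does not satisfy $\arcp$-descent in this form, so one cannot hope to argue directly. The trick the proof relies on is that the combined operation of inverting $d$ and imposing $F=1$ is insensitive to the difference between $\prism_{R/A}\{n\}$ and its perfection. Once that reduction is in place, everything else is formal bookkeeping around the coconnective descent machinery and the catalogue of properties of perfect prismatic cohomology from \cite{bs2}.
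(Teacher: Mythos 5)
Your proposal is correct and follows essentially the same route as the paper: reduce to the perfection via \cite[Lem. 9.2]{bs2}, deduce the truncating property from \cite[Prop. 8.5]{bs2}, obtain $p$-complete $\arc$-descent from \cite[Cor. 8.11]{bs2} plus vanishing on $\bbF_p$-algebras, and pass from descent to hyperdescent using coconnectivity from \cite[Lem. 8.4]{bs2} together with the commutation of totalizations and filtered colimits in the coconnective range. The only cosmetic difference is that you explicitly flag the equalizer defining $F=1$ as a finite limit, which the paper treats as implicit.
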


\begin{proof}[Proof of Proposition \ref{prop:etcomp}, algebraically]
We closely follow arguments in \cite[Sec. 9]{bs2}; note that the case of $n=0$ is precisely \cite[Th. 9.1]{bs2}. First, observe that both left and right side functors of the equivalence (\ref{eq:etcomp}) are right Kan extended from the full subcategory $\mathcal{C} = \CAlg^{\perfd}_{\overline{A},\bbZ_{p}^{\cyc}}$ of perfectoid $\overline{A}$-algebras each of which admits a $\bbZ_{p}^{\cyc}$-algebra structure. In fact, both functors factor through $\pi_{0}:\CAlg^{\an,p-\cpl}_{\overline{A}}\to\CAlg^{p-\cpl}_{\overline{A}}$ and are $\arcp$-hypersheaves on $\CAlg^{p-\cpl}_{\overline{A}}$. For the functor $R\mapsto \RG_{\et}(\Spec R[\frac{1}{p}],\bbZ_{p}(n))$, the latter claim follows from \cite[Cor. 6.17]{arc}, while Lemma \ref{lem:prismarcphypersheaf} explains the case of $R\mapsto \left(\prism_{R/A}\{n\}[\frac{1}{d}]\right)^{F=1}$. Thus, it suffices to construct the equivalence (\ref{eq:etcomp}) on $\mathcal{C}$.\\
\indent For $R\in\mathcal{C}$, suppose that the map $\overline{A}\to R$ corresponds to a map $(A,(d))\to (A'',(d''))$ of perfect prisms; in particular, $R\simeq \overline{A''}$. Then, the natural map $\prism_{R/A}\{n\}[\frac{1}{d}]\to\prism_{R/A''}\{n\}[\frac{1}{d''}] = A''\{n\}[\frac{1}{d''}]$ is an equivalence in $\mathcal{D}(A)^{\Cpl(p)}$ respecting Frobenii. Through the same argument applied to $\bbZ_{p}^{\cyc}\to R$, we have a natural equivalence $\prism_{R/A}\{n\}[\frac{1}{d}]\simeq\prism_{R/A'}\{n\}[\frac{1}{d'}]$ in $\Dh(\bbZ_{p})$ respecting Frobenii, where $(A',(d')) = (\bbZ_{p}[[q-1]],([p]_{q}))_{\perf}$ is the perfect prism corresponding to $\bbZ_{p}^{\cyc}$. To construct the comparison map, consider the composition 
\begin{equation*}
T_{p}(R^{\times})^{\otimes_{\bbZ_{p}}n}\xrightarrow{\log_{\prism}^{\otimes_{\bbZ_{p}} n}} A''\{1\}^{\otimes_{\bbZ_{p}}n}\to A''\{n\}\to A''\{n\}[1/d'']
\end{equation*} 
for each $n\geq0$. Here, $\log_{\prism}:T_{p}(R^{\times}) = T_{p}(\overline{A''}^{\times})\to A''\{1\}$ is the prismatic logarithm map associated with $(A'',(d''))$. By the invariance of $\log_{\prism}$ under Frobenius \cite[Prop. 2.5.18]{bl}, this composite map factors through $\left(A''\{n\}[\frac{1}{d''}]\right)^{F=1}$, and by the naturality of $\log_{\prism}$, the resulting induced map 
\begin{equation}\label{eq:etcomplog1}
\log_{\prism}^{\otimes n}: T_{p}(R^{\times})^{\otimes_{\bbZ_{p}} n} \to \left(\prism_{R/A}\{n\}[1/d]\right)^{F=1} 
\end{equation}
collects to a map of presheaves on $\mathcal{C}$. After $\arcp$-sheafification, the source by definition gives a sheaf $R\mapsto \RG_{\arcp}(\Spec R,\bbZ_{p}(n))$, cf. \cite[Constr. 7.3]{matTR}. Also, there is a natural equivalence $\RG_{\arcp}(\Spec R,\bbZ_{p}(n))\simeq \RG_{\et}(\Spec R[1/p],\bbZ_{p}(n))$ \cite[Prop. 7.4]{matTR}. Composing this with the sheaf map induced from the previous map $\log_{\prism}^{\otimes n}$ of presheaves in (\ref{eq:etcomplog1}), we have the comparison map 
\begin{equation}\label{eq:etcomplog2}
\RG_{\et}(\Spec R[1/p],\bbZ_{p}(n))\simeq \RG_{\arcp}(\Spec R,\bbZ_{p}(n))\xrightarrow{\log_{\prism}^{\otimes n}} \left(\prism_{R/A}\{n\}[1/d]\right)^{F=1}
\end{equation} 
of $\arcp$-sheaves on $R\in\mathcal{C}$. \\
\indent Let us further describe the target $R\mapsto\left(\prism_{R/A}\{n\}[\frac{1}{d}]\right)^{F=1}\simeq\left(\prism_{R/A'}\{n\}[\frac{1}{d'}]\right)^{F=1}:\mathcal{C}\to\Dh(\bbZ_{p})$ for any $n\in\bbZ$. Recall that $\Fil^{n}_{\N}F^{\ast}\prism_{R/A'}\{n\}[\frac{1}{d'}]\simeq F^{\ast}\prism_{R/A'}\{n\}[\frac{1}{d'}]$ by Proposition \ref{prop:candisom}, and that composition of its inverse with the Frobenius map gives a description of the endomorphism $F$. More precisely, consider the commutative diagram
\begin{equation*}
\begin{tikzcd}
F^{\ast}(\prism_{R/A'}\{n\})[\frac{1}{d'}] \arrow[d, "\simeq"'] & \Fil^{n}_{\N}F^{\ast}(\prism_{R/A'}\{n\})[\frac{1}{d'}] \arrow[l, "\sim"'] \arrow[r, "\varphi\{n\}{[\frac{1}{d'}]}"] \arrow[d, "\simeq"] &[+25pt] \prism_{R/A'}\{n\}[\frac{1}{d'}] \arrow[d, "\simeq"] \\ [2ex]
F^{\ast}(A'\{n\})\dt_{A'}F^{\ast}\prism_{R/A'}[\frac{1}{d'}] & F^{\ast}(A'\{n\})\dt_{A'}\Fil^{n}_{\N}F^{\ast}\prism_{R/A'}[\frac{1}{d'}] \arrow[l, "\sim"] \arrow[r, "\varphi_{A'\{n\}}\otimes \varphi{[\frac{1}{d'}]}"'] & A'\{n\}\dt_{A'}\prism_{R/A'}[\frac{1}{d'}]
\end{tikzcd}
\end{equation*}  
whose upper horizontal composition gives $A'[1/d']$-linear $F = \varphi\{n\}[\frac{1}{d'}]$. Using the generator $e_{A'}^{n}$ of $A'\{n\}$ as a free $A'$-module as in \cite[Not. 2.6.3]{bl}, the lower horizontal composition can be computed as a map $\lambda e_{A'}^{n}\otimes x/(d')^{r}\mapsto \lambda e^{n}_{A'}\otimes (d')^{n}x/(d')^{n+r}\mapsto \frac{\varphi(\lambda)}{(d')^{n}}e^{n}_{A'}\otimes (d')^{n}\varphi(x)/(d')^{n+r} = \varphi(\lambda)e^{n}_{A'}\otimes \varphi(x)/(d')^{n+r}$ (where $\lambda\in A'$ and $x\in\Fil^{n}_{\N}F^{\ast}\prism_{R/A'}$). Thus, through the trivialization of $A'\{n\}$ via $e^{n}_{A'}$, one has $\varphi\{n\}[\frac{1}{d'}] \simeq \frac{1}{(d')^{n}}\cdot\varphi\{0\}[\frac{1}{d'}]$, and hence there is an equivalence $\left(\prism_{R/A'}[\frac{1}{d'}]\right)^{F=1}\simeq \left(\prism_{R/A'}\{n\}[\frac{1}{d'}]\right)^{F=1}$ induced by the multiplication of $(q-1)^{n}e_{A'}^{n}$. \\
\indent Note that $\log_{\prism}(\epsilon) = (q-1)e_{A'}$ for $\epsilon = (q^{p}, q, q^{1/p},...)\in T_{p}(A')^{\times}$, so the equivalence above is the multiplication of $\log_{\prism}(\epsilon)^{n}$. On the other hand, for $R\in\mathcal{C}$, we know there is an equivalence $\RG_{\et}(\Spec R[1/p], \bbZ_{p})\simeq\RG_{\et}(\Spec R[1/p],\bbZ_{p}(n))$ induced by the multiplication of $\epsilon^{n}$ for each $n\in\bbZ$, cf. \cite[Rem. 8.2.4]{bl}. Thus, we have a natural diagram
\begin{equation}\label{eq:etcomplogdiag}
\begin{tikzcd}
\RG_{\et}(\Spec R[1/p],\bbZ_{p}) \arrow[r, "\epsilon^{n}", "\sim"'] \arrow[d, "\log_{\prism}^{\otimes 0}"'] & \RG_{\et}(\Spec R[1/p],\bbZ_{p}(n)) \arrow[d, "\log_{\prism}^{\otimes n}"] \\
\left(\prism_{R/A}[1/d]\right)^{F=1} \arrow[r, "\log_{\prism}(\epsilon)^{n}"', "\sim"] & \left(\prism_{R/A}\{n\}[1/d]\right)^{F=1}
\end{tikzcd}
\end{equation}
for $R\in\mathcal{C}$ and $n\in\bbZ$; for $n\geq0$ this follows from our construction of the map (\ref{eq:etcomplog2}), and for $n<0$ we define $\log_{\prism}^{\otimes n}$ to be the map fitting into the diagram above. Note that by right Kan extension, we also have the comparison map of (\ref{eq:etcomp}) on $\CAlg^{\an,p-\cpl}_{\overline{A}}$ which we denote as 
\begin{equation}\label{eq:etcomplog3}
\log_{\prism}^{\otimes n}: \RG_{\et}(\Spec R[1/p],\bbZ_{p}(n))\to \left(\prism_{R/A}\{n\}[1/d]\right)^{F=1}
\end{equation} 
for $R\in\CAlg^{\an,p-\cpl}_{\overline{A}}$ and $n\in\bbZ$. \\
\indent Thus, in order to to check that the map $\RG_{\et}(\Spec R[\frac{1}{p}],\bbZ_{p}(n))\to \left(\prism_{R/A}\{n\}[\frac{1}{d}]\right)^{F=1}$ of (\ref{eq:etcomplog3}) is an equivalence, it suffices to check the property for $R\in\mathcal{C}$ and for the case of $n=0$ given by $\log_{\prism}^{\otimes 0}$, which by definition is the case of \cite[Th. 9.1]{bs2}. In \emph{loc. cit.}, this case was verified by checking that modulo $p$ the map induces an equivalence $\RG_{\et}(\Spec R[\frac{1}{p}],\bbF_{p})\simeq \left(R^{\flat}[\frac{1}{d}]\right)^{F=1}$, which is a consequence of the Artin-Schreier fiber sequence $\RG_{\et}(\Spec R^{\flat}[\frac{1}{d}],\bbF_{p}) \to R^{\flat}[\frac{1}{d}] \xrightarrow{\varphi-1} R^{\flat}[\frac{1}{d}]$ and the natural equivalence $\RG_{\et}(\Spec R[\frac{1}{p}],\bbF_{p})\simeq \RG_{\et}(\Spec R^{\flat}[\frac{1}{d}],\bbF_{p})$. 
\end{proof}

\section{Thomason filtration and $\arcp$-hyperdescent}

In this section, we verify the main results stated in \ref{subsec:mainresults} and \ref{subsec:statement}. In \ref{subsec:filtconstr}, we construct and describe a filtration on an $\arcp$-hypersheafification of $L_{T(1)}\TC$ (Construction \ref{constr:filtonarcpsheaf}). After verifying that $\arcp$-hypersheafification does not change values of $L_{T(1)}\TC$ for finitely generated polynomial rings through \ref{subsec:progal} and \ref{subsec:arcphyp}, we construct a filtration on $L_{T(1)}\TC$ from the previous filtration via left Kan extension in \ref{subsec:filtconstr2} (Construction \ref{constr:filtvialke}). On the other hand, we could directly construct a filtration on $L_{T(1)}\TC$ via pullback from the filtration on its $\arcp$-hypersheafification (Construction \ref{constr:filtviapb}), and we prove that this filtration is equivalent to the former filtration on $L_{T(1)}\TC$ in \ref{subsec:compwithhyp} and obtain various basic properties of the common filtration as a consequence (Theorem \ref{th:T(1)TCfilt}). In \ref{subsec:compwiththomason}, we provide a comparison of our filtration with the Thomason filtration (Theorem \ref{th:compwiththomason}) and obtain the \'etale comparison for prismatic cohomology as a consequence (Corollary \ref{cor:etcomp}).    

\subsection{Statement of the main theorems}\label{subsec:statement}

Let us fix relevant notations and restate more precise versions of the main results in \ref{subsec:mainresults} to be proved in this section. Write $\DFh(\bbS) = \left(\Sp^{\Cpl(p)}\right)^{\mathrm{fil}} = \Fun(\N\bbZ^{\op},\Sp^{\Cpl(p)})$ for the $\infty$-category of $\bbZ$-indexed descending filtrations of $p$-complete spectra; we will call its objects simply as filtered $p$-complete spectra. Note that $\DFh(\bbS)$ has a natural symmetric monoidal structure whose underlying tensor product is given by the Day convolution. We will often call objects of the $\infty$-category $\CAlg(\DFh(\bbS))$, i.e., $\bbE_{\infty}$-algebra objects of filtered $p$-complete spectra, as multiplicative filtered $p$-complete spectra.

\begin{theorem}[Theorem \ref{th:T(1)TCfilt} and Corollary \ref{cor:compwiththomason}] \label{th:statementT(1)TCfilt}
There is a functor $\Fil^{\sbullet}L_{T(1)}\TC$ and a map $\Fil^{\sbullet}L_{T(1)}\TC\to L_{T(1)}\TC$ of $\Fun(\CAlg^{\an},\CAlg(\DFh(\bbS)))$ into the target $L_{T(1)}\TC\in\Fun(\CAlg^{\an},\CAlg(\Sp^{\Cpl(p)}))$ viewed as a constant filtration, which satisfy the following properties: 
\begin{adjustwidth}{12pt}{}
(1) (Exhaustiveness) The map $\Fil^{\sbullet}L_{T(1)}\TC\to L_{T(1)}\TC$ realizes $\Fil^{\sbullet}L_{T(1)}\TC$ as an exhaustive filtration of $L_{T(1)}\TC$. \\
(2) (Completeness) The object $\Fil^{\sbullet}L_{T(1)}\TC(R)$ is filtration-complete for $R\in\CAlg^{\an}$ satisfying the following finiteness condition:
\begin{adjustwidth}{13pt}{}
($\ast$) $\pi_{0}R^{\wedge_{p}}[1/p]$ has finite Krull dimension and admits a uniform bound on the mod $p$ virtual cohomological dimensions of the residue fields.
\end{adjustwidth}
(3) (Preservation of sifted colimits) $\Fil^{\sbullet}L_{T(1)}\TC$ and $L_{T(1)}\TC$ commute with sifted colimits. \\
(4) ($p$-complete fppf descent) $\Fil^{\sbullet}L_{T(1)}\TC$ and $L_{T(1)}\TC$ induce equivalences on taking functors $\pi_{0}:\CAlg^{\an}\to\CAlg$ and $(-)^{\wedge_{p}}:\CAlg^{\an}\to \CAlg^{\an,p-\cpl}$, and satisfy $p$-complete fppf descent.\\
(5) (Associated graded pieces) For each $n\in\bbZ$, write $\gr^{n}L_{T(1)}\TC$ for the $n$-th associated graded objects functor for $\Fil^{\sbullet}L_{T(1)}\TC$. There is a natural equivalence
\begin{equation*}
\gr^{n}L_{T(1)}\TC(R)\simeq\RG_{\et}(\Spec R^{\wedge_{p}}[1/p],\bbZ_{p}(n))[2n]
\end{equation*} 
in $\Dh(\bbZ_{p})$ for $R\in\CAlg^{\an}$. 
\end{adjustwidth}
\end{theorem}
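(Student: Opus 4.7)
The plan is to construct the filtration in two compatible ways, in both cases reducing to quasiregular semiperfectoid $\mathcal{O}_C$-algebras where the prismatic analysis of Section 2 applies directly, and then to match the two constructions. The first step is to build a filtration $\Fil^{\sbullet}(L_{T(1)}\TC)^{\sharp}$ on the $\arcp$-hypersheafification. On a quasiregular semiperfectoid $\mathcal{O}_C$-algebra $R$, one can identify $L_{T(1)}\TP(R)$ with the $p$-completion of $\prism_R[1/d]$ equipped with its Frobenius, and extract $L_{T(1)}\TC(R)$ as the corresponding Frobenius fixed points; the double-speed Postnikov filtration on $L_{T(1)}\TC(R)$ then has graded pieces which, via Propositions \ref{prop:candisom} and \ref{prop:nygdc}, one identifies with $\left(\prism_{R/A}\{n\}[1/d]\right)^{F=1}[2n]$. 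Since quasiregular semiperfectoid $\mathcal{O}_C$-algebras form a basis for the $\arcp$-topology \cite[Lem. 8.8]{bs2}, right Kan extension yields a filtration on $(L_{T(1)}\TC)^{\sharp}$ that is exhaustive and complete by construction, with $\arcp$-hypersheaf graded pieces by Lemma \ref{lem:prismarcphypersheaf}.

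Next, I would define $\Fil^{\sbullet}L_{T(1)}\TC(R)$ as the pullback of $\Fil^{\sbullet}(L_{T(1)}\TC)^{\sharp}(\pi_0(R^{\wedge_p}))$ along the natural hypersheafification map. Exhaustiveness (1) is immediate from this construction, and the graded-piece description (5) combines the hypersheaf description with the \'etale comparison of Proposition \ref{prop:etcomp}. Completeness (2) under hypothesis $(\ast)$ reduces via (5) to a Thomason-style uniform bound on the cohomological amplitude of $\RG_{\et}(\Spec \pi_0(R^{\wedge_p})[1/p],\bbZ_p(n))$, which follows from finite Krull dimension together with bounded virtual mod $p$ cohomological dimensions of residue fields. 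Property (4) then follows by combining this pullback description with $p$-complete fppf descent for $(L_{T(1)}\TC)^{\sharp}$ and the truncating behaviour of $L_{T(1)}\TC$ on $T(1)$-acyclic rings \cite[Cor. 4.22]{lmmt}, the latter supplying invariance under $\pi_0$ and derived $p$-completion.

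The main obstacle is sifted colimit preservation (3), since neither $L_{T(1)}\TC$ nor $\arcp$-hypersheafification manifestly commutes with sifted colimits on all of $\CAlg^{\an}$. I would address this by constructing a second filtration $\Fil^{\sbullet}_{\mathrm{lke}}L_{T(1)}\TC$ via left Kan extension from finitely generated polynomial rings, for which sifted colimit preservation is tautological; Proposition \ref{prop:etcomp} then lets one verify that its graded pieces still match $\RG_{\et}(\Spec R^{\wedge_p}[1/p],\bbZ_p(n))[2n]$, since this functor commutes with sifted colimits on $\CAlg^{\poly}$. The critical input---requiring the Galois- and $\arcp$-descent arguments of subsections \ref{subsec:progal} and \ref{subsec:arcphyp}---is that $\arcp$-hypersheafification does not alter $L_{T(1)}\TC$ on finitely generated polynomial rings. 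Granted this, the two filtrations can be compared on $\CAlg^{\poly}$ and transported to $\CAlg^{\an}$ via sifted colimits, yielding an equivalence $\Fil^{\sbullet}_{\mathrm{lke}}L_{T(1)}\TC \simeq \Fil^{\sbullet}L_{T(1)}\TC$ from which property (3) is inherited.
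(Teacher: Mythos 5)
Your overall architecture---build $\Fil^{\sbullet}(L_{T(1)}\TC)^{\sharp}$ by right Kan extension from quasiregular semiperfectoid $\mathcal{O}_C$-algebras, pull it back along the hypersheafification map $h$ to obtain the filtration on $L_{T(1)}\TC$, and construct a second left Kan extended filtration from finitely generated polynomial rings to handle sifted colimits---is exactly the approach the paper takes, and your treatment of (1), (3), (4), and (5) follows the same lines.

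The gap is in your argument for completeness (2). You claim it reduces, via the graded-piece description, to a uniform bound on the cohomological amplitude of $\RG_{\et}(\Spec \pi_0(R^{\wedge_p})[1/p],\bbZ_p(n))$. But bounded graded pieces are not sufficient for completeness of an exhaustive filtration: a constant nontrivial exhaustive filtration has vanishing graded pieces and is never complete. More to the point, by the pullback construction and Lemma~\ref{lem:filtpullback}(1), the inverse limit of $\Fil^{\sbullet}L_{T(1)}\TC(R)$ is precisely $\fib\bigl(h\colon L_{T(1)}\TC(R)\to (L_{T(1)}\TC)^{\sharp}(\pi_0(R^{\wedge_p}))\bigr)$; this is Theorem~\ref{th:T(1)TCfilt}(6). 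So completeness is \emph{equivalent} to $h$ being an equivalence, that is, to $L_{T(1)}\TC(R)$ already agreeing with its $\arcp$-hypercompletion, which is an assertion about the object $L_{T(1)}\TC(R)$ itself and not something controlled by amplitude of the graded pieces. The paper's actual argument (Corollary~\ref{cor:compwiththomason}) goes through Theorem~\ref{th:compwiththomason}: using the cyclotomic trace equivalence $L_{T(1)}\K(R^{\wedge_p}) \simeq L_{T(1)}\TC(R)$ and Lemma~\ref{lem:filtexhaustivegr}, one identifies $\Fil^{\sbullet}L_{T(1)}\TC(R)$ with the Thomason filtration $\Fil^{\sbullet}L_{T(1)}\K(R^{\wedge_p}[1/p])$, whose completeness under $(\ast)$ is supplied by Clausen--Mathew \cite[Th.~7.14]{etalek}. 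The operative input there is that $L_{T(1)}\K$ is an \'etale sheaf that becomes hypercomplete under the finiteness hypothesis---a statement of descent, not merely a bound on the $E_2$-page. Your sketch therefore needs the Thomason comparison step; without it, the completeness claim is unproven.
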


As indicated earlier, our construction of $\Fil^{\sbullet}L_{T(1)}\TC$ does not rely on the existing Thomason filtration on $L_{T(1)}\K$ and the cyclotomic trace map. Rather, we use the following filtration-completed version $\Fil^{\sbullet}(L_{T(1)}\TC)^{\sharp}$ on an $\arcp$-hypersheafification $(L_{T(1)}\TC)^{\sharp}$ as a useful tool for constructing the filtration on $L_{T(1)}\TC$. Below, we write $\DFhc(\bbS)\subseteq\DFh(\bbS)$ for the full subcategory of $\DFh(\bbS)$ spanned by filtration-complete objects, i.e., filtration-complete filtered $p$-complete spectra.

\begin{theorem}[Construction \ref{constr:filtonarcpsheaf} and Theorem \ref{th:T(1)TCfilt}]\label{th:statementfiltonarcpsheaf}
There exists a functor $\Fil^{\sbullet}(L_{T(1)}\TC)^{\sharp}$ and a map $\Fil^{\sbullet}(L_{T(1)}\TC)^{\sharp}\to (L_{T(1)}\TC)^{\sharp}(\pi_{0}((-)^{\wedge_{p}}))$ of $\Fun(\CAlg^{\an},\CAlg(\DFh(\bbS)))$ which satisfies the following properties:
\begin{adjustwidth}{12pt}{}
(1) The functor $\Fil^{\sbullet}(L_{T(1)}\TC)^{\sharp}$ induces an equivalence on $R\mapsto \pi_{0}(R^{\wedge_{p}}):\CAlg^{\an}\to\CAlg^{p-\cpl}$, and its restriction to $\CAlg^{p-\cpl}$ is an $\arcp$-hypersheaf.\\
(2) The map $\Fil^{\sbullet}(L_{T(1)}\TC)^{\sharp}\to (L_{T(1)}\TC)^{\sharp}(\pi_{0}((-)^{\wedge_{p}}))$ endows the target with a complete and exhaustive filtration given by the source. In particular, $\Fil^{\sbullet}(L_{T(1)}\TC)^{\sharp}$ defines an object of $\Fun(\CAlg^{\an},\DFhc(\bbS))$. \\
(3) For each $n\in\bbZ$ and a perfect prism $(A,(d))$, there is a natural equivalence 
\begin{equation*}
\gr^{n}(L_{T(1)}\TC)^{\sharp}(R)\simeq \left(\prism_{R/A}\{n\}[1/d]\right)^{F=1}[2n]
\end{equation*} 
in $\Dh(\bbZ_{p})$ for $R\in\CAlg^{\an}_{\overline{A}}$.\\
(4) There is a map 
\begin{equation*}
\Fil^{\sbullet}L_{T(1)}\TC\to \Fil^{\sbullet}(L_{T(1)}\TC)^{\sharp}
\end{equation*} 
of $\Fun(\CAlg^{\an},\CAlg(\DFh(\bbS)))$ which realizes the target as a filtration-completion of the source, and induces the natural map $h:L_{T(1)}\TC\to (L_{T(1)}\TC)^{\sharp}(\pi_{0}((-)^{\wedge_{p}}))$ after taking the underlying objects of filtrations.
\end{adjustwidth}
\end{theorem}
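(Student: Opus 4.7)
The plan is to construct $\Fil^{\sbullet}(L_{T(1)}\TC)^{\sharp}$ by first defining a natural filtration on $L_{T(1)}\TC(R)$ for quasiregular semiperfectoid inputs $R$ over a perfectoid base, and then right Kan extending to $p$-complete commutative rings along the inclusion from qrsp, using that qrsp algebras form a basis for the $\arcp$-topology (as perfectoids do by \cite[Lem.~8.8]{bs2}). The starting point is the identification, for $R\in\CAlg^{\qrsp}_{\overline{A}}$ with $(A,(d))$ a perfect prism, of $\pi_{2n}L_{T(1)}\TP(R)$ with $\prismc_R\{n\}[1/d]$, and of $L_{T(1)}\TC(R)$ as the Frobenius fixed points of $L_{T(1)}\TP(R)$. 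This uses the BMS2 formula $\pi_{2n}\TP(R)\simeq \prismc_R\{n\}$ on qrsp inputs, the Nikolaus--Scholze description of $\TC$ as the equalizer of the Frobenius and the canonical map from $\TN$ to $\TP$, and the fact that $v_{1}$ acts on $\TP$ through the distinguished generator $d$ up to a unit multiple of the Bott element, so that $T(1)$-localization of $\TP$ corresponds to $d$-inversion on the prismatic side. Proposition \ref{prop:nygdc} then identifies these Frobenius fixed points with $(\prism_{R/A}\{n\}[1/d])^{F=1}$.

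Next, I would equip $L_{T(1)}\TC(R)$ for qrsp $R$ with the double-speed Postnikov filtration $\Fil^{n}L_{T(1)}\TC(R):=\tau^{\geq 2n}L_{T(1)}\TC(R)$. Multiplicativity holds because $L_{T(1)}\TP(R)$ is concentrated in even degrees, and by construction $\gr^{n}\simeq(\prism_{R/A}\{n\}[1/d])^{F=1}[2n]$. The next step is to verify that each $\Fil^{n}L_{T(1)}\TC$ is itself an $\arcp$-hypersheaf on $\CAlg^{\qrsp}_{\overline{A}}$: this reduces to the same property for the graded pieces, which is supplied by Lemma \ref{lem:prismarcphypersheaf}, together with the fact that the filtration is complete with uniformly coconnective graded pieces (so that totalizations commute with the sequential limits defining completion). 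Right Kan extending along $\CAlg^{\qrsp}_{\overline{A}}\hookrightarrow\CAlg^{p-\cpl}_{\overline{A}}$ then produces the $\arcp$-hypersheaf $\Fil^{\sbullet}(L_{T(1)}\TC)^{\sharp}$ on the target; letting $\overline{A}$ vary extends this to $\CAlg^{p-\cpl}$, and precomposing with $R\mapsto\pi_{0}(R^{\wedge_{p}})$ gives the functor on $\CAlg^{\an}$. Property (1) is then built in, (3) follows from the formula for the graded pieces together with the $\arcp$-hyperdescent of both sides, completeness in (2) follows from uniform coconnectivity of the graded pieces, and exhaustiveness is inherited from the Postnikov filtration. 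Property (4) is obtained by constructing $\Fil^{\sbullet}L_{T(1)}\TC$ in the subsequent subsection as the pullback of $\Fil^{\sbullet}(L_{T(1)}\TC)^{\sharp}$ along the sheafification map, which makes the completion statement tautological.

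The main obstacle is the first step: precisely matching the action of $v_{1}$ on $\TP$ of qrsp $\overline{A}$-algebras with multiplication by $d$ up to the Bott element, and lifting this multiplicatively to a natural equivalence between $L_{T(1)}\TC$ and the Frobenius fixed points of the $d$-inverted prismatic complex, with correct Breuil--Kisin twists. Making this bridge between the chromatic $T(1)$-localization and the algebraic $d$-inversion natural in $R$ and compatible with the Postnikov filtration requires a careful comparison of the Nikolaus--Scholze cyclotomic Frobenius on $L_{T(1)}\TP$ with the prismatic Frobenius of BMS2. Once this identification is in place, everything else is formal: $\arcp$-descent supplies the globalization, and the results of Section 2 provide the correct description of the graded pieces.
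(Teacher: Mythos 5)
Your overall strategy matches the paper's: build a multiplicative double-speed Postnikov filtration on $L_{T(1)}\TC$ of quasiregular semiperfectoid rings using the even-periodicity of $L_{T(1)}\TP$ and the Frobenius-fixed-point description of $L_{T(1)}\TC$ from Construction~\ref{constr:T(1)TP}, then globalize by right Kan extension along a basis for the $\arcp$-topology. The reduction of the graded pieces to $\left(\prism_{R/A}\{n\}[1/d]\right)^{F=1}[2n]$ via Propositions~\ref{prop:candisom} and \ref{prop:nygdc}, and the use of Lemma~\ref{lem:prismarcphypersheaf} for hyperdescent of the graded pieces, are exactly the moves the paper makes.

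There are, however, two genuine gaps in your proposal. First, you set up the base case ``for $R\in\CAlg^{\qrsp}_{\overline{A}}$ with $(A,(d))$ an arbitrary perfect prism,'' but the key identification $\pi_{2n}L_{T(1)}\TP(R)\simeq\prismc_{R}\{n\}[1/d]$ does not come for free over an arbitrary perfectoid base. What you call the main obstacle at the end --- matching chromatic $T(1)$-localization with algebraic $d$-inversion --- is resolved in the paper by restricting to $R\in\CAlg^{\qrsp}_{\bbZ_p,\mathcal{O}_C}$ for $C$ a complete \emph{algebraically closed} nonarchimedean field (Construction~\ref{constr:qrsp}). There the Suslin--Mitchell computation $\K_{\geq 0}(\mathcal{O}_{\bbC_p},\bbZ_p)\simeq ku^{\wedge_p}$ makes $\TP(R,\bbZ_p)$ and $\TC^-(R,\bbZ_p)$ into $ku^{\wedge_p}$-modules via the trace, so that $T(1)$-localization is literally Bott inversion, and Hesselholt--Nikolaus identifies the image of $\beta$ with a distinguished element. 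For a general perfectoid $\overline{A}$ there is no map $\mathcal{O}_C\to\overline{A}$ and no available Bott element, so your identification is unjustified at this level of generality. The paper recovers the description of graded pieces over arbitrary perfect prisms only \emph{after} the hypersheaf has been built (Construction~\ref{constr:filtonarcpsheaf}(4)), by observing that $\left(\prism_{-/A}\{n\}[1/d]\right)^{F=1}$ is itself an $\arcp$-hypersheaf, so agreement on the $\mathcal{O}_C$-qrsp basis forces agreement everywhere. You should also note that $\CAlg^{\qrsp}_{\bbZ_p,\mathcal{O}_C}$ still forms a basis for the $\arcp$-topology on $\CAlg^{p-\cpl}$, so a single right Kan extension suffices; one does not separately ``let $\overline{A}$ vary.''

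Second, your proposal never establishes that the \emph{unfiltered} presheaf $L_{T(1)}\TC$ is an $\arcp$-hypersheaf on $\CAlg^{\qrsp}_{\bbZ_p,\mathcal{O}_C}$. Checking only that each filtration stage is a hypersheaf (via the graded pieces and completeness) does not automatically identify the right Kan extension's underlying object with $(L_{T(1)}\TC)^{\sharp}$ --- the colimit over $n\to-\infty$ could a priori change under hypersheafification. The paper handles this with Proposition~\ref{prop:T(1)TCarcphypqrsp}, which in turn needs Lemma~\ref{lem:T(1)TCarcphypperfd} (perfectoid case, via coconnectivity of $\tau_{\geq1}L_{T(1)}\TC$ and a dissonance/$T(1)$-acyclicity argument), Lemmas~\ref{lem:perfdarccov} and \ref{lem:spgrperfd}, and Proposition~\ref{prop:T(1)TCinvperfdqrsp} (invariance of $L_{T(1)}\TC$ under perfectoidization of qrsp rings); this last ingredient is where the Postnikov filtration of Construction~\ref{constr:qrsp} is reused to promote the graded-level statement to the space level. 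Together with the unsheafified hypersheaf statement, the paper then invokes the generalities of $\arcp$-Postnikov towers (citing \cite[A.10]{matTR}) to get both completeness \emph{and} exhaustiveness of $\Fil^{\sbullet}(L_{T(1)}\TC)^{\sharp}$; exhaustiveness is not automatic from coconnectivity alone once hypersheafification has been applied. Finally, for the precise formulation in $\Fun(\CAlg^{\an},\CAlg(\DFh(\bbS)))$, ``even so multiplicative'' needs to be upgraded to the lax symmetric monoidality of the double-speed Postnikov construction, which is Proposition~\ref{prop:postnikovmult} and Remark~\ref{rem:postnikovmult} in the appendix.
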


Finally, we verify the following comparison with the Thomason filtration on $L_{T(1)}\K$ and obtain the \'etale comparison theorem for prismatic cohomology as a consequence:

\begin{theorem}[Theorem \ref{th:compwiththomason} and Corollary \ref{cor:etcomp}]
\label{th:statementcompwiththomason}
There is a natural diagram 
\begin{equation*}
\begin{tikzcd}
(\Fil^{\sbullet}L_{T(1)}\K)(R^{\wedge_{p}}[1/p]) \arrow[r, "\sim"'] \arrow[d] & (\Fil^{\sbullet}L_{T(1)}\TC)(R) \arrow[d]  \\
L_{T(1)}\K(R^{\wedge_{p}}[1/p])\simeq L_{T(1)}\K(R^{\wedge_{p}}) \arrow[r, "\tr"', "\sim"] & L_{T(1)}\TC(R) 
\end{tikzcd}
\end{equation*}
of $\CAlg(\DFh(\bbS))$ for $R\in\CAlg^{\an}$, where the left vertical arrow is the Thomason filtration $\Fil^{\sbullet}L_{T(1)}\K(R^{\wedge_{p}}[1/p])$ on $L_{T(1)}\K(R^{\wedge_{p}}[1/p])$. Taking the underlying objects of the upper horizontal map of filtered objects recovers the equivalence $L_{T(1)}\K(R^{\wedge_{p}}[1/p])\simeq L_{T(1)}\K(R^{\wedge_{p}}) \underset{\sim}{\xrightarrow{\tr}} L_{T(1)}\TC(R)$ of underlying $p$-complete spectra in the lower horizontal arrow. In particular, taking the associated graded objects of the upper horizontal equivalence gives the natural equivalence
\begin{equation*}
\RG_{\et}(\Spec R^{\wedge_{p}}[1/p],\bbZ_{p}(n)) \simeq \left(\prism_{R/A}\{n\}[1/d]\right)^{F=1}
\end{equation*}
for $R\in\CAlg^{\an}_{\overline{A}}$ and $n\in\bbZ$.
\end{theorem}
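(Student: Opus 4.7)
My plan is to establish the filtered equivalence at the level of $\arcp$-hypersheaves of filtrations, where both sides reduce to a common construction on perfectoid inputs, and then pull back to $\CAlg^{\an}$. The starting point is that the functor $R \mapsto L_{T(1)}\K(R^{\wedge_p}[1/p])$ on $\CAlg^{\an}$ factors through $\pi_0(R^{\wedge_p})$ and is itself an $\arcp$-hypersheaf on $\CAlg^{p-\cpl}$: combining the BCM rigidity equivalence (\ref{eq:bcm}) with Theorem \ref{th:statementfiltonarcpsheaf}(1), this functor agrees naturally with the hypersheaf $(L_{T(1)}\TC)^{\sharp}(\pi_0((-)^{\wedge_p}))$, so that the cyclotomic trace upgrades to a canonical equivalence of $\arcp$-hypersheaves of $\bbE_\infty$-algebras.

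The core step is to construct a natural filtered map from the Thomason filtration $\Fil^{\sbullet}L_{T(1)}\K((-)^{\wedge_p}[1/p])$ into $\Fil^{\sbullet}(L_{T(1)}\TC)^{\sharp}$ and verify it is an equivalence. I would test this on perfectoid rings $\overline{A}$ corresponding to perfect prisms $(A,(d))$: the Thomason filtration is complete there since $\Spec \overline{A}[1/p]$ has finite Krull dimension and bounded mod $p$ virtual cohomological dimension of residue fields, while Theorem \ref{th:statementfiltonarcpsheaf}(2) gives completeness of the hypersheafified filtration. Both are thus complete multiplicative filtrations of the common $\bbE_\infty$-ring $L_{T(1)}\TC(\overline{A})$. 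The filtered comparison would be obtained by identifying both with the double-speed Postnikov filtration on this common object, arising respectively from the pro-étale descent tower of $L_{T(1)}\K(\overline{A}[1/p])$ and from the Frobenius-fixed-point construction of Theorem \ref{th:statementfiltonarcpsheaf}(3). After the perfectoid case, $\arcp$-hyperdescent on both sides --- for the hypersheafified side by Theorem \ref{th:statementfiltonarcpsheaf}(1), and for the Thomason side via the extension of Thomason descent through the BCM equivalence --- propagates the equivalence to all of $\CAlg^{p-\cpl}$.

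Pulling back along the canonical hypersheafification map then yields the stated filtered equivalence on $\CAlg^{\an}$, compatible with the cyclotomic trace on underlying spectra by construction. The étale comparison corollary is immediate from taking $n$-th associated graded objects: the left side becomes $\RG_{\et}(\Spec R^{\wedge_p}[1/p], \bbZ_p(n))[2n]$ by the construction of the Thomason filtration, while the right side becomes $(\prism_{R/A}\{n\}[1/d])^{F=1}[2n]$ by Theorem \ref{th:statementfiltonarcpsheaf}(3). The main obstacle I anticipate is the perfectoid filtered comparison in the middle step: one must match the two filtrations as filtered objects, not merely at the level of graded pieces, without circularly invoking the étale comparison we aim to derive. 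The natural approach is to identify both as Postnikov filtrations associated with the cohomological descent of $L_{T(1)}\TC(\overline{A})$, exploiting its bounded cohomological amplitude and the naturality of the cyclotomic trace to pin down the filtration uniquely before extracting graded pieces.
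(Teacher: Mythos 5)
There are two genuine gaps in the proposal, both located at the step you yourself flag as "the main obstacle"; the planned route does not circumvent them.

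First, the opening assertion is false: $R\mapsto L_{T(1)}\K(R^{\wedge_p}[1/p])$ is \emph{not} an $\arcp$-hypersheaf on $\CAlg^{p-\cpl}$. By the Bhatt--Clausen--Mathew equivalence it agrees with $L_{T(1)}\TC$, and the entire point of introducing $(L_{T(1)}\TC)^{\sharp}$ in Construction~\ref{constr:filtonarcpsheaf} is that $L_{T(1)}\TC$ fails $\arcp$-hyperdescent in general; the comparison map $h\colon L_{T(1)}\TC(R)\to (L_{T(1)}\TC)^{\sharp}(\pi_0(R^{\wedge_p}))$ is only an equivalence under the finiteness hypothesis of Corollary~\ref{cor:compwiththomason}. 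So "the cyclotomic trace upgrades to a canonical equivalence of $\arcp$-hypersheaves" is not available as a starting point, and the later step of "propagating the equivalence by $\arcp$-hyperdescent on both sides" does not apply to the Thomason side as you have set it up.

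Second, the proposed perfectoid reduction is circular in exactly the way you worried. For a perfectoid $\overline{A}$ over $\mathcal{O}_C$, the $\arcp$-filtration $\Fil^{\sbullet}(L_{T(1)}\TC)^{\sharp}(\overline{A})$ is indeed the double-speed \emph{presheaf} Postnikov filtration on $L_{T(1)}\TC(\overline{A})$ (Construction~\ref{constr:qrsp}), but the Thomason filtration $\Fil^{\sbullet}L_{T(1)}\K(\overline{A}[1/p])$ is the \emph{\'etale} Postnikov filtration on $(\Spec\overline{A}[1/p])_{\et}$, and these disagree for any perfectoid $\overline{A}$ whose generic fiber has nontrivial \'etale cohomology (e.g.\ $\mathcal{O}_C\langle x^{1/p^{\infty}}\rangle$). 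Matching them on perfectoid objects \emph{is} the \'etale comparison theorem at that level, so asserting this as a base case presupposes what you intend to prove. The paper avoids the circularity by a different interpolation: it introduces the presheaf Postnikov filtration $F=\tau_{\geq 2\sbullet}L_{T(1)}\K$ as a third object between the Thomason filtration $F'=\tau_{\geq 2\sbullet}^{\et}L_{T(1)}\K(-[1/p])$ and $\Fil^{\sbullet}(L_{T(1)}\TC)^{\sharp}$, and Proposition~\ref{prop:compwiththomason} proves $(F)^{\sharp}\simeq (F')^{\sharp}$ on $\CAlg^{p-\cpl}$ by reduction to $\arcp$-stalks (ultraproducts $V$ of rank-one absolutely integrally closed valuation rings, where $V[1/p]$ is strictly Henselian local so the \'etale sheafification map is an equivalence). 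On the other side, $(F)^{\sharp}$ agrees with $\Fil^{\sbullet}(L_{T(1)}\TC)^{\sharp}$ through the cyclotomic trace because both are the $\arcp$-Postnikov filtration of the same hypersheaf. This stalkwise argument, together with the exhaustiveness criterion Lemma~\ref{lem:filtexhaustivegr} for the pulled-back filtration $\Fil^{\sbullet}_{\mathrm{pb}}L_{T(1)}\TC = h^{\ast}\Fil^{\sbullet}(L_{T(1)}\TC)^{\sharp}$, is what your "exploiting bounded cohomological amplitude and naturality of the trace to pin down the filtration" needs to be replaced with; as written it does not amount to a proof.
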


\subsection{Construction of the filtration I}\label{subsec:filtconstr}

We start with an analogue of Proposition \ref{prop:candisom} and Construction \ref{constr:frobfix} for localizing invariants. Let $R$ be a connective $\bbE_{1}$-ring. Recall that its $p$-complete topological Hochschild homology $\THH(R,\bbZ_{p})$ is a connective $p$-typical cyclotomic spectrum admitting the cyclotomic Frobenius $\varphi:\THH(R,\bbZ_{p})\to\THH(R,\bbZ_{p})^{tC_{p}}$ \cite{ns}. Taking homotopy $S^{1}$-invariants for the $S^{1}$-equivariant map $\varphi$ gives the Frobenius map $\varphi^{hS^{1}}:\TC^{-}(R,\bbZ_{p})\to \TP(R,\bbZ_{p})$, where $\TC^{-}(R,\bbZ_{p})\simeq \THH(R,\bbZ_{p})^{hS^{1}}$ and $\TP(R,\bbZ_{p})\simeq \THH(R,\bbZ_{p})^{tS^{1}}\simeq (\THH(R,\bbZ_{p})^{tC_{p}})^{h(S^{1}/C_{p})}$, cf. \cite[Lem. II.4.2]{ns}. On the other hand, there is a canonical map $\can:\TC^{-}(R,\bbZ_{p})\to\TP(R,\bbZ_{p})$ coming from the Tate construction.   

\begin{construction}[Frobenius on $T(1)$-local $\TP$]\label{constr:T(1)TP}
Let $R$ be a connective $\bbE_{1}$-ring which is $T(1)$-acyclic, e.g., a connective $\bbE_{1}$-algebra over $H\bbZ$, and consider the fiber sequence 
\begin{equation*}
\THH(R,\bbZ_{p})_{hS^{1}}[1]\to \TC^{-}(R,\bbZ_{p})\xrightarrow{\can}\TP(R,\bbZ_{p})
\end{equation*}
of $p$-complete spectra. Since the full subcategory of $T(1)$-acyclic spectra in $\Sp$ is stable under colimits and $p$-completions of spectra, we know the first term of this fiber sequence is $T(1)$-acyclic. In particular, the map $\can:L_{T(1)}\TC^{-}(R)\to L_{T(1)}\TP(R)$ obtained as the $T(1)$-localization of the canonical map $\TC^{-}(R,\bbZ_{p})\to\TP(R,\bbZ_{p})$ is an equivalence. Through this equivalence, we obtain an endomorphism
\begin{equation*}
F:L_{T(1)}\TP(R)\simeq L_{T(1)}\TC^{-}(R)\xrightarrow{L_{T(1)}\varphi^{hS^{1}}}L_{T(1)}\TP(R)
\end{equation*}
of $L_{T(1)}\TP(R)$. \\
\indent By Nikolaus-Scholze description of topological cyclic homology \cite{ns}, we know $\TC(R,\bbZ_{p})$ is described as an equalizer of $\varphi^{hS^{1}}$ and the canonical map. Taking $T(1)$-localization, we have
\begin{align*}
L_{T(1)}\TC(R) &\simeq \fib(L_{T(1)}\TC^{-}(R)\xrightarrow{L_{T(1)}\varphi^{hS^{1}}-\can}L_{T(1)}\TP(R))\\
                         &\simeq \fib(L_{T(1)}\TP(R)\xrightarrow{F-1}L_{T(1)}\TP(R)),
\end{align*}
or in short, $L_{T(1)}\TC(R)\simeq (L_{T(1)}\TP(R))^{F=1}$ in $\Sp^{\Cpl(p)}$.
\end{construction}

Now, we can directly construct and describe a motivic filtration on $T(1)$-local $\TC$ for quasiregular semiperfectoid rings which are algebras over the perfectoid ring $\mathcal{O}_{C}$ for any complete and algebraically closed nonarchimedean valued field $C$ of mixed characteristic $(0,p)$, e.g., $\bbC_{p} = \widehat{\overline{\bbQ_{p}}}$.\\
\indent Recall that for $R\in\CAlg^{\qrsp}$, the spectra $\THH(R,\bbZ_{p})$, $\TC^{-}(R,\bbZ_{p})$, and $\TP(R,\bbZ_{p})$ are even, and that the maps 
\begin{equation*}
\pi_{2n}(\varphi^{hS^{1}}),\ \pi_{2n}(\can):\pi_{2n}\TC^{-}(R,\bbZ_{p})\to\pi_{2n}\TP(R,\bbZ_{p})
\end{equation*} 
are naturally identified with the maps 
\begin{equation*}
\widehat{\varphi}\{n\},\ \can:\Fil^{n}_{\N}\prismc_{R}\{n\}\to\prismc_{R}\{n\}
\end{equation*} 
respectively for each $n\in\bbZ$, cf. \cite[Section 7]{bms2} and \cite[Section 13]{bs2}. \\
\indent Let $C$ be a complete and algebraically closed nonarchimedean valued field of mixed characteristic $(0,p)$, and let $\mathcal{O}_{C}$ be its ring of integers, which is a perfectoid valuation ring. By \cite[Th. 3.9]{amm} and \cite{susloc}, $\K_{\geq 0}(\mathcal{O}_{\bbC_{p}},\bbZ_{p})\simeq ku^{\wedge_{p}}$. Thus, for any $M\in\Mod_{\K_{\geq 0}(\mathcal{O}_{C},\bbZ_{p})}(\Sp^{\Cpl(p)})$, which in particular is a $ku^{\wedge_{p}}$-module, $L_{T(1)}M\simeq M[\beta^{-1}]$ by inverting a Bott element $\beta\in \pi_{2}\K(\mathcal{O}_{\bbC_{p}},\bbZ_{p})\simeq\pi_{2}\K(\mathcal{O}_{C},\bbZ_{p})\simeq\bbZ_{p}$ in $\Sp^{\Cpl(p)}$, cf. \cite[Constr. 2.8]{bcm}. \\
\indent Finally, let us fix some notations for objects pertinent to $C$. Take a compatible system of primitive $p$-power roots of unity in $\mathcal{O}_{C}$, and let $\epsilon\in\mathcal{O}_{C}^{\flat}$ be the element defined by the system. Then, we can take $d = \frac{[\epsilon]-1}{[\epsilon^{1/p}]-1}\in A_{\inf}(\mathcal{O}_{C}) = W(\mathcal{O}_{C}^{\flat})$ for a distinguished generator for the perfect prism $(A,(d)) = (A_{\inf}(\mathcal{O}_{C}),(d))$ corresponding to $\mathcal{O}_{C}$. Write $\CAlg^{\qrsp}_{\bbZ_{p},\mathcal{O}_{C}}$ for the full subcategory of $\CAlg^{p-\cpl}$ spanned by quasiregular semiperfectoid rings each of which admits an $\mathcal{O}_{C}$-algebra structure. In general, for any perfectoid ring $\overline{B}$, write $\CAlg^{\qrsp}_{\bbZ_{p},\overline{B}}$ (resp. $\CAlg^{\perfd}_{\bbZ_{p},\overline{B}}$) for the full subcategory of $\CAlg^{p-\cpl}$ spanned by objects of $\CAlg^{\qrsp}_{\overline{B}}$ (resp. $\CAlg^{\perfd}_{\overline{B}}$). 

\begin{construction}[Case of quasiregular semiperfectoid $\mathcal{O}_{C}$-algebras]\label{constr:qrsp}
Let $C$ be a complete and algebraically closed nonarchimedean valued field of mixed characteristic $(0,p)$, and let $\mathcal{O}_{C}$ be its valuation ring. For each $R\in\CAlg^{\qrsp}_{\bbZ_{p},\mathcal{O}_{C}}$, consider the filtered object 
\begin{align*}
\Fil^{\sbullet}L_{T(1)}\TC(R)&=\fib(L_{T(1)}\varphi^{hS^{1}}-\can:\tau_{\geq 2\sbullet}L_{T(1)}\TC^{-}(R)\to\tau_{\geq 2\sbullet}L_{T(1)}\TP(R))\\
                                            &\simeq \fib(F-1:\tau_{\geq 2\sbullet}L_{T(1)}\TP(R)\to\tau_{\geq 2\sbullet}L_{T(1)}\TP(R))
\end{align*} 
of $\Sp^{\Cpl(p)}$. Since $L_{T(1)}\TC^{-}(R)\simeq L_{T(1)}\TP(R)$ are even spectra by the preceding discussions, this construction gives a Postnikov filtration on $L_{T(1)}\TC(R)$. In particular, the filtered object $\Fil^{\sbullet}L_{T(1)}\TC(R)$ is complete and its underlying object is $L_{T(1)}\TC(R)$, i.e., $\colim_{n\to-\infty}\Fil^{n}L_{T(1)}\TC(R)\simeq L_{T(1)}\TC(R)$ in $\Sp^{\Cpl(p)}$ naturally on $R$. We have the following description of the associated graded pieces of this filtered object: 
\begin{lemma}
For each $n\in\bbZ$, there is a natural equivalence 
\begin{equation*}
\gr^{n}L_{T(1)}\TC(R)\simeq\left(\prism_{R/A}\{n\}[1/d]\right)^{F=1}[2n]
\end{equation*}
in $\Dh(\bbZ_{p})$ for $R\in\CAlg^{\qrsp}_{\bbZ_{p},\mathcal{O}_{C}}$.
\begin{proof}
Recall that the cyclotomic trace map sends a generator $\beta\in\pi_{2}\K(\mathcal{O}_{C},\bbZ_{p})$ to $([\epsilon]-1)$ (resp. $([\epsilon^{1/p}]-1)$) in $\pi_{\ast}\TP(\mathcal{O}_{C},\bbZ_{p})$ (resp. $\pi_{\ast}\TC^{-}(\mathcal{O}_{C},\bbZ_{p})$) up to graded units \cite[Th. 1.3.6]{hn}. From the description of $T(1)$-localizations for $\K_{\geq0}(\mathcal{O}_{C},\bbZ_{p})$-modules $\TC^{-}(R,\bbZ_{p})$ and $\TP(R,\bbZ_{p})$ above, we have natural isomorphisms
\begin{equation*}
\pi_{2n}L_{T(1)}\TC^{-}(R)\simeq \Fil^{n}_{\N}\prismc_{R}\{n\}[1/d]~~\text{and}~~\pi_{2n}L_{T(1)}\TP(R)\simeq\prismc_{R}\{n\}[1/d] 
\end{equation*} 
for each $n\in\bbZ$ as in \cite[Cor. 2.11]{bcm}. Through the description of maps $\varphi^{hS^{1}}$ and $\can$ for quasiregular semiperfectoid rings recalled above, their $T(1)$-localizations $L_{T(1)}\varphi^{hS^{1}} = \varphi^{hS^{1}}[\beta^{-1}]$ and $\can = \can[\beta^{-1}]:L_{T(1)}\TC^{-}(R)\to L_{T(1)}\TP(R)$ for $R\in\CAlg^{\qrsp}_{\mathbb{Z}_{p},\mathcal{O}_{C}}$ can be described as $\pi_{2n}L_{T(1)}\varphi^{hS^{1}} = \widehat{\varphi}\{n\}[\frac{1}{d}]$ and $\pi_{2n}(\can) = \can:\Fil^{n}_{\N}\prismc_{R}\{n\}[\frac{1}{d}]\to\prismc_{R}\{n\}[\frac{1}{d}]$. In particular, we have a natural identification 
\begin{equation*}
\begin{tikzcd}
\pi_{2n}F: \arrow[d, equal] &[-2.5em] \pi_{2n}L_{T(1)}\TP(R) \arrow[d,"\sim"'] & \pi_{2n}L_{T(1)}\TC^{-}(R) \arrow[r, "\pi_{2n}L_{T(1)}\varphi^{hS^{1}}"] \arrow[l,"\can"',"\sim"] \arrow[d,"\sim"] &[3em]  \pi_{2n}L_{T(1)}\TP(R) \arrow[d,"\sim"] \\ 
\widehat{F}: & \prismc_{R}\{n\}[1/d] & \Fil^{n}_{\N}(\prismc_{R}\{n\})[1/d] \arrow[r, "\widehat{\varphi}\{n\}{\left[\frac{1}{d}\right]}"'] \arrow[l,"\can","\sim"'] &\prismc_{R}\{n\}[1/d]
\end{tikzcd}
\end{equation*} 
of $\pi_{2n}F$ and $\widehat{F}$. Through this identification and Construction \ref{constr:frobfix} (1), we have 
\begin{equation*}
\gr^{n}L_{T(1)}\TC(R)\simeq \fib(\pi_{2n}(L_{T(1)}\TP(R))[2n]\xrightarrow{\pi_{2n}F-1}\pi_{2n}(L_{T(1)}\TP(R))[2n])\simeq \left(\prismc_{R}\{n\}[1/d]\right)^{F=1}[2n].
\end{equation*} 
By Proposition \ref{prop:nygdc}, we can decomplete the Nygaard completion inside the Frobenius fixed locus and identify the associated graded piece object with $\left(\prism_{R/A}\{n\}[\frac{1}{d}]\right)^{F=1}[2n]$. \end{proof}
\end{lemma}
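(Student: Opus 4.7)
\medskip

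\noindent\textbf{Proof proposal.} The plan is to reduce the computation of the associated graded pieces to a computation on $\pi_{2n}$ using the evenness of the relevant spectra, and then invoke the existing identification of homotopy groups of $\TC^{-}$ and $\TP$ with Nygaard-filtered prismatic cohomology twists.

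First, I would recall that for $R\in\CAlg^{\qrsp}$ the spectra $\THH(R,\bbZ_{p})$, $\TC^{-}(R,\bbZ_{p})$, and $\TP(R,\bbZ_{p})$ are concentrated in even degrees, with $\pi_{2n}\TC^{-}(R,\bbZ_{p})\simeq \Fil^{n}_{\N}\prismc_{R}\{n\}$ and $\pi_{2n}\TP(R,\bbZ_{p})\simeq \prismc_{R}\{n\}$; moreover, under these identifications the maps $\pi_{2n}(\varphi^{hS^{1}})$ and $\pi_{2n}(\can)$ are identified with $\widehat{\varphi}\{n\}$ and the canonical inclusion $\can$ respectively. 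Since these are $\K_{\geq 0}(\mathcal{O}_{C},\bbZ_{p})\simeq ku^{\wedge_{p}}$-modules, $T(1)$-localization of these spectra amounts to inverting a Bott class $\beta\in\pi_{2}\K(\mathcal{O}_{C},\bbZ_{p})$, so $L_{T(1)}\TC^{-}(R)$ and $L_{T(1)}\TP(R)$ remain even. By Hesselholt--Nikolaus (or \cite[Th.~1.3.6]{hn}), $\tr(\beta)$ corresponds (up to a unit) to the element $[\epsilon^{1/p}]-1$ in $\TC^{-}$ and to $[\epsilon]-1$ in $\TP$, whose ratio is the distinguished generator $d$. Consequently, inverting $\beta$ on $\pi_{2\ast}$ corresponds to inverting $d$, giving natural identifications
\begin{equation*}
\pi_{2n}L_{T(1)}\TC^{-}(R)\simeq \Fil^{n}_{\N}(\prismc_{R}\{n\})[1/d],\qquad \pi_{2n}L_{T(1)}\TP(R)\simeq \prismc_{R}\{n\}[1/d].
\end{equation*}

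Next, since the filtration is defined by the truncation $\tau_{\geq 2\sbullet}$ (a Postnikov filtration, as its underlying filtered spectrum is the fiber of a map between even spectra), the $n$-th associated graded of $\Fil^{\sbullet}L_{T(1)}\TC(R)$ is computed by taking the fiber of the induced map on $\pi_{2n}[2n]$. Using the description of Construction \ref{constr:T(1)TP}, where $F$ is the composition of $L_{T(1)}\varphi^{hS^{1}}$ with $\can^{-1}$, we obtain on $\pi_{2n}$ precisely the map $\widehat{F}=\widehat{\varphi}\{n\}[1/d]\circ\can^{-1}$ of Construction \ref{constr:frobfix}(1). Therefore
\begin{equation*}
\gr^{n}L_{T(1)}\TC(R) \simeq \fib\bigl(\prismc_{R}\{n\}[1/d]\xrightarrow{\widehat{F}-1}\prismc_{R}\{n\}[1/d]\bigr)[2n] \simeq \bigl(\prismc_{R}\{n\}[1/d]\bigr)^{F=1}[2n].
\end{equation*}

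Finally, Proposition \ref{prop:nygdc} tells us that the natural map $(\prism_{R/A}\{n\}[1/d])^{F=1}\to(\prismc_{R}\{n\}[1/d])^{F=1}$ is an equivalence in $\Dh(\bbZ_{p})$, yielding the desired description. The main subtlety—which is already handled by the preparatory results in \ref{subsec:dinvprism}—is this last decompletion step, since $F$ is only defined on the Nygaard-filtered piece rather than the full prismatic complex; Proposition \ref{prop:candisom} provides the identification $\Fil^{n}_{\N}\prismc_{R}\{n\}[1/d]\simeq \prismc_{R}\{n\}[1/d]$ which makes sense of $\widehat{F}$ as an endomorphism after inverting $d$, and Proposition \ref{prop:nygdc} then exchanges the Nygaard completion for the uncompleted prismatic complex. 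Everything else is essentially bookkeeping with the BMS2--BL computation of $\pi_{\ast}\TC^{-}/\TP$ and the chromatic identification of the Bott element with the distinguished generator.
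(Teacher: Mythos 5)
Your proposal matches the paper's proof essentially step for step: it reduces to $\pi_{2n}$ via evenness, uses the BMS2/Bhatt--Lurie identification of $\pi_{2\ast}\TC^-,\TP$ with Nygaard-filtered prismatic complexes, invokes Hesselholt--Nikolaus and the Bott element to translate $T(1)$-localization into $d$-inversion (as in \cite[Cor.~2.11]{bcm}), identifies $\pi_{2n}F$ with $\widehat{F}$ from Construction \ref{constr:frobfix}, and finally decompletes via Proposition \ref{prop:nygdc}. Your closing remark about the role of Proposition \ref{prop:candisom} in making $\widehat{F}$ a genuine endomorphism after inverting $d$ is a correct and useful clarification of a point the paper leaves implicit.
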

\noindent Note that if we use Proposition \ref{prop:etcomp}, the $n$-th associated graded object can further be identified with $\RG_{\et}(\Spec R[\frac{1}{p}],\bbZ_{p}(n))[2n]$.
\end{construction}

For general $p$-complete commutative rings, we will construct filtrations on the $\arcp$-hypersheafification of $L_{T(1)}\TC$. In fact, $L_{T(1)}\TC$ turns out to be $\arcp$-hypercomplete on quasiregular semiperfectoid $\mathcal{O}_{C}$-algebras (Proposition \ref{prop:T(1)TCarcphypqrsp}), and the filtrations for general $p$-complete commutative rings will agree with the filtrations from Construction \ref{constr:qrsp} on $\CAlg^{\qrsp}_{\mathbb{Z}_{p},\mathcal{O}_{C}}$. First, let us check the case of perfectoid $\mathcal{O}_{C}$-algebras.   

\begin{lemma}\label{lem:T(1)TCarcphypperfd}
Let $C$ be as in Construction \ref{constr:qrsp}. Then, $L_{T(1)}\TC$ is an $\arcp$-hypersheaf on $\CAlg^{\perfd}_{\mathbb{Z}_{p},\mathcal{O}_{C}}$. 
\begin{proof}
First, observe that $R\mapsto \tau_{\geq 1}L_{T(1)}\TC(R)$ is an $\arcp$-hypersheaf on $\CAlg^{\perfd}_{\bbZ_{p},\mathcal{O}_{C}}$. In fact, each $R\mapsto\tau_{[2i-1,2i]}L_{T(1)}\TC(R)\simeq \left(\prism_{R/A}\{i\}[\frac{1}{d}]\right)^{F=1}[2i]$ for $i\geq 1$ is an $\arcp$-hypersheaf on $\CAlg^{\perfd}_{\bbZ_{p},\mathcal{O}_{C}}$ by Lemma \ref{lem:prismarcphypersheaf}. Thus, each presheaf $\tau_{[1,2n]}L_{T(1)}\TC(-)$ is a hypersheaf, and so is the limit $\tau_{\geq 1}L_{T(1)}\TC(-) = \lim_{n\to\infty}\tau_{[1,2n]}L_{T(1)}\TC(-)$. \\
\indent Combining this with the facts that ($0$-)truncated spectra are $T(1)$-acyclic and that $T(1)$-local spectra are closed under limits of spectra, the claimed result follows, cf. \cite[Proof of Prop. 5.10]{matTR}. In fact, let $R\to R^{\sbullet}$ be an $\arcp$-hypercover in $\CAlg^{\perfd}_{\mathbb{Z}_{p},\mathcal{O}_{C}}$, and consider the fiber sequence 
\begin{equation*}
A_{T(1)}(\tau_{\geq 1}L_{T(1)}\TC(R^{r}))\to \tau_{\geq 1}L_{T(1)}\TC(R^{r})\to L_{T(1)}(\tau_{\geq 1}L_{T(1)}\TC(R^{r}))\simeq L_{T(1)}\TC(R^{r})
\end{equation*}
for each $[r]\in\Delta_{s}$, cf. \cite[Not. A.5.1.1]{sag}. Here, the natural equivalence describing the third term of the sequence is obtained from the fiber sequence 
\begin{equation*}
L_{T(1)}(\tau_{\geq 1}L_{T(1)}\TC(R^{r}))\to L_{T(1)}(L_{T(1)}\TC(R^{r}))\to L_{T(1)}(\tau_{<1}L_{T(1)}\TC(R^{r}))\simeq 0,
\end{equation*}
whose second term is naturally equivalent to $L_{T(1)}\TC(R^{r})$ via the localization map $L_{T(1)}\TC(R^{r})\to L_{T(1)}(L_{T(1)}\TC(R^{r}))$. Thus, the second map of the first fiber sequence is equivalent to a 1-connective cover map, and the fiber term $A_{T(1)}(\tau_{\geq 1}L_{T(1)}\TC(R^{r}))$ is $0$-truncated. By taking limits on the first fiber sequence, we have a fiber sequence 
\begin{equation*}
\lim_{\Delta_{s}}A_{T(1)}(\tau_{\geq 1}L_{T(1)}\TC(R^{\sbullet}))\to \tau_{\geq 1}L_{T(1)}\TC(R)\to \lim_{\Delta_{s}}L_{T(1)}\TC(R^{\sbullet}), 
\end{equation*}
where we used that $\tau_{\geq1}L_{T(1)}\TC(-)$ is an $\arcp$-hypersheaf to describe the middle term of the sequence. Since the first term is $T(1)$-acyclic and the third term is $T(1)$-local, we know that the third term is a $T(1)$-localization of the middle term, i.e., $L_{T(1)}\TC(R)\simeq \lim_{\Delta_{s}}L_{T(1)}\TC(R^{\sbullet})$.  
\end{proof}
\end{lemma}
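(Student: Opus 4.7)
The plan is to combine the Postnikov filtration description of $L_{T(1)}\TC$ on quasiregular semiperfectoid $\mathcal{O}_{C}$-algebras from Construction \ref{constr:qrsp} (which specializes to perfectoid $\mathcal{O}_{C}$-algebras, as perfectoids are quasiregular semiperfectoid) with the $\arcp$-hyperdescent of Frobenius-fixed prismatic complexes established in Lemma \ref{lem:prismarcphypersheaf}. The upshot will be that the graded pieces of the Postnikov filtration on $L_{T(1)}\TC(R)$ for $R\in\CAlg^{\perfd}_{\bbZ_{p},\mathcal{O}_{C}}$ are identified with shifts of $(\prism_{R/A}\{i\}[1/d])^{F=1}$, which are already known to be $\arcp$-hypersheaves.

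First, I would show that $R\mapsto \tau_{\geq 1}L_{T(1)}\TC(R)$ is an $\arcp$-hypersheaf on $\CAlg^{\perfd}_{\bbZ_{p},\mathcal{O}_{C}}$. For each $i\geq 1$, the two-step Postnikov layer $\tau_{[2i-1,2i]}L_{T(1)}\TC(-)$ is equivalent (up to a shift) to $(\prism_{-/A}\{i\}[1/d])^{F=1}[2i]$, which is an $\arcp$-hypersheaf by Lemma \ref{lem:prismarcphypersheaf}. Proceeding inductively using fiber sequences of Postnikov layers, every finite truncation $\tau_{[1,2n]}L_{T(1)}\TC(-)$ is a hypersheaf; since hypersheaves are closed under limits, the limit $\tau_{\geq 1}L_{T(1)}\TC(-)=\lim_{n}\tau_{[1,2n]}L_{T(1)}\TC(-)$ is itself a hypersheaf.

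Next, I would bootstrap from $\tau_{\geq 1}L_{T(1)}\TC$ to the full $L_{T(1)}\TC$ by exploiting $T(1)$-locality. The fiber of the $1$-connective cover map $\tau_{\geq 1}L_{T(1)}\TC(R)\to L_{T(1)}\TC(R)$ is a $0$-truncated spectrum, hence an $H\bbZ$-module, hence $T(1)$-acyclic (since $v_{1}$ acts by zero on any $H\bbZ$-module). For an $\arcp$-hypercover $R\to R^{\sbullet}$, the first step gives $\tau_{\geq 1}L_{T(1)}\TC(R)\simeq\lim_{\Delta_{s}}\tau_{\geq 1}L_{T(1)}\TC(R^{\sbullet})$, and the fiber of the natural comparison $\tau_{\geq 1}L_{T(1)}\TC(R^{\sbullet})\to L_{T(1)}\TC(R^{\sbullet})$ is termwise $0$-truncated. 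Since $0$-truncated spectra (being $H\bbZ$-modules) are closed under limits, the limit of this fiber is still $T(1)$-acyclic, while $\lim_{\Delta_{s}}L_{T(1)}\TC(R^{\sbullet})$ is $T(1)$-local (as a limit of $T(1)$-local spectra). Applying $L_{T(1)}$ to the resulting fiber sequence identifies $L_{T(1)}\TC(R)\simeq L_{T(1)}(\tau_{\geq 1}L_{T(1)}\TC(R))\simeq \lim_{\Delta_{s}}L_{T(1)}\TC(R^{\sbullet})$, proving hyperdescent.

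The main obstacle is the last step: verifying that the low-degree ($\leq 0$) discrepancy between $L_{T(1)}\TC$ and its $1$-connective cover does not spoil hyperdescent. This relies on the closure of $T(1)$-acyclic spectra under the relevant limits in our setup, which succeeds precisely because the error term is uniformly concentrated in nonpositive degrees (hence an $H\bbZ$-module), and on the fact that $L_{T(1)}\TC(R)$ is recovered as the $T(1)$-localization of $\tau_{\geq 1}L_{T(1)}\TC(R)$, so that sandwiching between a $T(1)$-acyclic fiber and a $T(1)$-local total space pins down the answer.
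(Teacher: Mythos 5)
Your proposal is correct and follows essentially the same two-step route as the paper: first establish $\arcp$-hyperdescent for $\tau_{\geq 1}L_{T(1)}\TC$ via the identification of its two-step Postnikov layers with (shifts of) Frobenius fixed points of $d$-inverted prismatic complexes, which are hypersheaves by Lemma \ref{lem:prismarcphypersheaf}, and then pass to $L_{T(1)}\TC$ using the $T(1)$-acyclic/$T(1)$-local fracture along the $1$-connective cover, exactly as in the paper. One small slip in the justification: a $0$-truncated spectrum is not automatically an $H\bbZ$-module (for instance $\Sigma^{-2}\tau_{\leq 2}\bbS$ has a nontrivial $k$-invariant and is not formal), so the parenthetical appeal to $v_{1}$ acting by zero on $H\bbZ$-modules does not directly apply; what is needed — and what the paper actually invokes — is the standard fact that bounded-above spectra are $T(1)$-acyclic (e.g., via the colimit description of $T(1)\wedge(-)$ along a $v_{1}$-self map), together with the fact that $0$-truncated spectra are closed under limits because truncation functors preserve limits.
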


We need some preparations to extend this hyperdescent property on perfectoid $\mathcal{O}_{C}$-algebras to quasiregular semiperfectoid $\mathcal{O}_{C}$-algebras. 

\begin{lemma}\label{lem:perfdarccov}
Let $R\to S$ be a $p$-complete arc-cover in $\CAlg^{p-\cpl}$ such that $R$ and $S$ are quasiregular semiperfectoid rings. Then, $R_{\perfd}\to S_{\perfd}$ is a $p$-complete arc-cover in $\CAlg^{\perfd}$. 
\begin{proof}
Here, $(-)_{\perfd}$ denotes the perfectoidization of quasiregular semiperfectoid rings, cf. \cite[Cor. 7.3]{bs2} or \cite[Section 8]{bs2} for the general case. To check that the induced map $R_{\perfd}\to S_{\perfd}$ is a $p$-complete arc-cover, let $R_{\perfd}\to V$ be a map into a $p$-complete rank $\leq1$ valuation ring $V$ whose field of fractions is algebraically closed. Composing it with the natural map $R \to R_{\perfd}$ gives a map $R\to V$. As $R\to S$ is a $p$-complete $\arc$-cover, there is a diagram
\begin{equation*}
\begin{tikzcd}
R \arrow[r] \arrow[rr, bend left=25] \arrow[d] & R_{\perfd} \arrow[r] \arrow[d] & V \arrow[d, hook]\\
S \arrow[r] \arrow[rr, bend right=25] & S_{\perfd} \arrow[r, dashrightarrow] & W,
\end{tikzcd}
\end{equation*}
with commutative outer square, where $V\hookrightarrow W$ is an extension of valuation rings and $W$ satisfies the same assumptions as $V$, cf. \cite[2.2.1]{cs}. Since both $V$ and $W$ are perfectoid, applying perfectoidization $(-)_{\perfd}$ to the square gives the desired right side commutative square in the diagram. 
\end{proof}
\end{lemma}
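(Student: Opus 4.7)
The plan is to unfold the definition of a $p$-complete arc-cover and reduce the statement to the universal property of perfectoidization for quasiregular semiperfectoid rings. Recall from \cite{cs} that it suffices to test the covering condition against maps $R_{\perfd}\to V$ where $V$ is a $p$-adically complete rank $\leq 1$ valuation ring with algebraically closed fraction field; on such $V$, the $p$-adically complete rank $\leq 1$ extensions $V\hookrightarrow W$ appearing in the definition can also be taken to have the same properties, so $V$ and $W$ will automatically be perfectoid.

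First I would take an arbitrary test map $R_{\perfd}\to V$ as above and precompose with the canonical map $R\to R_{\perfd}$ to obtain a test map $R\to V$. The hypothesis that $R\to S$ is a $p$-complete arc-cover then provides an extension $V\hookrightarrow W$ of rank $\leq 1$ $p$-complete valuation rings (which I can arrange to remain inside the class with algebraically closed fraction field) and a map $S\to W$ making the outer square
\begin{equation*}
\begin{tikzcd}
R \arrow[r] \arrow[d] & V \arrow[d, hook] \\
S \arrow[r] & W
\end{tikzcd}
\end{equation*}
commute.

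The key step is to promote this to the required diagram with $R_{\perfd}$ and $S_{\perfd}$. Since $V$ and $W$ are perfectoid, the universal property of perfectoidization (applied to the quasiregular semiperfectoid rings $R$ and $S$, cf.\ \cite[Cor. 7.3]{bs2}) shows that $R\to V$ factors uniquely through $R\to R_{\perfd}\to V$ and $S\to W$ factors uniquely through $S\to S_{\perfd}\to W$. The two factorizations are compatible with the map $R_{\perfd}\to S_{\perfd}$ by naturality of perfectoidization, producing a commutative square
\begin{equation*}
\begin{tikzcd}
R_{\perfd} \arrow[r] \arrow[d] & V \arrow[d, hook] \\
S_{\perfd} \arrow[r] & W
\end{tikzcd}
\end{equation*}
which witnesses $R_{\perfd}\to S_{\perfd}$ as a $p$-complete arc-cover.

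I do not expect a serious obstacle: the argument is purely formal given the universal property of perfectoidization. The only point one must be a little careful about is the standard reduction that allows one to assume $V$ and $W$ are perfectoid (i.e.\ have algebraically closed fraction field), so that perfectoidization descends the solid arrows of the outer square to arrows out of $R_{\perfd}$ and $S_{\perfd}$; this is available from \cite[2.2.1]{cs}.
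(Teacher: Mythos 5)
Your proof is correct and follows the same route as the paper: test against a map $R_{\perfd}\to V$ with $V$ perfectoid, pull back along $R\to R_{\perfd}$, invoke the arc-cover property of $R\to S$, and then use the universal property (functoriality) of perfectoidization on the resulting square with perfectoid targets $V$ and $W$. The only cosmetic difference is that you phrase the last step via the universal property of $(-)_{\perfd}$ while the paper phrases it as applying the functor $(-)_{\perfd}$ to the square; these are the same observation.
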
  

\begin{lemma}\label{lem:spgrperfd}
Let $S\in\CAlg^{p-\cpl}$ be a semiperfectoid ring, and let $(A,(d))$ be a perfect prism whose corresponding perfectoid ring $\overline{A}$ maps to $S$. Then, for each $n\in\bbZ$, there is a natural equivalence $\left(\prism_{S/A}\{n\}[\frac{1}{d}]\right)^{F=1}\simeq \left(\prism_{S_{\perfd}/A}\{n\}[\frac{1}{d}]\right)^{F=1}$. 
\begin{proof}
Since $(-[\frac{1}{d}])^{F=1}$ is constant on perfection \cite[Lem. 9.2]{bs2}, it suffices to check that the natural map $(\prism_{S/A})_{\perf}\to(\prism_{S_{\perfd}/A})_{\perf}$ is an equivalence in $\Dh(A)$. Since $S_{\perfd}$ is perfectoid \cite[Cor. 7.3 and Prop. 8.5]{bs2}, $(\prism_{S_{\perfd}/A})_{\perf}\simeq (W(S_{\perfd}^{\flat}))_{\perf} = W(S_{\perfd}^{\flat})$ and modulo $d$ the map induces an equivalence $S_{\perfd}\simeq W(S_{\perfd}^{\flat})/d$ in $\Dh(\bbZ_{p})$. Thus, the original map is an equivalence.  
\end{proof}
\end{lemma}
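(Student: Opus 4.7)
The plan is to strip away both the Breuil--Kisin twist and the Frobenius-fixed structure and reduce the claim to the assertion that the natural map
\begin{equation*}
(\prism_{S/A})_{\perf} \to (\prism_{S_{\perfd}/A})_{\perf}
\end{equation*}
is an equivalence in $\Dh(A)$. Over a perfect prism $(A,(d))$, the line bundle $A\{n\}$ is an invertible $A$-module on which the Frobenius acts by a unit times $d^{n}$; consequently the functor $\left((-)\{n\}[1/d]\right)^{F=1}$ factors through $(-)_{\perf}$. This is exactly the content of \cite[Lem.~9.2]{bs2}, already invoked in the proof of Lemma \ref{lem:prismarcphypersheaf}, so I may forget the twist at the outset.

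Next I would verify the reduced statement. Since $S_{\perfd}$ is itself perfectoid (cf.~\cite[Cor.~7.3 and Prop.~8.5]{bs2}), the prismatic cohomology $\prism_{S_{\perfd}/A}$ is the perfect prism $W(S_{\perfd}^{\flat})$, which is already equal to its own perfection, and its reduction modulo $d$ recovers $S_{\perfd}$. On the other hand, the universal property of perfectoidization identifies $\left((\prism_{S/A})_{\perf}\right)/d$ with $S_{\perfd}$ as well, compatibly with the natural comparison map $S \to S_{\perfd}$. Hence modulo $d$ the map $(\prism_{S/A})_{\perf} \to W(S_{\perfd}^{\flat})$ is the identity on $S_{\perfd}$ in $\Dh(\bbZ_{p})$.

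Finally, both $(\prism_{S/A})_{\perf}$ and $W(S_{\perfd}^{\flat})$ are derived $(p,d)$-complete objects of $\Dh(A)$, so derived Nakayama promotes the equivalence modulo $d$ to an equivalence in $\Dh(A)$, completing the argument. The only real subtlety I anticipate is the initial reduction to the untwisted perfection statement; once \cite[Lem.~9.2]{bs2} is in hand, the rest is formal from the construction of $S_{\perfd}$ and the structure of $\prism_{S_{\perfd}/A}$.
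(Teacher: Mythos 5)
Your proposal is correct and follows essentially the same route as the paper: reduce via \cite[Lem.~9.2]{bs2} to the untwisted perfection statement, identify $(\prism_{S_{\perfd}/A})_{\perf}$ with $W(S_{\perfd}^{\flat})$ since $S_{\perfd}$ is perfectoid, and check the map mod $d$ is the identity on $S_{\perfd}$ before invoking derived Nakayama. (One small slip in the motivational aside: the Frobenius on $A\{n\}$ scales the generator by $d^{-n}$, not $d^{n}$; this does not affect the argument since you are only using it to justify invoking \cite[Lem.~9.2]{bs2}.)
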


\begin{proposition}\label{prop:T(1)TCinvperfdqrsp}
Let $C$ be as in Construction \ref{constr:qrsp}. For $S\in\CAlg^{\qrsp}_{\mathbb{Z}_{p},\mathcal{O}_{C}}$, the natural map $S\to S_{\perfd}$ induces an equivalence $L_{T(1)}\TC(S)\simeq L_{T(1)}\TC(S_{\perfd})$. 
\begin{proof}
Consider the functor $\Fil^{\sbullet}:=S\mapsto \Fil^{\sbullet}L_{T(1)}\TC(S)$ on $\CAlg^{\qrsp}_{\mathbb{Z}_{p},\mathcal{O}_{C}}$ provided by Construction \ref{constr:qrsp}, which assigns a Postnikov filtration on each $L_{T(1)}\TC(S)$. Due to the exhaustiveness of the filtration, it suffices to check that the functor induces an equivalence when applied to the perfectoidization map $S\to S_{\perfd}$. By Lemma \ref{lem:spgrperfd}, each associated graded pieces functor induces equivalences on perfectoidization maps. Thus, by induction, $\Fil^{n}/\Fil^{n+m}$ has the same property for each $n\in\bbZ$ and $m\geq 1$. Since $\Fil^{n}\simeq \lim_{m\to\infty}\Fil^{n}/\Fil^{n+m}$ by completeness of the filtration, $\Fil^{n}$ induces equivalences on perfectoidization maps. 
\end{proof}
\end{proposition}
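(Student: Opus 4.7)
The plan is to exploit the complete, exhaustive filtration $\Fil^{\sbullet}L_{T(1)}\TC$ on $\CAlg^{\qrsp}_{\bbZ_p,\mathcal{O}_C}$ from Construction \ref{constr:qrsp}, whose associated graded pieces have been identified as $\gr^n L_{T(1)}\TC(R) \simeq \bigl(\prism_{R/A}\{n\}[1/d]\bigr)^{F=1}[2n]$. Since $S_{\perfd}$ is perfectoid and hence in particular quasiregular semiperfectoid over $\mathcal{O}_C$, both $S$ and $S_{\perfd}$ lie in the domain of this filtration, so the perfectoidization map $S\to S_{\perfd}$ induces a morphism of complete, exhaustive filtered $p$-complete spectra, and we may try to upgrade an equivalence on graded pieces to an equivalence on the underlying objects.

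The strategy proceeds in three steps. First, Lemma \ref{lem:spgrperfd} applied to each twist $n\in\bbZ$ yields immediately that the induced map $\gr^n L_{T(1)}\TC(S)\to \gr^n L_{T(1)}\TC(S_{\perfd})$ is an equivalence. Second, a routine induction on $m\geq 1$ using the fiber sequences
\begin{equation*}
\gr^{n+m-1} L_{T(1)}\TC \longrightarrow \Fil^n/\Fil^{n+m}\, L_{T(1)}\TC \longrightarrow \Fil^n/\Fil^{n+m-1}\, L_{T(1)}\TC
\end{equation*}
shows that each finite quotient $\Fil^n/\Fil^{n+m}\, L_{T(1)}\TC$ sends the perfectoidization map to an equivalence. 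Third, using the completeness of the filtration to write $\Fil^n L_{T(1)}\TC \simeq \lim_m \Fil^n/\Fil^{n+m}\, L_{T(1)}\TC$, and the fact that equivalences in $\Sp^{\Cpl(p)}$ are stable under limits, we conclude that each $\Fil^n L_{T(1)}\TC$ sends the map to an equivalence; exhaustiveness $L_{T(1)}\TC \simeq \colim_{n\to -\infty} \Fil^n L_{T(1)}\TC$ then transfers this to the underlying spectrum.

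The argument is essentially formal once the filtration is in place, so there is no real obstacle to overcome. The delicate step, in the sense of being the one non-formal input, is the completeness-based passage through the inverse limit over $m$, which is precisely where the favorable convergence of the Postnikov filtration on the even spectrum $L_{T(1)}\TP(R)$ underlying Construction \ref{constr:qrsp} is used. Everything else reduces to tracking fiber sequences and invoking Lemma \ref{lem:spgrperfd} for the graded-piece comparison.
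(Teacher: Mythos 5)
Your proposal is correct and follows essentially the same route as the paper: apply Lemma \ref{lem:spgrperfd} to the graded pieces of the Postnikov filtration from Construction \ref{constr:qrsp}, propagate the equivalence to the finite quotients $\Fil^n/\Fil^{n+m}$ by induction on $m$, pass to the limit using completeness, and use exhaustiveness to recover the underlying object. The only differences are cosmetic: you spell out the fiber sequence driving the induction and the stability of equivalences under limits, which the paper leaves implicit.
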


\begin{proposition}\label{prop:T(1)TCarcphypqrsp}
Let $C$ be as in Construction \ref{constr:qrsp}. Then, $L_{T(1)}\TC$ is an $\arcp$-hypersheaf on $\CAlg^{\qrsp}_{\mathbb{Z}_{p},\mathcal{O}_{C}}$. 
\begin{proof}
We first check that $L_{T(1)}\TC$ is an $\arcp$-sheaf on $\CAlg^{\qrsp}_{\mathbb{Z}_{p},\mathcal{O}_{C}}$. It suffices to check that it is a $p$-complete arc sheaf and vanishes on $\bbF_{p}$-algebras, and the latter condition follows from the vanishing $L_{T(1)}\K(\bbF_{p})\simeq0$. To check the first condition, take a $p$-complete arc-cover $R\to S$ in $\CAlg^{\qrsp}_{\mathbb{Z}_{p},\mathcal{O}_{C}}$. By Proposition \ref{prop:T(1)TCinvperfdqrsp} and Lemma \ref{lem:perfdarccov}, the question of descent with respect to the Cech nerve of $R\to S$ reduces to that of a $p$-complete arc cover $R_{\perfd}\to S_{\perfd}$ in $\CAlg^{\perfd}_{\mathcal{O}_{C}}$, cf. \cite[Prop. 8.13]{bs2}. The latter case is guaranteed by Lemma \ref{lem:T(1)TCarcphypperfd}. \\
\indent To verify that $L_{T(1)}\TC$ is an $\arcp$-hypersheaf on $\CAlg^{\qrsp}_{\mathbb{Z}_{p},\mathcal{O}_{C}}$, first note that for each $i\geq1$, the presheaf $R\mapsto\tau_{[2i-1, 2i]}L_{T(1)}\TC(R)\simeq \left(\prism_{R/A}\{i\}[\frac{1}{d}]\right)^{F=1}[2i]$ is an $\arcp$-hypersheaf by Lemma \ref{lem:prismarcphypersheaf}. By induction, each $R\mapsto\tau_{[1,2n]}L_{T(1)}\TC(R)$ is an $\arcp$-hypersheaf, and hence the presheaf limit 
\begin{equation*}
\lim_{n\to\infty}\tau_{[1,2n]}L_{T(1)}\TC(-)\simeq \tau_{\geq 1}L_{T(1)}\TC(-)
\end{equation*} 
is an $\arcp$-hypersheaf on $\CAlg^{\qrsp}_{\mathbb{Z}_{p},\mathcal{O}_{C}}$. Now, in the fiber sequence of presheaves 
\begin{equation*}
\tau_{\geq 1}L_{T(1)}\TC(-)\to L_{T(1)}\TC(-)\to \tau_{\leq 0}L_{T(1)}\TC(-),
\end{equation*} 
the third object is also an $\arcp$-sheaf, and due to its truncatedness, it is an $\arcp$-hypersheaf. Thus, the middle object $L_{T(1)}\TC$ is an $\arcp$-hypersheaf on $\CAlg^{\qrsp}_{\mathbb{Z}_{p},\mathcal{O}_{C}}$.  
\end{proof}
\end{proposition}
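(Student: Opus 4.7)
The plan is to first establish the (non-hyper) $\arcp$-sheaf property, and then to bootstrap it into a hyperdescent statement by slicing $L_{T(1)}\TC$ into bounded pieces via the Postnikov filtration from Construction \ref{constr:qrsp} and using that the prismatic sides are already known to be $\arcp$-hypersheaves (Lemma \ref{lem:prismarcphypersheaf}).

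For the sheaf property, I would argue that $L_{T(1)}\TC$ vanishes on $\bbF_p$-algebras (via the classical $L_{T(1)}\K(\bbF_p)\simeq0$, together with Dundas--Goodwillie--McCarthy and the cyclotomic trace, as recalled in the introduction) so it suffices to check $p$-complete $\arc$-descent. Given an $\arc$-cover $R\to S$ in $\CAlg^{\qrsp}_{\bbZ_p,\mathcal{O}_C}$, I would apply perfectoidization termwise to the $p$-complete Čech nerve. By Lemma \ref{lem:perfdarccov} the induced map $R_{\perfd}\to S_{\perfd}$ is a $p$-complete arc-cover of perfectoid $\mathcal{O}_C$-algebras, and every term of the Čech nerve of $R\to S$ is quasiregular semiperfectoid, so Proposition \ref{prop:T(1)TCinvperfdqrsp} lets me replace the diagram by its perfectoidization without changing the values of $L_{T(1)}\TC$. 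The remaining descent statement on perfectoids is exactly Lemma \ref{lem:T(1)TCarcphypperfd}.

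To promote this to hyperdescent, I would exploit the Postnikov filtration of Construction \ref{constr:qrsp}: for each $i\geq 1$,
\begin{equation*}
\tau_{[2i-1,2i]}L_{T(1)}\TC(R)\simeq \bigl(\prism_{R/A}\{i\}[1/d]\bigr)^{F=1}[2i],
\end{equation*}
which Lemma \ref{lem:prismarcphypersheaf} identifies as an $\arcp$-hypersheaf. Hyperdescent is closed under fiber sequences and limits of presheaves of truncated spectra, so by induction each $\tau_{[1,2n]}L_{T(1)}\TC$ is an $\arcp$-hypersheaf, and taking $n\to\infty$ gives that $\tau_{\geq 1}L_{T(1)}\TC$ is an $\arcp$-hypersheaf on $\CAlg^{\qrsp}_{\bbZ_p,\mathcal{O}_C}$. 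Finally, in the fiber sequence
\begin{equation*}
\tau_{\geq 1}L_{T(1)}\TC(-)\to L_{T(1)}\TC(-)\to \tau_{\leq 0}L_{T(1)}\TC(-),
\end{equation*}
the right-hand term is $0$-truncated and an $\arcp$-sheaf by the first step, hence automatically an $\arcp$-hypersheaf; the left-hand term is a hypersheaf by the previous argument; so the middle term is as well.

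The main obstacle in this plan is the inductive step for $\tau_{\geq 1}L_{T(1)}\TC$: one has to know that the completeness of the Postnikov filtration survives under the presheaf limit $\lim_n \tau_{[1,2n]}L_{T(1)}\TC$ and that the limit of hypersheaves of bounded connectivity remains a hypersheaf. This is where Construction \ref{constr:qrsp} is indispensable, since it tells us that $L_{T(1)}\TC$ is even above degree $0$ on quasiregular semiperfectoid $\mathcal{O}_C$-algebras, so the Postnikov tower is strictly increasing in connectivity and the limit indeed recovers $\tau_{\geq 1}L_{T(1)}\TC$ as a hypersheaf.
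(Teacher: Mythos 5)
Your proposal is correct and follows exactly the paper's two-step argument: first reduce $\arcp$-descent to the perfectoid case using Proposition~\ref{prop:T(1)TCinvperfdqrsp}, Lemma~\ref{lem:perfdarccov}, and Lemma~\ref{lem:T(1)TCarcphypperfd}, and then bootstrap to hyperdescent by slicing along the Postnikov tower, applying Lemma~\ref{lem:prismarcphypersheaf} to the graded pieces, and closing up the fiber sequence with the truncated term $\tau_{\leq 0}L_{T(1)}\TC$. One small imprecision in your closing remark: it is $L_{T(1)}\TC^{-}$ and $L_{T(1)}\TP$ that are even on $\CAlg^{\qrsp}_{\bbZ_p,\mathcal{O}_C}$, not $L_{T(1)}\TC$ itself (its Postnikov graded pieces occupy the two consecutive degrees $[2n-1,2n]$), and the worry you flag is not an obstacle --- hypersheaves are closed under presheaf limits, and $\lim_n \tau_{[1,2n]}X \simeq \tau_{\geq 1}X$ holds for any spectrum $X$ by Postnikov completeness of $\Sp$ --- so the argument goes through as you outline.
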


Now, we can extend our construction of the filtration to the case of general animated commutative rings.  

\begin{construction}[Filtration on the $\arcp$-hypersheafification]
\label{constr:filtonarcpsheaf}
Let $C = \bbC_{p} = \widehat{\overline{\mathbb{Q}_{p}}}$, and let $(\widetilde{A},(\widetilde{d}))$ be a perfect prism corresponding to $\mathcal{O}_{C}$. \\
(1) The natural map $\Fil^{\sbullet}L_{T(1)}\TC\to L_{T(1)}\TC$ lifts to a map of presheaves of multiplicative filtered $p$-complete spectra. Construction \ref{constr:qrsp} gives a presheaf $\Fil^{\sbullet}L_{T(1)}\TC = \left(R\mapsto \Fil^{\sbullet}L_{T(1)}\TC(R)\right)$ of filtration-complete filtered $p$-complete spectra and a map of presheaves $\Fil^{\sbullet}L_{T(1)}\TC\to L_{T(1)}\TC$ of filtered $p$-complete spectra on $\CAlg^{\qrsp}_{\mathbb{Z}_{p},\mathcal{O}_{C}}$. Here, the target of the latter map $L_{T(1)}\TC$ is understood to be equipped with the constant filtration. On the other hand, note that $T(1)$-local $\TC$ defines an object $L_{T(1)}\TC\in\Fun(\CAlg^{\an},\CAlg(\Sp^{\Cpl(p)}))$ which is an equalizer of the endomorphisms $F$ and $id$ of $L_{T(1)}\TP$ viewed as an object of $\Fun(\CAlg^{\an},\CAlg(\Sp^{\Cpl(p)}))$. By Proposition \ref{prop:postnikovmult} and Remark \ref{rem:postnikovmult}, we know that $\tau_{\geq 2\sbullet}F:\tau_{\geq 2\sbullet}L_{T(1)}\TP\to\tau_{\geq 2\sbullet}L_{T(1)}\TP$ and the natural map $\tau_{\geq 2\sbullet}L_{T(1)}\TP\to L_{T(1)}\TP$ into the constant filtered object are maps in $\Fun(\CAlg^{\qrsp}_{\mathbb{Z}_{p},\mathcal{O}_{C}}, \CAlg(\DFh(\bbS)))$. Now, we can view the functor $\Fil^{\sbullet}L_{T(1)}\TC:\CAlg^{\qrsp}_{\bbZ_{p},\mathcal{O}_{C}}\to \DFh(\bbS)$ as an underlying presheaf of a presheaf valued in multiplicative filtered $p$-complete spectra, for which we retain the same notation by rewriting it as an equalizer $\Fil^{\sbullet}L_{T(1)}\TC:=\mathrm{eq}(\tau_{\geq 2\sbullet}F,id:\tau_{\geq 2\sbullet}L_{T(1)}\TP\to\tau_{\geq 2\sbullet}L_{T(1)}\TP)$ in $\Fun(\CAlg^{\qrsp}_{\mathbb{Z}_{p},\mathcal{O}_{C}}, \CAlg(\DFh(\bbS)))$. In particular, the natural map $\Fil^{\sbullet}L_{T(1)}\TC\to L_{T(1)}\TC$ is in fact a map of $\CAlg(\DFh(\bbS))$-valued presheaves. \\
(2) The presheaf map $\Fil^{\sbullet}L_{T(1)}\TC\to L_{T(1)}\TC$ is in fact a map of $\arcp$-hypersheaves. Since the associated graded object presheaves $\gr^{n}L_{T(1)}\TC\simeq (R\mapsto \left(\prism_{R/\widetilde{A}}\{n\}[1/\widetilde{d}]\right)^{F=1}[2n])$ of the presheaf $\Fil^{\sbullet}L_{T(1)}\TC$ are $\arcp$-hypersheaves on $\CAlg^{\qrsp}_{\bbZ_{p},\mathcal{O}_{C}}$ for all $n\in\bbZ$ by Lemma \ref{lem:prismarcphypersheaf}, we know from the filtration-completeness that $\Fil^{\sbullet}L_{T(1)}\TC$ is an $\arcp$-hypersheaf. By Proposition \ref{prop:T(1)TCarcphypqrsp}, the map $\Fil^{\sbullet}L_{T(1)}\TC\to L_{T(1)}\TC$ is a morphism of $\arcp$-hypersheaves.  \\
(3) Write $(L_{T(1)}\TC)^{\sharp}$ for an $\arcp$-hypersheafification of $L_{T(1)}\TC$ on $\CAlg^{p-\cpl}$. We define a multiplicative, exhaustive and complete filtration $\Fil^{\sbullet}(L_{T(1)}\TC)^{\sharp}$ on $(L_{T(1)}\TC)^{\sharp}$ via right Kan extension as follows. Since $\CAlg^{\qrsp}_{\mathbb{Z}_{p},\mathcal{O}_{C}}$ form a basis of $\CAlg^{p-\cpl}$ equipped with the $\arcp$-topology, the map 
\begin{equation*}
\Fil^{\sbullet}L_{T(1)}\TC\to L_{T(1)}\TC
\end{equation*} 
in $\Shv^{\hyp}_{\CAlg(\DFh(\bbS))}\left((\CAlg^{\qrsp}_{\mathbb{Z}_{p},\mathcal{O}_{C}})^{\op}_{\arcp}\right)$ right Kan extends to the corresponding map and the objects 
\begin{equation*}
\Fil^{\sbullet}(L_{T(1)}\TC)^{\sharp}\to (L_{T(1)}\TC)^{\sharp}
\end{equation*} 
in $\Shv^{\hyp}_{\CAlg(\DFh(\bbS))}\left((\CAlg^{p-\cpl})^{\op}_{\arcp}\right)$ via the equivalence of categories $\Shv^{\hyp}_{\CAlg(\DFh(\bbS))}\left((\CAlg^{p-\cpl})^{\op}_{\arcp}\right)\simeq \Shv^{\hyp}_{\CAlg(\DFh(\bbS))}\left((\CAlg^{\qrsp}_{\mathbb{Z}_{p},\mathcal{O}_{C}})^{\op}_{\arcp}\right)$ of hypersheaves valued in multiplicative filtered $p$-complete spectra given by restrictions and right Kan extensions. Here, $\Fil^{\sbullet}(L_{T(1)}\TC)^{\sharp}$ on $\CAlg^{p-\cpl}$ is defined to be a right Kan extension of $\Fil^{\sbullet}L_{T(1)}\TC$ on $\CAlg^{\qrsp}_{\mathbb{Z}_{p},\mathcal{O}_{C}}$. By construction, $\Fil^{\sbullet}(L_{T(1)}\TC)^{\sharp}$ is an $\arcp$-Postnikov filtration $\tau^{\sharp}_{\geq2\sbullet-1}(L_{T(1)}\TC)^{\sharp}$ of the $\arcp$-hypersheaf $(L_{T(1)}\TC)^{\sharp}$, as its restriction on $\CAlg^{\qrsp}_{\mathbb{Z}_{p},\mathcal{O}_{C}}$ is equivalent to an $\arcp$-Postnikov filtration of $L_{T(1)}\TC$ on $\CAlg^{\qrsp}_{\bbZ_{p},\mathcal{O}_{C}}$, which is just the presheaf Postnikov filtration $\tau_{\geq 2\sbullet -1}L_{T(1)}\TC$ on $\CAlg^{\qrsp}_{\bbZ_{p},\mathcal{O}_{C}}$ as discussed above. In particular, the filtered object $\Fil^{\sbullet}(L_{T(1)}\TC)^{\sharp}(R)$ is an exhaustive and complete \cite[A.10]{matTR} filtration on $(L_{T(1)}\TC)^{\sharp}(R)$ for each $R\in\CAlg^{p-\cpl}$. \\
(4) By definition of $\Fil^{\sbullet}(L_{T(1)}\TC)^{\sharp}$ in (3) and the description of $n$-th associated graded pieces in Construction \ref{constr:qrsp}, there is a natural equivalence 
\begin{equation*}
\gr^{n}(L_{T(1)}\TC)^{\sharp}(R)\simeq \left(\prism_{R/A}\{n\}[1/d]\right)^{F=1}[2n]
\end{equation*}
for each $n\in\bbZ$ and $R\in\CAlg^{p-\cpl}_{\overline{A}}$ for any perfect prism $(A,(d))$, as the functor $R\mapsto \left(\prism_{R/A}\{n\}[1/d]\right)^{F=1}:\CAlg^{p-\cpl}_{\overline{A}}\to \Dh(\bbZ_{p})$ on the right hand side is an $\arcp$-hypersheaf by Lemma \ref{lem:prismarcphypersheaf}. Note that assuming Proposition \ref{prop:etcomp}, we can also say that there is a natural equivalence
\begin{equation*}
\gr^{n}(L_{T(1)}\TC)^{\sharp}(R)\simeq \RG_{\et}(\Spec R[1/p],\bbZ_{p}(n))[2n]
\end{equation*}
for each $n\in\bbZ$ and $R\in\CAlg^{p-\cpl}$. \\
(5) Finally, we naturally define the functor $\Fil^{\sbullet}(L_{T(1)}\TC)^{\sharp}:\CAlg^{\an}\to\CAlg(\DFh(\bbS))$ as 
\begin{equation*}
\Fil^{\sbullet}(L_{T(1)}\TC)^{\sharp}(R) = \Fil^{\sbullet}(L_{T(1)}\TC)^{\sharp}(\pi_{0}(R^{\wedge_{p}}))
\end{equation*} 
for $R\in\CAlg^{\an}$. By definition, it is equipped with a map $\Fil^{\sbullet}(L_{T(1)}\TC)^{\sharp}\to L_{T(1)}\TC(\pi_{0}((-)^{\wedge_{p}}))$ of presheaves of multiplicative filtered $p$-complete spectra. Note that the above two descriptions of the associated graded objects functor remain the same. Also, viewed as a functor into $\DFh(\bbS)$, the functor factors through $\DFhc(\bbS)$ as its values are filtration-complete, and induces $\Fil^{\sbullet}(L_{T(1)}\TC)^{\sharp}:\CAlg^{\an}\to\DFhc(\bbS)$. 
\end{construction}

\begin{remarkn}
(1) As the natural map $S\to S_{\perfd}$ is an $\arcp$-equivalence for any semiperfectoid ring $S$, one has $(L_{T(1)}\TC)^{\sharp}(S)\simeq (L_{T(1)}\TC)^{\sharp}(S_{\perfd})$ and similarly $\Fil^{\sbullet}(L_{T(1)}\TC)^{\sharp}(S)\simeq \Fil^{\sbullet}(L_{T(1)}\TC)^{\sharp}(S_{\perfd})$ for the filtration.  \\
(2) As $\CAlg^{\QSyn}_{\mathcal{O}_{C}/}$ admits $\CAlg^{\qrsp}_{\bbZ_{p},\mathcal{O}_{C}}$ as a basis for the $\arcp$-topology, one knows that over $\CAlg^{\QSyn}_{\mathcal{O}_{C}/}$, $\arcp$-sheafification of $L_{T(1)}\TC$ agrees with the $\arcp$-hypersheafification $(L_{T(1)}\TC)^{\sharp}$. In fact, note that restrictions and right Kan extensions induce an equivalence $\Shv_{\mathcal{D}}\left((\CAlg^{\QSyn}_{\mathcal{O}_{C}/})^{\op}_{\arcp}\right)\simeq\Shv_{\mathcal{D}}\left((\CAlg^{\qrsp}_{\bbZ_{p},\mathcal{O}_{C}})^{\op}_{\arcp}\right)$ extending the equivalence between their full subcategories of hypersheaves for any presentable $\infty$-category $\mathcal{D}$, since for each $R\in\CAlg^{\QSyn}_{\mathcal{O}_{C}/}$ we can find a quasisyntomic cover $R\to S$, which in particular is an $\arcp$-cover, into $S\in\CAlg^{\qrsp}_{\bbZ_{p},\mathcal{O}_{C}}$ such that all the terms of the associated Cech nerve remain to be in $\CAlg^{\qrsp}_{\bbZ_{p},\mathcal{O}_{C}}$, cf. \cite[Lem. C.3]{hoy} or \cite[Proof of Prop. 4.31]{bms2}.
\end{remarkn}

\begin{remarkn}\label{rem:mathewfilt1}
In \cite{matTR}, a natural filtration on an $\arcp$-hypersheafification of $T(1)$-local $\TR$ was constructed through right Kan extension from the case of perfectoid rings. In \emph{ibid}., it was also proved that for the case of formally smooth $\mathcal{O}_{K}$-algebras, a natural filtration on $T(1)$-local $\TC$ can be obtained by taking Frobenius fixed point on the former filtration. Our approach to Construction \ref{constr:filtonarcpsheaf} provides an analogue of this idea for $\TP$ and quasiregular semiperfectoid rings in place of $\TR$ and perfectoid rings, using the fact that at least after $T(1)$-localization there is a Frobenius endomorphism on $L_{T(1)}\TP$ whose fixed point recovers $L_{T(1)}\TC$ for commutative rings, as explained in Construction \ref{constr:T(1)TP}. 
\end{remarkn}

\subsection{Pro-Galois descent for $T(1)$-local $\TC$}\label{subsec:progal}

Construction \ref{constr:filtonarcpsheaf} naturally leads us to ask that for which rings the values of $T(1)$-local $\TC$ are unaffected by $\arcp$-hypersheafification. When studying this question in \ref{subsec:arcphyp}, we will use the following pro-Galois descent result for $T(1)$-local $\TC$ at the generic fiber, a variant of \cite[Proof of Th. 6.8]{matTR}:

\begin{proposition}[Pro-Galois descent at the generic fiber]\label{prop:progal}
Let $B$ be a commutative ring, and let $B_{\infty}$ be a commutative $B$-algebra equipped with a continuous action of a profinite group $G$ with respect to the discrete topology on $B_{\infty}$. Let $R$ be a commutative $B$-algebra, and let $R_{\infty}$ be the $R$-algebra $B_{\infty}\otimes_{B}R$ equipped with the induced action of $G$. \\
\indent Assume the following conditions:
\begin{adjustwidth}{12pt}{}
(1) The map $B[1/p]\to B_{\infty}[1/p]$ is a $G$-Galois extension. \\
(2) $B[1/p]$ has finite Krull dimension and has globally bounded virtual $p$-cohomological dimensions of residue fields.\\
(3) $G$ has finite virtual $p$-cohomological dimension and admits a cofinal set $(N_{i}\unlhd G)_{I}$ of open normal subgroups of $G$ indexed by a filtered set $I$, such that for each $i\in I$, the ring map $R\to R_{\infty}^{N_{i}}$ is finite and finitely presented. 
\end{adjustwidth}
Then, $L_{T(1)}\TC$ satisfies descent with respect to the Cech nerve of the map $R\to R_{\infty}$, i.e., we have a natural equivalence of $p$-complete spectra
\begin{equation}\label{eq:T(1)TCprogal}
L_{T(1)}\TC(R)\simeq \lim(L_{T(1)}\TC(R_{\infty})\rightrightarrows L_{T(1)}\TC(R_{\infty}\otimes_{R}R_{\infty})~\substack{\rightarrow\\[-1em] \rightarrow \\[-1em] \rightarrow}\cdots).
\end{equation}
\end{proposition}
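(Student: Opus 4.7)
The plan is to use the Bhatt--Clausen--Mathew equivalence (\ref{eq:bcm}) to translate the claimed descent into a pro-Galois descent statement for $L_{T(1)}\K$ at the generic fiber, and then invoke the étale hyperdescent property of $L_{T(1)}\K$ established by Thomason and Clausen--Mathew. Applying (\ref{eq:bcm}) termwise, (\ref{eq:T(1)TCprogal}) is equivalent to the statement that $L_{T(1)}\K$ satisfies descent along the derived Cech nerve of $R^{\wedge_{p}}[1/p]\to R_{\infty}^{\wedge_{p}}[1/p]$.

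The first step is to identify this Cech nerve with the standard continuous cobar construction for a pro-finite Galois extension. Set $R_{i} := R_{\infty}^{N_{i}}$. Hypothesis (3) says each $R\to R_{i}$ is finite and finitely presented, so in particular the formation of $N_{i}$-invariants commutes with inverting $p$. Combining this with hypothesis (1) and base change along $B\to R$, the map $R[1/p]\to R_{i}[1/p]\simeq B_{\infty}^{N_{i}}[1/p]\otimes_{B[1/p]}R[1/p]$ is a $(G/N_{i})$-Galois finite étale cover. Since $R_{\infty}=\colim_{i}R_{i}$, one then checks (using cofinality of $(N_{i})$ and the pro-Galois structure) that the $k$-fold derived tensor product $(R_{\infty}^{\otimes_{R}k+1})^{\wedge_{p}}[1/p]$ is naturally identified with $\Map^{\cont}(G^{\times k},R_{\infty}^{\wedge_{p}}[1/p])$ in the usual way.

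Next, I would combine finite Galois descent for $L_{T(1)}\K$ with passage to the profinite limit. By \cite[Th. 7.14]{etalek}, hypothesis (2) on $B[1/p]$ guarantees that $L_{T(1)}\K$ restricted to qcqs schemes finite étale over $\Spec B[1/p]$ is an étale hypersheaf, so for each $i$ there is a natural equivalence
\[
L_{T(1)}\K(R^{\wedge_{p}}[1/p])\simeq L_{T(1)}\K(R_{i}^{\wedge_{p}}[1/p])^{h(G/N_{i})}.
\]
Taking the limit over $I$ and using the finite virtual $p$-cohomological dimension of $G$ from hypothesis (3) (together with the bounded vcd of residue fields of $B[1/p]$) to obtain a uniform bound on the cohomological amplitude of the finite-level Galois descent spectral sequences, one identifies the inverse limit with the totalization of the continuous cobar complex for $L_{T(1)}\K(R_{\infty}^{\wedge_{p}}[1/p])$, i.e.\ with the right-hand side of (\ref{eq:T(1)TCprogal}) after applying (\ref{eq:bcm}) in reverse.

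The main obstacle will be verifying the convergence and compatibility assertions in the passage to the profinite limit, rather than the finite-level Galois descent, which is standard. Specifically, one must ensure that $p$-completion interacts correctly with the cofiltered colimit $R_{\infty}=\colim_{i}R_{i}$ and with the derived tensor powers, and that the profinite limit of the finite Galois descent equivalences converges to the full Cech totalization. Both points rely crucially on the uniform cohomological finiteness built into hypotheses (2) and (3), which together constrain the relevant descent spectral sequences to a bounded region and thus make the limit and totalization commute appropriately.
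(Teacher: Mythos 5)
Your overall strategy shares the paper's key inputs — the \'etale hypersheaf property of $L_{T(1)}\K$ from \cite[Th.\ 7.14]{etalek} and the finite virtual $p$-cohomological dimension hypotheses — but the execution diverges in a way that leaves the proof with a genuine unresolved gap.

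The paper does \emph{not} directly take a limit of finite-level Galois descent equivalences and then argue convergence. Instead, it forms the presheaf $F = (T\mapsto L_{T(1)}\TC(\Fun_G(T,R_\infty)))$ on the site $\mathcal{T}_G$ of finite continuous $G$-sets, exhibits it as a \emph{module} over the hypercomplete sheaf $\rho_\ast A$ obtained from $L_{T(1)}\K$ at the generic fiber of $B$, and then invokes the fact that hypercompletion on $\mathcal{T}_G$ is smashing when $G$ has finite virtual $p$-cohomological dimension \cite[Prop.\ 4.17]{etalek} to conclude that $F$ itself is a hypersheaf. With hypercompleteness in hand, the totalization formula $F(\ast)\simeq\lim\bigl(\colim_I F(G/N_i)\rightrightarrows\colim_I F(G/N_i\times G/N_i)\cdots\bigr)$ of \cite[Constr.\ 4.6 and Th.\ 4.26]{etalek} applies, and the remaining work is to identify $\colim_I F((G/N_i)^{\times(k+1)})$ with $L_{T(1)}\TC(R_\infty^{\otimes_R(k+1)})$, which is Lemma~\ref{lem:indgaloiscech}.

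The point you flag as your ``main obstacle'' --- commuting the limit over $I$ with the Cech totalization --- is precisely where your argument is incomplete, and it is exactly the content of the Clausen--Mathew formula above. Saying that the finite vcd gives ``a uniform bound on the cohomological amplitude of the finite-level Galois descent spectral sequences'' is a plausible heuristic, but turning it into a proof that $\lim_I(-)^{h(G/N_i)}$ agrees with the continuous cobar totalization is not routine and is the substance of what \cite[Th.\ 4.26]{etalek} does. Without citing or reproving something equivalent to that result, the proposal does not close. A secondary issue: by translating to $L_{T(1)}\K$ first, you have to identify $(R_\infty^{\otimes_R(k+1)})^{\wedge_p}[1/p]$ with $(R_\infty^{\wedge_p}[1/p])^{\otimes_{R^{\wedge_p}[1/p]}(k+1)}$, and also to argue that $\Fun_{\mathrm{cts}}(G^{\times k},R_\infty)$ and $R_\infty^{\otimes_R(k+1)}$ are related after applying the localizing invariant; the paper handles this carefully by proving the finite-level comparison map $R_i^{\otimes_R(k+1)}\to\Fun((G/N_i)^{\times k},R_i)$ is only a $p$-isogeny (not an isomorphism) and uses $p$-isogeny invariance of $L_{T(1)}\TC$ \cite[Prop.\ 2.2(3)]{matTR}. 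Staying on the $\TC$ side avoids the awkward interplay between $p$-completion, $p$-inversion, and (derived) tensor products that your $\K$-theoretic translation forces you to confront.
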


The remainder of this subsection is devoted to the proof of Proposition \ref{prop:progal}; we retain the notations and the assumptions of the proposition throughout this subsection.\\
\indent Let us start by noting that $T(1)$-local $\TC$, as a functor valued in $p$-complete spectra, preserves sifted colimits of connective ring spectra:

\begin{lemma}\label{lem:T(1)TClke}
The functor $R\mapsto L_{T(1)}\TC(R):\Alg_{\bbE_{1}}(\Sp_{\geq 0})\to \Sp^{\Cpl(p)}$ commutes with sifted colimits. 
\begin{proof}
Let $(R_{i})_{I}$ be a sifted diagram of connective $\bbE_{1}$-rings. By \cite[Cor. 2.15]{cmm}, the natural map $\colim_{I}\TC(R_{i})\to\TC(\colim_{I}R_{i})$ is a $p$-adic equivalence of spectra. Since $T(1)$-localization induces equivalences on $p$-adic equivalences of spectra, we have a natural equivalence 
\begin{equation*}
L_{T(1)}\left(\colim_{I}\TC(R_{i})^{\wedge_{p}}\right)\simeq L_{T(1)}\TC(\colim_{I}R_{i})
\end{equation*} 
of $p$-complete spectra. To further describe the source object, note that there is a natural equivalence of spectra $L_{T(1)}X\simeq (L_{1}X)^{\wedge_{p}}$ for $X\in\Sp$, cf. \cite[Prop. 2.11]{bous}. Since the localization $L_{1}$ is smashing by Hopkins-Ravenel theorem \cite[Th. 7.5.6]{rav}, we know the natural map 
\begin{equation*}
\colim'_{I}L_{T(1)}\TC(R_{i})\to L_{T(1)}\left(\colim_{I}\TC(R_{i})^{\wedge_{p}}\right)
\end{equation*} 
(here, $\colim'_{I}$ in the source of the map is a temporary notation for a colimit taken in $\Sp^{\Cpl(p)}$) is an equivalence of $p$-complete spectra. 
\end{proof}
\end{lemma}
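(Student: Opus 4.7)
The plan is to combine the Clausen--Mathew--Morrow sifted colimit theorem for $\TC$ with general properties of the $T(1)$-localization functor. First, I would invoke \cite[Cor. 2.15]{cmm}, which states that for any sifted diagram $(R_{i})_{i\in I}$ of connective $\bbE_{1}$-rings, the natural comparison map $\colim_{I}\TC(R_{i})\to\TC(\colim_{I}R_{i})$ (with colimits taken in $\Sp$) is a $p$-adic equivalence. Since $T(1)$-local spectra are $p$-complete (as $p$ is nilpotent on $T(1)$), the functor $L_{T(1)}$ inverts $p$-adic equivalences, so applying it gives a natural equivalence
\begin{equation*}
L_{T(1)}\left(\colim_{I}\TC(R_{i})\right)\simeq L_{T(1)}\TC(\colim_{I}R_{i})
\end{equation*}
of $p$-complete spectra.

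The remaining step is to identify the left-hand side with the colimit of the diagram $(L_{T(1)}\TC(R_{i}))_{I}$ computed in $\Sp^{\Cpl(p)}$. For this I would use Bousfield's description $L_{T(1)}X\simeq (L_{1}X)^{\wedge_{p}}$ combined with the Hopkins--Ravenel smashing theorem, which implies that the height one chromatic localization $L_{1}$ preserves all colimits of spectra. Since $p$-completion is a reflective localization onto $\Sp^{\Cpl(p)}$, it follows that the composite $L_{T(1)} \simeq (-)^{\wedge_{p}}\circ L_{1}$ converts colimits in $\Sp$ into the corresponding colimits in $\Sp^{\Cpl(p)}$. In particular, writing $\colim'$ for the colimit in $\Sp^{\Cpl(p)}$, one gets $\colim'_{I}L_{T(1)}\TC(R_{i})\simeq L_{T(1)}(\colim_{I}\TC(R_{i}))$, and chaining with the equivalence of the previous paragraph yields the desired identification $\colim'_{I}L_{T(1)}\TC(R_{i})\simeq L_{T(1)}\TC(\colim_{I}R_{i})$.

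The argument is short and modular, with no substantial obstacle; the only mild subtlety is bookkeeping about whether colimits are being formed in $\Sp$ or in $\Sp^{\Cpl(p)}$, and the smashing property of $L_{1}$ is precisely what makes this transition clean.
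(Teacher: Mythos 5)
Your proposal is correct and takes essentially the same route as the paper's proof: invoke \cite[Cor.\ 2.15]{cmm} to reduce to the observation that $L_{T(1)}$ inverts $p$-adic equivalences, then use $L_{T(1)}\simeq((-)^{\wedge_p}\circ L_1)$ together with the Hopkins--Ravenel smashing theorem to move the colimit outside. The only cosmetic difference is that the paper applies $L_{T(1)}$ to the $p$-completed colimit $\colim_I\TC(R_i)^{\wedge_p}$ whereas you apply it to $\colim_I\TC(R_i)$ directly, but these agree since $L_{T(1)}$ factors through $p$-completion.
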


Now, let us fix some standard notations and conventions. By $G$-Galois extensions, we mean pro-$G$-Galois extensions which in particular are ind-finite Galois, cf. \cite[Def. 8.1.1]{roggal}. We write $\mathcal{T}_{G}$ for the finitary site of finite continuous $G$-sets and jointly surjective maps as in \cite[Def. 4.1]{etalek}. Note that there is a natural map of sites $\rho:\mathcal{T}_{G}\to (\Spec B[1/p])_{\et}$ whose underlying functor maps a finite continuous $G$-set $T$ to the spectrum of the $B[1/p]$-algebra $B_{\infty}[1/p]^{\ker(G\to\Aut(T))}$. Let $\rho_{\ast}:\Shv_{\Sp^{\Cpl(p)}}((\Spec B[1/p])_{\et})\to\Shv_{\Sp^{\Cpl(p)}}(\mathcal{T}_{G})$ be the right adjoint functor between $\infty$-categories of $\Sp^{\Cpl(p)}$-valued sheaves induced from $\rho$. Since its left adjoint $\rho^{\ast}$ preserves $\infty$-connective objects, $\rho_{\ast}$ induces a functor $\rho_{\ast}:\Shv^{\hyp}_{\Sp^{\Cpl(p)}}((\Spec B[1/p])_{\et})\to\Shv^{\hyp}_{\Sp^{\Cpl(p)}}(\mathcal{T}_{G})$ between the full subcategories of hypercomplete objects. \\
\indent Consider the functor 
\begin{equation*}
F = \left(T\mapsto L_{T(1)}\TC(\Fun_{G}(T,R_{\infty}))\right):\mathcal{T}_{G}^{\op}\to\Sp^{\Cpl(p)}.
\end{equation*}
By definition, $F$ preserves finite products, i.e., $F\in\Fun^{\pi}(\mathcal{T}_{G}^{\op},\Sp^{\Cpl(p)})$. Through the natural equivalence $\Fun^{\pi}(\mathcal{T}_{G}^{\op},\Sp^{\Cpl(p)})\simeq\Fun(\mathcal{O}_{G}^{\op},\Sp^{\Cpl(p)})$ \cite[Constr. 4.5]{etalek}, $F$ is completely determined by its restriction to the orbit category $\mathcal{O}_{G}$ consisting of finite continuous nonempty transitive $G$-sets via right Kan extension. 

\begin{lemma}
The presheaf $F = L_{T(1)}\TC(\Fun_{G}(-,R_{\infty}))$ of $p$-complete spectra on $\mathcal{T}_{G}$ is a hypercomplete sheaf. 
\begin{proof}
Consider the presheaf 
\begin{equation*}
A = \left(\Spec S\mapsto L_{T(1)}\K(S\otimes_{B[1/p]}R[1/p])\right)
\end{equation*}
of $p$-complete spectra on $(\Spec B[1/p])_{\et}$ induced from the localizing invariant $L_{T(1)}\K(-\otimes_{HB[1/p]}HR[1/p]):\Cat_{HB[1/p]}^{\ex}\to\Sp^{\Cpl(p)}$ of $B[1/p]$-linear small stable $\infty$-categories. Due to the finiteness conditions on $B[1/p]$, the presheaf $A$ is a hypercomplete \'etale sheaf \cite[Th. 7.14]{etalek}. In particular, the image $\rho_{\ast}A$ of the sheaf $A$ is a hypercomplete object of $\Shv_{\Sp^{\Cpl(p)}}(\mathcal{T}_{G})$. As an object of $\Fun(\mathcal{O}_{G}^{\op},\Sp^{\Cpl(p)})$, the functor $\rho_{\ast}A$ can be described as
\begin{equation*}
G/H\mapsto L_{T(1)}\K(B_{\infty}[1/p]^{H}\otimes_{B[1/p]}R[1/p]).
\end{equation*}
Note that $F$ is a sheaf of $p$-complete spectra on $\mathcal{T}_{G}$ by \cite[Constr. 6.3]{matTR}. Moreover, observe that $F$ is a module over the hypersheaf $\rho_{\ast}A$; for each open subgroup $H\leq G$, we have maps of $\bbE_{\infty}$-rings 
\begin{align*}
L_{T(1)}\K(B_{\infty}[1/p]^{H}\otimes_{B[1/p]}R[1/p]) &\simeq L_{T(1)}\K(B_{\infty}^{H}\otimes_{B}R)\to L_{T(1)}\TC(B_{\infty}^{H}\otimes_{B}R)\\
& \to L_{T(1)}\TC((B_{\infty}\otimes_{B}R)^{H})\simeq L_{T(1)}\TC(\Fun_{G}(G/H,R_{\infty})).  
\end{align*} 
Now, as $G$ has finite virtual $p$-cohomological dimension, hypercompletion is smashing \cite[Prop. 4.17]{etalek}, and hence the sheaf $F$ is also hypercomplete.
\end{proof}
\end{lemma}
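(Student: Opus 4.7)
The plan is to reduce hypercompleteness of $F$ to the known hypercompleteness of $T(1)$-local algebraic K-theory on the generic fiber, established by Clausen--Mathew's generalization of Thomason's theorem, and then propagate the property to $F$ by exhibiting it as a module over an appropriate pushforward sheaf. The key observation is that the smashingness of hypercompletion on $\mathcal{T}_G$ (which holds precisely when $G$ has finite virtual $p$-cohomological dimension) lets one transfer hypercompleteness through module structures.

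First I would consider the presheaf
\begin{equation*}
A : \Spec S \mapsto L_{T(1)}\K(S \otimes_{B[1/p]} R[1/p])
\end{equation*}
on $(\Spec B[1/p])_{\et}$, arising from the $B[1/p]$-linear localizing invariant $\mathcal{C} \mapsto L_{T(1)}\K(\mathcal{C} \otimes_{HB[1/p]} HR[1/p])$. Since $B[1/p]$ satisfies the finiteness conditions in assumption (2), the Clausen--Mathew descent theorem \cite[Th. 7.14]{etalek} asserts that $A$ is a hypercomplete \'etale sheaf of $p$-complete spectra. Pushing forward along $\rho : \mathcal{T}_G \to (\Spec B[1/p])_{\et}$, the sheaf $\rho_\ast A$ is still hypercomplete and, via the equivalence $\Fun^\pi(\mathcal{T}_G^{\op},\Sp^{\Cpl(p)}) \simeq \Fun(\mathcal{O}_G^{\op},\Sp^{\Cpl(p)})$, may be described on orbits by $G/H \mapsto L_{T(1)}\K(B_\infty[1/p]^H \otimes_{B[1/p]} R[1/p])$.

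Second I would verify that $F$ itself is a (not necessarily hypercomplete) sheaf on $\mathcal{T}_G$, appealing to Mathew's construction \cite[Constr. 6.3]{matTR}, and then exhibit a module structure of $F$ over $\rho_\ast A$. On each orbit $G/H$, this structure comes from the composition
\begin{equation*}
L_{T(1)}\K(B_\infty[1/p]^H \otimes_{B[1/p]} R[1/p]) \simeq L_{T(1)}\K(B_\infty^H \otimes_B R) \xrightarrow{\tr} L_{T(1)}\TC(B_\infty^H \otimes_B R) \to L_{T(1)}\TC(\Fun_G(G/H, R_\infty)),
\end{equation*}
where the first equivalence uses the Bhatt--Clausen--Mathew equivalence (\ref{eq:bcm}) applied to $B_\infty^H \otimes_B R$ (valid because, by assumption (3), $H$ contains some $N_i$ and hence $B_\infty^H \otimes_B R$ is a $B_\infty^{N_i} \otimes_B R$-module, finite and finitely presented over $R$), and the last map is induced by the inclusion of $H$-invariants.

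Finally, since $G$ has finite virtual $p$-cohomological dimension, hypercompletion on $\Shv_{\Sp^{\Cpl(p)}}(\mathcal{T}_G)$ is smashing by \cite[Prop. 4.17]{etalek}, which means modules over hypercomplete $\mathbb{E}_\infty$-algebra sheaves are automatically hypercomplete. Applying this to $F$ as a $\rho_\ast A$-module concludes the argument. The main technical obstacle I anticipate is ensuring that the module structure above is constructed \emph{functorially} across $\mathcal{T}_G$ (not just objectwise on orbits), i.e.\ that the composition respects the transition maps arising from $G$-equivariant maps of finite $G$-sets; this requires care in propagating the BCM equivalence and the cyclotomic trace through the right Kan extension from $\mathcal{O}_G$ to $\mathcal{T}_G$.
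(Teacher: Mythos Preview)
Your proposal is correct and follows essentially the same approach as the paper: build the hypercomplete sheaf $\rho_{\ast}A$ from $T(1)$-local K-theory on the generic fiber via \cite[Th.~7.14]{etalek}, exhibit $F$ as a module over it using the cyclotomic trace, and conclude by the smashingness of hypercompletion \cite[Prop.~4.17]{etalek}. One small point: the first equivalence $L_{T(1)}\K(B_\infty[1/p]^H \otimes_{B[1/p]} R[1/p]) \simeq L_{T(1)}\K(B_\infty^H \otimes_B R)$ only needs the invariance of $L_{T(1)}\K$ under inverting $p$ \cite[Th.~2.16]{bcm}, not the full equivalence (\ref{eq:bcm}).
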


\begin{remarkn}
By \cite[Constr. 4.6 and Th. 4.26]{etalek}, there is a natural equivalence 
\begin{equation}\label{eq:postshpt}
F(\ast)\simeq \lim(\colim_{I}F(G/N_{i})\rightrightarrows \colim_{I}F(G/N_{i}\times G/N_{i}) ~\substack{\rightarrow\\[-1em] \rightarrow \\[-1em] \rightarrow}\cdots)
\end{equation}
computing the value of the hypersheaf $F$ at the point $\ast$. 
\end{remarkn}

Let us compute each of the terms in the limit on the right hand side of the equivalence (\ref{eq:postshpt}). The first term is 
\begin{equation}\label{eq:firstterm}
\colim_{I}F(G/N_{i})\simeq\colim_{I}L_{T(1)}\TC(R_{\infty}^{N_{i}})\simeq L_{T(1)}\TC(R_{\infty}).
\end{equation} 
Note that we used Lemma \ref{lem:T(1)TClke}, which in particular asserts that $L_{T(1)}\TC$ commutes with filtered colimits of rings. \\
\indent Let $k\geq 1$. The higher terms in the limit are 
\begin{align*}
\colim_{I}F(G/N_{i}\times (G/N_{i})^{\times k}) &= \colim_{I}L_{T(1)}\TC(\Fun_{G}(G/N_{i}\times (G/N_{i})^{\times k},R_{\infty}))\\
& \simeq \colim_{I}L_{T(1)}\TC(\Fun((G/N_{i})^{\times k},R_{\infty}))\\
& \simeq L_{T(1)}\TC(\Fun_{\mathrm{cts}}(G^{\times k},R_{\infty})). 
\end{align*} 
Here, we used that $\Fun_{G}(G/N_{i}\times (G/N_{i})^{\times k},R_{\infty})\simeq \Fun((G/N_{i})^{\times k}, R_{\infty})$; the isomorphism maps each $G$-equivariant map $\varphi:G/N_{i}\times (G/N_{i})^{\times k}\to R_{\infty}$ to the map $(\overline{h_{1}},...,\overline{h_{k}})\mapsto \varphi(\overline{1},\overline{h_{1}},...,\overline{h_{k}})$, while the inverse isomorphism maps each map $\psi:(G/N_{i})^{\times k}\to R_{\infty}$ to the $G$-equivariant map $(\overline{g},\overline{h_{1}},...,\overline{h_{k}})\mapsto g\psi(\overline{g^{-1}h_{1}},...,\overline{g^{-1}h_{k}})$. The final equivalence follows from $\Fun_{\mathrm{cts}}(G^{\times k},R_{\infty}) = \colim_{I}\Fun((G/N_{i})^{\times k},R_{\infty})$ and Lemma \ref{lem:T(1)TClke}.\\
\indent We further compute that each of the higher terms above is equivalent to $L_{T(1)}\TC(R_{\infty}^{\otimes_{R}(k+1)})$:

\begin{lemma}\label{lem:indgaloiscech}
There is a canonical equivalence of $p$-complete spectra
\begin{equation*}
L_{T(1)}\TC(R_{\infty}^{\otimes_{R}(k+1)})\simeq L_{T(1)}\TC(\Fun_{\mathrm{cts}}(G^{\times k},R_{\infty})).
\end{equation*}
\begin{proof}
For notational convenience, let us write $R_{i} = R_{\infty}^{N_{i}}$ for each $i\in I$. Consider the map $R_{i}^{\otimes_{R}(k+1)}\to \Fun((G/N_{i})^{\times k},R_{\infty})$ given by the composition 
\begin{equation*}
R_{i}^{\otimes_{R}(k+1)}\to \Fun((G/N_{i})^{\times k}, R_{i})\to \Fun((G/N_{i})^{\times k}, R_{\infty}).
\end{equation*}
Each of the maps in the composition is natural in $i\in I$, and hence after taking filtered colimits, we have an induced composite map 
 \begin{align*}
 R_{\infty}^{\otimes_{R}(k+1)} \to \colim_{i}\Fun((G/N_{i})^{\times k}, R_{i})\to\colim_{i}\Fun((G/N_{i})^{\times k},R_{\infty}) = \Fun_{\mathrm{cts}}(G^{\times k}, R_{\infty}).
 \end{align*} 
We verify that upon taking the functor $L_{T(1)}\TC$, this map gives an equivalence of $p$-complete spectra.\\
\indent First, observe that $T(1)$-local $\TC$ of the first natural map $R_{\infty}^{\otimes_{R}(k+1)} \to \colim_{i}\Fun((G/N_{i})^{\times k}, R_{i})$ of the composite is an equivalence. By Lemma \ref{lem:T(1)TClke}, it suffices to check that for each $i\in I$, the induced map $L_{T(1)}\TC(R_{i}^{\otimes_{R}(k+1)})\to L_{T(1)}\TC(\Fun((G/N_{i})^{\times k}, R_{i}))$ is an equivalence. Note that the $R$-algebra map $R_{i}^{\otimes_{R}(k+1)}\to\Fun((G/N_{i})^{\times k},R_{i})$ has the source and target that are finitely presented as $R$-modules \cite[Tag 0564]{stacks} and induces an isomorphism after inverting $p$ as the map $R[1/p]\to R_{i}[1/p]$ is $G/N_{i}$-Galois. Thus, the map $R_{i}^{\otimes_{R}(k+1)}\to\Fun((G/N_{i})^{\times k},R_{i})$ is a $p$-isogeny, and in particular induces an equivalence after applying $L_{T(1)}\TC$ by \cite[Prop. 2.2 (3)]{matTR}. \\
\indent It remains to check that $T(1)$-local $\TC$ of the second natural map of the composite is an equivalence; in fact, we check that the map $\colim_{i}\Fun((G/N_{i})^{\times k}, R_{i})\to\colim_{i}\Fun((G/N_{i})^{\times k},R_{\infty})$ is an isomorphism. For the surjectivity of the map, observe that for each element $(r_{g})_{g\in (G/N_{i})^{\times k}}$ of the target, due to the finiteness of $(G/N_{i})^{\times k}$, one can take $j\geq i$ so $r_{g}\in R_{j}$ for all $g\in (G/N_{i})^{\times k}$, and hence $(r_{g})_{g\in (G/N_{i})^{\times k}} = (r_{\overline{h}\in (G/N_{i})^{\times k}})_{h\in (G/N_{j})^{\times k}}$ is in the image of the map $\Fun((G/N_{j})^{\times k},R_{j})\to \Fun((G/N_{j})^{\times k},R_{\infty})$. For the injectivity of the map, note that the maps $\Fun((G/N_{i})^{\times k},R_{i})\to \Fun((G/N_{i})^{\times k}, R_{\infty})$ induced from the injective ring maps $R_{i}\to R_{\infty}$ are already injective for all $i\in I$, and hence the filtered colimit remains to be injective. 
\end{proof}
\end{lemma}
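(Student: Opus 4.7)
The plan is to realize both sides as filtered colimits indexed by $I$ and compare them level by level. Writing $R_{i} := R_{\infty}^{N_{i}}$, one has $R_{\infty} = \colim_{i}R_{i}$ and by definition $\Fun_{\mathrm{cts}}(G^{\times k},R_{\infty}) = \colim_{i}\Fun((G/N_{i})^{\times k},R_{\infty})$. Since $L_{T(1)}\TC$ commutes with filtered colimits of connective $\bbE_{1}$-rings by Lemma \ref{lem:T(1)TClke}, it suffices to construct, naturally in $i\in I$, a ring map $R_{i}^{\otimes_{R}(k+1)}\to \Fun((G/N_{i})^{\times k},R_{\infty})$ whose $L_{T(1)}\TC$ is an equivalence, and then pass to the colimit.

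The natural candidate is the composition
\begin{equation*}
R_{i}^{\otimes_{R}(k+1)} \xrightarrow{\alpha_{i}} \Fun((G/N_{i})^{\times k},R_{i}) \xrightarrow{\beta_{i}} \Fun((G/N_{i})^{\times k},R_{\infty}),
\end{equation*}
where $\alpha_{i}$ is the classical Galois comparison map and $\beta_{i}$ is postcomposition with the inclusion $R_{i}\hookrightarrow R_{\infty}$. For the first factor, base-changing the hypothesis that $B[1/p]\to B_{\infty}[1/p]$ is $G$-Galois along $B\to R$ and passing to $N_{i}$-fixed subrings shows that $R_{i}[1/p]/R[1/p]$ is finite $(G/N_{i})$-Galois, so $\alpha_{i}$ becomes an isomorphism after inverting $p$. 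By hypothesis (3), both source and target of $\alpha_{i}$ are finitely presented as $R$-modules, so $\alpha_{i}$ is a $p$-isogeny. The $p$-isogeny invariance of $L_{T(1)}\TC$ from \cite[Prop. 2.2]{matTR} then shows that $L_{T(1)}\TC(\alpha_{i})$ is an equivalence. For the second factor, I would argue at the level of underlying rings that after taking the filtered colimit over $i$, the comparison map
\begin{equation*}
\colim_{i}\Fun((G/N_{i})^{\times k},R_{i}) \longrightarrow \colim_{i}\Fun((G/N_{i})^{\times k},R_{\infty}) = \Fun_{\mathrm{cts}}(G^{\times k},R_{\infty})
\end{equation*}
is already an isomorphism of rings: injectivity is immediate from each $R_{i}\hookrightarrow R_{\infty}$ being injective, while surjectivity uses the finiteness of $(G/N_{i})^{\times k}$ to push any function with values in $R_{\infty}$ into some $R_{j}$ for $j\geq i$ (and then reinterpret it on $(G/N_{j})^{\times k}$ via pullback along the quotient $G/N_{j}\twoheadrightarrow G/N_{i}$).

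The principal technical point I expect to require some care is ensuring that $\alpha_{i}$ is natural in $i$, so that the factorization assembles into a map of filtered systems and the colimit closes up cleanly; this amounts to checking compatibility of the Galois comparison maps with the transition $R_{i}\to R_{j}$ for $j\geq i$ and with the quotient $G/N_{j}\twoheadrightarrow G/N_{i}$. Aside from this essentially formal diagram-chase, the two core inputs are the preservation of filtered colimits by $L_{T(1)}\TC$ and $p$-isogeny invariance; the Galois hypothesis (1) and the finite presentation hypothesis in (3) interact precisely to supply the latter.
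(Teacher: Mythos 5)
Your proposal matches the paper's proof essentially step for step: the same factorization $R_i^{\otimes_R(k+1)}\to\Fun((G/N_i)^{\times k},R_i)\to\Fun((G/N_i)^{\times k},R_\infty)$, the same use of Lemma~\ref{lem:T(1)TClke} to pass to the filtered colimit, the same $p$-isogeny argument for the first factor (finiteness from hypothesis (3), isomorphism after inverting $p$ from the Galois hypothesis, then \cite[Prop.~2.2]{matTR}), and the same elementary surjectivity-by-finiteness / injectivity argument showing the second factor becomes a ring isomorphism in the colimit. The naturality concern you flag is real but is a routine diagram chase that the paper also treats as immediate.
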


By Lemma \ref{lem:indgaloiscech} and the preceding discussion, we have a canonical equivalence
\begin{equation}\label{eq:higherterms}
\colim_{i}F(G/N_{i}\times (G/N_{i})^{\times k})\simeq L_{T(1)}\TC(\Fun_{\mathrm{cts}}(G^{\times k}, R_{\infty}))\simeq L_{T(1)}\TC(R_{\infty}^{\otimes_{R}(k+1)})
\end{equation}
for each $k\geq 0$; equivalence (\ref{eq:firstterm}) covers the case of $k=0$. Combining the equivalence (\ref{eq:higherterms}), $F(\ast)\simeq L_{T(1)}\TC(R)$, and the equivalence (\ref{eq:postshpt}) computing $F(\ast)$, we have the claimed natural equivalence (\ref{eq:T(1)TCprogal}). This finishes the proof of Proposition \ref{prop:progal}. 

\subsection{Values of the $\arcp$-hypersheafification of $T(1)$-local $\TC$}\label{subsec:arcphyp}
Now, we verify that for certain class of $p$-quasisyntomic rings $R$, we have $L_{T(1)}\TC(R^{\wedge_{p}})\simeq (L_{T(1)}\TC)^{\sharp}(R^{\wedge_{p}})$. Below, quasisyntomic means $p$-quasisyntomic and $p$-complete as in \cite{bms2}. 

\begin{proposition}\label{prop:T(1)TChyppresperfd}
Let $C$ be a complete and algebraically closed nonarchimedean valued field of mixed characteristic $(0,p)$, and let $\mathcal{O}_{C}$ be its ring of integers. Let $\overline{A}$ be a perfectoid $\mathcal{O}_{C}$-algebra. For a $p$-torsion free quasisyntomic ring $R$ which is $p$-completely finitely generated over $\overline{A}$, one has a natural equivalence $L_{T(1)}\TC(R)\simeq (L_{T(1)}\TC)^{\sharp}(R)$. 
\end{proposition}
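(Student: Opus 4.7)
The plan is to reduce to the case of quasiregular semiperfectoid $\mathcal{O}_C$-algebras---where the equivalence $L_{T(1)}\TC\simeq(L_{T(1)}\TC)^\sharp$ is immediate from Proposition \ref{prop:T(1)TCarcphypqrsp} together with the right Kan extension description of $(L_{T(1)}\TC)^\sharp$ in Construction \ref{constr:filtonarcpsheaf}---by constructing a single cover $R\to R_\infty$ along which both sides satisfy descent. The source side will use pro-Galois descent at the generic fiber (Proposition \ref{prop:progal}), while the target side uses the intrinsic $\arcp$-hyperdescent of $(L_{T(1)}\TC)^\sharp$.

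Starting from a surjection $\overline{A}\langle x_1,\ldots,x_n\rangle\twoheadrightarrow R$ witnessing the $p$-complete finite generation, I would set $B=\mathcal{O}_C\langle x_1,\ldots,x_n\rangle$, $B_\infty=\mathcal{O}_C\langle x_1^{1/p^\infty},\ldots,x_n^{1/p^\infty}\rangle$ (the standard perfectoid Tate algebra), and $R_\infty=R\,\hat{\otimes}_B\,B_\infty\simeq R\,\hat{\otimes}_{\overline{A}\langle\vec{x}\rangle}\,\overline{A}\langle\vec{x}^{1/p^\infty}\rangle$. Then $R_\infty$ is semiperfectoid (a quotient of the perfectoid $\overline{A}\langle\vec{x}^{1/p^\infty}\rangle$) and quasisyntomic ($p$-completely flat base change of the quasisyntomic $R$ along the qsyn map $B\to B_\infty$), hence quasiregular semiperfectoid; by \cite[Lem.~4.30]{bms2} the entire $p$-completed Cech nerve $R_\infty^{\hat{\otimes}_R(\bullet+1)}$ consists of qrsp $\mathcal{O}_C$-algebras. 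Since $\mu_{p^\infty}\subset\mathcal{O}_C$, the map $B[1/p]\to B_\infty[1/p]$ is the classical $\bbZ_p^n$-Galois tower with invariants $R_\infty^{(p^i\bbZ_p)^n}=R[y_1,\ldots,y_n]/(y_j^{p^i}-x_j)$ finite free over $R$, and $B[1/p]=C\langle\vec{x}\rangle$ is a noetherian Tate algebra of Krull dimension $n$ whose residue fields, being finitely generated over the algebraically closed characteristic-zero field $C$, have mod-$p$ virtual cohomological dimension at most $n$. All hypotheses of Proposition \ref{prop:progal} are therefore verified, giving the descent equivalence $L_{T(1)}\TC(R)\simeq\lim_{[k]\in\Delta}L_{T(1)}\TC(R_\infty^{\hat{\otimes}_R(k+1)})$. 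On the other hand, $R\to R_\infty$ is $p$-completely faithfully flat, so an $\arcp$-cover, and the $\arcp$-hypersheaf $(L_{T(1)}\TC)^\sharp$ satisfies the analogous hyperdescent. Termwise identification of the two descent diagrams via Proposition \ref{prop:T(1)TCarcphypqrsp} on each qrsp Cech term then yields the desired equivalence $L_{T(1)}\TC(R)\simeq(L_{T(1)}\TC)^\sharp(R)$.

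The main obstacle is the choice of auxiliary base $B$ in Proposition \ref{prop:progal}. The most natural choice $B=\overline{A}\langle\vec{x}\rangle$ fails the Krull-dimension and virtual-cohomological-dimension hypotheses, since $\overline{A}[1/p]$ can be arbitrarily wild for a general perfectoid $\mathcal{O}_C$-algebra $\overline{A}$. The key observation is that the $p$-power-root Galois cover depends only on the polynomial coordinates $x_j$ and not on the $\overline{A}$-structure, so one can work instead over the smaller base $B=\mathcal{O}_C\langle\vec{x}\rangle$, whose generic fiber $C\langle\vec{x}\rangle$ is a Tate algebra over the algebraically closed field $C$---a setting where the Thomason-type finiteness required by Proposition \ref{prop:progal} is easily verified. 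The base change to $R$ produces the same semiperfectoid cover $R_\infty$, allowing the descent argument to proceed.
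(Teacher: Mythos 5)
There is a genuine gap, and it is located precisely where you write ``the map $B[1/p]\to B_\infty[1/p]$ is the classical $\bbZ_p^n$-Galois tower.'' With $B=\mathcal{O}_C\langle x_1,\ldots,x_n\rangle$ (the $p$-adic \emph{disk}), the Kummer map $B[1/p]\to B[1/p][y]/(y^{p^i}-x_j)$ is \emph{not} \'etale: it is ramified along the coordinate hyperplane $x_j=0$, where the fiber is $\Spec\left(\kappa[y]/y^{p^i}\right)$ rather than $\prod_{\zeta\in\mu_{p^i}}\Spec\kappa$. Equivalently, $C\langle x^{1/p}\rangle\otimes_{C\langle x\rangle}C\langle x^{1/p}\rangle$ has a non-reduced fiber over $x=0$ and hence is not isomorphic to $\Map(\mu_p, C\langle x^{1/p}\rangle)$, so the Rognes--Galois condition required by hypothesis (1) of Proposition~\ref{prop:progal} fails. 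This is exactly why the paper's proof does \emph{not} apply pro-Galois descent directly to $R$: it first uses Remark~\ref{rem:verysmall} and Zariski descent (both $L_{T(1)}\TC$ and $(L_{T(1)}\TC)^\sharp$ are Zariski sheaves) to reduce to the case of ``very small'' $R$, i.e.\ those admitting a surjection from a \emph{torus} $\overline{A}\langle x_1^{\pm1},\ldots,x_n^{\pm1}\rangle$. On the torus the Kummer $p$-power covers are unramified, so the tower $B[1/p]\to B_\infty[1/p]$ with $B_i=\bbZ[\zeta_{p^\infty},x_j^{\pm 1/p^i}]$ is genuinely $\bbZ_p(1)^n$-Galois, and Proposition~\ref{prop:progal} then applies. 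Your proposal skips this Zariski-localization step, and the argument does not survive without it.

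A secondary issue: you take $B_\infty=\mathcal{O}_C\langle x_1^{1/p^\infty},\ldots,x_n^{1/p^\infty}\rangle$, the $p$-completed Tate algebra. But Proposition~\ref{prop:progal} requires the $G$-action on $B_\infty$ to be continuous for the \emph{discrete} topology, i.e.\ every element must have open stabilizer, which forces $B_\infty=\colim_i B_\infty^{N_i}$. This fails after $p$-completion (e.g.\ $\sum_n p^n x^{1/p^n}$ is fixed by no open subgroup). The paper's proof avoids this by letting $B_\infty$ be the uncompleted colimit $\colim_i B_i$ and only introducing $R'_\infty=R_\infty^{\wedge_p}$ afterwards, noting that $R_\infty^{\otimesh_R(k+1)}\simeq(R'_\infty)^{\otimesh_R(k+1)}$. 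Your observation that one should descend to a smaller base than $\overline{A}\langle\vec{x}\rangle$ to control the Krull dimension and virtual cohomological dimensions is in the right spirit (the paper descends further, to $\bbZ[\zeta_{p^\infty},\vec{x}^{\pm}]$, where these conditions are immediate), but it does not repair either of the problems above.
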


Here, for a $p$-complete commutative ring $\overline{A}$ and $R\in\CAlg^{p-\cpl}_{\overline{A}}$, we say that $R$ is $p$-completely finitely generated over $\overline{A}$ if there is a surjection $\overline{A}\langle x_{1},...,x_{n}\rangle\twoheadrightarrow R$ for some $n\geq0$. To prove Proposition \ref{prop:T(1)TChyppresperfd}, we first reduce to the case of very small $R$. We say that a commutative $\overline{A}$-algebra $R$ is very small if it admits a surjection $\overline{A}\langle x_{1}^{\pm1},...,x_{n}^{\pm1}\rangle\twoheadrightarrow R$ from a $p$-completely finitely generated torus. 

\begin{remarkn}\label{rem:verysmall}
Let $\overline{A}$ be a $p$-complete commutative ring, and let $R$ be a $p$-complete commutative $\overline{A}$-algebra. \\
\noindent (1) If $R$ is $p$-completely finitely generated over $\overline{A}$, then $\Spf R$ is covered by finitely many affine open subsets which are very small. Suppose that $R$ admits a surjection $\overline{A}\langle x_{1},...,x_{n}\rangle\twoheadrightarrow R$. Note that $\Spf\overline{A}\langle x_{1},...,x_{n}\rangle$ is covered in Zariski topology by finitely many tori over $\overline{A}$ isomorphic to $\Spf\overline{A}\langle x_{1}^{\pm1},...,x_{n}^{\pm1}\rangle$ via translations, as the analogous statement is true for $\bbA^{n}_{\overline{A}/p} = \Spec(\overline{A}/p)[x_{1},...,x_{n}]$. Taking the pullback of this affine open cover along the closed embedding $\Spf R\hookrightarrow \Spf\overline{A}\langle x_{1},...,x_{n}\rangle$ gives an affine, finite open cover of $\Spf R$ consisting of very small $\overline{A}$-algebras. \\
\noindent (2) If $R\to R'$ and $R\to R''$ are $\overline{A}$-algebra maps such that both $R'$ and $R''$ are very small, then $R'\otimesh_{R}R''$ is also very small. In fact, if $R'$ and $R''$ admit surjections $\overline{A}\langle x_{1}^{\pm1},...,x_{n}^{\pm1}\rangle\twoheadrightarrow R'$ and $\overline{A}\langle x_{1}^{\pm1},...,x_{m}^{\pm1}\rangle\twoheadrightarrow R''$ respectively, then we have a surjection $\overline{A}\langle x_{1}^{\pm1},...,x_{n}^{\pm1},y_{1}^{\pm1},...,y_{m}^{\pm1}\rangle\simeq \overline{A}\langle x_{1}^{\pm1},...,x_{n}^{\pm1}\rangle\otimesh_{\overline{A}}\overline{A}\langle x_{1}^{\pm1},...,x_{m}^{\pm1}\rangle\twoheadrightarrow R'\otimesh_{R}R''$.
\end{remarkn}

\begin{proof}[Proof of Proposition \ref{prop:T(1)TChyppresperfd}, reduction to the case of very small $\overline{A}$-algebras]
For $R$ which admits a surjection $\overline{A}\langle x_{1},...,x_{n}\rangle\twoheadrightarrow R$, we can take a Zariski cover $R\to R'$ with $R'$ being a finite product of very small $\overline{A}$-algebras by Remark \ref{rem:verysmall} (1). Then, each of the terms of the $p$-completed Cech nerve for $R\to R'$ is a finite product of very small $\overline{A}$-algebras by Remark \ref{rem:verysmall} (2). Since both $L_{T(1)}\TC$ and $(L_{T(1)}\TC)^{\sharp}$ are Zariski sheaves, statement for $R$ is now reduced to that of very small $\overline{A}$-algebras which are $p$-torsion free and quasisyntomic. 
\end{proof}

For the case of very small algebras, we can apply the pro-Galois descent result of the subsection \ref{subsec:progal} to complete the proof of Proposition \ref{prop:T(1)TChyppresperfd}. 

\begin{proof}[Proof of Proposition \ref{prop:T(1)TChyppresperfd}]
By our previous reduction argument, it suffices to check the case when the $\overline{A}$-algebra $R$ is very small. Suppose $R$ admits a surjection $\overline{A}\langle x_{1}^{\pm1},...,x_{n}^{\pm1}\rangle\twoheadrightarrow R$. Consider the maps of commutative rings $B = B_{0}\to B_{1}\to B_{2}\to\cdots \to B_{\infty}$, where $B_{i} = \bbZ[\zeta_{p^{\infty}},x_{1}^{\pm 1/p^{i}},...,x_{n}^{\pm1/p^{i}}]$ for each $i\geq0$ and $B_{\infty} = \bbZ[\zeta_{p^{\infty}},x_{1}^{\pm 1/p^{\infty}},...,x_{n}^{\pm 1/p^{\infty}}] = \colim_{\bbN}B_{i}$. Note that each finite and finitely presented $B$-algebra $B_{i}$ admits a natural action of the group $\mu_{p^{i}}^{n}$ making $B[1/p]\to B_{i}[1/p]$ a $\mu_{p^{i}}^{n}$-Galois extension; $(\zeta_{1},...,\zeta_{n})\in\mu_{p^{i}}^{n}$ acts on $x_{1}^{m_{1}/p^{i}}\cdots x_{n}^{m_{n}/p^{i}}\in B_{i}$ as a multiplication by $\zeta_{1}^{m_{1}}\cdots \zeta_{n}^{m_{n}}$. These actions are compatible with each other, and makes $B[1/p]\to B_{\infty}[1/p]$ a (pro-)Galois extension with profinite Galois group $G:=\bbZ_{p}(1)^{n} = \lim(\cdots\xrightarrow{(-)^{p}}\mu_{p^{2}}^{n}\xrightarrow{(-)^{p}}\mu_{p}^{n})$ which has finite $p$-cohomological dimension and acts continuously on the $B$-algebra $B_{\infty}$ endowed with the discrete topology. \\
\indent Fix a map $\bbZ[\zeta_{p^{\infty}}]\to\mathcal{O}_{C}$ determined by a choice of a compatible system of primitive $p$-power roots of unity of the target. Through this, we have a map $B\to \overline{A}\langle x^{\pm1}_{1},...,x^{\pm1}_{n}\rangle$ making $R$, a quotient of the target, an algebra over $B$. Let $R_{\infty} = B_{\infty}\otimes_{B}R$ and let $R'_{\infty} = R_{\infty}^{\wedge_{p}}$. By construction, $R_{\infty}'$ admits a map $\overline{A}\langle x_{1}^{\pm 1/p^{\infty}},...,x_{n}^{\pm 1/p^{\infty}}\rangle\to R_{\infty}'$ from a perfectoid $\mathcal{O}_{C}$-algebra $\overline{A}\langle x_{1}^{\pm 1/p^{\infty}},...,x_{n}^{\pm 1/p^{\infty}}\rangle$, which is surjective by derived Nakayama and the surjectivity of $(\overline{A}/p)[x_{1}^{\pm1},...,x_{n}^{\pm1}]\twoheadrightarrow R/p$. In particular, $R'_ {\infty}\in\CAlg^{\qrsp}_{\mathbb{Z}_{p},\mathcal{O}_{C}}$. \\
\indent Now, by Proposition \ref{prop:progal}, we have an equivalence 
\begin{equation*}
L_{T(1)}\TC(R)\simeq \lim(L_{T(1)}\TC(R'_{\infty})\rightrightarrows L_{T(1)}\TC(R'_{\infty}\otimesh_{R}R'_{\infty})~\substack{\rightarrow\\[-1em] \rightarrow \\[-1em] \rightarrow}\cdots)
\end{equation*} 
using that $R_{\infty}^{\otimesh_{R}(k+1)}\simeq (R'_{\infty})^{\otimesh_{R}(k+1)}$ by our notation. On the other hand, since $R\to R'_{\infty}$ is a quasisyntomic, and hence an $\arcp$-cover, we have an equivalence 
\begin{equation*}
(L_{T(1)}\TC)^{\sharp}(R)\simeq \lim((L_{T(1)}\TC)^{\sharp}(R'_{\infty})\rightrightarrows (L_{T(1)}\TC)^{\sharp}(R'_{\infty}\otimesh_{R}R'_{\infty})~\substack{\rightarrow\\[-1em] \rightarrow \\[-1em] \rightarrow}\cdots).
\end{equation*} 
Since each of the terms $(R'_{\infty})^{\otimesh_{R}(k+1)}$ is in $\CAlg^{\qrsp}_{\mathbb{Z}_{p},\mathcal{O}_{C}}$, we know $L_{T(1)}\TC$ and $(L_{T(1)}\TC)^{\sharp}$ agree on it by Proposition \ref{prop:T(1)TCarcphypqrsp}. Thus, the right hand sides of the equivalences are equivalent to each other, and hence $L_{T(1)}\TC(R)\simeq (L_{T(1)}\TC)^{\sharp}(R)$ naturally as desired. 
\end{proof}

\begin{proposition}\label{prop:T(1)TChyppres}
Let $\mathcal{O}_{K}$ be a complete discrete valuation ring of mixed characteristic $(0,p)$ whose residue field $\kappa$ satisfies $[\kappa:\kappa^{p}]<\infty$. Suppose that $R$ is a $p$-torsion free quasisyntomic ring which is $p$-completely finitely generated over $\mathcal{O}_{K}$. Then, $L_{T(1)}\TC(R)\simeq(L_{T(1)}\TC)^{\sharp}(R)$.
\begin{proof}
We argue as in the proof of Proposition \ref{prop:T(1)TChyppresperfd}. Write $B = \mathcal{O}_{K}$, and let $B_{\infty} = \mathcal{O}_{\overline{K}}$ equipped with the action of $G = \Gal(\overline{K}/K)$, where $\overline{K}$ is an algebraic closure of $K = \Frac(\mathcal{O}_{K})$. Note that $B_{\infty}$ is a filtered colimit of $\mathcal{O}_{L}$ over finite extensions $L$ of $K$, and that each $\mathcal{O}_{L}$ is finite free as a $B$-module. Under the present assumption on $K$, the group $G$ has finite $p$-cohomological dimension \cite{go}. Also, let $R_{\infty} = B_{\infty}\otimes_{B}R$, and write $R_{\infty}' = R_{\infty}^{\wedge_{p}}$ for its $p$-completion; by construction, $R_{\infty}'$ is quasisyntomic and $R\to R_{\infty}'$ is a quasisyntomic cover. Also, note that $R_{\infty}'$ is $p$-completely finitely generated over $\mathcal{O}_{C}$, where $C = \widehat{\overline{K}}$. In fact, if $\mathcal{O}_{K}\langle x_{1},...,x_{n}\rangle\twoheadrightarrow R$, then the surjectivity of the map $\mathcal{O}_{C}\langle x_{1},...,x_{n}\rangle\simeq (\mathcal{O}_{\overline{K}}\otimes_{\mathcal{O}_{K}}\mathcal{O}_{K}\langle x_{1},...,x_{n}\rangle)^{\wedge_{p}}\to R_{\infty}'$ follows from derived Nakayama and the surjectivity of $(\mathcal{O}_{K}/p)[x_{1},...,x_{n}]\twoheadrightarrow R/p$. Thus, each terms $R_{\infty}^{\otimesh_{R}(k+1)}\simeq (R'_{\infty})^{\otimesh_{R}(k+1)}$ of the $p$-completed Cech nerve for the $\arcp$-cover $R\to R_{\infty}'$ is $p$-completely finitely generated over $\mathcal{O}_{C}$ and quasisyntomic. By Proposition \ref{prop:T(1)TChyppresperfd}, we have $L_{T(1)}\TC\left((R'_{\infty})^{\otimesh_{R}(k+1)}\right)\simeq(L_{T(1)}\TC)^{\sharp}\left((R'_{\infty})^{\otimesh_{R}(k+1)}\right)$ for all $k\geq0$. Hence, the right hand side of the equation (\ref{eq:T(1)TCprogal}) computes $(L_{T(1)}\TC)^{\sharp}(R)$ due to the $\arcp$-descent property of $(L_{T(1)}\TC)^{\sharp}$, and we conclude $L_{T(1)}\TC(R)\simeq (L_{T(1)}\TC)^{\sharp}(R)$ by Proposition \ref{prop:progal}. 
\end{proof}
\end{proposition}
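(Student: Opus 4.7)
The plan is to mimic the strategy used for the perfectoid base case in Proposition \ref{prop:T(1)TChyppresperfd}, but replace the passage to a perfectoid torus cover with a pro-Galois descent along the extension $\mathcal{O}_K \to \mathcal{O}_{\overline{K}}$, so as to reduce to the case already handled. Concretely, I would set $B = \mathcal{O}_K$, $B_\infty = \mathcal{O}_{\overline{K}}$ for a fixed algebraic closure $\overline{K}$ of $K = \mathrm{Frac}(\mathcal{O}_K)$, with continuous action of $G = \mathrm{Gal}(\overline{K}/K)$, and take the cofinal filtered system $\{N_L\}$ of open normal subgroups indexed by finite Galois subextensions $L/K$; then $B_\infty^{N_L} = \mathcal{O}_L$ is finite free over $\mathcal{O}_K$, so $R \to R \otimes_{\mathcal{O}_K} \mathcal{O}_L$ is finite and finitely presented.

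Next I would verify the remaining hypotheses needed to apply the pro-Galois descent statement, Proposition \ref{prop:progal}, to $R_\infty = B_\infty \otimes_B R$: the extension $K \to \overline{K}$ is $G$-Galois, $K$ has Krull dimension $0$ with bounded virtual $p$-cohomological dimension of residue fields (it is itself the only residue field), and under the assumption $[\kappa:\kappa^p] < \infty$ the absolute Galois group $G$ has finite virtual $p$-cohomological dimension, a classical fact of Gabber--Orgogozo type. Writing $R_\infty' := R_\infty^{\wedge_p}$, Proposition \ref{prop:progal} then gives a descent equivalence
\[
L_{T(1)}\TC(R) \;\simeq\; \lim_{[n]\in\Delta} L_{T(1)}\TC\bigl((R_\infty')^{\hotimes_R (n+1)}\bigr),
\]
where I freely identify $L_{T(1)}\TC$ on $R_\infty^{\otimes_R(n+1)}$ with its value on the $p$-completion, since the difference is $T(1)$-acyclic.

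In parallel, since $R \to R_\infty'$ is a quasisyntomic cover it is in particular an $\arcp$-cover, so the $\arcp$-hypersheaf $(L_{T(1)}\TC)^\sharp$ descends along the $p$-completed Cech nerve of $R \to R_\infty'$. To identify the two limits term-by-term I need each $(R_\infty')^{\hotimes_R (n+1)}$ to satisfy the hypotheses of Proposition \ref{prop:T(1)TChyppresperfd} over the perfectoid base $\mathcal{O}_C$, with $C = \widehat{\overline{K}}$. A surjection $\mathcal{O}_K\langle x_1,\dots,x_m\rangle \twoheadrightarrow R$ base-changes and $p$-completes to a surjection $\mathcal{O}_C\langle x_1,\dots,x_m\rangle \twoheadrightarrow R_\infty'$ (surjectivity reduces modulo $p$ and then follows from derived Nakayama), and the class of $p$-completely finitely generated quasisyntomic $\mathcal{O}_C$-algebras is closed under $p$-completed tensor products, which handles all higher Cech terms.

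The main obstacle I anticipate is the bookkeeping around pro-Galois descent: ensuring the finite virtual $p$-cohomological dimension of $G$ from the $F$-finiteness of $\kappa$, producing a cofinal system of finite and finitely presented subrings so that Proposition \ref{prop:progal} applies uncompleted while the $\arcp$-descent computation of $(L_{T(1)}\TC)^\sharp$ runs over the $p$-completed Cech nerve, and matching the two. Once these compatibilities are in place, Proposition \ref{prop:T(1)TChyppresperfd} applied termwise to the Cech nerve makes $L_{T(1)}\TC$ and $(L_{T(1)}\TC)^\sharp$ agree levelwise, and taking the limit yields the desired equivalence at $R$.
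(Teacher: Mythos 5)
Your proposal is correct and follows essentially the same argument as the paper: set $B_\infty = \mathcal{O}_{\overline{K}}$ with the $\Gal(\overline{K}/K)$-action, invoke Proposition \ref{prop:progal} (checking the finite vcd of $G$ via $[\kappa:\kappa^p]<\infty$ and producing the cofinal system from rings of integers of finite subextensions), identify the $p$-completed Cech nerve of $R \to R_\infty' := R_\infty^{\wedge_p}$ as consisting of $p$-completely finitely generated quasisyntomic $\mathcal{O}_C$-algebras with $C=\widehat{\overline{K}}$, and apply Proposition \ref{prop:T(1)TChyppresperfd} termwise together with $\arcp$-hyperdescent of $(L_{T(1)}\TC)^\sharp$. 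The bookkeeping points you flag as potential obstacles (uncompleted versus $p$-completed Cech nerve, surjectivity via derived Nakayama) are exactly what the paper addresses and pose no difficulty.
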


\begin{corollary}\label{cor:T(1)TCthomasonfilt}
Suppose that $R$ is an animated commutative ring such that $\pi_{0}(R^{\wedge_{p}})$ satisfies the assumptions for the commutative ring $R$ in Proposition \ref{prop:T(1)TChyppresperfd} or Proposition \ref{prop:T(1)TChyppres}. Then, $L_{T(1)}\TC(R)$ admits a complete and exhaustive multiplicative $\bbZ$-indexed descending filtration whose $n$-th associated graded piece is naturally equivalent to $\RG_{\et}(\Spec R^{\wedge_{p}}[1/p],\bbZ_{p}(n))[2n]$. 
\begin{proof}
Since $L_{T(1)}\TC$ is truncating on $T(1)$-acyclic spectra and is invariant for $p$-completions, and since $\RG_{\et}(\Spec R^{\wedge_{p}}[1/p],\bbZ_{p}(n))$ depends only on the underlying ordinary commutative ring $\pi_{0}(R^{\wedge_{p}})[1/p]$, it suffices to check that $L_{T(1)}\TC(\pi_{0}(R^{\wedge_{p}}))$ admits a filtration with the properties as stated above. By Proposition \ref{prop:T(1)TChyppresperfd} or Proposition \ref{prop:T(1)TChyppres} depending on the assumption, $L_{T(1)}\TC(\pi_{0}(R^{\wedge_{p}}))\simeq (L_{T(1)}\TC)^{\sharp}(\pi_{0}(R^{\wedge_{p}}))$. The right hand side of the equivalence admits a filtration of the claimed form through Construction \ref{constr:filtonarcpsheaf}, which gives the claimed filtration on $L_{T(1)}\TC(\pi_{0}(R^{\wedge_{p}}))$. 
\end{proof}
\end{corollary}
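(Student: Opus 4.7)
The plan is to reduce the assertion entirely to the case where the input is the ordinary commutative ring $\pi_{0}(R^{\wedge_{p}})$, and then transport the filtration that has already been built on $(L_{T(1)}\TC)^{\sharp}$ through the identification $L_{T(1)}\TC \simeq (L_{T(1)}\TC)^{\sharp}$ supplied by the hypothesis. Since all the technical machinery is in place, I do not expect to need any genuinely new input; the proof amounts to assembling previous results correctly.

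First I would perform the reduction to ordinary rings. The functor $L_{T(1)}\TC$ is invariant under derived $p$-completion (because $T(1)$-localization factors through $p$-completion), so the natural map $R \to R^{\wedge_{p}}$ induces an equivalence on $L_{T(1)}\TC$. By \cite[Cor.~4.22]{lmmt}, $L_{T(1)}\TC$ is truncating on $T(1)$-acyclic connective spectra, so applying it to the $0$-connective cover map $R^{\wedge_{p}} \to \pi_{0}(R^{\wedge_{p}})$ yields a further equivalence $L_{T(1)}\TC(R^{\wedge_{p}}) \simeq L_{T(1)}\TC(\pi_{0}(R^{\wedge_{p}}))$. On the other side, the functor $R \mapsto \RG_{\et}(\Spec R^{\wedge_{p}}[1/p], \bbZ_{p}(n))$ visibly depends only on the underlying discrete ring $\pi_{0}(R^{\wedge_{p}})[1/p]$. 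Thus it suffices to produce the claimed filtration on $L_{T(1)}\TC(\pi_{0}(R^{\wedge_{p}}))$.

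Next I would invoke the finiteness hypothesis. By assumption $\pi_{0}(R^{\wedge_{p}})$ satisfies the hypotheses of either Proposition \ref{prop:T(1)TChyppresperfd} or Proposition \ref{prop:T(1)TChyppres}, each of which supplies a natural equivalence
\begin{equation*}
L_{T(1)}\TC(\pi_{0}(R^{\wedge_{p}})) \simeq (L_{T(1)}\TC)^{\sharp}(\pi_{0}(R^{\wedge_{p}})).
\end{equation*}
Construction \ref{constr:filtonarcpsheaf} equips the target with a complete, exhaustive, multiplicative $\bbZ$-indexed descending filtration, whose $n$-th associated graded piece is described as $\left(\prism_{-/A}\{n\}[1/d]\right)^{F=1}[2n]$ over any perfectoid base and, via the étale comparison of Proposition \ref{prop:etcomp}, as $\RG_{\et}(\Spec (-)[1/p], \bbZ_{p}(n))[2n]$ in general. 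Pulling this filtration back through the displayed equivalence produces the desired filtration on $L_{T(1)}\TC(\pi_{0}(R^{\wedge_{p}}))$, and hence on $L_{T(1)}\TC(R)$ after the preceding reduction.

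The main potential obstacle would be a mismatch between "filtration on the presheaf $L_{T(1)}\TC$" and "filtration on its $\arcp$-hypersheafification," but the hypothesis removes exactly that obstruction: under the Krull-dimension and cohomological-dimension assumptions, Propositions \ref{prop:T(1)TChyppresperfd} and \ref{prop:T(1)TChyppres} guarantee no loss of information upon hypersheafification at the level of $\pi_{0}(R^{\wedge_{p}})$. Everything else (completeness, exhaustiveness, multiplicativity, and the identification of graded pieces with $p$-adic étale cohomology) is already part of Construction \ref{constr:filtonarcpsheaf} together with Proposition \ref{prop:etcomp}, so the proof is essentially bookkeeping.
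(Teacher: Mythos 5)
Your proof is correct and follows essentially the same route as the paper: reduce to $\pi_{0}(R^{\wedge_{p}})$ using the truncating and $p$-completion-invariance properties of $L_{T(1)}\TC$, invoke Proposition \ref{prop:T(1)TChyppresperfd} or \ref{prop:T(1)TChyppres} to identify $L_{T(1)}\TC(\pi_{0}(R^{\wedge_{p}}))$ with $(L_{T(1)}\TC)^{\sharp}(\pi_{0}(R^{\wedge_{p}}))$, and then transport the filtration from Construction \ref{constr:filtonarcpsheaf}. The only difference is that you explicitly flag the use of Proposition \ref{prop:etcomp} for the \'etale description of the graded pieces, which the paper leaves implicit inside Construction \ref{constr:filtonarcpsheaf}(4).
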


\subsection{Construction of the filtration II}\label{subsec:filtconstr2}

Using Proposition \ref{prop:T(1)TChyppres}, we can construct a filtration on $T(1)$-local $\TC$ via left Kan extension from finitely generated polynomial algebras. First, we observe the following consequence of the \'etale comparison of Proposition \ref{prop:etcomp}, which can be perceived as an algebraic analogue of Lemma \ref{lem:T(1)TClke} for the cohomology theory appearing as associated graded pieces:

\begin{corollary}\label{cor:T(1)TCgrlke}
For each $n\in\bbZ$, the functor 
\begin{equation*}
R\mapsto \RG_{\et}(\Spec R^{\wedge_{p}}[1/p],\bbZ_{p}(n)):\CAlg^{\an}\to \Dh(\bbZ_{p})
\end{equation*}
commutes with sifted colimits.
\begin{proof}
First, we check the analogous statement for the same functor on $\CAlg^{\an}_{\overline{A}}$, where $\overline{A}$ is a perfectoid ring corresponding to a perfect prism $(A,(d))$. By Proposition \ref{prop:etcomp}, the functor is equivalent to $R\mapsto \left(\prism_{R/A}\{n\}[\frac{1}{d}]\right)^{F=1}\simeq \left(\prism_{R^{\wedge_{p}}/A}\{n\}[\frac{1}{d}]\right)^{F=1}$. Since $\prism_{-/A}\{n\}$ commutes with sifted colimits and that sifted colimits commute with fiber-cofiber sequences, the latter functor preserves sifted colimits.\\
\indent Now, we check the case of arbitrary base rings, i.e., the case of the functor on $\CAlg^{\an}$. It suffices to check that the functor 
\begin{equation*}
F_{0} = \left(R\mapsto \RG_{\et}(\Spec R^{\wedge_{p}}[1/p],\mu_{p}^{\otimes n})\right):\CAlg^{\an}\to \mathcal{D}(\bbF_{p})
\end{equation*}
commutes with sifted colimits. Consider the functor $F = (R\mapsto \RG_{\et}(\Spec R[\zeta_{p^{\infty}}]^{\wedge_{p}}[1/p],\mu_{p}^{\otimes n}))$ obtained as a composition of $F_{0}$ with the base change $-\otimes_{\bbZ}\bbZ_{p}^{\cyc}:\CAlg^{\an}\to\CAlg^{\an}_{\bbZ_{p}^{\cyc}}$. Since we know $F_{0}$ restricted to $\CAlg^{\an}_{\bbZ_{p}^{\cyc}}$ preserves sifted colimits by the previous paragraph applied to $\overline{A} = \bbZ_{p}^{\cyc}$, the functor $F$ commutes with sifted colimits. By construction, $F$ naturally lifts to a sifted colimit preserving functor valued in $\mathcal{D}(\mathbb{F}_{p})^{B\bbZ_{p}^{\times}}$, and by faithfully flat descent of \'etale cohomology, there is an equivalence $F_{0}\simeq F^{h\bbZ_{p}^{\times}}$ by taking a homotopy fixed point for this continuous $\bbZ_{p}^{\times}$-action. Write $G$ for the torsion subgroup of $\bbZ_{p}^{\times}$, which in particular is finite cyclic. Since $F^{h\bbZ_{p}^{\times}}\simeq (F^{hG})^{h\bbZ_{p}}$ and $(-)^{h\bbZ_{p}}\simeq\fib(\psi-1)$ for the action $\psi$ of a topological generator of $\bbZ_{p}$, we are reduced to check that the functor $R\mapsto F(R)^{hG}:\CAlg^{\an}\to\mathcal{D}(\bbF_{p})$ commutes with sifted colimits. The functor commutes with filtered colimits, due to the facts that $F$ (valued in $\mathcal{D}(\bbF_{p})$) commutes with filtered colimits and factors through $\mathcal{D}(\bbF_{p})^{\geq 0}$ and that $(-)^{hG}$ commutes with filtered colimits in $\mathcal{D}(\bbF_{p})^{\geq0}$. Finally, the functor also commutes with geometric realizations (which in particular are filtered colimits of $m$-skeletal geometric realizations over $\Delta^{\op}_{\leq m}$), since $F$ valued in $\mathcal{D}(\bbF_{p})$ preserves geometric realizations and factors through $\mathcal{D}(\bbF_{p})^{\geq0}$, the functor $(-)^{hG}$ preserves filtered colimits in $\mathcal{D}(\bbF_{p})^{\geq0}$, and $(-)^{hG}$ preserves finite colimits in $\mathcal{D}(\bbF_{p})$. 
\end{proof}
\end{corollary}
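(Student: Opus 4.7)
The plan is to first handle the case when the base ring is a perfectoid, where the étale comparison theorem of Proposition \ref{prop:etcomp} reduces the assertion to a formal property of prismatic cohomology, and then to bootstrap to an arbitrary base via faithfully flat descent along $\bbZ\to\bbZ_{p}^{\cyc}$.

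For Step 1, fix a perfect prism $(A,(d))$ with perfectoid quotient $\overline{A}$. Restricted to $\CAlg^{\an}_{\overline{A}}$, Proposition \ref{prop:etcomp} identifies the functor with $R\mapsto\left(\prism_{R/A}\{n\}[1/d]\right)^{F=1}$, which factors through $R\mapsto R^{\wedge_{p}}$. Now $\prism_{-/A}\{n\}$ commutes with sifted colimits by construction, and inverting $d$ together with taking the Frobenius fixed-point equalizer are both sifted-colimit-preserving operations in a stable $\infty$-category; hence the functor preserves sifted colimits over $\overline{A}$.

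For Step 2, it suffices to verify the assertion with $\bbF_{p}$-coefficients, i.e., to show that $F_{0}(R)=\RG_{\et}(\Spec R^{\wedge_{p}}[1/p],\mu_{p}^{\otimes n}):\CAlg^{\an}\to\mathcal{D}(\bbF_{p})$ commutes with sifted colimits; the statement for $\bbZ_{p}(n)$ then follows by assembling the inverse limit in the coconnective direction. Base-change along $\bbZ\to\bbZ_{p}^{\cyc}$ produces $F(R)=F_{0}(R\otimes_{\bbZ}\bbZ_{p}^{\cyc})$, which preserves sifted colimits by Step 1 applied to $\overline{A}=\bbZ_{p}^{\cyc}$. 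Faithfully flat descent for étale cohomology along the pro-Galois extension $\bbZ_{p}\to\bbZ_{p}^{\cyc}$ with profinite Galois group $\bbZ_{p}^{\times}$ then yields $F_{0}\simeq F^{h\bbZ_{p}^{\times}}$ naturally in $R$. Writing $\bbZ_{p}^{\times}\cong G\times\bbZ_{p}$ with $G$ the finite torsion subgroup, one has $F_{0}\simeq(F^{hG})^{h\bbZ_{p}}$; the $\bbZ_{p}$-homotopy fixed points compute as $\fib(\psi-1)$ for a topological generator $\psi$, which is a finite limit and hence commutes with sifted colimits in the stable category $\mathcal{D}(\bbF_{p})$.

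The remaining and main obstacle is to show that $(-)^{hG}$ for the finite cyclic $G$, when precomposed with $F$, preserves sifted colimits; this is where one cannot appeal to abstract nonsense, since in general $(-)^{hG}$ does not commute with filtered colimits in a stable $\infty$-category. The resolution is to exploit the fact that $F$ takes values in the connective part $\mathcal{D}(\bbF_{p})^{\geq 0}$: filtered colimits in the connective part are $t$-exact, and $(-)^{hG}$ preserves filtered colimits of uniformly connective objects because the bounded-below group-cohomology spectral sequence converges degree-wise. For geometric realizations, one truncates to the $m$-skeleton $\Delta_{\leq m}^{\op}$, on which the colimit is finite, uses that $(-)^{hG}$ preserves finite colimits in the stable setting, and passes to the filtered colimit in $m$ using the previous point together with the connectivity of $F$. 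Combining these preservation properties with Step 2 completes the argument.
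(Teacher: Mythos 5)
Your proposal is correct and follows essentially the same route as the paper: Step 1 via Proposition \ref{prop:etcomp} over a perfectoid base, Step 2 via base change along $\bbZ\to\bbZ_p^{\cyc}$ and pro-Galois descent with $\bbZ_p^{\times}\cong G\times\bbZ_p$, and the same resolution of the key obstacle for $(-)^{hG}$ using that $F$ is valued in $\mathcal{D}(\bbF_p)^{\geq 0}$ (this part is coconnective, not ``connective'' as you write, but your subsequent reasoning is the intended one), together with finite colimits being preserved by $(-)^{hG}$ in a stable $\infty$-category and geometric realizations being filtered colimits of skeletal truncations.
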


\begin{remarkn}
By Corollary \ref{cor:T(1)TCgrlke}, we also know that for each $n\in\bbZ$, the closely related functor 
\begin{equation*}
R\mapsto \RG_{\et}(\Spec R[1/p],\bbZ_{p}(n)):\CAlg^{\an}\to \Dh(\bbZ_{p})
\end{equation*}
is left Kan extended from its restriction to the full subcategory $\CAlg^{\sm}$ spanned by smooth $\bbZ$-algebras in $\CAlg^{\an}$. In fact, the fiber of the natural map $\RG_{\et}(\Spec R[1/p],\bbZ_{p}(n))\to\RG_{\et}(\Spec R^{\wedge_{p}}[1/p],\bbZ_{p}(n))$ satisfies rigidity for Henselian pairs, cf. \cite[Rem. 8.4.4]{bl}. The arguments in the proof of \cite[Prop. 8.4.10]{bl} shows that this rigidity combined with the property of left Kan extension from smooth (even polynomial) algebras for the functor of Corollary \ref{cor:T(1)TCgrlke} implies the stated property of left Kan extension from smooth algebras for $R\mapsto \RG_{\et}(\Spec R[1/p],\bbZ_{p}(n))$.
\end{remarkn}

Thus, for any $B\in\CAlg^{\an}$, we in particular know from Lemma \ref{lem:T(1)TClke} and Corollary \ref{cor:T(1)TCgrlke} that the functors $L_{T(1)}\TC:\CAlg^{\an}_{B}\to\Sp^{\Cpl(p)}$ and $R\mapsto \RG_{\et}(\Spec R^{\wedge_{p}}[1/p],\bbZ_{p}(n)):\CAlg^{\an}_{B}\to \Dh(\bbZ_{p})$ commute with sifted colimits and hence are left Kan extended from the full subcategory $\CAlg^{\poly}_{B}$ of finitely generated polynomial algebras over $B$ in $\CAlg^{\an}_{B}$. Below, we again drop $B$ from these notations when $B=\bbZ$. 

\begin{construction}[Filtration via left Kan extension]\label{constr:filtvialke}
We write $\Fil^{\sbullet}_{\mathrm{lke}}L_{T(1)}\TC$ for the functor
\begin{equation*}
\Fil^{\sbullet}_{\mathrm{lke}}L_{T(1)}\TC\simeq L\left(\Fil^{\sbullet}(L_{T(1)}\TC)^{\sharp}|_{\CAlg^{\poly}}\right)\in\Fun_{\Sigma}\left(\CAlg^{\an},\CAlg(\DFh(\bbS))\right)
\end{equation*}
obtained by taking left Kan extension $L:\Fun(\CAlg^{\poly},\CAlg(\DFh(\bbS)))\simeq \Fun_{\Sigma}(\CAlg^{\an},\CAlg(\DFh(\bbS)))$ of the functor $\Fil^{\sbullet}(L_{T(1)}\TC)^{\sharp}|_{\CAlg^{\poly}}\in\Fun(\CAlg^{\poly},\CAlg(\DFh(\bbS)))$ on finitely generated polynomial rings, the restriction of the filtration from Construction \ref{constr:filtonarcpsheaf}, along the inclusion $\CAlg^{\poly}\hookrightarrow \CAlg^{\an}$. 
\end{construction}

\begin{remarkn}
Let $\mathcal{D}$ be an $\infty$-category which admits sifted colimits. For each $F\in\Fun(\CAlg^{\poly}, \mathcal{D})$, the value of its left Kan extension $LF\in\Fun_{\Sigma}(\CAlg^{\an},\mathcal{D})$ at $R\in\CAlg^{\an}$ is computed as a colimit in $\mathcal{D}$ over the $\infty$-category $\CAlg^{\poly}_{/R}$, which admits finite coproducts and in particular is sifted. Suppose $\mathcal{D}$ moreover admits a symmetric monoidal structure which is compatible with sifted colimits. Then, sifted colimits in commutative algebra objects of $\mathcal{D}$ are computed in the underlying $\infty$-category $\mathcal{D}$ \cite[Cor. 3.2.3.2]{ha}, and hence the functor $L:\Fun(\CAlg^{\poly},\CAlg(\mathcal{D}^{\otimes}))\simeq \Fun_{\Sigma}(\CAlg^{\an},\CAlg(\mathcal{D}^{\otimes}))$ is compatible with the functor $L:\Fun(\CAlg^{\poly},\mathcal{D}) \simeq \Fun_{\Sigma}(\CAlg^{\an},\mathcal{D})$ through the forgetful functor $\CAlg(\mathcal{D}^{\otimes})\to\mathcal{D}$. For instance, $L(L_{T(1)}\TC|_{\CAlg^{\poly}})\simeq L_{T(1)}\TC$ in $\Fun_{\Sigma}(\CAlg^{\an},\CAlg(\Sp^{\Cpl(p)}))$ by Lemma \ref{lem:T(1)TClke}. Also, the underlying functor $R\mapsto \Fil^{\sbullet}_{\mathrm{lke}}L_{T(1)}\TC(R):\CAlg^{\an}\to\DFh(\bbS)$ of $\Fil^{\sbullet}_{\mathrm{lke}}L_{T(1)}\TC$ is equivalent to a left Kan extension of the functor $\Fil^{\sbullet}(L_{T(1)}\TC)^{\sharp}|_{\CAlg^{\poly}}:\CAlg^{\poly}\to\DFh(\bbS)$. 
\end{remarkn}

\begin{proposition}\label{prop:filtvialke}
The following holds for the functor $\Fil^{\sbullet}_{\mathrm{lke}}L_{T(1)}\TC$ of Construction \ref{constr:filtvialke}:\\
(1) There is a map 
\begin{equation*}
\Fil^{\sbullet}_{\mathrm{lke}}L_{T(1)}\TC\to L_{T(1)}\TC
\end{equation*} 
of $\Fun_{\Sigma}(\CAlg^{\an},\CAlg(\DFh(\bbS)))$ which realizes the source $\Fil^{\sbullet}_{\mathrm{lke}}L_{T(1)}\TC(R)$ as an exhaustive filtration on $L_{T(1)}\TC(R)$ for each $R\in\CAlg^{\an}$.  \\
(2) For each $n\in\bbZ$, there is a natural equivalence 
\begin{equation*}
\gr^{n}\Fil_{\mathrm{lke}}L_{T(1)}\TC(R)\simeq\RG_{\et}(\Spec R^{\wedge_{p}}[1/p],\bbZ_{p}(n))[2n]
\end{equation*} 
in $\Dh(\bbZ_{p})$ for $R\in\CAlg^{\an}$. 
\begin{proof}
(1) From Corollary \ref{cor:T(1)TCthomasonfilt} and Proposition \ref{prop:T(1)TChyppres} for the case of $\mathcal{O}_{K} = \bbZ_{p}$, we know there is a map $\Fil^{\sbullet}(L_{T(1)}\TC)^{\sharp}|_{\CAlg^{\poly}}\to L_{T(1)}\TC|_{\CAlg^{\poly}}$ of $\Fun(\CAlg^{\poly},\CAlg(\DFh(\bbS)))$ which realizes the source as an exhaustive filtration on $L_{T(1)}\TC|_{\CAlg^{\poly}}\simeq (L_{T(1)}\TC)^{\sharp}|_{\CAlg^{\poly}}$. The map of $\Fun_{\Sigma}(\CAlg^{\an},\CAlg(\DFh(\bbS)))$ obtained through left Kan extension, by Construction \ref{constr:filtvialke} and Lemma \ref{lem:T(1)TClke}, gives a desired map 
\begin{equation*}
\Fil^{\sbullet}_{\mathrm{lke}}L_{T(1)}\TC\simeq L\left(\Fil^{\sbullet}(L_{T(1)}\TC)^{\sharp}|_{\CAlg^{\poly}}\right) \to L\left(L_{T(1)}\TC|_{\CAlg^{\poly}}\right)\simeq L_{T(1)}\TC.
\end{equation*} 
Since values of left Kan extensions are computed by colimits (over categories of the form $\CAlg^{\poly}_{/R}$), the map realizes $L\left(\Fil^{\sbullet}(L_{T(1)}\TC)^{\sharp}|_{\CAlg^{\poly}}\right)$ as an exhaustive filtration on  $L\left(L_{T(1)}\TC|_{\CAlg^{\poly}}\right)$. \\
(2) It suffices to check that the functor $R\mapsto \RG_{\et}(\Spec R^{\wedge_{p}}[1/p],\bbZ_{p}(n)):\CAlg^{\an}\to\Dh(\bbZ_{p})$ is left Kan extended from its restriction to $\CAlg^{\poly}$. This property follows from Corollary \ref{cor:T(1)TCgrlke} above. 
\end{proof}
\end{proposition}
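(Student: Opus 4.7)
The plan is to deduce both parts from the already established comparison between $L_{T(1)}\TC$ and $(L_{T(1)}\TC)^{\sharp}$ on polynomial rings together with the fact that left Kan extension commutes with all small colimits in the target, which is what governs both exhaustiveness of filtrations and the formation of associated graded pieces.

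For part (1), I would first invoke Proposition \ref{prop:T(1)TChyppres} with $\mathcal{O}_{K}=\bbZ_{p}$ to conclude that the natural map $L_{T(1)}\TC|_{\CAlg^{\poly}}\to (L_{T(1)}\TC)^{\sharp}|_{\CAlg^{\poly}}$ is an equivalence. Combined with Corollary \ref{cor:T(1)TCthomasonfilt}, this gives a map $\Fil^{\sbullet}(L_{T(1)}\TC)^{\sharp}|_{\CAlg^{\poly}}\to L_{T(1)}\TC|_{\CAlg^{\poly}}$ in $\Fun(\CAlg^{\poly},\CAlg(\DFh(\bbS)))$ which is exhaustive objectwise. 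Applying the left Kan extension functor $L:\Fun(\CAlg^{\poly},\CAlg(\DFh(\bbS)))\to \Fun_{\Sigma}(\CAlg^{\an},\CAlg(\DFh(\bbS)))$, and using Lemma \ref{lem:T(1)TClke} to identify $L(L_{T(1)}\TC|_{\CAlg^{\poly}})$ with $L_{T(1)}\TC$ on $\CAlg^{\an}$, produces the desired map $\Fil^{\sbullet}_{\mathrm{lke}}L_{T(1)}\TC\to L_{T(1)}\TC$. Exhaustiveness is then a matter of commuting two colimits: for $R\in\CAlg^{\an}$, the underlying spectrum $\colim_{n\to -\infty}\Fil^{n}_{\mathrm{lke}}L_{T(1)}\TC(R)$ is computed by a sifted colimit over $\CAlg^{\poly}_{/R}$ of $\colim_{n\to -\infty}\Fil^{n}(L_{T(1)}\TC)^{\sharp}(P)\simeq L_{T(1)}\TC(P)$, which recovers $L_{T(1)}\TC(R)$ again by Lemma \ref{lem:T(1)TClke}.

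For part (2), I would note that the $n$-th associated graded piece is a finite colimit and so commutes with the left Kan extension functor $L$. Consequently $\gr^{n}\Fil_{\mathrm{lke}}L_{T(1)}\TC$ is the left Kan extension from $\CAlg^{\poly}$ of $\gr^{n}(L_{T(1)}\TC)^{\sharp}|_{\CAlg^{\poly}}$, which by Construction \ref{constr:filtonarcpsheaf} (using the étale comparison noted there to rewrite the associated graded of Theorem \ref{th:introfiltonarcpsheaf}) is naturally equivalent on polynomial rings to $\RG_{\et}(\Spec P^{\wedge_{p}}[1/p],\bbZ_{p}(n))[2n]$. Finally, Corollary \ref{cor:T(1)TCgrlke} asserts that the functor $R\mapsto \RG_{\et}(\Spec R^{\wedge_{p}}[1/p],\bbZ_{p}(n))$ on $\CAlg^{\an}$ commutes with sifted colimits, hence agrees with its own left Kan extension from $\CAlg^{\poly}$. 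Stringing these equivalences together yields the claimed identification of $\gr^{n}\Fil_{\mathrm{lke}}L_{T(1)}\TC(R)$ for arbitrary $R\in\CAlg^{\an}$.

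Neither step presents a real obstacle given what has been proved already; the only mild subtlety is to ensure that the identification of the source on polynomial rings as a filtration on $L_{T(1)}\TC$ (rather than on $(L_{T(1)}\TC)^{\sharp}$) is used consistently, so that exhaustiveness transfers correctly through $L$. The essential content is Proposition \ref{prop:T(1)TChyppres}, which licenses the replacement $(L_{T(1)}\TC)^{\sharp}\leadsto L_{T(1)}\TC$ on $\CAlg^{\poly}$, and Corollary \ref{cor:T(1)TCgrlke}, which provides the analogous replacement for the graded pieces; the rest is formal manipulation of left Kan extensions.
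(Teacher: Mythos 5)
Your proposal is correct and follows essentially the same route as the paper: part (1) reduces to $\CAlg^{\poly}$ via Proposition \ref{prop:T(1)TChyppres} (with $\mathcal{O}_K=\bbZ_p$) and Corollary \ref{cor:T(1)TCthomasonfilt}, then left Kan extends using Lemma \ref{lem:T(1)TClke} and commutes colimits for exhaustiveness; part (2) observes that $\gr^n$ commutes with $L$ (being a finite colimit) and invokes Corollary \ref{cor:T(1)TCgrlke} to see the \'etale cohomology functor is its own left Kan extension from $\CAlg^{\poly}$. The only cosmetic difference is that you spell out the interchange of colimits in part (1) and the finite-colimit step in part (2) slightly more explicitly than the paper does, but the key references and the logical skeleton are identical.
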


\subsection{Comparisons of the filtrations on $T(1)$-local $\TC$ and its hypercompletion} \label{subsec:compwithhyp}

In this subsection, we provide another characterization of the exhaustive, left Kan extended filtration on $L_{T(1)}\TC$ of Construction \ref{constr:filtvialke} in terms of a pullback of $\Fil^{\sbullet}(L_{T(1)}\TC)^{\sharp}$ from Construction \ref{constr:filtonarcpsheaf} along the $\arcp$-hypersheafification map (Construction \ref{constr:filtviapb} and Proposition \ref{prop:filtcompletecomparison}). Through this identification, we can verify useful colimit and limit-related properties for the filtration on $L_{T(1)}\TC$ which weren't immediate from each of the constructions, e.g., flat descent, preservation of sifted colimits, and a criterion for the filtration to be filtration-complete; we collect them in Theorem \ref{th:T(1)TCfilt}. For the properties of pullback and completion constructions for filtered objects we use here, see Appendix \ref{sec:appendix}.  

\begin{construction}[Filtration via pullback from the hypercompletion] \label{constr:filtviapb}
We write $\Fil^{\sbullet}_{\mathrm{pb}}L_{T(1)}\TC$ for the functor
\begin{equation*}
\Fil^{\sbullet}_{\mathrm{pb}}L_{T(1)}\TC\simeq \left(R\mapsto h^{\ast}\Fil^{\sbullet}(L_{T(1)}\TC)^{\sharp}(R)\right):\CAlg^{\an}\to \CAlg(\DFh(\bbS))
\end{equation*}
obtained by applying the pullback construction from Construction \ref{constr:filtpullback}, for each $R\in\CAlg^{\an}$, to the natural map 
\begin{equation*}
h:L_{T(1)}\TC(R)\simeq L_{T(1)}\TC(\pi_{0}(R^{\wedge_{p}}))\to (L_{T(1)}\TC)^{\sharp}(\pi_{0}(R^{\wedge_{p}}))
\end{equation*} 
of $\CAlg(\Sp^{\Cpl(p)})$ given by an $\arcp$-hypersheafification, and the map of multiplicative filtered $p$-complete spectra 
\begin{equation*}
\Fil^{\sbullet}(L_{T(1)}\TC)^{\sharp}(\pi_{0}(R^{\wedge_{p}}))\to(L_{T(1)}\TC)^{\sharp}(\pi_{0}(R^{\wedge_{p}}))
\end{equation*}
from Construction \ref{constr:filtonarcpsheaf}. Note that each of the pullbacks of commutative algebra objects in the construction is computed in the underlying $\infty$-category $\Sp^{\Cpl(p)}$ (or $\Fun(\CAlg^{\an},\Sp^{\Cpl(p)})$), cf. \cite[Cor. 3.2.2.4]{ha}. Also, recall that the latter map of filtered objects realizes the source $\Fil^{\sbullet}(L_{T(1)}\TC)^{\sharp}(R):=\Fil^{\sbullet}(L_{T(1)}\TC)^{\sharp}(\pi_{0}(R^{\wedge_{p}}))$ as an exhaustive filtration of $(L_{T(1)}\TC)^{\sharp}(\pi_{0}(R^{\wedge_{p}}))$. \\
\indent By construction, there is a map 
\begin{equation*}
\Fil^{\sbullet}_{\mathrm{pb}}L_{T(1)}\TC\to L_{T(1)}\TC
\end{equation*}
of presheaves of multiplicative filtered $p$-complete spectra on $\CAlg^{\an}$, and by Lemma \ref{lem:filtpullback} (2), this map realizes $\Fil^{\sbullet}_{\mathrm{pb}}L_{T(1)}\TC$ as an exhaustive filtration on $L_{T(1)}\TC$. By Lemma \ref{lem:filtpullback} (3), we also know that $\gr^{\sbullet}\Fil_{\mathrm{pb}}L_{T(1)}\TC\simeq \gr^{\sbullet}(L_{T(1)}\TC)^{\sharp}$. 
\end{construction}

Thus, through Construction \ref{constr:filtvialke} and Construction \ref{constr:filtviapb}, we have obtained two (\emph{a priori} different) constructions of exhaustive filtrations $\Fil^{\sbullet}_{\mathrm{lke}}L_{T(1)}\TC$ and $\Fil^{\sbullet}_{\mathrm{pb}}L_{T(1)}\TC$ of $L_{T(1)}\TC$. To compare these two filtered objects, consider the natural map 
\begin{equation} \label{eq:filtcomparison}
\Fil^{\sbullet}_{\mathrm{lke}}L_{T(1)}\TC\simeq L\left(\Fil^{\sbullet}(L_{T(1)}\TC)^{\sharp}|_{\CAlg^{\poly}}\right)\to\Fil^{\sbullet}(L_{T(1)}\TC)^{\sharp}
\end{equation} 
of presheaves of multiplicative filtered $p$-complete spectra on $\CAlg^{\an}$.

\begin{remarkn}\label{rem:filtundobj}
The map $\Fil^{\sbullet}_{\mathrm{lke}}L_{T(1)}\TC\to\Fil^{\sbullet}(L_{T(1)}\TC)^{\sharp}$ of (\ref{eq:filtcomparison}) induces the map 
\begin{equation*}
h:L_{T(1)}\TC\to (L_{T(1)}\TC)^{\sharp}(\pi_{0}(-)^{\wedge_{p}})
\end{equation*} 
of underlying objects in $\Fun\left(\CAlg^{\an},\CAlg(\Sp^{\Cpl(p)})\right)$, which we used in Construction \ref{constr:filtviapb}. In fact, for $R\in\CAlg^{\an}$, taking underlying objects for the map of filtrations of (\ref{eq:filtcomparison}) after evaluating $R$ gives the canonical map of $p$-complete spectra
\begin{equation}\label{eq:filtundobjintermediate}
\colim_{S\in(\CAlg^{\poly})_{/R}} (L_{T(1)}\TC)^{\sharp}(\pi_{0}(S^{\wedge_{p}}))\to (L_{T(1)}\TC)^{\sharp}(\pi_{0}(R^{\wedge_{p}})).
\end{equation}
To further describe this map, note that we have the following natural equivalence:
\begin{equation*}
\begin{tikzcd}
L_{T(1)}\TC(R)\simeq \colim_{S\in(\CAlg^{\poly})_{/R}}L_{T(1)}\TC(S) \arrow[r, "h", "\sim"'] & \colim_{S\in(\CAlg^{\poly})_{/R}}(L_{T(1)}\TC)^{\sharp}(\pi_{0}(S^{\wedge{p}})).
\end{tikzcd}
\end{equation*}
Here, we are using Lemma \ref{lem:T(1)TClke} to ensure the first map is an equivalence, while using Proposition \ref{prop:T(1)TChyppres} for $\mathcal{O}_{K}=\bbZ_{p}$ to know the second map induced from $h$ is also an equivalence. By composing this equivalence with the map (\ref{eq:filtundobjintermediate}), we know the latter map is equivalent to the map $h:L_{T(1)}\TC(R)\to (L_{T(1)}\TC)^{\sharp}(\pi_{0}(R^{\wedge_{p}}))$ as claimed. 
\end{remarkn}

By Remark \ref{rem:filtundobj} above, we obtain the following square of presheaves 
\begin{equation*}
\begin{tikzcd}
\Fil^{\sbullet}_{\mathrm{lke}}L_{T(1)}\TC \arrow[r, "(\ref{eq:filtcomparison})"] \arrow[d] & \Fil^{\sbullet}(L_{T(1)}\TC)^{\sharp} \arrow[d] \\
L_{T(1)}\TC \arrow[r, "h"'] & (L_{T(1)}\TC)^{\sharp}(\pi_{0}(-)^{\wedge_{p}})
\end{tikzcd}
\end{equation*}
of multiplicative filtered $p$-complete spectra on $\CAlg^{\an}$ whose top horizontal arrow is (\ref{eq:filtcomparison}) and the vertical arrows are the natural maps, which can also be understood as induced from unit maps for the adjunction $p_{!}\dashv p^{\ast}$ as discussed in the paragraph above Lemma \ref{lem:filtexhaustivegr}. From this square, we induce the natural comparison map 

\begin{equation}\label{eq:filtcomparison2}
\Fil^{\sbullet}_{\mathrm{lke}}L_{T(1)}\TC\to h^{\ast}\Fil^{\sbullet}(L_{T(1)}\TC)^{\sharp}\simeq \Fil^{\sbullet}_{\mathrm{pb}}L_{T(1)}\TC
\end{equation}
of presheaves of multiplicative filtered $p$-complete spectra on $\CAlg^{\an}$ over the object $L_{T(1)}\TC$.  \\
\indent Below, we explain that these two filtrations on $L_{T(1)}\TC$, one obtained via left Kan extension and the other obtained via pullback construction, are in fact equivalent to each other, and that the filtration on $(L_{T(1)}\TC)^{\sharp}$ we constructed earlier is their filtration-completion. 

\begin{proposition}[Comparisons of filtrations] \label{prop:filtcompletecomparison}
We have the following comparisons between the filtrations $\Fil^{\sbullet}_{\mathrm{lke}}L_{T(1)}\TC$ from Construction \ref{constr:filtvialke} and $\Fil^{\sbullet}_{\mathrm{pb}}L_{T(1)}\TC$ from Construction \ref{constr:filtviapb} on $L_{T(1)}\TC$, and $\Fil^{\sbullet}(L_{T(1)}\TC)^{\sharp}$ on an $\arcp$-hypersheafification $(L_{T(1)}\TC)^{\sharp}$ from Construction \ref{constr:filtonarcpsheaf}:\\
(1) The map $\Fil^{\sbullet}_{\mathrm{lke}}L_{T(1)}\TC\to \Fil^{\sbullet}_{\mathrm{pb}}L_{T(1)}\TC$ of (\ref{eq:filtcomparison2}) is an equivalence of filtered objects over $L_{T(1)}\TC$.\\
(2) The map $\Fil^{\sbullet}_{\mathrm{lke}}L_{T(1)}\TC\to\Fil^{\sbullet}(L_{T(1)}\TC)^{\sharp}$ of (\ref{eq:filtcomparison}) realizes the target as a filtration-completion of the source. 
\begin{proof}
We first check (2). Note that the map $\Fil^{\sbullet}_{\mathrm{lke}}L_{T(1)}\TC\to\Fil^{\sbullet}(L_{T(1)}\TC)^{\sharp}$ by construction induces an equivalence $\gr^{\sbullet}\Fil_{\mathrm{lke}}L_{T(1)}\TC\simeq L(\gr^{\sbullet}(L_{T(1)}\TC)^{\sharp}|_{\CAlg^{\poly}})\simeq \gr^{\sbullet}(L_{T(1)}\TC)^{\sharp}$ on associated graded objects and that the target filtered object $\Fil^{\sbullet}(L_{T(1)}\TC)^{\sharp}$ is filtration-complete. By Lemma \ref{lem:filtcompletion} (2), we have the claimed result. \\
\indent For (1), apply the associated graded objects functor on the diagram of $\Fun(\CAlg^{\an},\DFh(\bbS))$
\begin{equation*} 
\begin{tikzcd}
\Fil^{\sbullet}_{\mathrm{lke}}L_{T(1)}\TC \arrow[rd, "(\ref{eq:filtcomparison2})"'] \arrow[rrd, bend left=10, "(\ref{eq:filtcomparison})"] & & \\
       &  \Fil^{\sbullet}_{\mathrm{pb}}L_{T(1)}\TC \arrow[r] & \Fil^{\sbullet}(L_{T(1)}\TC)^{\sharp}
\end{tikzcd}
\end{equation*}
to obtain the diagram of $\Fun(\CAlg^{\an},\Dh(\bbZ_{p})^{\gr})$
\begin{equation*} 
\begin{tikzcd}
\gr^{\sbullet}\Fil_{\mathrm{lke}}L_{T(1)}\TC \arrow[rd, "\gr^{\sbullet}(\ref{eq:filtcomparison2})"'] \arrow[rrd, bend left=10, "\gr^{\sbullet}(\ref{eq:filtcomparison})", "\sim"'] & & \\
       &  \gr^{\sbullet}\Fil_{\mathrm{pb}}L_{T(1)}\TC \arrow[r, "\sim"'] & \gr^{\sbullet}\Fil(L_{T(1)}\TC)^{\sharp}.
\end{tikzcd}
\end{equation*}
We already have observed that the map $\gr^{\sbullet}(\ref{eq:filtcomparison})$ is an equivalence while checking the statement (2) in the previous paragraph. On the other hand, the lower horizontal map is an equivalence by Lemma \ref{lem:filtpullback} (3). Thus, we know the map $\gr^{\sbullet}(\ref{eq:filtcomparison2})$ is an equivalence. Now, since the source and the target filtered objects for the map (\ref{eq:filtcomparison2}) are exhaustive filtrations for the common underlying object $L_{T(1)}\TC$, we know the map (\ref{eq:filtcomparison2}) is an equivalence by Lemma \ref{lem:filtexhaustivegr}. 
\end{proof}
\end{proposition}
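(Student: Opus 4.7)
The plan is to reduce both parts to checks on associated graded pieces, and then invoke two formal facts about filtered objects (presumably isolated in the appendix as Lemma \ref{lem:filtcompletion} and Lemma \ref{lem:filtexhaustivegr}): that a map of filtered objects realizes the target as a filtration-completion if and only if it induces equivalences on associated graded pieces and the target is filtration-complete, and that a map between two exhaustive filtrations on a common underlying object is an equivalence precisely when it is an equivalence on all associated graded pieces.

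I would tackle (2) first, since this is what directly feeds into (1). The key observation is that taking associated graded pieces commutes with left Kan extension (as $\gr^{n}$ preserves colimits of filtered objects), so that
\begin{equation*}
\gr^{n}\Fil_{\mathrm{lke}}L_{T(1)}\TC \simeq L\bigl(\gr^{n}(L_{T(1)}\TC)^{\sharp}|_{\CAlg^{\poly}}\bigr).
\end{equation*}
By Construction \ref{constr:filtonarcpsheaf} (4), together with the étale comparison of Proposition \ref{prop:etcomp}, $\gr^{n}(L_{T(1)}\TC)^{\sharp}(R)$ is naturally equivalent to $\RG_{\et}(\Spec R^{\wedge_{p}}[1/p], \bbZ_{p}(n))[2n]$; by Corollary \ref{cor:T(1)TCgrlke} this functor already commutes with sifted colimits on $\CAlg^{\an}$ and so coincides with its own left Kan extension from $\CAlg^{\poly}$. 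Since $\Fil^{\sbullet}(L_{T(1)}\TC)^{\sharp}$ is filtration-complete by construction, the completion-characterization lemma yields (2).

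For (1), I would exploit the commutative triangle over $L_{T(1)}\TC$ in which the composite $\Fil^{\sbullet}_{\mathrm{lke}}L_{T(1)}\TC \to \Fil^{\sbullet}_{\mathrm{pb}}L_{T(1)}\TC \to \Fil^{\sbullet}(L_{T(1)}\TC)^{\sharp}$ is the map (\ref{eq:filtcomparison}). Applying $\gr^{\sbullet}$, the hypotenuse is an equivalence by the verification carried out for (2), while the second leg is an equivalence by the pullback property Lemma \ref{lem:filtpullback} (3) (pullback preserves associated graded pieces). Hence the first leg $\gr^{\sbullet}(\ref{eq:filtcomparison2})$ is an equivalence. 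Since both $\Fil^{\sbullet}_{\mathrm{lke}}L_{T(1)}\TC$ and $\Fil^{\sbullet}_{\mathrm{pb}}L_{T(1)}\TC$ are exhaustive filtrations sitting over the same underlying object $L_{T(1)}\TC$, Lemma \ref{lem:filtexhaustivegr} upgrades the equivalence on graded pieces to an equivalence of filtered objects.

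The main obstacle is conceptual rather than computational: ensuring that the presheaf $\gr^{n}(L_{T(1)}\TC)^{\sharp}$ is left Kan extended from $\CAlg^{\poly}$, which is not at all automatic, since $(L_{T(1)}\TC)^{\sharp}$ is defined via right Kan extension and $\arcp$-hypersheafification. This uses the description of $\gr^{n}(L_{T(1)}\TC)^{\sharp}$ via étale cohomology at the generic fiber, together with the nontrivial sifted-colimit-preservation in Corollary \ref{cor:T(1)TCgrlke}, which in turn rests on the étale comparison theorem. Once this is in hand, both parts of the proposition reduce to formal manipulations of filtered objects.
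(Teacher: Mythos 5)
Your proposal is correct and follows essentially the same route as the paper: prove (2) first by showing the map on associated graded pieces is an equivalence (using that $\gr$ commutes with left Kan extension, and that $\gr^{n}(L_{T(1)}\TC)^{\sharp}$ already preserves sifted colimits via Corollary \ref{cor:T(1)TCgrlke} and the \'etale comparison) and applying the completion-characterization lemma, then deduce (1) from the commutative triangle over $L_{T(1)}\TC$, the pullback's invariance of graded pieces (Lemma \ref{lem:filtpullback} (3)), and Lemma \ref{lem:filtexhaustivegr}. The only difference is cosmetic: you spell out the "by construction" step on graded pieces in a bit more detail than the paper does, which simply cites Proposition \ref{prop:filtvialke} (2) and Construction \ref{constr:filtonarcpsheaf} for the same content.
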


\begin{notation}\label{not:T(1)TCfilt}
We write $\Fil^{\sbullet}L_{T(1)}\TC$ for the common presheaf
\begin{equation*}
\Fil^{\sbullet}L_{T(1)}\TC:=\Fil^{\sbullet}_{\mathrm{lke}}L_{T(1)}\TC\simeq\Fil^{\sbullet}_{\mathrm{pb}}L_{T(1)}\TC
\end{equation*} 
of multiplicative $p$-complete spectra on $\CAlg^{\an}$; the equivalence of these two presheaves is guaranteed through Proposition \ref{prop:filtcompletecomparison} (1). We also write $\gr^{\sbullet}L_{T(1)}\TC:=\gr^{\sbullet}\Fil(L_{T(1)}\TC)$ for its associated graded object in $\Fun(\CAlg^{\an},\Dh(\bbZ_{p})^{\gr})$. By definition, there is a natural map 
\begin{equation*}
\Fil^{\sbullet}L_{T(1)}\TC\to L_{T(1)}\TC
\end{equation*} 
of multiplicative filtered objects which realizes $\Fil^{\sbullet}L_{T(1)}\TC$ as an exhaustive filtration on its underlying object $L_{T(1)}\TC$. 
\end{notation}

\begin{theorem}[Filtration on $T(1)$-local $\TC$] \label{th:T(1)TCfilt}
There exists a functor $\Fil^{\sbullet}L_{T(1)}\TC:\CAlg^{\an}\to\CAlg(\DFh(\bbS))$ which satisfies the following properties:\\
(1) There is a map 
\begin{equation*}
\Fil^{\sbullet}L_{T(1)}\TC\to L_{T(1)}\TC
\end{equation*} 
of presheaves of multiplicative filtered $p$-complete spectra on $\CAlg^{\an}$ which realizes the source as an exhaustive filtration of the target $L_{T(1)}\TC$ viewed as a constant filtered object. \\
(2) $\Fil^{\sbullet}L_{T(1)}\TC$ commutes with sifted colimits. \\
(3) $\Fil^{\sbullet}L_{T(1)}\TC$ induces equivalences on taking functors $\pi_{0}:\CAlg^{\an}\to\CAlg$ and $(-)^{\wedge_{p}}:\CAlg^{\an}\to \CAlg^{\an,p-\cpl}$, and satisfies $p$-complete fppf descent. \\
(4) For each $n\in\bbZ$, write $\gr^{n}L_{T(1)}\TC$ for the $n$-th associated graded objects functor for $\Fil^{\sbullet}L_{T(1)}\TC$. There is a natural equivalence
\begin{equation*}
\gr^{n}L_{T(1)}\TC(R)\simeq\RG_{\et}(\Spec R^{\wedge_{p}}[1/p],\bbZ_{p}(n))[2n]
\end{equation*} 
in $\Dh(\bbZ_{p})$ for $R\in\CAlg^{\an}$. \\
(5) There is a map 
\begin{equation*}
\Fil^{\sbullet}L_{T(1)}\TC\to \Fil^{\sbullet}(L_{T(1)}\TC)^{\sharp}
\end{equation*} 
of $\Fun(\CAlg^{\an},\CAlg(\DFh(\bbS)))$, where the target $\Fil^{\sbullet}(L_{T(1)}\TC)^{\sharp}$ is from Construction \ref{constr:filtonarcpsheaf}. This map realizes the target as a filtration-completion of the source, and induces the $\arcp$-hypersheafification map 
\begin{equation*}
h:L_{T(1)}\TC\to (L_{T(1)}\TC)^{\sharp}(\pi_{0}((-)^{\wedge_{p}}))
\end{equation*}
of $\Fun(\CAlg^{\an},\CAlg(\Sp^{\Cpl(p)}))$ between the underlying objects of filtrations. \\
(6) For $R\in\CAlg^{\an}$, the filtered object $\Fil^{\sbullet}L_{T(1)}\TC(R)$ of $p$-complete spectra is filtration-complete if and only if the natural map $h:L_{T(1)}\TC(R)\simeq L_{T(1)}\TC(\pi_{0}(R^{\wedge_{p}}))\to (L_{T(1)}\TC)^{\sharp}(\pi_{0}(R^{\wedge_{p}}))$ given by $\arcp$-hypersheafification is an equivalence. 
\begin{proof}
We take $\Fil^{\sbullet}L_{T(1)}\TC$ to be the filtered object of Notation \ref{not:T(1)TCfilt}. (1) follows from Proposition \ref{prop:filtvialke} (1) or the last paragraph of Construction \ref{constr:filtviapb}. (2) follows from Construction \ref{constr:filtvialke}. For (3), note that the functors $L_{T(1)}\TC$, $(L_{T(1)}\TC)^{\sharp}(\pi_{0}((-)^{\wedge_{p}}))$, and $\Fil^{\sbullet}(L_{T(1)}\TC)^{\sharp}$ are invariant under taking $\pi_{0}$ and $(-)^{\wedge_{p}}$ of input animated commutative rings, and that these functors satisfy $p$-complete fppf descent; the latter for $L_{T(1)}\TC$ follows from its \'etale and finite flat descent properties \cite[Th. A.4 and Th. 5.1]{ausrog} as in the proof of \cite[Th. 6.9]{etalek}. Being a pullback of these functors, $\Fil^{\sbullet}L_{T(1)}\TC$ also satisfies the claimed properties. (4) follows from Proposition \ref{prop:filtvialke} (2) or from Lemma \ref{lem:filtpullback} (3) applied to $h$ and $\Fil^{\sbullet}(L_{T(1)}\TC)^{\sharp}$. (5) follows from Proposition \ref{prop:filtcompletecomparison} (2) and Remark \ref{rem:filtundobj}. Finally, (6) follows from Lemma \ref{lem:filtpullback} (1) applied to $h$ and $\Fil^{\sbullet}(L_{T(1)}\TC)^{\sharp}$. 
\end{proof}
\end{theorem}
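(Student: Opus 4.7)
The plan is to take $\Fil^{\sbullet}L_{T(1)}\TC$ to be the functor of Notation \ref{not:T(1)TCfilt}, namely the common filtration identified via Proposition \ref{prop:filtcompletecomparison}~(1), and then verify each of the six properties (1)--(6) by invoking the appropriate construction and citing the relevant earlier result. Since the two descriptions $\Fil^{\sbullet}_{\mathrm{lke}}L_{T(1)}\TC$ and $\Fil^{\sbullet}_{\mathrm{pb}}L_{T(1)}\TC$ are already shown to agree, the strategy is to use whichever description makes each assertion most transparent.

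First, for (1), I would read off exhaustiveness of $\Fil^{\sbullet}L_{T(1)}\TC\to L_{T(1)}\TC$ from either Proposition \ref{prop:filtvialke}~(1) (via left Kan extension from $\CAlg^{\poly}$, where exhaustiveness is automatic by Corollary \ref{cor:T(1)TCthomasonfilt} and Proposition \ref{prop:T(1)TChyppres}) or from Construction \ref{constr:filtviapb} (via Lemma \ref{lem:filtpullback}~(2)). Property (2) follows directly from the left Kan extension description, and property (4) can be extracted from Proposition \ref{prop:filtvialke}~(2) or equivalently from Lemma \ref{lem:filtpullback}~(3) applied to the map $h$ and to $\Fil^{\sbullet}(L_{T(1)}\TC)^{\sharp}$ (whose graded pieces were computed in Construction \ref{constr:filtonarcpsheaf} using the étale comparison). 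Property (5) is exactly Proposition \ref{prop:filtcompletecomparison}~(2) together with the identification of the induced map on underlying objects noted in Remark \ref{rem:filtundobj}, and property (6) follows from Lemma \ref{lem:filtpullback}~(1) applied to $h$ and the complete filtration $\Fil^{\sbullet}(L_{T(1)}\TC)^{\sharp}$.

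The one place where a genuine verification is still required is property (3). The invariance under $\pi_0$ and $(-)^{\wedge_p}$ is clear for $L_{T(1)}\TC$ (since $L_{T(1)}\TC$ is truncating on $T(1)$-acyclic spectra and $p$-completion invariant) and, essentially tautologically, for $\Fil^{\sbullet}(L_{T(1)}\TC)^{\sharp}$ by the definition in Construction \ref{constr:filtonarcpsheaf}~(5); the pullback description of $\Fil^{\sbullet}L_{T(1)}\TC$ then transports the property. For $p$-complete fppf descent, I would observe that $L_{T(1)}\TC$ satisfies this by combining its known étale and finite flat descent properties (as in \cite[Th.~A.4 and Th.~5.1]{ausrog}, following \cite[Th.~6.9]{etalek}), and that $\Fil^{\sbullet}(L_{T(1)}\TC)^{\sharp}$ satisfies it because its graded pieces do (Lemma \ref{lem:prismarcphypersheaf} together with the filtration-completeness coming from Construction \ref{constr:filtonarcpsheaf}). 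Since $\Fil^{\sbullet}L_{T(1)}\TC$ is a pullback in $\Fun(\CAlg^{\an},\CAlg(\DFh(\bbS)))$ of functors each satisfying $p$-complete fppf descent (and pullbacks commute with limits), the descent property is inherited.

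The main obstacle, if any, is bookkeeping rather than substance: one must make sure that the pullback construction of Construction \ref{constr:filtviapb} is performed coherently in $\Fun(\CAlg^{\an},\CAlg(\DFh(\bbS)))$ so that all the structure (multiplicativity, functoriality in $R$, compatibility with the map to $L_{T(1)}\TC$) is preserved when one inherits descent and the sifted-colimit preservation property. This is handled by the fact, recalled in Construction \ref{constr:filtviapb}, that pullbacks of commutative algebra objects are computed in the underlying $\infty$-category, so each property of interest reduces to a statement about the underlying diagram of multiplicative filtered $p$-complete spectra.
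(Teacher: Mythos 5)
Your proposal is correct and follows essentially the same route as the paper's proof: both take $\Fil^{\sbullet}L_{T(1)}\TC$ from Notation \ref{not:T(1)TCfilt} and then read off each of (1)--(6) from the same sources (Proposition \ref{prop:filtvialke}, Construction \ref{constr:filtviapb}, Lemma \ref{lem:filtpullback}, Proposition \ref{prop:filtcompletecomparison}, Remark \ref{rem:filtundobj}), with (3) handled via the pullback description and the $\pi_0$/$(-)^{\wedge_p}$-invariance and $p$-complete fppf descent of each of the three terms in the pullback square. The only difference is that you unpack the descent claim for $\Fil^{\sbullet}(L_{T(1)}\TC)^{\sharp}$ in slightly more detail (via Lemma \ref{lem:prismarcphypersheaf} and completeness), which the paper leaves implicit.
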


\subsection{Comparison with the Thomason filtration on $T(1)$-local algebraic $\K$-theory} \label{subsec:compwiththomason}

In this subsection, we provide a comparison of $\Fil^{\sbullet}L_{T(1)}\TC$ with the Thomason filtration on $T(1)$-local algebraic K-theory and explain how this entails a K-theoretic proof of the \'etale comparison for prismatic cohomology. Consider the functor $F := \tau_{\geq 2\sbullet}L_{T(1)}\K:\CAlg^{\an}\to\CAlg(\DFh(\bbS))$ of presheaf Postnikov filtration on $L_{T(1)}\K$, cf. Proposition \ref{prop:postnikovmult}. By taking \'etale sheafification $F^{\et}$ and inverting $p$ on the source, we have $F':=F^{\et}(-[\frac{1}{p}])$ and a natural map
\begin{equation}\label{eq:FtoF'}
F\to F^{\et}\to F^{\et}(-[1/p]) = F'
\end{equation} 
of presheaves of multiplicative filtered $p$-complete spectra on $\CAlg^{\an}$. Note that this is a map over $L_{T(1)}\K$, which is an \'etale sheaf, cf., \cite[Th. A.4]{ausrog} or more generally \cite[Th. 5.39]{etalek}, and is invariant under inverting $p$ on the source \cite[Th. 2.16]{bcm}. By definition, the underlying filtered object for $F^{\et}$ in $\Fun(\CAlg^{\an},\DFh(\bbS))$ is given by the \'etale Postinikov filtration $\tau_{\geq 2\sbullet}^{\et}L_{T(1)}\K$, cf. Remark \ref{rem:calgsheaf} and Remark \ref{rem:filtsheaf}. Similarly, we also consider the functor $G:=\tau_{\geq 2\sbullet-1}L_{T(1)}\K:\CAlg^{\an}\to \DFh(\bbS)$, its \'etale sheafification $G^{\et}$, and a natural map $G\to G^{\et}\to G^{\et}(-[1/p])=:G'$ of presheaves of multiplicative filtered $p$-complete spectra on $\CAlg^{\an}$. 

\begin{remarkn} \label{rem:thomasonfilt}
For $R\in\CAlg^{\an}$, the filtered object $F'(R)$ gives the Thomason filtration for $L_{T(1)}\K(R[\frac{1}{p}])\simeq L_{T(1)}\K(R)$. In fact, we have an equivalence
\begin{equation*}
\begin{tikzcd}
L_{(\Spec R[1/p])_{\et}}\left(\tau_{\geq2\sbullet}L_{T(1)}\K|_{(\Spec R[1/p])_{\et}}\right)(R[1/p]) \arrow[r, "\sim"] & (\tau_{\geq2\sbullet}L_{T(1)}\K)^{\et}|_{(\Spec R[1/p])}(R[1/p]) = F'(R),
\end{tikzcd}
\end{equation*}
where $L_{(\Spec R[1/p])_{\et}}$ is an \'etale sheafification functor for presheaves on $(\Spec R[1/p])_{\et}$, cf. \cite[Prop. 7.6]{etalek}. The left hand side of the equivalence gives the Thomason filtration, which in particular gives an exhaustive filtration on $L_{T(1)}\K(R[1/p])\simeq L_{T(1)}\K(R)$ and is filtration-complete under the presence of the finiteness condition on $R[1/p]$ as in \cite[Th. 7.14]{etalek}. As identified by Thomason \cite{tt}, the $n$-th associated graded piece of the Thomason filtration is naturally equivalent to $\RG_{\et}(\Spec R[1/p],\bbZ_{p}(n))[2n]$ for $R\in\CAlg^{\an}$, and $F' = \tau_{\geq 2\sbullet}^{\et}L_{T(1)}\K(-[1/p])\simeq \tau_{\geq 2\sbullet-1}^{\et}L_{T(1)}\K(-[1/p]) = G'$. 
\end{remarkn}

\begin{proposition}\label{prop:compwiththomason}
The maps $F\to F'$ of (\ref{eq:FtoF'}) and $G\to G'$ induce equivalences after taking $\arcp$-hypersheafifications on $\CAlg^{p-\cpl}$. 
\begin{proof}
Since the case of $G = \tau_{\geq 2\sbullet -1}L_{T(1)}\K$ follows in the same way as the case of $F$, let us explain the latter. To check the induced map $(F|_{\CAlg^{p-\cpl}})^{\sharp}\to (F'|_{\CAlg^{p-\cpl}})^{\sharp}$ of hypersheaves is an equivalence, it suffices to check that the map $F|_{\CAlg^{p-\cpl}}\to F'|_{\CAlg^{p-\cpl}}$ gives an equivalence $F(\prod_{T}V_{t})\simeq F'(\prod_{T}V_{t})$ for any products $\prod_{T}V_{t}\in\CAlg^{p-\cpl}$ of rank 1 absolutely integrally closed $p$-complete valuation rings $V_{t}$, as these form a basis for the $\arcp$-topology on $\CAlg^{p-\cpl}$, cf. \cite[Prop. 3.30]{arc}. \\
\indent Now, observe that both $F$ and $F'$ on $\CAlg^{\an}$ commute with filtered colimits; the case of $F$ is obvious from the definition and the analogous property for algebraic $\K$-theory. For the case of $F'$, it suffices to check that the \'etale sheafification $F^{\et} = \tau_{\geq2\sbullet}^{\et}L_{T(1)}\K$ of $F$ commutes with filtered colimits. For each $m\in\bbZ$, consider the functor $R\mapsto \tau_{\leq m}L_{T(1)}\K(R):\CAlg^{\an}\to\Sp^{\Cpl(p)}$, which commutes with filtered colimits and is $m$-truncated as a presheaf. By \cite[Prop. 7.11]{etalek}, we know its \'etale sheafification $\tau_{\leq m}^{\et}L_{T(1)}\K$ commutes with filtered colimits. Combining this with the fact that $L_{T(1)}\K$ commutes with filtered colimits, we know each $\tau_{\geq 2n}^{\et}L_{T(1)}\K$ for $n\in\bbZ$, and hence $F^{\et}$ commutes with filtered colimits. \\
\indent By \cite[Cor. 3.18]{arc}, the preceding discussion further reduces us to the task of verifying that the map $F(V)\to F'(V)$ is an equivalence for ultraproducts $V$ of $V_{t}$'s; these rings $V$ are in particular absolutely integrally closed $p$-Henselian valuation rings. By construction, the map $F(V)\to F'(V)$ factors as $F(V)\simeq F(V[1/p])\to F^{\et}(V[1/p]) = F'(V)$, and we have to check that the \'etale sheafification map $F(V[1/p])\to F^{\et}(V[1/p])$ on $V[1/p]$ is an equivalence. However, note that the localization $V[1/p]$ of $V$ remains to be an absolutely integrally closed valuation ring, and hence is in particular a strictly Henselian local ring \cite[Lem. 5.3]{arc}. Since both $F$ and $F^{\et}$ commute with filtered colimits as observed above, we know the map $F(V[1/p])\to F^{\et}(V[1/p])$ is a map between stalks induced from an \'etale sheafification map, and hence is an equivalence of filtered $p$-complete spectra.
\end{proof}
\end{proposition}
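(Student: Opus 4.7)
The plan is to verify the equivalence $(F|_{\CAlg^{p-\cpl}})^{\sharp}\simeq (F'|_{\CAlg^{p-\cpl}})^{\sharp}$ on a convenient basis for the $\arcp$-topology and then reduce to the case of a strictly Henselian local ring, where étale sheafification is innocuous. The argument for $G\to G'$ will be formally identical (the only difference is a shift in truncation degree), so I would present it once for $F$.

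First, I would reduce to a basis. Since by \cite[Prop. 3.30]{arc} products $\prod_{T}V_{t}$ of rank $1$ absolutely integrally closed $p$-complete valuation rings form a basis for the $\arcp$-topology on $\CAlg^{p-\cpl}$, it suffices to show $F(\prod_{T}V_{t})\to F'(\prod_{T}V_{t})$ is an equivalence. Next I would verify that both $F$ and $F'$ commute with filtered colimits of animated commutative rings: for $F$ this is immediate from the corresponding property of $L_{T(1)}\K$ and the fact that truncations commute with filtered colimits; for $F'$ the key input is that étale sheafification preserves the filtered-colimit-preserving property for bounded presheaves, cf.\ \cite[Prop. 7.11]{etalek}, applied to each truncation $\tau_{\leq m}L_{T(1)}\K$, which together with filtered-colimit-preservation of $L_{T(1)}\K$ itself gives filtered-colimit-preservation of each $\tau_{\geq 2n}^{\et}L_{T(1)}\K$ and hence of $F^{\et}$ and $F'$.

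By \cite[Cor. 3.18]{arc}, each product $\prod_{T}V_{t}$ is a filtered colimit of ultraproducts of the $V_{t}$'s, and each such ultraproduct $V$ is again an absolutely integrally closed $p$-Henselian valuation ring. Thus I am reduced to checking that $F(V)\to F'(V)$ is an equivalence for such $V$. The map factors through $F(V)\simeq F(V[1/p])\to F^{\et}(V[1/p]) = F'(V)$, where the first equivalence comes from $p$-invertibility of $L_{T(1)}\K$ \cite[Th. 2.16]{bcm}. Now $V[1/p]$ is a localization of an absolutely integrally closed valuation ring, so by \cite[Lem. 5.3]{arc} it is strictly Henselian local; since both $F$ and $F^{\et}$ commute with filtered colimits, the canonical map $F(V[1/p])\to F^{\et}(V[1/p])$ is a stalk map at a geometric point of $\Spec V[1/p]$ and is therefore an equivalence.

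The main technical obstacle is the filtered-colimit-preservation of $F'$, which requires extracting it from the analogous statement for truncated presheaves of \cite[Prop. 7.11]{etalek} and then reassembling via the Postnikov tower; once this is in hand, the reduction through ultraproducts and the appeal to strictly Henselian stalks are standard. A secondary subtlety is ensuring that the reduction to valuation rings via \cite[Cor. 3.18]{arc} is compatible with the multiplicative filtered structure, but this is automatic since the presented identifications take place in $\CAlg(\DFh(\bbS))$ and filtered colimits of $\bbE_{\infty}$-algebras are computed underlying.
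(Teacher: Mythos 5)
Your proposal is correct and follows essentially the same route as the paper's proof: reduction to the basis of products of rank-$1$ absolutely integrally closed $p$-complete valuation rings, verification that $F$ and $F'$ commute with filtered colimits (using \cite[Prop. 7.11]{etalek} for the truncations), reduction via \cite[Cor. 3.18]{arc} to ultraproducts, and the stalk argument at the strictly Henselian local ring $V[1/p]$. The only cosmetic difference is that you cite \cite[Th. 2.16]{bcm} explicitly for the equivalence $F(V)\simeq F(V[1/p])$, which the paper leaves implicit (it was noted in the paragraph introducing $F'$).
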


Below, let us write $\Fil^{\sbullet}L_{T(1)}\K$ for the Thomason filtration on $L_{T(1)}\K$; using our notation from Remark \ref{rem:thomasonfilt}, we can write $F'(R) = \Fil^{\sbullet}L_{T(1)}\K(R[1/p])$ for $R\in\CAlg^{\an}$. 

\begin{theorem}[Comparison with the Thomason filtration]\label{th:compwiththomason}
There is a natural diagram 
\begin{equation*}
\begin{tikzcd}
(\Fil^{\sbullet}L_{T(1)}\K)(R^{\wedge_{p}}[1/p]) \arrow[r, "\sim"'] \arrow[d] & (\Fil^{\sbullet}L_{T(1)}\TC)(R) \arrow[d]  \\
L_{T(1)}\K(R^{\wedge_{p}}[1/p])\simeq L_{T(1)}\K(R^{\wedge_{p}}) \arrow[r, "\tr"', "\sim"] & L_{T(1)}\TC(R) 
\end{tikzcd}
\end{equation*}
of multiplicative filtered $p$-complete spectra for $R\in\CAlg^{\an}$, where the upper horizontal map is an equivalence of multiplicative filtered $p$-complete spectra which induces the equivalence $L_{T(1)}\K(R^{\wedge_{p}}[1/p])\simeq L_{T(1)}\K(R^{\wedge_{p}}) \underset{\sim}{\xrightarrow{\tr}} L_{T(1)}\TC(R)$ of underlying objects in $\CAlg(\Sp^{\Cpl(p)})$, given by the cyclotomic trace map, as the lower horizontal map between constant filtered objects. 
\begin{proof}
Consider the following diagram 
\begin{equation}\label{eq:thomason1}
\begin{tikzcd}
F'|_{\CAlg^{p-\cpl}} \arrow[r] \arrow[d] & (F'|_{\CAlg^{p-\cpl}})^{\sharp} \arrow[d] & (F|_{\CAlg^{p-\cpl}})^{\sharp} \arrow[l, "\sim", "\text{Prop.} \ref{prop:compwiththomason}"'] \arrow[d] \arrow[r, "(\tau_{\geq 2\sbullet-1}\tr)^{\sharp}", "\sim"'] & \Fil^{\sbullet}(L_{T(1)}\TC)^{\sharp} \arrow[d] \\
L_{T(1)}\K|_{\CAlg^{p-\cpl}} \arrow[r] & (L_{T(1)}\K|_{\CAlg^{p-\cpl}})^{\sharp} & (L_{T(1)}\K|_{\CAlg^{p-\cpl}})^{\sharp} \arrow[l, "="] \arrow[r, "\sim", "\tr^{\sharp}"'] & (L_{T(1)}\TC)^{\sharp}
\end{tikzcd}
\end{equation}
of $\Fun(\CAlg^{p-\cpl},\CAlg(\DFh(\bbS)))$. The left square is induced from an $\arcp$-hypersheafification of the natural map $F'\to L_{T(1)}\K$ over $\CAlg^{p-\cpl}$, while the middle square is obtained from the fact that the map $F\to F'$ is defined over $L_{T(1)}\K$. The right square is induced from taking an $\arcp$-hypersheafification of the equivalence $\tr:L_{T(1)}\K|_{\CAlg^{p-\cpl}}\simeq L_{T(1)}\TC|_{\CAlg^{p-\cpl}}$ and its truncations; more precisely, the upper horizontal arrow of the square is the composition $(F|_{\CAlg^{p-\cpl}})^{\sharp}\simeq (G|_{\CAlg^{p-\cpl}})^{\sharp}\overset{\tau^{\sharp}_{\geq 2\sbullet-1}\tr}{\simeq}\Fil^{\sbullet}(L_{T(1)}\TC)^{\sharp}$. Here, the natural map $(F|_{\CAlg^{p-\cpl}})^{\sharp}\to (G|_{\CAlg^{p-\cpl}})^{\sharp}$ is an equivalence through Proposition \ref{prop:compwiththomason} and the fact that the natural map $F'\to G'$ is an equivalence. To check that this map is in $\Fun(\CAlg^{p-\cpl},\CAlg(\DFh(\bbS)))$, note that it is equivalent to the composition $\tau^{\sharp}_{\geq 2\sbullet}(L_{T(1)}\K|_{\CAlg^{p-\cpl}})^{\sharp}\overset{\tau^{\sharp}_{\geq 2\sbullet}\tr}{\simeq} \tau^{\sharp}_{\geq 2\sbullet}(L_{T(1)}\TC)^{\sharp}\to \tau^{\sharp}_{\geq 2\sbullet-1}(L_{T(1)}\TC)^{\sharp} = \Fil^{\sbullet}(L_{T(1)}\TC)^{\sharp}$, and hence it suffices to check that the second map $\tau^{\sharp}_{\geq 2\sbullet}(L_{T(1)}\TC)^{\sharp}\to \tau^{\sharp}_{\geq 2\sbullet-1}(L_{T(1)}\TC)^{\sharp}= \Fil^{\sbullet}(L_{T(1)}\TC)^{\sharp}$ of the composition is in $\Fun(\CAlg^{p-\cpl},\CAlg(\DFh(\bbS)))$. In fact, this map is in $\Shv^{\hyp}_{\CAlg(\DFh(\bbS))}((\CAlg^{p-\cpl})^{\op}_{\arcp})$, since the corresponding map of $\arcp$-hypersheaves $\tau_{\geq 2\sbullet}L_{T(1)}\TC|_{\CAlg^{\qrsp}_{\bbZ_{p},\mathcal{O}_{C}}}\to \Fil^{\sbullet}L_{T(1)}\TC|_{\CAlg^{\qrsp}_{\bbZ_{p},\mathcal{O}_{C}}}$, being induced from the map $\tau_{\geq 2\sbullet}L_{T(1)}\TC|_{\CAlg^{\qrsp}_{\bbZ_{p},\mathcal{O}_{C}}}\to \tau_{\geq 2\sbullet}L_{T(1)}\TP|_{\CAlg^{\qrsp}_{\bbZ_{p},\mathcal{O}_{C}}}$ of $\Fun(\CAlg^{\qrsp}_{\bbZ_{p},\mathcal{O}_{C}},\CAlg(\DFh(\bbS)))$, is in $\Shv^{\hyp}_{\CAlg(\DFh(\bbS))}((\CAlg^{\qrsp}_{\bbZ_{p},\mathcal{O}_{C}})^{\op}_{\arcp})$.  \\
\indent By Proposition \ref{prop:compwiththomason}, we can take the outer square of (\ref{eq:thomason1}) as a well-defined diagram, and by the description of $\Fil^{\sbullet}L_{T(1)}\TC\simeq \Fil_{\mathrm{pb}}^{\sbullet}L_{T(1)}\TC$ as a pullback filtration of Construction \ref{constr:filtviapb}, we have the following diagram
\begin{equation}\label{eq:thomason2}
\begin{tikzcd}
F'|_{\CAlg^{p-\cpl}} \arrow[r] \arrow[d] & (\Fil_{\mathrm{pb}}^{\sbullet}L_{T(1)}\TC)|_{\CAlg^{p-\cpl}} \arrow[r] \arrow[d] & \Fil^{\sbullet}(L_{T(1)}\TC)^{\sharp} \arrow[d] \\
L_{T(1)}\K|_{\CAlg^{p-\cpl}} \arrow[r, "\tr"', "\sim"] & L_{T(1)}\TC|_{\CAlg^{p-\cpl}} \arrow[r, "h"'] & (L_{T(1)}\TC)^{\sharp}
\end{tikzcd}
\end{equation}
in $\Fun(\CAlg^{p-\cpl},\CAlg(\DFh(\bbS)))$ whose outer square agrees with the outer square of the previous diagram. Since $L_{T(1)}\K$ and $L_{T(1)}\TC$ are truncating on $T(1)$-acyclic spectra \cite[Cor. 4.22]{lmmt} and the filtered objects $\Fil^{\sbullet}L_{T(1)}\TC$ and $\Fil^{\sbullet}(L_{T(1)}\TC)^{\sharp}$ are truncating on $\CAlg^{\an}$, we have a diagram 
\begin{equation}\label{eq:thomason3}
\begin{tikzcd}
F'((-)^{\wedge_{p}}) \arrow[r] \arrow[d] & \Fil_{\mathrm{pb}}^{\sbullet}L_{T(1)}\TC \arrow[r] \arrow[d] & \Fil^{\sbullet}(L_{T(1)}\TC)^{\sharp} \arrow[d] \\
L_{T(1)}\K((-)^{\wedge_{p}}) \arrow[r, "\tr"', "\sim"] & L_{T(1)}\TC \arrow[r, "h"'] & (L_{T(1)}\TC)^{\sharp}(\pi_{0}((-)^{\wedge_{p}}))
\end{tikzcd}
\end{equation}
in $\Fun(\CAlg^{\an},\CAlg(\DFh(\bbS)))$ by construction of the involved objects and the previous diagram. \\
\indent We check that the map $F'((-)^{\wedge_{p}}) \to \Fil_{\mathrm{pb}}^{\sbullet}L_{T(1)}\TC$ from the top left horizontal arrow of (\ref{eq:thomason3}) induces an equivalence on associated graded objects. Consider the natural map $F'|_{\CAlg^{p-\cpl}}\to (F'|_{\CAlg^{p-\cpl}})^{\sharp}$ from the top left horizontal arrow of (\ref{eq:thomason1}). It induces an equivalence of associated graded objects, due to the fact that $\gr^{n}(F'|_{\CAlg^{p-\cpl}})\simeq \RG_{\et}(\Spec(-)[1/p],\bbZ_{p}(n))[2n]$ for each $n\in\bbZ$ is already an $\arcp$-hypersheaf valued in $\Dh(\bbZ_{p})$. Since the target is a complete filtered object (through the identification with $\Fil^{\sbullet}(L_{T(1)}\TC)^{\sharp}$ using Proposition \ref{prop:compwiththomason} and the cyclotomic trace map), we know that this map realizes the target as a filtration-completion of the source by Lemma \ref{lem:filtcompletion} (2). Of course, the same can be said for the composite top horizontal map $F'|_{\CAlg^{p-\cpl}}\to\Fil^{\sbullet}(L_{T(1)}\TC)^{\sharp}$ of (\ref{eq:thomason1}) and (\ref{eq:thomason2}), equivalently as well as for $F'((-)^{\wedge_{p}})\to\Fil^{\sbullet}(L_{T(1)}\TC)^{\sharp}$ of (\ref{eq:thomason3}). This also implies that the induced map $F'((-)^{\wedge_{p}})\to \Fil_{\mathrm{pb}}^{\sbullet}L_{T(1)}\TC$, again using the pullback description of the target, gives an equivalence after taking associated graded objects functor.  \\
\indent Now, note that both $F'\to L_{T(1)}\K(-[1/p])\simeq L_{T(1)}\K$ and $\Fil^{\sbullet}_{\mathrm{pb}}L_{T(1)}\TC\to L_{T(1)}\TC$ realize the source as an exhaustive filtration of the target; the former is an \'etale Postnikov filtration, while the latter is a filtered object pulled-back from an $\arcp$-Postnikov filtration, cf. Theorem \ref{th:T(1)TCfilt} (1). By Lemma \ref{lem:filtexhaustivegr}, we know the natural map $F'((-)^{\wedge_{p}})\to \Fil^{\sbullet}_{\mathrm{pb}}L_{T(1)}\TC$ is an equivalence from the preceding paragraph. 
\end{proof}
\end{theorem}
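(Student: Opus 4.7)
The plan is to route the comparison through the $\arcp$-hypersheafification, using $\Fil^{\sbullet}(L_{T(1)}\TC)^{\sharp}$ of Construction \ref{constr:filtonarcpsheaf} as a bridge between the Thomason filtration and $\Fil^{\sbullet}L_{T(1)}\TC$. The starting point is Proposition \ref{prop:compwiththomason}: the natural map from the presheaf Postnikov filtration $F = \tau_{\geq 2\sbullet}L_{T(1)}\K$ to the presheaf $F' = \tau^{\et}_{\geq 2\sbullet}L_{T(1)}\K(-[1/p])$ realizing the Thomason filtration becomes an equivalence after $\arcp$-hypersheafification on $\CAlg^{p-\cpl}$. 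Transporting via the cyclotomic trace equivalence $\tr : L_{T(1)}\K|_{\CAlg^{p-\cpl}} \simeq L_{T(1)}\TC|_{\CAlg^{p-\cpl}}$ of \cite{bcm}, the source identifies with the $\arcp$-hypersheafification of $\tau_{\geq 2\sbullet}L_{T(1)}\TC|_{\CAlg^{p-\cpl}}$, which on the basis $\CAlg^{\qrsp}_{\bbZ_{p},\mathcal{O}_{C}}$ coincides with $\Fil^{\sbullet}L_{T(1)}\TC$ of Construction \ref{constr:qrsp} because $L_{T(1)}\TC$ is even there. Right Kan extending along the inclusion into $\CAlg^{p-\cpl}$ yields an equivalence
\[
(F'|_{\CAlg^{p-\cpl}})^{\sharp} \simeq \Fil^{\sbullet}(L_{T(1)}\TC)^{\sharp}
\]
in $\Shv^{\hyp}_{\CAlg(\DFh(\bbS))}((\CAlg^{p-\cpl})^{\op}_{\arcp})$ sitting over $\tr^{\sharp} : (L_{T(1)}\K)^{\sharp} \simeq (L_{T(1)}\TC)^{\sharp}$.

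Next, I would precompose with $\pi_{0}((-)^{\wedge_{p}}) : \CAlg^{\an} \to \CAlg^{p-\cpl}$ and pull back along the hypersheafification map $h : L_{T(1)}\TC \to (L_{T(1)}\TC)^{\sharp}(\pi_{0}((-)^{\wedge_{p}}))$. Using the identification $\Fil^{\sbullet}L_{T(1)}\TC \simeq \Fil^{\sbullet}_{\mathrm{pb}}L_{T(1)}\TC$ from Proposition \ref{prop:filtcompletecomparison}(1), the above equivalence pulls back to a natural map
\[
F'((-)^{\wedge_{p}}) \longrightarrow \Fil^{\sbullet}L_{T(1)}\TC
\]
in $\Fun(\CAlg^{\an},\CAlg(\DFh(\bbS)))$ lying over the trace equivalence $L_{T(1)}\K((-)^{\wedge_{p}}[1/p]) \simeq L_{T(1)}\TC$. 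By Remark \ref{rem:thomasonfilt}, $F'((-)^{\wedge_{p}})(R)$ is the Thomason filtration on $L_{T(1)}\K(R^{\wedge_{p}}[1/p])$, which is exhaustive, while $\Fil^{\sbullet}L_{T(1)}\TC \to L_{T(1)}\TC$ is exhaustive by Theorem \ref{th:T(1)TCfilt}(1). Since pullback commutes with $\gr^{\sbullet}$ (Lemma \ref{lem:filtpullback}(3)), the map on $n$-th associated graded pieces identifies with the pullback of the equivalence $\gr^{n}F'|_{\CAlg^{p-\cpl}} \simeq \RG_{\et}(\Spec(-)[1/p],\bbZ_{p}(n))[2n] \simeq \gr^{n}(L_{T(1)}\TC)^{\sharp}$, and so is an equivalence. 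Lemma \ref{lem:filtexhaustivegr} then forces $F'((-)^{\wedge_{p}}) \to \Fil^{\sbullet}L_{T(1)}\TC$ to be an equivalence, producing the square of the statement.

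The main obstacle is carrying out everything in $\CAlg(\DFh(\bbS))$ rather than just in $\DFh(\bbS)$. In particular, one must verify that the natural map $\tau_{\geq 2\sbullet}L_{T(1)}\TC \to \Fil^{\sbullet}L_{T(1)}\TC = \tau_{\geq 2\sbullet-1}L_{T(1)}\TC$ on $\CAlg^{\qrsp}_{\bbZ_{p},\mathcal{O}_{C}}$ lifts to a morphism of $\CAlg(\DFh(\bbS))$-valued $\arcp$-hypersheaves; this follows from realizing $\Fil^{\sbullet}L_{T(1)}\TC$ as the equalizer of $\tau_{\geq 2\sbullet}F, \mathrm{id} : \tau_{\geq 2\sbullet}L_{T(1)}\TP \rightrightarrows \tau_{\geq 2\sbullet}L_{T(1)}\TP$ as in Construction \ref{constr:filtonarcpsheaf}(1). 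Finally, the étale comparison of Corollary \ref{cor:etcomp} is extracted by taking $\gr^{n}$ of the resulting upper horizontal equivalence: Thomason's computation gives the left side as $\RG_{\et}(\Spec R^{\wedge_{p}}[1/p],\bbZ_{p}(n))[2n]$, while for $R \in \CAlg^{\an}_{\overline{A}}$ the right side equals $\bigl(\prism_{R/A}\{n\}[1/d]\bigr)^{F=1}[2n]$ by Construction \ref{constr:filtonarcpsheaf}(4), and deloooping by $[-2n]$ gives the stated natural equivalence.
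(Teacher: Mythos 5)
Your strategy is essentially the paper's: use Proposition \ref{prop:compwiththomason} to identify $(F'|_{\CAlg^{p-\cpl}})^{\sharp}$ with $(F|_{\CAlg^{p-\cpl}})^{\sharp}$, transport via the cyclotomic trace, right Kan extend from the basis, pull back along $h$, and conclude with Lemma \ref{lem:filtexhaustivegr} once the map is an equivalence on associated graded pieces. However, the pivotal step --- identifying the $\arcp$-hypersheafification of $\tau_{\geq 2\sbullet}L_{T(1)}\TC|_{\CAlg^{p-\cpl}}$ with $\Fil^{\sbullet}(L_{T(1)}\TC)^{\sharp}$ --- is not justified correctly. You assert that on the basis $\CAlg^{\qrsp}_{\bbZ_p,\mathcal{O}_C}$ the filtration of Construction \ref{constr:qrsp} coincides with $\tau_{\geq 2\sbullet}L_{T(1)}\TC$ ``because $L_{T(1)}\TC$ is even there.'' This is false: it is $L_{T(1)}\TC^{-}(R)$ and $L_{T(1)}\TP(R)$ that are even for $R\in\CAlg^{\qrsp}_{\bbZ_p,\mathcal{O}_C}$, while $L_{T(1)}\TC(R)$ is the fiber of $F-1$ on $L_{T(1)}\TP(R)$ and has nontrivial odd homotopy (the associated graded $\gr^n L_{T(1)}\TC(R)[-2n]\simeq\left(\prism_{R/A}\{n\}[1/d]\right)^{F=1}$ sits in cohomological degrees $0$ and $1$). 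Consequently $\Fil^{\sbullet}L_{T(1)}\TC|_{\CAlg^{\qrsp}_{\bbZ_p,\mathcal{O}_C}}$ is the $\tau_{\geq 2\sbullet-1}$ Postnikov filtration, not $\tau_{\geq 2\sbullet}$, and the two do not coincide on the basis. Your own final paragraph in fact distinguishes $\tau_{\geq 2\sbullet}L_{T(1)}\TC$ from $\Fil^{\sbullet}L_{T(1)}\TC=\tau_{\geq 2\sbullet-1}L_{T(1)}\TC$ on that basis, which is inconsistent with the evenness claim.

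The correct way to bridge the indexing gap, as in the paper, is to introduce $G = \tau_{\geq 2\sbullet-1}L_{T(1)}\K$ alongside $F = \tau_{\geq 2\sbullet}L_{T(1)}\K$, observe that $F'\simeq G'$ (Remark \ref{rem:thomasonfilt}, since the étale homotopy sheaves of $L_{T(1)}\K$ vanish in odd degrees), and apply Proposition \ref{prop:compwiththomason} to both $F$ and $G$: this gives $(F|_{\CAlg^{p-\cpl}})^{\sharp}\simeq (F'|_{\CAlg^{p-\cpl}})^{\sharp}\simeq (G'|_{\CAlg^{p-\cpl}})^{\sharp}\simeq(G|_{\CAlg^{p-\cpl}})^{\sharp}$, after which the trace identifies $(G|_{\CAlg^{p-\cpl}})^{\sharp}$ with $\Fil^{\sbullet}(L_{T(1)}\TC)^{\sharp}$. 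The shift from $\tau_{\geq 2\sbullet}$ to $\tau_{\geq 2\sbullet-1}$ only becomes invisible \emph{after} $\arcp$-hypersheafification, never on the basis itself. Your multiplicativity remark at the end is the right supplementary observation and matches the paper's care in checking the shift map lives in $\CAlg(\DFh(\bbS))$, but the core argument needs the $F'\simeq G'$ mechanism inserted explicitly.
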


In the course of the proof of Theorem \ref{th:compwiththomason} given above, we in particular observed the following fact, which can be regarded as a topological or K-theoretic proof of the \'etale comparison. Note that we already stated this comparison result and gave an algebraic proof in Proposition \ref{prop:etcomp}; the proof we give here is logically independent of Proposition \ref{prop:etcomp}, cf. Remark \ref{rem:etcomp}. 

\begin{corollary}[\'Etale comparison via cyclotomic trace map]\label{cor:etcomp}
The map  
\begin{equation*}
\Fil^{\sbullet}L_{T(1)}\K((-)^{\wedge_{p}}[1/p]) \xrightarrow{\sim} \Fil^{\sbullet}L_{T(1)}\TC
\end{equation*} 
of $\Fun(\CAlg^{\an},\CAlg(\DFh(\bbS)))$ from Theorem \ref{th:compwiththomason} induces an equivalence 
\begin{equation*}
\gr^{\sbullet}L_{T(1)}\K((-)^{\wedge_{p}}[1/p]) \xrightarrow{\sim} \gr^{\sbullet}L_{T(1)}\TC
\end{equation*} 
of $\Fun(\CAlg^{\an},\Dh(\bbZ_{p})^{\gr})$ after taking associated graded objects functor. In particular, for each $n\in\bbZ$ and a perfect prism $(A,(d))$, we have a natural equivalence
\begin{equation}\label{eq:etcomp2}
\RG_{\et}(\Spec R^{\wedge_{p}}[1/p],\bbZ_{p}(n)) \xrightarrow{\sim} \left(\prism_{R/A}\{n\}[1/d]\right)^{F=1}
\end{equation}
in $\Dh(\bbZ_{p})$ for $R\in\CAlg^{\an}_{\overline{A}}$. 
\begin{proof}
By Theorem \ref{th:compwiththomason}, we have an equivalence between the given filtered objects, and hence the equivalence between the associated graded objects. Here, we take $\Fil^{\sbullet}L_{T(1)}\TC$ as $\Fil^{\sbullet}_{\mathrm{pb}}L_{T(1)}\TC$ of Construction \ref{constr:filtviapb} as in the proof of Theorem \ref{th:compwiththomason}. Note that the aforementioned proof and in particular the proof of the equivalence $\gr^{\sbullet}L_{T(1)}\K((-)^{\wedge_{p}}[1/p]) \simeq \gr^{\sbullet}L_{T(1)}\TC$ given in the paragraph below the diagram (\ref{eq:thomason3}) does not use the results of subsection \ref{subsec:filtconstr2} and hence is independent of Proposition \ref{prop:etcomp}. Now, the equivalence 
\begin{equation*}
\gr^{n}L_{T(1)}\K(R^{\wedge_{p}}[1/p])\simeq \RG_{\et}(\Spec R^{\wedge_{p}}[1/p],\bbZ_{p}(n))[2n]
\end{equation*} 
follows from Thomason's description \cite{tt}, while the equivalence 
\begin{equation*}
\gr^{n}L_{T(1)}\TC(R)\simeq \left(\prism_{R/A}\{n\}[1/d]\right)^{F=1}[2n]
\end{equation*} 
follows from $\gr^{\sbullet}\Fil_{\mathrm{pb}}L_{T(1)}\TC\simeq \gr^{\sbullet}(L_{T(1)}\TC)^{\sharp}$ of Construction \ref{constr:filtviapb} and the description of the target in Construction \ref{constr:filtonarcpsheaf}.
\end{proof}
\end{corollary}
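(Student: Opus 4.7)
The filtered equivalence of Theorem \ref{th:compwiththomason} is already an equivalence in $\Fun(\CAlg^{\an},\CAlg(\DFh(\bbS)))$, so the plan is simply to apply the (symmetric monoidal) associated graded functor $\gr^{\sbullet}:\DFh(\bbS)\to \Dh(\bbZ_{p})^{\gr}$ degreewise and then identify both sides independently. Concretely, I will fix $n\in\bbZ$ and a perfect prism $(A,(d))$ with associated perfectoid ring $\overline{A}$, and I will extract the equivalence (\ref{eq:etcomp2}) from the equivalence of $n$-th graded pieces.

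For the left-hand side, I invoke Thomason's computation recorded in Remark \ref{rem:thomasonfilt}: the Thomason filtration on $L_{T(1)}\K(R^{\wedge_{p}}[1/p])$ has $n$-th associated graded piece naturally equivalent to $\RG_{\et}(\Spec R^{\wedge_{p}}[1/p],\bbZ_{p}(n))[2n]$. This identification is classical and independent of any prismatic input. For the right-hand side, I will use the pullback presentation $\Fil^{\sbullet}L_{T(1)}\TC\simeq \Fil^{\sbullet}_{\mathrm{pb}}L_{T(1)}\TC$ provided by Proposition \ref{prop:filtcompletecomparison} (1) and Notation \ref{not:T(1)TCfilt}. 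By Lemma \ref{lem:filtpullback} (3), pullback along $h:L_{T(1)}\TC(R)\to (L_{T(1)}\TC)^{\sharp}(\pi_{0}(R^{\wedge_{p}}))$ does not change associated graded pieces, so $\gr^{n}L_{T(1)}\TC(R)\simeq \gr^{n}(L_{T(1)}\TC)^{\sharp}(R)$. The latter was computed directly in Construction \ref{constr:filtonarcpsheaf} (4) as $\left(\prism_{R/A}\{n\}[1/d]\right)^{F=1}[2n]$, using only the $\arcp$-hypersheaf property of $R\mapsto (\prism_{R/A}\{n\}[1/d])^{F=1}$ from Lemma \ref{lem:prismarcphypersheaf} together with the explicit description in the quasiregular semiperfectoid case from Construction \ref{constr:qrsp}; none of this invokes Proposition \ref{prop:etcomp}.

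Assembling the three identifications yields the natural chain of equivalences
\begin{equation*}
\RG_{\et}(\Spec R^{\wedge_{p}}[1/p],\bbZ_{p}(n))[2n] \simeq \gr^{n}L_{T(1)}\K(R^{\wedge_{p}}[1/p]) \simeq \gr^{n}L_{T(1)}\TC(R) \simeq \left(\prism_{R/A}\{n\}[1/d]\right)^{F=1}[2n],
\end{equation*}
and desuspending by $[2n]$ gives the claimed equivalence (\ref{eq:etcomp2}).

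The only genuine subtlety is logical: one must be sure that none of the inputs used here secretly depend on Proposition \ref{prop:etcomp}, so that the resulting argument is an honest K-theoretic route to étale comparison, as promised in the statement. I will therefore trace back through the ingredients (Construction \ref{constr:qrsp}, Lemma \ref{lem:prismarcphypersheaf}, Proposition \ref{prop:T(1)TCarcphypqrsp}, Construction \ref{constr:filtonarcpsheaf}, Construction \ref{constr:filtviapb}, Proposition \ref{prop:filtcompletecomparison} (1), and Theorem \ref{th:compwiththomason}) and verify that each uses only the prismatic description of graded pieces for quasiregular semiperfectoid $\mathcal{O}_{C}$-algebras together with $\arcp$-hyperdescent, and not the étale reinterpretation; this bookkeeping is the main (albeit mild) obstacle, and is what Remark \ref{rem:etcomp} is designed to make transparent.
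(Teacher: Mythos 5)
Your proposal follows the same overall plan as the paper — apply $\gr^{\sbullet}$ to the filtered equivalence of Theorem \ref{th:compwiththomason}, identify the left side via Thomason, identify the right side via the pullback description and Construction \ref{constr:filtonarcpsheaf} — and the core identifications you assemble are the right ones. However, there is a genuine logical gap in how you get to the pullback description of the right side, precisely at the point you flag as "the only genuine subtlety."

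You invoke Proposition \ref{prop:filtcompletecomparison} (1) (via Notation \ref{not:T(1)TCfilt}) to write $\Fil^{\sbullet}L_{T(1)}\TC \simeq \Fil^{\sbullet}_{\mathrm{pb}}L_{T(1)}\TC$, and you include this proposition in the list of things you plan to trace back to confirm independence from Proposition \ref{prop:etcomp}. But that trace-back would fail: as Remark \ref{rem:etcomp} (2) records, Proposition \ref{prop:filtcompletecomparison} depends on Corollary \ref{cor:T(1)TCgrlke}, and Corollary \ref{cor:T(1)TCgrlke} is proved \emph{using} Proposition \ref{prop:etcomp}. Concretely, Proposition \ref{prop:filtcompletecomparison} (2) needs the equivalence $L\bigl(\gr^{\sbullet}(L_{T(1)}\TC)^{\sharp}|_{\CAlg^{\poly}}\bigr)\simeq \gr^{\sbullet}(L_{T(1)}\TC)^{\sharp}$, i.e., that $\gr^{\sbullet}(L_{T(1)}\TC)^{\sharp}$ commutes with sifted colimits, which is exactly what Corollary \ref{cor:T(1)TCgrlke} supplies from the \'etale comparison. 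So your citation introduces a circularity and the argument as written does not give an independent K-theoretic proof of (\ref{eq:etcomp2}). The fix is what the paper does: throughout this corollary, simply \emph{take} $\Fil^{\sbullet}L_{T(1)}\TC$ to mean $\Fil^{\sbullet}_{\mathrm{pb}}L_{T(1)}\TC$, exactly as in the proof of Theorem \ref{th:compwiththomason} (whose substance is the equivalence $F'((-)^{\wedge_p})\simeq \Fil^{\sbullet}_{\mathrm{pb}}L_{T(1)}\TC$ and never touches subsection \ref{subsec:filtconstr2}). Then the graded pieces follow from Lemma \ref{lem:filtpullback} (3), i.e., from Construction \ref{constr:filtviapb} itself, plus the prismatic description in Construction \ref{constr:filtonarcpsheaf} (4), and no appeal to Proposition \ref{prop:filtcompletecomparison} or to the left-Kan-extended filtration is needed.
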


In fact, the two comparison maps we obtained from Corollary \ref{cor:etcomp} and Proposition \ref{prop:etcomp} agree with each other:

\begin{remarkn}[Description of (\ref{eq:etcomp2}) in terms of the prismatic logarithm] \label{rem:etcompmaps}
The comparison map 
\begin{equation*}
\gr^{n}(\tr)[-2n]:\gr^{n}L_{T(1)}\K(R^{\wedge_{p}}[1/p])[-2n]\simeq \gr^{n}L_{T(1)}\K(R^{\wedge_{p}})[-2n]\to \gr^{n}L_{T(1)}\TC(R)[-2n]
\end{equation*} 
of Corollary \ref{cor:etcomp}, (\ref{eq:etcomp2}) is equivalent to the comparison map 
\begin{equation*}
\log_{\prism}^{\otimes n}:\RG_{\et}(\Spec R^{\wedge_{p}}[1/p],\bbZ_{p}(n))\to \left(\prism_{R/A}\{n\}[1/d]\right)^{F=1}
\end{equation*}
of Proposition \ref{prop:etcomp}, (\ref{eq:etcomplog3}). To check this, it suffices to compute $\gr^{n}(\tr)[-2n]$ for $R\in\CAlg^{p-\cpl}_{\overline{A}}$. As both the source and the target of the map are $\arcp$-hypersheaves on $\CAlg^{p-\cpl}_{\overline{A}}$ and preserve filtered colimits on $\CAlg^{\an}_{\overline{A}}$, we are further reduced to the case of absolutely integrally closed $p$-Henselian valuation rings $V$ over $\overline{A}$ which admits a map from $\mathcal{O}_{\bbC_{p}}$. Moreover, it suffices to check the case of $n=1$. The cyclotomic trace map is multiplicative and $(T_{p}(V^{\wedge_{p}})^{\times})^{\otimes_{\bbZ_{p}} n}[0]\simeq \RG_{\et}(\Spec V^{\wedge_{p}}[1/p],\bbZ_{p}(n)) = \gr^{n}L_{T(1)}\K(V^{\wedge_{p}}[1/p])[-2n]$ for $n\in\bbZ$ are invertible $\bbZ_{p}$-modules which satisfy natural equivalences $(\gr^{1}L_{T(1)}\K(V^{\wedge_{p}}[1/p])[-2])^{\otimes_{\bbZ_{p}}n}\simeq \gr^{n}L_{T(1)}\K(V^{\wedge_{p}}[1/p])[-2n]$; in particular, $\gr^{n}(\tr)[-2n]\simeq (\gr^{1}(\tr)[-2])^{\otimes_{\bbZ_{p}} n}$. By construction, $\log_{\prism}^{\otimes n}$ has the analogous property on $V^{\wedge_{p}}$. For $\gr^{1}(\tr)[-2]$ on $V$, we have $\gr^{1}(\tr)[-2]\simeq \pi_{2}(\tr):\pi_{2}(\K_{\geq0}(V^{\wedge}_{p},\bbZ_{p})[\beta^{-1}])\to \pi_{2}(\TC(V^{\wedge_{p}},\bbZ_{p})[\beta^{-1}])$. By \cite[Th. 1.2]{al} (and \cite[Prop. 2.6.10]{bl}), we know $\pi_{2}(\tr)$ for $\tr$ before $p$-completely inverting $\beta$ is given by the prismatic logarithm, and hence after $p$-completely inverting $d$ and factoring through the Frobenius fixed point induces the map $\log_{\prism}^{\otimes 1}$. 
\end{remarkn}

We have the following consequence concerning the filtration-completeness of $\Fil^{\sbullet}L_{T(1)}\TC(R)$:

\begin{corollary}\label{cor:compwiththomason}
For $R\in\CAlg^{\an}$ such that $\pi_{0}R^{\wedge_{p}}[1/p]$ has finite Krull dimension and admits a uniform bound on the mod $p$ virtual cohomological dimensions of the residue fields, the filtered object $(\Fil^{\sbullet}L_{T(1)}\TC)(R)$ gives a filtration-complete and exhaustive filtration on $L_{T(1)}\TC(R)$. Equivalently, the natural map $h:L_{T(1)}\TC(R) \to (L_{T(1)}\TC)^{\sharp}(\pi_{0}(R^{\wedge_{p}}))$ is an equivalence. 
\begin{proof}
Under the finiteness condition on $\pi_{0}R^{\wedge_{p}}[1/p]$ as indicated, $L_{T(1)}\K$ on $\Spec(R^{\wedge_{p}}[1/p])_{\et}$ is \'etale Postnikov complete, and the map $F'(R^{\wedge_{p}})\to L_{T(1)}\K(R^{\wedge_{p}})$ realizes the source as a complete and exhaustive filtration of the target \cite[Th. 7.14]{etalek}. From the natural equivalence $F'(R^{\wedge_{p}})\simeq \Fil^{\sbullet}L_{T(1)}\TC(R)$ of Theorem \ref{th:compwiththomason}, we know the filtration-completeness. Equivalently, we also know that the map $L_{T(1)}\TC(R)\to (L_{T(1)}\TC)^{\sharp}(\pi_{0}(R^{\wedge_{p}}))$ is an equivalence by Theorem \ref{th:T(1)TCfilt} (6). 
\end{proof}
\end{corollary}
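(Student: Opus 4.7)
The plan is to deduce the statement directly from the comparison with the Thomason filtration in Theorem \ref{th:compwiththomason} combined with Theorem \ref{th:T(1)TCfilt}(6). The key observation is that we already know the Thomason filtration is well-behaved under the stated finiteness hypothesis by the Clausen--Mathew results.

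First, I would invoke \cite[Th. 7.14]{etalek}, which states that under the finiteness condition on $\pi_0 R^{\wedge_p}[1/p]$ (finite Krull dimension and a uniform bound on mod $p$ virtual cohomological dimensions of residue fields), the presheaf $L_{T(1)}\K$ on $(\Spec R^{\wedge_p}[1/p])_\et$ is \'etale Postnikov complete. Translating into the notation of subsection \ref{subsec:compwiththomason}, this says that the natural map $F'(R^{\wedge_p}) \to L_{T(1)}\K(R^{\wedge_p}[1/p])$ exhibits the source as a complete and exhaustive filtration on the target. The exhaustiveness part is already known in full generality from Remark \ref{rem:thomasonfilt}; the new input is the completeness under the finiteness assumption.

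Next, I would apply Theorem \ref{th:compwiththomason} to identify $F'(R^{\wedge_p}) \simeq \Fil^{\sbullet}L_{T(1)}\K(R^{\wedge_p}[1/p])$ with $\Fil^{\sbullet}L_{T(1)}\TC(R)$ as multiplicative filtered $p$-complete spectra over the common underlying object (which is $L_{T(1)}\K(R^{\wedge_p}) \simeq L_{T(1)}\TC(R)$ via the cyclotomic trace). Since completeness of a filtered object is an intrinsic property invariant under equivalences, it follows that $\Fil^{\sbullet}L_{T(1)}\TC(R)$ is a complete and exhaustive filtration on $L_{T(1)}\TC(R)$.

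Finally, to obtain the equivalence statement for $h$, I would appeal to Theorem \ref{th:T(1)TCfilt}(6), which asserts precisely that $\Fil^{\sbullet}L_{T(1)}\TC(R)$ is filtration-complete if and only if the natural $\arcp$-hypersheafification map $h: L_{T(1)}\TC(R) \to (L_{T(1)}\TC)^{\sharp}(\pi_0(R^{\wedge_p}))$ is an equivalence. There is no real obstacle here --- the result is essentially a clean assembly of the already-established Theorem \ref{th:compwiththomason}, the input from Clausen--Mathew, and the pullback characterization of the filtration from Construction \ref{constr:filtviapb} as packaged in Theorem \ref{th:T(1)TCfilt}(6).
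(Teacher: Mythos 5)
Your proposal is correct and matches the paper's argument: both cite \cite[Th.\ 7.14]{etalek} for completeness of the Thomason filtration under the finiteness hypothesis, transfer that across the equivalence of Theorem \ref{th:compwiththomason}, and then read off the statement about $h$ from Theorem \ref{th:T(1)TCfilt}(6).
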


\begin{remarkn}
The construction of \cite{matTR}, cf. Remark \ref{rem:mathewfilt1}, defines a natural filtration on $T(1)$-local $\TR$ for each formally smooth algebra $R$ over the ring $\mathcal{O}_{K}$ of integers of $K$, where $K$ is a complete nonarchimedean field of mixed characteristic $(0,p)$ which is either discretely valued or perfectoid \cite[Th. 1.8]{matTR}. By \cite[Rem. 7.28]{matTR}, the natural filtration on $T(1)$-local $\TC$ of these $p$-complete commutative rings obtained by taking Frobenius fixed points recovers the Thomason filtration, and in particular agrees with our filtration on $T(1)$-local $\TC$.  
\end{remarkn}

\appendix

\section{Preliminaries on filtered objects} \label{sec:appendix}

In this appendix, we record some generalities on filtered objects of presentable stable $\infty$-categories. In particular, we discuss completion and pullback constructions for filtered objects, sheafifications, and the multiplicativity of Postnikov filtrations. \\

\noindent Let $\mathcal{D}$ be a presentable stable $\infty$-category. Let us use the notations $\mathcal{D}^{\mathrm{fil}} = \Fun(\N\bbZ^{\op},\mathcal{D})$ and $\mathcal{D}^{\gr} = \Fun(\N\bbZ^{\mathrm{disc}},\mathcal{D})$ for the categories of filtered objects and graded objects in $\mathcal{D}$ respectively, and $\gr:\mathcal{D}^{\mathrm{fil}}\to\mathcal{D}^{\gr} = (X^{\geq\sbullet}\mapsto \gr^{\sbullet} X)$ for the associated graded objects functor. Also, let us denote $\mathcal{D}^{\mathrm{fil, \wedge}}$ for the full subcategory of $\mathcal{D}^{\mathrm{fil}}$ spanned by filtration-complete filtered objects. The following construction is well-known:

\begin{construction}[Filtration-completion] \label{constr:filtcompletion}
Let $\mathcal{D}$ be a presentable stable $\infty$-category. Consider the map $p:\N\bbZ^{\op}\to\ast$ of simplicial sets and the induced adjunction $p^{\ast}\dashv p_{\ast}:\mathcal{D}^{\mathrm{fil}}\to\mathcal{D}$. Here, $p^{\ast}$ is a constant filtration functor, while for each object $X^{\geq\sbullet}\in\mathcal{D}^{\mathrm{fil}}$, the object $p_{\ast}X^{\geq\sbullet}$ is $\lim X = \lim_{n\to\infty} X^{\geq n}$, a right Kan extension of $X^{\geq\sbullet}$ along $p$. We write $X^{\geq\sbullet}/\lim X$ for a cofiber of the counit map $\lim X\simeq p^{\ast} p_{\ast} X^{\geq\sbullet}\to X^{\geq\sbullet}$ of the adjunction. By construction $id\simeq p_{\ast}p^{\ast}$, and hence the object $X^{\geq\sbullet}/\lim X$ is filtration-complete. 
\end{construction}

\begin{lemma}\label{lem:filtcompletion}
Let $\mathcal{D}$ be a presentable stable $\infty$-category. Then, the followings hold:\\
(1) The inclusion $\mathcal{D}^{\mathrm{fil},\wedge}\hookrightarrow\mathcal{D}^{\mathrm{fil}}$ admits a left adjoint, which we call a filtration-completion functor. In fact, the functor $(X^{\geq\sbullet}\mapsto X^{\geq\sbullet}/\lim X):\mathcal{D}^{\mathrm{fil}}\to\mathcal{D}^{\mathrm{fil},\wedge}$ gives a left adjoint for the inclusion.  \\
(2) Let $X^{\geq \sbullet}\to Y^{\geq\sbullet}$ be a map in $\mathcal{D}^{\mathrm{fil}}$ such that the target $Y^{\geq\sbullet}$ is filtration-complete and that the map induces an equivalence of associated graded objects $\gr^{\sbullet} X\simeq \gr^{\sbullet} Y$ in $\mathcal{D}^{\gr}$. Then, the map realizes $Y^{\geq\sbullet}$ as a filtration-completion of $X^{\geq\sbullet}$.
\begin{proof}
For (1), we have to check that for any filtration-complete filtered object $Z^{\geq\sbullet}\in\mathcal{D}^{\mathrm{fil},\wedge}$, the composition map $-\circ(X^{\geq\sbullet}\to X^{\geq\sbullet}/\lim X):\Map_{\mathcal{D}^{\mathrm{fil,\wedge}}}(X^{\geq\sbullet}/\lim X, Z^{\geq\sbullet})\to \Map_{\mathcal{D}^{\mathrm{fil}}}(X^{\geq\sbullet}, Z^{\geq\sbullet})$ is an equivalence of spaces. By Construction \ref{constr:filtcompletion}, this map factors through the equivalence 
\begin{equation*}
\Map_{\mathcal{D}^{\mathrm{fil},\wedge}}(X^{\geq\sbullet}/\lim X, Z^{\geq\sbullet}) \simeq \fib\left(\Map_{\mathcal{D}^{\mathrm{fil}}}(X^{\geq\sbullet}, Z^{\geq\sbullet})\to \Map_{\mathcal{D}^{\mathrm{fil}}}(p^{\ast} p_{\ast} X^{\geq\sbullet}, Z^{\geq\sbullet})\right). 
\end{equation*}
Since the target of the map which we are taking a fiber can be further described as 
\begin{equation*}
\Map_{\mathcal{D}^{\mathrm{fil}}}(p^{\ast} p_{\ast} X^{\geq\sbullet}, Z^{\geq\sbullet})\simeq \Map_{\mathcal{D}}(p_{\ast} X^{\geq\sbullet}, p_{\ast}Z^{\geq\sbullet})\simeq \ast
\end{equation*} 
due to the assumption $p_{\ast}Z^{\geq\sbullet}\simeq 0$, we know the composition map is indeed an equivalence. \\
\indent For (2), note that taking the filtration-completion functor on the given map induces a map $X^{\geq\sbullet}/\lim X\to Y^{\geq\sbullet}$ in $\mathcal{D}^{\mathrm{fil},\wedge}$. By (1), it suffices to check that this map is an equivalence. Since the associated graded objects functor $\gr$ restricted to $\mathcal{D}^{\mathrm{fil},\wedge}$ reflects equivalences (in fact is fully faithful), one is reduced to checking that the induced map $\gr^{\sbullet}(X^{\geq\sbullet}/\lim X)\to\gr^{\sbullet}Y$ is an equivalence. The latter holds due to the assumption that the given map induces an equivalence $\gr^{\sbullet} X\simeq \gr^{\sbullet} Y$, combined with the fact that the natural map $\gr^{\sbullet} X\simeq \gr^{\sbullet}(X^{\geq\sbullet}/\lim X)$ is an equivalence by construction. 
\end{proof}
\end{lemma}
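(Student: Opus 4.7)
For part (1), the plan is to verify the universal property of the left adjoint directly on the candidate functor $X^{\geq\sbullet}\mapsto X^{\geq\sbullet}/\lim X$. Fix $Z^{\geq\sbullet}\in\mathcal{D}^{\mathrm{fil},\wedge}$. The defining cofiber sequence $p^{\ast}p_{\ast}X^{\geq\sbullet}\to X^{\geq\sbullet}\to X^{\geq\sbullet}/\lim X$ in the stable $\infty$-category $\mathcal{D}^{\mathrm{fil}}$ induces a fiber sequence of mapping spaces
\begin{equation*}
\Map_{\mathcal{D}^{\mathrm{fil}}}(X^{\geq\sbullet}/\lim X, Z^{\geq\sbullet})\to \Map_{\mathcal{D}^{\mathrm{fil}}}(X^{\geq\sbullet}, Z^{\geq\sbullet})\to \Map_{\mathcal{D}^{\mathrm{fil}}}(p^{\ast}p_{\ast}X^{\geq\sbullet}, Z^{\geq\sbullet}).
\end{equation*}
Using the adjunction $p^{\ast}\dashv p_{\ast}$, the rightmost term is equivalent to $\Map_{\mathcal{D}}(p_{\ast}X^{\geq\sbullet}, p_{\ast}Z^{\geq\sbullet}) = \Map_{\mathcal{D}}(\lim X, \lim Z)$, which vanishes since $\lim Z\simeq 0$ by assumption. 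Hence the first map in the fiber sequence is an equivalence, giving the required natural equivalence of mapping spaces.

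For part (2), I would first apply the functor from (1) to the given map. Since $Y^{\geq\sbullet}$ is already filtration-complete, the map factors as $X^{\geq\sbullet}\to X^{\geq\sbullet}/\lim X\to Y^{\geq\sbullet}$ in $\mathcal{D}^{\mathrm{fil}}$, and the claim reduces to showing that the second map is an equivalence in $\mathcal{D}^{\mathrm{fil},\wedge}$. The associated graded objects functor sends the cofiber sequence $p^{\ast}p_{\ast}X^{\geq\sbullet}\to X^{\geq\sbullet}\to X^{\geq\sbullet}/\lim X$ to a cofiber sequence in $\mathcal{D}^{\mathrm{gr}}$; the first term has vanishing graded pieces because it is a constant filtration. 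Therefore $\gr^{\sbullet}X\simeq \gr^{\sbullet}(X^{\geq\sbullet}/\lim X)$, and combining with the hypothesis $\gr^{\sbullet}X\simeq \gr^{\sbullet}Y$ shows that $X^{\geq\sbullet}/\lim X\to Y^{\geq\sbullet}$ induces an equivalence on associated graded objects.

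The one remaining point, which I expect to be the main conceptual step rather than a routine verification, is the conservativity of $\gr:\mathcal{D}^{\mathrm{fil},\wedge}\to\mathcal{D}^{\mathrm{gr}}$. I would handle this by passing to the fiber $F^{\geq\sbullet}$ of the map $X^{\geq\sbullet}/\lim X\to Y^{\geq\sbullet}$ inside $\mathcal{D}^{\mathrm{fil},\wedge}$; this fiber is itself filtration-complete (filtration-complete filtered objects are closed under limits) and satisfies $\gr^{\sbullet}F\simeq 0$. The vanishing of $\gr^{n}F$ means each transition map $F^{\geq n+1}\to F^{\geq n}$ is an equivalence, so the diagram $\{F^{\geq n}\}_{n}$ is essentially constant, and its limit (which is $0$ by completeness) equals each $F^{\geq n}$. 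Thus $F^{\geq\sbullet}\simeq 0$, and the map is an equivalence. With this in hand, part (2) follows.
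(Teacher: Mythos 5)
Your proof is correct and takes essentially the same approach as the paper: the same fiber sequence of mapping spaces (via the cofiber sequence $p^{\ast}p_{\ast}X^{\geq\sbullet}\to X^{\geq\sbullet}\to X^{\geq\sbullet}/\lim X$ and the $p^{\ast}\dashv p_{\ast}$ adjunction) for part (1), and the same reduction to associated graded objects for part (2). The one addition is that you supply an explicit argument for conservativity of $\gr$ on $\mathcal{D}^{\mathrm{fil},\wedge}$ --- the transition maps of the fiber are equivalences since $\gr$ vanishes, so the fiber is essentially constant and hence zero by completeness --- a standard fact the paper merely asserts.
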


We will consider the situation where the map $Y^{\geq\sbullet}\to p^{\ast}(y)$ realizes the source as an exhaustive filtration on $y$. As in Construction \ref{constr:filtcompletion}, let $p:\N\bbZ^{\op}\to\ast$ be the natural map of simplicial sets and consider the induced adjunction $p_{!}\dashv p^{\ast}:\mathcal{D}\to \mathcal{D}^{\mathrm{fil}}$, where for each object $Y^{\geq\sbullet}\in\mathcal{D}^{\mathrm{fil}}$, the object $p_{!}Y^{\geq\sbullet}$ is $\colim Y = \colim_{n\to-\infty} Y^{\geq n}$, a left Kan extension of $Y^{\geq\sbullet}$ along $p$. Recall that we say the map $Y^{\geq\sbullet}\to p^{\ast}(y)$ realizes $Y^{\geq\sbullet}$ as an exhaustive filtration on $y$ if applying $p_{!}$ to the map induces, through the equivalence $p_{!}p^{\ast}\simeq id$, an equivalence $\colim Y\simeq y$ in $\mathcal{D}$. We have the following statement for maps of exhaustive filtrations with a common underlying object, dual to the situation of Lemma \ref{lem:filtcompletion}:

\begin{lemma}\label{lem:filtexhaustivegr}
Let $\mathcal{D}$ be a presentable stable $\infty$-category. Also, let $f:X^{\geq\sbullet}\to Y^{\geq\sbullet}$ be a map in $\mathcal{D}^{\mathrm{fil}}_{/p^{\ast}(x)}$, i.e, a map of filtrations over a constant filtration $p^{\ast}(x)$ associated with an object $x$ of $\mathcal{D}$. Suppose $X^{\geq\sbullet}\to p^{\ast}(x)$ and $Y^{\geq\sbullet}\to p^{\ast}(x)$ realize the sources as exhaustive filtrations of $x$. Then, $f$ is an equivalence if it induces an equivalence $\gr^{\sbullet}X\simeq \gr^{\sbullet}Y$ in $\mathcal{D}^{\gr}$. 
\begin{proof}
By considering the natural map $x/Y^{\geq-\sbullet}\to x/X^{\geq-\sbullet}$ in $(\mathcal{D}^{\op})^{\mathrm{fil},\wedge}$, this again follows from the fact that the functor $\gr:(\mathcal{D}^{\op})^{\mathrm{fil},\wedge}\to(\mathcal{D}^{\op})^{\gr}$ reflects equivalences.
\end{proof}
\end{lemma}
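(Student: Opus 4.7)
My plan is to form the cofiber $C^{\geq\sbullet} := \mathrm{cof}(f)$ in $\mathcal{D}^{\mathrm{fil}}$ and show $C^{\geq\sbullet} \simeq 0$; since $\mathcal{D}$ is stable, this is equivalent to $f$ being an equivalence. The strategy is to combine two orthogonal vanishing statements about $C^{\geq\sbullet}$: the graded-equivalence hypothesis trivializes its graded pieces, and the fact that $f$ is a morphism over $p^{\ast}(x)$ together with exhaustiveness trivializes its underlying object.

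For the first, since $\gr^{\sbullet}$ is exact, the hypothesis $\gr^{\sbullet}X \simeq \gr^{\sbullet}Y$ gives $\gr^{\sbullet}C \simeq 0$. In a stable $\infty$-category, $\gr^{n}C \simeq \mathrm{cof}(C^{\geq n+1} \to C^{\geq n})$, so the vanishing of all $\gr^{n}C$ forces every transition map in $C^{\geq\sbullet}$ to be an equivalence; in particular, $C^{\geq n} \simeq p_{!}C^{\geq\sbullet}$ for every $n \in \bbZ$. For the second, since $p_{!} \dashv p^{\ast}$ and $f$ lies over $p^{\ast}(x)$, the morphism $p_{!}f : p_{!}X^{\geq\sbullet} \to p_{!}Y^{\geq\sbullet}$ factors the identity of $x$ through the exhaustiveness equivalences $p_{!}X^{\geq\sbullet} \simeq x \simeq p_{!}Y^{\geq\sbullet}$ and is therefore an equivalence. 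As $p_{!}$ is a left adjoint, it preserves cofibers, so $p_{!}C^{\geq\sbullet} \simeq 0$. Combining, $C^{\geq n} \simeq 0$ for all $n$, whence $f$ is an equivalence.

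The whole argument is a formal interplay between stability, exactness of $\gr$, and the adjunction $p_{!} \dashv p^{\ast}$, and I do not foresee a genuine obstacle. A dual route, more in the spirit of Lemma \ref{lem:filtcompletion}(2), would apply that lemma in $\mathcal{D}^{\op}$ to the reindexed cofibers $x/X^{\geq -\sbullet}$ and $x/Y^{\geq -\sbullet}$, using that exhaustiveness of a descending filtration in $\mathcal{D}$ becomes filtration-completeness of its cofiber with $x$ in $\mathcal{D}^{\op}$ so that the fully faithfulness of $\gr$ on complete objects can be invoked; the direct approach above appears cleaner and avoids the reindexing bookkeeping.
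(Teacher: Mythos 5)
Your direct argument is correct, and it is a genuinely different (if closely related) route from what the paper does. The paper passes to $\mathcal{D}^{\op}$: it observes that the cofibers $x/X^{\geq -\sbullet}$ and $x/Y^{\geq -\sbullet}$ are filtration-complete objects of $(\mathcal{D}^{\op})^{\mathrm{fil}}$ (exhaustiveness in $\mathcal{D}$ becomes completeness in $\mathcal{D}^{\op}$), that they have equivalent associated graded objects, and then invokes the fact that $\gr$ restricted to complete filtered objects reflects equivalences; one recovers $f$ by taking fibers against $p^{\ast}(x)$. Your primary argument instead stays in $\mathcal{D}^{\mathrm{fil}}$ and combines two elementary facts: exactness of $\gr$ forces $\gr^{\sbullet}\mathrm{cof}(f)\simeq 0$, which makes all transition maps in $\mathrm{cof}(f)$ equivalences so that $\mathrm{cof}(f)^{\geq n}\simeq p_{!}\mathrm{cof}(f)$ for every $n$; and exhaustiveness together with $p_{!}$ preserving cofibers gives $p_{!}\mathrm{cof}(f)\simeq 0$. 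What your approach buys is that it does not need the conservativity (or fully faithfulness) of $\gr$ on complete filtered objects as a black box, and it sidesteps the reindexing dual, at the small cost of being somewhat longer. The alternative route you sketch in the last paragraph is exactly the paper's proof, so your proposal subsumes it.
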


We also consider the following pullback construction for filtrations. 

\begin{construction}[Pullback filtration] \label{constr:filtpullback}
Let $\mathcal{D}$ be a presentable stable $\infty$-category. Suppose that $f:x\to y$ is a map of $\mathcal{D}$, and that $Y^{\geq\sbullet}\in\mathcal{D}^{\mathrm{fil}}$ is a filtered object equipped with a map of filtered objects $Y^{\geq\sbullet}\to p^{\ast}(y)$ (i.e., a map from $Y^{\geq\sbullet}$ into the constant filtration associated with $y$). We write $f^{\ast}Y^{\geq\sbullet}$ for a pullback $p^{\ast}(x)\times_{p^{\ast}(y)}Y^{\geq\sbullet}$ of $Y^{\geq\sbullet}\to p^{\ast}(y)$ along the map $p^{\ast}(f)$ in $\mathcal{D}^{\mathrm{fil}}$. In particular, it is equipped with a map of filtered objects $f^{\ast}Y^{\geq\sbullet}\to p^{\ast}(x)$. 
\end{construction}

\begin{lemma}\label{lem:filtpullback}
Let $\mathcal{D}$ be a presentable stable $\infty$-category, and let $f:x\to y$ and $Y^{\geq\sbullet}\to p^{\ast}(y)$ be as in Construction \ref{constr:filtpullback}. Then, the followings hold: \\
(1) $\lim f^{\ast}Y^{\geq\sbullet}\simeq x\times_{y}\lim Y$. In particular, if $Y^{\geq\sbullet}$ is filtration-complete, then $f^{\ast}Y^{\geq\sbullet}$ is filtration-complete if and only if $f$ is an equivalence. \\
(2) $\colim f^{\ast}Y^{\geq\sbullet}\simeq x\times_{y}\colim Y$. In particular, if $Y^{\geq\sbullet}\to p^{\ast}(y)$ realizes $Y^{\geq\sbullet}$ as an exhaustive filtration of the object $y$ (i.e., the map induces an equivalence $\colim Y\simeq y$), then $f^{\ast}Y^{\geq\sbullet}\to p^{\ast}(x)$ realizes $f^{\ast}Y^{\geq\sbullet}$ as an exhaustive filtration of the object $x$.\\
(3) $\gr^{\sbullet}f^{\ast}Y^{\geq\sbullet}\simeq \gr^{\sbullet}Y$. 
\begin{proof}
All the statements are straightforward from the fact that the functors $p_{!},p_{\ast}:\mathcal{D}^{\mathrm{fil}}\to\mathcal{D}$ and $\gr:\mathcal{D}^{\mathrm{fil}}\to\mathcal{D}^{\gr}$ are exact. For (1), one has $p_{\ast}(f^{\ast}Y^{\geq\sbullet})\simeq p_{\ast}(p^{\ast}(x))\times_{p_{\ast}(p^{\ast}(y))}p_{\ast}(Y^{\geq\sbullet})\simeq x\times_{y} p_{\ast}(Y^{\geq\sbullet})$, and observes that when $p_{\ast}Y^{\geq\sbullet}\simeq 0$, one has $p_{\ast}(f^{\ast}Y^{\geq\sbullet})\simeq\fib(f)$ to conclude. The statement (2) is checked analogously, replacing $p_{\ast}$ in the previous argument by $p_{!}$. For (3), one has $\gr^{\sbullet}f^{\ast}Y^{\geq\sbullet}\simeq \gr^{\sbullet}p^{\ast}(x)\times_{\gr^{\sbullet}p^{\ast}(y)}\gr^{\sbullet}Y\simeq 0\times_{0}\gr^{\sbullet}Y\simeq\gr^{\sbullet}Y$. 
\end{proof}
\end{lemma}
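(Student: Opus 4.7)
The plan is to apply three key functors---the limit $p_\ast$, the colimit $p_!$, and the associated graded $\gr^\sbullet$---to the defining pullback square
\begin{equation*}
\begin{tikzcd}
f^\ast Y^{\geq\sbullet} \arrow[r] \arrow[d] & Y^{\geq\sbullet} \arrow[d] \\
p^\ast(x) \arrow[r, "p^\ast(f)"'] & p^\ast(y)
\end{tikzcd}
\end{equation*}
in $\mathcal{D}^{\mathrm{fil}}$, using that each functor is exact between presentable stable $\infty$-categories and therefore preserves this pullback. Exactness of $p_\ast$ is automatic as a right adjoint; exactness of $p_!$ uses that $\mathcal{D}$ is stable (so every pullback is also a pushout, and $p_!$ preserves pushouts as a left adjoint); exactness of $\gr^n$ follows from its expression as $\cofib(X^{\geq n+1}\to X^{\geq n})$ in a stable $\infty$-category, which by the same token preserves finite limits.

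For (1), applying $p_\ast$ and using the unit equivalence $p_\ast p^\ast \simeq \mathrm{id}$ for constant filtrations, I would obtain $\lim f^\ast Y^{\geq\sbullet} \simeq x \times_y \lim Y$. When $Y^{\geq\sbullet}$ is filtration-complete the right-hand side reduces to $\fib(f)$, which vanishes iff $f$ is an equivalence. For (2), the same argument with $p_!$ and $p_! p^\ast \simeq \mathrm{id}$ gives the analogous identification; exhaustiveness is then immediate by substituting $\colim Y \simeq y$. For (3), applying $\gr^\sbullet$ and noting that constant filtrations have vanishing associated graded ($\gr^\sbullet p^\ast(z) \simeq 0$ for any $z \in \mathcal{D}$) collapses the right-hand column of the square to yield $\gr^\sbullet f^\ast Y^{\geq\sbullet} \simeq 0 \times_0 \gr^\sbullet Y \simeq \gr^\sbullet Y$.

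The argument is essentially formal; I do not anticipate a substantive obstacle. The only point worth pausing over is the simultaneous preservation of pullbacks by both $p_!$ and $\gr^\sbullet$, neither of which is a right adjoint in the strict sense, but this is handled uniformly by the stability of $\mathcal{D}$.
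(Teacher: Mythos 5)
Your proof is correct and follows the same route as the paper's: apply the exact functors $p_{\ast}$, $p_{!}$, and $\gr^{\sbullet}$ to the defining pullback square and use the identities $p_{\ast}p^{\ast}\simeq\mathrm{id}$, $p_{!}p^{\ast}\simeq\mathrm{id}$, and $\gr^{\sbullet}p^{\ast}\simeq 0$. The extra sentences you include justifying exactness of $p_{!}$ and $\gr^{\sbullet}$ via stability (pullbacks are pushouts) are correct and make explicit what the paper leaves implicit.
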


Let us briefly discuss sheafifications of commutative algebra presheaves and filtered presheaves. 

\begin{remarkn}[Sheafification of commutative algebra presheaves] \label{rem:calgsheaf}
Let $\mathcal{D}$ be a closed symmetric monoidal presentable $\infty$-category, i.e., an object of $\CAlg(\Pr^{\mathrm{L},\otimes})$. In particular, $\CAlg(\mathcal{D}^{\otimes})$ is presentable \cite[Cor. 3.2.3.5]{ha} and the forgetful functor $\CAlg(\mathcal{D}^{\otimes})\to\mathcal{D}$ reflects equivalences and preserves small limits \cite[Lem. 3.2.2.6 and Cor. 3.2.2.5]{ha}. Moreover, suppose that $\mathcal{D}$ is compactly generated. Then, for each $\infty$-topos $\mathcal{X}$, the $\infty$-category of $\mathcal{D}$-valued sheaves $\Shv_{\mathcal{D}}(\mathcal{X})$ is equipped with the pointwise symmetric monoidal structure induced from that of the presheaves $\Fun(\mathcal{X}^{\op},\mathcal{D})$, cf. \cite[Lem. 1.3.4.4 and Proof of Prop. 1.3.4.6]{sag}, and from the forgetful functor we have a natural equivalence 
\begin{equation*}
\Shv_{\CAlg(\mathcal{D}^{\otimes})}(\mathcal{X})\simeq \Fun^{\mathrm{R}}(\mathcal{X}^{\op},\CAlg(\mathcal{D}^{\otimes}))\simeq \CAlg(\Fun^{\mathrm{R}}(\mathcal{X}^{\op},\mathcal{D})^{\otimes})\simeq \CAlg(\Shv_{\mathcal{D}}(\mathcal{X})^{\otimes}).
\end{equation*}
By definition, this equivalence is compatible with the natural equivalence 
\begin{equation*}
\Fun(\mathcal{X}^{\op},\CAlg(\mathcal{D}^{\otimes}))\simeq \CAlg(\Fun(\mathcal{X}^{\op},\mathcal{D})^{\otimes})
\end{equation*} 
through canonical embeddings. Thus, we in particular have a natural diagram
\begin{equation*}
\begin{tikzcd}
\Fun(\mathcal{X}^{\op},\CAlg(\mathcal{D}^{\otimes})) \arrow[r, "\sim"'] \arrow[d, "L"'] & \CAlg(\Fun(\mathcal{X}^{\op},\mathcal{D})^{\otimes}) \arrow[d, "L"]\\
\Shv_{\CAlg(\mathcal{D}^{\otimes})}(\mathcal{X}) \arrow[r, "\sim"] & \CAlg(\Shv_{\mathcal{D}}(\mathcal{X})^{\otimes})
\end{tikzcd}
\end{equation*}
whose vertical arrows are induced from the sheafification functors. 
\end{remarkn}

\begin{remarkn}[Sheafification of filtered presheaves]\label{rem:filtsheaf}
Let $\mathcal{D}$ be a presentable $\infty$-category, and let $\mathcal{X}$ be an $\infty$-topos. We have a natural equivalence 
\begin{equation*}
\Shv_{\Fun(\N\bbZ^{\op},\mathcal{D})}(\mathcal{X})\simeq\Fun^{\mathrm{R}}(\mathcal{X}^{\op},\Fun(\N\mathbb{Z}^{\op},\mathcal{D}))\simeq \Fun(\N\bbZ^{\op},\Fun^{\mathrm{R}}(\mathcal{X}^{\op},\mathcal{D}))\simeq \Fun(\N\bbZ^{\op},\Shv_{\mathcal{D}}(\mathcal{X})).
\end{equation*} 
By definition, this equivalence is compatible with the natural equivalence 
\begin{equation*}
\Fun(\mathcal{X}^{\op},\Fun(\N\mathbb{Z}^{\op},\mathcal{D}))\simeq \Fun(\N\bbZ^{\op},\Fun(\mathcal{X}^{\op},\mathcal{D}))
\end{equation*} 
through canonical embeddings. Thus, we have a natural diagram 
\begin{equation*}
\begin{tikzcd}
\Fun(\mathcal{X}^{\op},\Fun(\N\bbZ^{\op},\mathcal{D})) \arrow[r, "\sim"'] \arrow[d, "L"'] & \Fun(\N\bbZ^{\op},\Fun(\mathcal{X}^{\op},\mathcal{D})) \arrow[d, "L"]\\
\Shv_{\Fun(\N\bbZ^{\op},\mathcal{D})}(\mathcal{X}) \arrow[r, "\sim"] & \Fun(\N\bbZ^{\op},\Shv_{\mathcal{D}}(\mathcal{X}))
\end{tikzcd}
\end{equation*}
whose vertical arrows are induced from the sheafification functors. 
\end{remarkn}

In particular, sheafification of filtration valued (resp. commutative algebra valued) presheaves has underlying sheaves given by levelwise sheafifications (resp. sheafification of underlying presheaves forgetting the commutative algebra structure). Note that analogous statements hold for hypersheafification as well: 

\begin{remarkn}
Let $\mathcal{X}$ be an $\infty$-topos. Recall that its full subcategory $\mathcal{X}^{\hyp}$ of hypercomplete objects gives a geometric morphism $\imath^{\ast}\dashv\imath_{\ast}:\mathcal{X}^{\hyp}\hookrightarrow\mathcal{X}$ of $\infty$-topoi \cite[Rem. 1.3.3.2]{sag}. By applying above constructions in Remark \ref{rem:calgsheaf} and Remark \ref{rem:filtsheaf} to this geometric morphism, we know that the $\infty$-topos $\mathcal{X}$ appearing in the bottom horizontal arrows of the natural diagrams can be replaced by $\mathcal{X}^{\hyp}$. In other words, the vertical arrows in the diagrams of Remark \ref{rem:calgsheaf} and Remark \ref{rem:filtsheaf} can be replaced by the hypersheafification functors. 
\end{remarkn}

Finally, let us turn our attention to Postnikov filtrations. 

\begin{remarkn}[Conventions for Postnikov filtrations]
Let $\mathcal{D}$ be a closed symmetric monoidal presentable stable $\infty$-category equipped with a t-structure which is compatible with the monoidal structure, e.g., $\mathcal{D} = \Sp$. Here, compatibility of the t-structure with the monoidal structure means that $\mathcal{D}_{\geq 0}$ has the monoidal unit of $\mathcal{D}$ and is closed under tensor products. Note that $\mathcal{D}^{\mathrm{fil}} = \Fun(\N\mathbb{Z}^{\op},\mathcal{D})$ is equipped with a symmetric monoidal structure $\mathcal{D}^{\mathrm{fil},\otimes}$ whose underlying tensor product is given by the Day convolution, cf. \cite[Ex. 2.2.6.17]{ha}. Let $k$ be a positive integer, and consider the functor $\tau_{\geq k\sbullet}:\mathcal{D}\to \mathcal{D}^{\mathrm{fil}}$ sending each objects $x$ of $\mathcal{D}$ to its ($k$-speed) Postnikov filtration $\tau_{\geq k\sbullet}x$.
\end{remarkn}
 
\begin{proposition}[Multiplicativity of $k$-speed Postnikov filtrations]\label{prop:postnikovmult}
Let $\mathcal{D}$ be a closed symmetric monoidal presentable stable $\infty$-category equipped with a t-structure which is compatible with the monoidal structure, and let $k$ be a positive integer. Then, the functor $\tau_{\geq k\sbullet}:\mathcal{D}\to \mathcal{D}^{\mathrm{fil}}$ is lax symmetric monoidal, and in particular induces the functor $\tau_{\geq k\sbullet}:\CAlg(\mathcal{D}^{\otimes})\to \CAlg(\mathcal{D}^{\mathrm{fil},\otimes})$. 
\end{proposition}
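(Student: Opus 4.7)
The approach is to present $\tau_{\geq k\sbullet}$, up to a manifestly monoidal reindexing, as the right adjoint of a symmetric monoidal functor, so that the lax symmetric monoidal refinement is automatic from the general principle that the right adjoint of a symmetric monoidal functor between presentable $\infty$-categories inherits a canonical lax symmetric monoidal structure. The induced functor on commutative algebra objects is then formal.

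First I would reduce to the case $k = 1$. The multiplication map $\iota_{k}:\N\bbZ^{\op}\to\N\bbZ^{\op}$, $n\mapsto kn$, is a morphism of the additive monoid $(\bbZ,+)$; consequently, the precomposition functor $\iota_{k}^{\ast}:\mathcal{D}^{\mathrm{fil}}\to\mathcal{D}^{\mathrm{fil}}$ is strong symmetric monoidal for the Day convolution, and $\tau_{\geq k\sbullet} \simeq \iota_{k}^{\ast}\circ\tau_{\geq\sbullet}$ at the level of underlying functors. A lax symmetric monoidal refinement of $\tau_{\geq\sbullet}$ therefore yields one for $\tau_{\geq k\sbullet}$ by postcomposition.

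For the case $k=1$, the crucial input from compatibility of the t-structure with the monoidal structure is the inclusion $\mathcal{D}_{\geq m}\otimes\mathcal{D}_{\geq n}\subseteq\mathcal{D}_{\geq m+n}$ for all $m,n\in\bbZ$; this is deduced by induction from the hypotheses that $\mathcal{D}_{\geq 0}$ contains the unit and is closed under $\otimes$, together with the fact that the tensor product preserves colimits in each variable (as $\mathcal{D}$ is closed symmetric monoidal). Let $\mathcal{A}\subseteq\mathcal{D}^{\mathrm{fil}}$ be the full subcategory of filtrations $X^{\geq\sbullet}$ with $X^{\geq n}\in\mathcal{D}_{\geq n}$ for every $n\in\bbZ$. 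The above inclusion guarantees that $\mathcal{A}$ contains the Day convolution unit and is closed under Day convolution, so it inherits a symmetric monoidal structure making the inclusion $\mathcal{A}\hookrightarrow\mathcal{D}^{\mathrm{fil}}$ symmetric monoidal. The underlying-object functor $\colim:\mathcal{A}\to\mathcal{D}$ is symmetric monoidal via the natural equivalence
\begin{equation*}
\colim_{\ell}(X\otimes Y)^{\geq\ell}\simeq \colim_{m,n\in\bbZ}X^{\geq m}\otimes Y^{\geq n}\simeq(\colim X)\otimes(\colim Y),
\end{equation*}
and admits $\tau_{\geq\sbullet}:\mathcal{D}\to\mathcal{A}$ as a right adjoint: for $x\in\mathcal{D}$ and $Y^{\geq\sbullet}\in\mathcal{A}$, one has
\begin{equation*}
\Map_{\mathcal{D}}(\colim Y,x)\simeq\lim_{n}\Map_{\mathcal{D}}(Y^{\geq n},x)\simeq\lim_{n}\Map_{\mathcal{D}}(Y^{\geq n},\tau_{\geq n}x)\simeq\Map_{\mathcal{A}}(Y,\tau_{\geq\sbullet}x),
\end{equation*}
the middle equivalence using $Y^{\geq n}\in\mathcal{D}_{\geq n}$ and the universal property of the Whitehead cover $\tau_{\geq n}x\to x$.

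Invoking the standard principle above then equips $\tau_{\geq\sbullet}:\mathcal{D}\to\mathcal{A}$ with a canonical lax symmetric monoidal structure, which yields the desired lax symmetric monoidal refinement after composing with the symmetric monoidal inclusion $\mathcal{A}\hookrightarrow\mathcal{D}^{\mathrm{fil}}$; the resulting functor $\CAlg(\mathcal{D}^{\otimes})\to\CAlg(\mathcal{D}^{\mathrm{fil},\otimes})$ is then automatic. The main technical point I anticipate is promoting the displayed mapping-space equivalence to a genuine $\infty$-categorical adjunction (not merely a homotopy-categorical one), which amounts to producing suitable unit and counit natural transformations from the Whitehead covers $\tau_{\geq n}x\to x$ and checking coherence; the inclusion $\mathcal{D}_{\geq m}\otimes\mathcal{D}_{\geq n}\subseteq\mathcal{D}_{\geq m+n}$ is exactly what keeps all intermediate constructions inside $\mathcal{A}$ and will be used throughout.
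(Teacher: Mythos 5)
Your proposal is, at its core, the paper's proof: identify the monoidal full subcategory of filtrations whose $n$-th term is appropriately connective (your $\mathcal{A}$; the paper's $\mathcal{C}$, with $X^{\geq n}\in\mathcal{D}_{\geq kn}$ for general $k$), show that the underlying-object functor (the paper's $p_{!}|_{\mathcal{C}}$) is strong symmetric monoidal, and exhibit $\tau_{\geq k\sbullet}$ as its right adjoint so the lax structure is automatic. The mapping-space computation you display for the adjunction is essentially the end computation carried out in the paper's proof, and your closure argument for $\mathcal{A}$ under Day convolution matches Lemma~\ref{lem:postnikovmult1}.

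There is, however, a local error in your reduction to $k=1$. The reindexing $\iota_{k}^{\ast}$ is \emph{not} strong symmetric monoidal for Day convolution, only lax. The structure map at filtration degree $n$ is the canonical map $\colim_{a+b\geq n}X^{\geq ka}\otimes Y^{\geq kb}\to\colim_{i+j\geq kn}X^{\geq i}\otimes Y^{\geq j}$, and the index poset on the left is not cofinal in the one on the right: for $k=2$, $n=1$, the object $(i,j)=(1,1)$ admits no map to any $(2a,2b)$ with $a+b\geq 1$, and by choosing $X^{\geq m}=Y^{\geq m}=\mathbf{1}$ for $m\leq 1$ and $0$ for $m\geq 2$ one makes this map $0\to\mathbf{1}$. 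Being the right adjoint of the strong symmetric monoidal $(\iota_{k})_{!}$, the functor $\iota_{k}^{\ast}$ is nevertheless lax symmetric monoidal, and laxness is all your argument actually needs, since the composite $\iota_{k}^{\ast}\circ\tau_{\geq\sbullet}$ of lax symmetric monoidal functors is lax symmetric monoidal. The paper sidesteps the detour by building $\mathcal{C}$ directly with the $k$-speed connectivity bound. Separately, your one-line assertion that $\colim:\mathcal{A}\to\mathcal{D}$ is symmetric monoidal is correct in spirit but elides the coherence; the paper's Lemma~\ref{lem:postnikovmult2} supplies it by establishing the symmetric monoidal structure on $p_{!}:\Sp^{\mathrm{fil}}\to\Sp$ via the universal property of the Day convolution presheaf category and then tensoring with $\mathcal{D}$.
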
   

The proof requires several steps. Consider the full subcategory $\mathcal{C}$ of $\mathcal{D}^{\mathrm{fil}}$ spanned by objects $X^{\geq\sbullet}\in\mathcal{D}^{\mathrm{fil}}$ such that for each $n\in\bbZ$, the object $X^{\geq n}$ is in $\mathcal{D}_{\geq kn}$, cf. \cite[Proof of Lem. 7.2.1.11]{htt} for an analogue. The functor $\tau_{\geq k\sbullet}$ factors through $\mathcal{C}$, and induces the functor $\tau_{\geq k\sbullet}:\mathcal{D}\to\mathcal{C}$ for which we retain the same notation for convenience.

\begin{lemma}\label{lem:postnikovmult1}
The full subcategory $\mathcal{C}$ admits a natural symmetric monoidal structure making the canonical embedding $\mathcal{C}\hookrightarrow \mathcal{D}^{\mathrm{fil}}$ symmetric monoidal. 
\begin{proof}
It suffices to check that $\mathcal{C}$ in $\mathcal{D}^{\mathrm{fil}}$ has the monoidal unit of $\mathcal{D}^{\mathrm{fil}}$ and is closed under tensor products, cf. \cite[Prop. 2.2.1.1 and Rem. 2.2.1.2]{ha}. The monoidal unit $\mathbf{1}_{\mathcal{D}^{\mathrm{fil}}}$ of $\mathcal{D}^{\mathrm{fil}}$ is described as $(\mathbf{1}_{\mathcal{D}^{\mathrm{fil}}})^{\geq n}\simeq 0$ for $n>0$ and $(\mathbf{1}_{\mathcal{D}^{\mathrm{fil}}})^{\geq n}\simeq\mathbf{1}$ for $n\leq0$ (here, $\mathbf{1}$ denotes the monoidal unit of $\mathcal{D}$) and hence is in $\mathcal{C}$. For objects $X^{\geq\sbullet}$ and $Y^{\geq\sbullet}$ of $\mathcal{C}$ and $n\in\bbZ$, we have $(X^{\geq\sbullet}\otimes Y^{\geq\sbullet})^{\geq n}\simeq \colim_{i+j\geq n}X^{\geq i}\otimes Y^{\geq j}$, and since $X^{\geq i}\otimes Y^{\geq j}\in\mathcal{D}_{\geq k(i+j)}\subseteq\mathcal{D}_{\geq kn}$ for each $i$ and $j$ satisfying $i+j\geq n$, we know $X^{\geq\sbullet}\otimes Y^{\geq\sbullet}\in\mathcal{C}$. 
\end{proof}
\end{lemma}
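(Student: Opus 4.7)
The plan is to invoke the standard criterion (e.g., \cite[Prop.~2.2.1.1 and Rem.~2.2.1.2]{ha}) that a full subcategory $\mathcal{C}$ of a symmetric monoidal $\infty$-category $\mathcal{D}^{\mathrm{fil}}$ inherits a unique symmetric monoidal structure making the inclusion into a symmetric monoidal functor, provided that $\mathcal{C}$ contains the monoidal unit of $\mathcal{D}^{\mathrm{fil}}$ and is closed under the tensor product. Accordingly, the work reduces to two concrete closure checks in $\mathcal{D}^{\mathrm{fil}}$ equipped with the Day convolution.

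For the unit, I would use the explicit description of $\mathbf{1}_{\mathcal{D}^{\mathrm{fil}}}$ as the filtered object with $(\mathbf{1}_{\mathcal{D}^{\mathrm{fil}}})^{\geq n}\simeq \mathbf{1}_{\mathcal{D}}$ for $n\leq 0$ and $(\mathbf{1}_{\mathcal{D}^{\mathrm{fil}}})^{\geq n}\simeq 0$ for $n>0$. Since the t-structure on $\mathcal{D}$ is compatible with the monoidal structure, $\mathbf{1}_{\mathcal{D}}\in\mathcal{D}_{\geq 0}\subseteq\mathcal{D}_{\geq kn}$ for every $n\leq 0$, while $0$ lies in every connective slice trivially, so $\mathbf{1}_{\mathcal{D}^{\mathrm{fil}}}$ belongs to $\mathcal{C}$. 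For closure under tensor product, fix $X^{\geq\sbullet}, Y^{\geq\sbullet}\in\mathcal{C}$ and $n\in\bbZ$, and use the Day convolution formula $(X\otimes Y)^{\geq n}\simeq \colim_{i+j\geq n}X^{\geq i}\otimes Y^{\geq j}$. I would first record the auxiliary fact $\mathcal{D}_{\geq a}\otimes \mathcal{D}_{\geq b}\subseteq \mathcal{D}_{\geq a+b}$, which follows by shifting from the hypothesis $\mathcal{D}_{\geq 0}\otimes\mathcal{D}_{\geq 0}\subseteq \mathcal{D}_{\geq 0}$, using that $\otimes$ commutes with suspension in a stable setting. Applied termwise, each $X^{\geq i}\otimes Y^{\geq j}\in \mathcal{D}_{\geq k(i+j)}\subseteq \mathcal{D}_{\geq kn}$ whenever $i+j\geq n$, and closure of $\mathcal{D}_{\geq kn}$ under colimits in $\mathcal{D}$ then yields $(X\otimes Y)^{\geq n}\in \mathcal{D}_{\geq kn}$, hence $X\otimes Y\in\mathcal{C}$.

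The only substantive point requiring care is verifying $\mathcal{D}_{\geq a}\otimes \mathcal{D}_{\geq b}\subseteq \mathcal{D}_{\geq a+b}$ from the bare compatibility hypothesis at the unit level; this is standard but should be explicitly noted so that the speed-$k$ shift in the definition of $\mathcal{C}$ interacts cleanly with the Day convolution. Once both verifications are in place, the cited criterion immediately produces the symmetric monoidal structure on $\mathcal{C}$ together with the symmetric monoidal inclusion $\mathcal{C}\hookrightarrow \mathcal{D}^{\mathrm{fil}}$.
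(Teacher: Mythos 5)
Your proposal is correct and follows essentially the same route as the paper: reduce to checking that $\mathcal{C}$ contains the unit and is closed under Day convolution, then verify both using the explicit description of $\mathbf{1}_{\mathcal{D}^{\mathrm{fil}}}$ and the colimit formula $(X\otimes Y)^{\geq n}\simeq\colim_{i+j\geq n}X^{\geq i}\otimes Y^{\geq j}$. Your explicit remark that $\mathcal{D}_{\geq a}\otimes\mathcal{D}_{\geq b}\subseteq\mathcal{D}_{\geq a+b}$ follows by shifting from the unit-level compatibility is a point the paper leaves implicit, but it is the same argument.
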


We again use the notation $p_{!}\dashv p^{\ast}:\mathcal{D}\to \mathcal{D}^{\mathrm{fil}}$ for the adjunction of the functor $p^{\ast}$ induced by the map $p:\N\bbZ^{\op}\to\Delta^{0}$ and the functor of left Kan extensions $p_{!}$ taking underlying objects of filtered objects. For the lemma below, we only require that $\mathcal{D}$ is an object of $\CAlg(\Pr^{\mathrm{L},\otimes}_{\mathrm{st}})$. 

\begin{lemma}\label{lem:postnikovmult2}
Let $\mathcal{D}$ be a closed symmetric monoidal presentable stable $\infty$-category. The adjunction $p_{!}\dashv p^{\ast}:\mathcal{D}\to \mathcal{D}^{\mathrm{fil}}$ induces a symmetric monoidal adjunction. 
\begin{proof}
It suffices to check that the functor $p_{!}:\mathcal{D}^{\mathrm{fil}}\to\mathcal{D}$ refines to a symmetric monoidal functor. For that, we first check the case of $\mathcal{D} = \Sp$ and deduce the general case from that. To check that the functor $p_{!}:\Sp^{\mathrm{fil}} = \Fun(\N\bbZ^{\op},\Sp) \to\Sp$ admits a symmetric monoidal structure, consider the stable Yoneda map $j_{\mathrm{st}}:\N\bbZ\to\Fun(\N\bbZ^{\op},\Sp)$ obtained as the composition of the Yoneda embedding $j:\N\bbZ\to\mathcal{P}(\N\bbZ)$ and the stabilization $\Sigma^{\infty}_{+}\circ -:\mathcal{P}(\N\bbZ)\to \Fun(\N\bbZ^{\op},\Sp)$. Restriction along $j_{\mathrm{st}}$ induces the equivalence $\Fun^{\mathrm{L}}(\Fun(\N\bbZ^{\op},\Sp),\Sp)\simeq \Fun(\N\bbZ,\Sp)$ by the universal properties of each functors composing $j_{\mathrm{st}}$; in fact, this equivalence respects symmetric monoidal structures. More precisely, we have an equivalence
\begin{equation*}
\begin{tikzcd}
\Fun^{\mathrm{L},\otimes}(\Fun(\N\bbZ^{\op},\Sp),\Sp) \arrow[r, "-\circ\Sigma^{\infty}_{+}", "\sim"'] & \Fun^{\mathrm{L},\otimes}(\mathcal{P}(\N\bbZ),\Sp) \arrow[r, "-\circ j", "\sim"'] & \Fun^{\otimes}(\N\bbZ,\Sp),
\end{tikzcd}
\end{equation*} 
where in the above composition, the first map is an equivalence by \cite[Prop. 5.4 (iv)]{ggn}, while the second map is an equivalence by \cite[Cor. 4.8.1.11]{ha}. Thus, in order to check that $p_{!}\in \Fun^{\mathrm{L}}(\Fun(\N\bbZ^{\op},\Sp),\Sp)$ is symmetric monoidal, it suffices to check that the functor $p_{!}\circ j_{\mathrm{st}}\in \Fun(\N\bbZ,\Sp)$ is symmetric monoidal. From the description that $j_{\mathrm{st}}(m)^{\geq n}\simeq 0$ for $n>m$ and $j_{\mathrm{st}}(m)^{\geq n}\simeq \bbS$ for $n\leq m$, we know $p_{!}\circ j_{\mathrm{st}}$ is equivalent to the constant functor valued in $\bbS$, which in particular is naturally symmetric monoidal.\\
\indent Now, we check that the functor $p_{!} = p^{\mathcal{D}}_{!}:\Fun(\N\bbZ^{\op},\mathcal{D})\to\mathcal{D}$ admits a symmetric monoidal structure from the case of $\mathcal{D} = \Sp$ we verified above. In fact, from the symmetric monoidal functors $\Sp\to\mathcal{D}$, $\Fun(\N\bbZ^{\op},\Sp)\to \Fun(\N\bbZ^{\op},\mathcal{D})$, and $\mathbf{1}_{\mathcal{D}^{\mathrm{fil}}}\otimes-:\mathcal{D}\to\Fun(\N\bbZ^{\op},\mathcal{D})$, we obtain symmetric monoidal functors $\Fun(\N\bbZ^{\op},\Sp)\otimes\mathcal{D}\to\Fun(\N\bbZ^{\op},\mathcal{D})$ and $\Sp\otimes\mathcal{D}\to \mathcal{D}$ of $\CAlg(\Pr^{\mathrm{L},\otimes}_{\mathrm{st}})$. Since $p_{!}:\Fun(\N\bbZ^{\op},\Sp)\to\Sp$ is symmetric monoidal, we know the induced map $p_{!}\otimes\mathcal{D}:\Fun(\N\bbZ^{\op},\Sp)\otimes\mathcal{D}\to \Sp\otimes\mathcal{D}$ is also in $\CAlg(\Pr^{\mathrm{L},\otimes}_{\mathrm{st}})$. To relate the underlying functor of $p_{!}\otimes\mathcal{D}$ in $\Pr^{\mathrm{L}}_{\mathrm{st}}$ with the functor $p^{\mathcal{D}}_{!}$ in $\Pr^{\mathrm{L}}_{\mathrm{st}}$, consider the diagram

\begin{equation*}
\begin{tikzcd}
\Fun(\N\bbZ^{\op},\Sp)\times\mathcal{D} \arrow[r, "p_{!}\times\mathcal{D}"] \arrow[d] & \Sp\times\mathcal{D} \arrow[d] \\
\Fun(\N\bbZ^{\op},\mathcal{D}) \arrow[r, "p^{\mathcal{D}}_{!}"'] & \mathcal{D}
\end{tikzcd}
\end{equation*}
in $\Pr^{\mathrm{L}}_{\mathrm{st}}$, where the vertical arrows are induced from the symmetric monoidal structures on $\Fun(\N\bbZ^{\op},\mathcal{D})$ and $\mathcal{D}$ as well as the symmetric monoidal functors $\Sp\to\mathcal{D}$, $\Fun(\N\bbZ^{\op},\Sp)\to \Fun(\N\bbZ^{\op},\mathcal{D})$, and $\mathbf{1}_{\mathcal{D}^{\mathrm{fil}}}\otimes-:\mathcal{D}\to\Fun(\N\bbZ^{\op},\mathcal{D})$. For instance, the left vertical arrow is the composition $\Fun(\N\bbZ^{\op},\Sp)\times\mathcal{D}\xrightarrow{\mathrm{can}\times (\mathbf{1}_{\mathcal{D}^{\mathrm{fil}}}\otimes-)} \Fun(\N\bbZ^{\op},\mathcal{D})\times \Fun(\N\bbZ^{\op},\mathcal{D})\xrightarrow{\otimes}\Fun(\N\bbZ^{\op},\mathcal{D})$, which can be described as sending $(X^{\geq\sbullet},d)$ to $X^{\geq\sbullet}\otimes d$. Note that the vertical arrows commute with colimits by construction, and that the diagram is defined through the preservation of colimits by the action of $\Sp$ on $\mathcal{D}$. Now, the diagram induces an equivalence between the induced functor $p_{!}\otimes\mathcal{D}:\Fun(\N\bbZ^{\op},\Sp)\otimes\mathcal{D}\to \Sp\otimes\mathcal{D}$ in $\Pr^{\mathrm{L}}_{\mathrm{st}}$ and $p^{\mathcal{D}}_{!}$ through the maps $\Fun(\N\bbZ^{\op},\Sp)\otimes\mathcal{D}\to\Fun(\N\bbZ^{\op},\mathcal{D})$ and $\Sp\otimes\mathcal{D}\to \mathcal{D}$ in $\Pr^{\mathrm{L}}_{\mathrm{st}}$ which are equivalences. Since these three functors (other than $p^{\mathcal{D}}_{!}$) are the underlying functors in $\Pr^{\mathrm{L}}_{\mathrm{st}}$ of the analogous functors in $\CAlg(\Pr^{\mathrm{L},\otimes}_{\mathrm{st}})$ discussed earlier, we know $p^{\mathcal{D}}_{!}$ (which is equivalent to $p_{!}\otimes\mathcal{D}$) admits a symmetric monoidal structure. 
\end{proof}
\end{lemma}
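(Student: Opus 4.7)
The plan is to promote the left adjoint $p_{!}$ to a symmetric monoidal functor; once this is done, the right adjoint $p^{\ast}$ automatically inherits a lax symmetric monoidal structure and the adjunction becomes a symmetric monoidal adjunction. I would reduce the question to the universal case $\mathcal{D}=\Sp$ and then bootstrap to general $\mathcal{D}$ by tensoring.

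For the universal case, I would exploit the universal property of the Day convolution on $\Fun(\N\bbZ^{\op},\Sp)$. The stable Yoneda embedding $j_{\mathrm{st}}:\N\bbZ\to\Fun(\N\bbZ^{\op},\Sp)$, defined as the composition of the Yoneda embedding $j:\N\bbZ\to\mathcal{P}(\N\bbZ)$ with the stabilization $\Sigma^{\infty}_{+}\circ-:\mathcal{P}(\N\bbZ)\to\Fun(\N\bbZ^{\op},\Sp)$, exhibits $\Fun(\N\bbZ^{\op},\Sp)$ as the free presentable stable symmetric monoidal $\infty$-category generated by the symmetric monoidal poset $\N\bbZ$. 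By successive application of \cite[Prop.~5.4(iv)]{ggn} and \cite[Cor.~4.8.1.11]{ha}, restriction along $j_{\mathrm{st}}$ yields an equivalence
\[
\Fun^{\mathrm{L},\otimes}(\Fun(\N\bbZ^{\op},\Sp),\Sp)\simeq\Fun^{\otimes}(\N\bbZ,\Sp).
\]
Consequently, to promote the colimit-preserving $p_{!}$ to a symmetric monoidal functor it suffices to promote the composite $p_{!}\circ j_{\mathrm{st}}:\N\bbZ\to\Sp$ to a symmetric monoidal functor. An explicit computation gives $j_{\mathrm{st}}(m)^{\geq n}\simeq\bbS$ for $n\leq m$ and $\simeq 0$ otherwise, with transition maps between the $\bbS$'s being the identity. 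Hence $p_{!}(j_{\mathrm{st}}(m))\simeq\colim_{n\to-\infty}j_{\mathrm{st}}(m)^{\geq n}\simeq\bbS$, and the functor $p_{!}\circ j_{\mathrm{st}}$ is equivalent to the constant functor at the monoidal unit $\bbS$, which carries an evident symmetric monoidal structure.

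For the general case, I would use that every object $\mathcal{D}$ of $\CAlg(\Pr^{\mathrm{L},\otimes}_{\mathrm{st}})$ is canonically a $\Sp$-module in this $\infty$-category, so that tensoring with $\mathcal{D}$ is a functor on $\CAlg(\Pr^{\mathrm{L},\otimes}_{\mathrm{st}})$. Applying it to the symmetric monoidal $p_{!}:\Fun(\N\bbZ^{\op},\Sp)\to\Sp$ of the previous paragraph produces a symmetric monoidal functor $p_{!}\otimes\mathcal{D}:\Fun(\N\bbZ^{\op},\Sp)\otimes\mathcal{D}\to\Sp\otimes\mathcal{D}$. I would then identify its source and target with $\Fun(\N\bbZ^{\op},\mathcal{D})$ and $\mathcal{D}$ respectively as symmetric monoidal $\infty$-categories, via the symmetric monoidal bilinear functors $(X^{\geq\sbullet},d)\mapsto X^{\geq\sbullet}\otimes d$ and $(s,d)\mapsto s\otimes d$, which are colimit-preserving in each variable because the monoidal product on $\mathcal{D}$ preserves colimits separately in each variable. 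Under these identifications, $p_{!}\otimes\mathcal{D}$ corresponds to $p_{!}^{\mathcal{D}}$, giving the desired symmetric monoidal refinement.

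The main subtlety will be in verifying that the identifications in the second step are symmetric monoidal equivalences of relative tensor products in $\CAlg(\Pr^{\mathrm{L},\otimes}_{\mathrm{st}})$ rather than mere equivalences of underlying $\infty$-categories compatible with $p_{!}$; this uses the universal property of the relative tensor product as classifying colimit-preserving bilinear functors together with the fact that the external Day convolution $\Fun(\N\bbZ^{\op},\Sp)\otimes\mathcal{D}$ agrees with the internal Day convolution $\Fun(\N\bbZ^{\op},\mathcal{D})$ under the canonical equivalence $\Fun(\N\bbZ^{\op},\Sp)\otimes\mathcal{D}\simeq\Fun(\N\bbZ^{\op},\mathcal{D})$.
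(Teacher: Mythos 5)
Your proposal is correct and follows essentially the same route as the paper's proof: reduce to making $p_{!}$ symmetric monoidal, handle $\mathcal{D}=\Sp$ via the universal property of Day convolution (with the same references) and the computation that $p_{!}\circ j_{\mathrm{st}}$ is constant at $\bbS$, then bootstrap to general $\mathcal{D}$ by tensoring in $\CAlg(\Pr^{\mathrm{L},\otimes}_{\mathrm{st}})$ and identifying $\Fun(\N\bbZ^{\op},\Sp)\otimes\mathcal{D}\simeq\Fun(\N\bbZ^{\op},\mathcal{D})$. The subtlety you flag at the end—that the identification must be an equivalence in $\CAlg(\Pr^{\mathrm{L},\otimes}_{\mathrm{st}})$ compatible with $p_{!}$—is exactly what the paper addresses with its commutative square of bilinear functors.
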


\begin{proof}[Proof of Proposition \ref{prop:postnikovmult}]
By Lemma \ref{lem:postnikovmult1} and Lemma \ref{lem:postnikovmult2}, we know the functor $p_{!}|_{\mathcal{C}}:\mathcal{C}\hookrightarrow\mathcal{D}^{\mathrm{fil}}\to\mathcal{D}$ is symmetric monoidal. Moreover, it is a left adjoint functor admitting $\tau_{\geq k\sbullet}:\mathcal{D}\to\mathcal{C}$ as its right adjoint. In fact, for each $X^{\geq\sbullet}\in\mathcal{C}$ and $y\in\mathcal{D}$, we have natural equivalences 
\begin{align*}
\Map_{\mathcal{D}}(p_{!}X^{\geq\sbullet}, y) & \simeq \Map_{\mathcal{D}^{\mathrm{fil}}}(X^{\geq\sbullet}, p^{\ast}(y))\simeq \int_{\bbZ^{\op}}\Map_{\mathcal{D}}(X^{\geq\sbullet},p^{\ast}(y))\\
 & \simeq \lim_{\{(i,j)\in\bbZ\times\bbZ^{\op}~|~i\geq j\}}\Map_{\mathcal{D}}(X^{\geq i}, p^{\ast}(y)^{\geq j}) = \lim_{\{(i,j)\in\bbZ\times\bbZ^{\op}~|~i\geq j\}}\Map_{\mathcal{D}}(X^{\geq i}, y) \\
 & \simeq \lim_{\{(i,j)\in\bbZ\times\bbZ^{\op}~|~i\geq j\}}\Map_{\mathcal{D}}(X^{\geq i}, \tau_{\geq kj}y) \simeq \int_{\bbZ^{\op}}\Map_{\mathcal{D}}(X^{\geq\sbullet},\tau_{\geq k\sbullet}y)\\
 & \simeq \Map_{\mathcal{D}^{\mathrm{fil}}}(X^{\geq\sbullet}, \tau_{\geq k\sbullet}y).
\end{align*} 
Here, for the second, the third, and the last two equivalences, we used the description of the mapping space of $\mathcal{D}^{\mathrm{fil}}$ via end construction \cite[Prop. 2.3]{gla} and the identification of $\mathrm{Tw}(\bbZ^{\op})$ with $\{(i,j)\in\bbZ\times\bbZ^{\op}~|~i\geq j\}$. For the fifth equivalence, we used that $X^{\geq i}\in \mathcal{D}_{\geq kj}$ for $i\geq j$. \\
\indent Being a right adjoint of a symmetric monoidal functor, we know $\tau_{\geq k\sbullet}:\mathcal{D}\to\mathcal{C}$ is lax symmetric monoidal. By composing with the symmetric monoidal embedding $\mathcal{C}\hookrightarrow\mathcal{D}^{\mathrm{fil}}$ of Lemma \ref{lem:postnikovmult1}, we know the composite $\tau_{\geq k\sbullet}:\mathcal{D}\to\mathcal{D}^{\mathrm{fil}}$ is lax symmetric monoidal. 
\end{proof}

\begin{remarkn} \label{rem:postnikovmult}
Let $\mathcal{D}$ be as in Proposition \ref{prop:postnikovmult}. For $x\in\CAlg(\mathcal{D}^{\otimes})$, the natural map $\tau_{\geq k\sbullet}x\to p^{\ast}(x)$ from the Postnikov filtration to the constant filtration is a map in $\CAlg(\mathcal{D}^{\mathrm{fil,\otimes}})$. In fact, the map is equivalent to a composition of the unit map $\tau_{\geq k\sbullet}x\to p^{\ast}p_{!}(\tau_{\geq k\sbullet}x) = p^{\ast}(p_{!}|_{\mathcal{C}})(\tau_{\geq k\sbullet}x)$ for the adjunction $p_{!}\dashv p^{\ast}$ and the image by $p^{\ast}$ of the counit map $(p_{!}|_{\mathcal{C}})(\tau_{\geq k\sbullet}x)\to x$ of the adjunction $p_{!}|_{\mathcal{C}}\dashv \tau_{\geq k\sbullet}:\mathcal{D}\to\mathcal{C}$. By Lemma \ref{lem:postnikovmult2} and the Proof of Proposition \ref{prop:postnikovmult} above, both of the maps in $\mathcal{D}^{\mathrm{fil}}$ are in fact maps in $\CAlg(\mathcal{D}^{\mathrm{fil},\otimes})$.
\end{remarkn}
 
\printbibliography[heading=bibintoc]
\noindent \textit{\small E-mail address}: \texttt{\small hyungseop.kim@universite-paris-saclay.fr}\\
\noindent \textsc{\small CNRS, Laboratoire de Math\'ematiques d'Orsay, Universit\'e Paris-Saclay, Bâtiment 307, rue Michel Magat, F-91405 Orsay Cedex, France}
\end{document}